\newtheorem{theorem}{Theorem}
\newtheorem{proposition}{Proposition}
\newtheorem{lemma}{Lemma}
\newtheorem{corollary}{Corollary}
\newtheorem{definition}{Definition}
\newcommand{\boxto}{%
	\mathrel{\mathop\Box}\mathrel{\mkern-2.5mu}\rightarrow
}
\newcommand{\diamondto}{%
	\mathrel{\mathop\Diamond}\mathrel{\mkern-2.8mu}\rightarrow
}
\newcommand{\boxTo}{\ensuremath{%
		\mathrel{\Box\kern-1.5pt\raise1pt\hbox{$\mathord{\Rightarrow}$}}}}
\newcommand{\diamondTo}{\ensuremath{%
		\mathrel{\Diamond\kern-1.5pt\raise1pt\hbox{$\mathord{\Rightarrow}$}}}}
\newcommand*\vDdash{%
	\mathrel{%
		\ooalign{$\vdash$\cr$\vDash$}%
	}%
}
\def\@clipped@vdash{%
	\raise .6ex\hbox{\clipbox{0pt .6ex 0pt .6ex}{$\vdash$}}%
}
\newcommand{\NEG}{\mbox{$\sim$}}
\begin{document}

\title[Connexive modal and conditional logic]{On connexivity in modal and conditional contexts}


\author*[1]{\fnm{Grigory K.} \sur{Olkhovikov} 
}\email{grigory.olkhovikov@\{rub.de, gmail.com\} 
}

%

\affil*[1]{\orgdiv{Department of Philosophy I}, \orgname{Ruhr University Bochum}, \orgaddress{\street{Universit\"atsstra{\ss}e 150}, \city{Bochum}, \postcode{44780}, \country{Germany}}}

%


\abstract{We define and axiomatize three new logics based on the connexive logic $\mathsf{C}$, the modal logic $\mathsf{CnK}$ and the conditional logics $\mathsf{CnCK}$ and $\mathsf{CnCK}_R$. These logics display strong connexivity properties and are connected to one another, since $\mathsf{CnCK}_R$ is the reflexive extension of $\mathsf{CnCK}$ and $\mathsf{CnK}$ is faithfully embeddable into both $\mathsf{CnCK}$ and $\mathsf{CnCK}_R$ in a multitude of natural ways. We argue that all the three logics provide (albeit in different ways) natural expansions of $\mathsf{C}$ to their respective languages that preserve and further develop several core properties of $\mathsf{C}$, especially its connexivity profile.}

\keywords{Conditional logic, Strong negation, Paraconsistent logic, Strong completeness, Modal logic, Constructive logic, Connexive logic, Negation-inconsistency, Contraclassical logic }



\maketitle

%
%

\section{Introduction}
In the present paper we extend the propositional logic $\mathsf{C}$, originally introduced in \cite{w}, first with a set of modal operators to obtain the modal logic $\mathsf{CnK}$, and then with a set of conditional operators to get the conditional logics $\mathsf{CnCK}$ and $\mathsf{CnCK}_R$; we also axiomatize the resulting systems. 

Logic $\mathsf{C}$ is a non-classical logic which is relatively well-known and well-researched in the existing literature; even though, admittedly, it is not quite a household name yet. Its closest relatives are the numerous variations of the Nelson's logic of strong negation (originally introduced in \cite{nelson}, but $\mathsf{C}$ is in fact closer to its paraconsistent version first proposed independently in \cite{almukdad} and \cite{kutschera}; see also \cite[Appendix B, \S 2]{prawitz}). Despite its somewhat niche character, $\mathsf{C}$ is known to display a rather unusual combination of rare properties (most of which can be ultimately traced back to its reading of false implicative sentences) and thus provides a worthy and intriguing object of study. 

It is therefore important to properly take this reading into account in any attempts to generalize the treatment of implication in $\mathsf{C}$ by expanding its language with modal operators or with binary operators for would- and might-conditionals. 

In this paper, we make exactly this attempt and the main focus of the paper is on the way in which the properties of strict implication in $\mathsf{CnK}$ and of would- and might-conditionals in $\mathsf{CnCK}$ mirror some important properties of the implication in $\mathsf{C}$. This link is admittedly less tight in the case of $\mathsf{CnCK}$, which provides a very wide generalization of the implication of $\mathsf{C}$.\footnote{Something like this also happens in the classical case: compare the properties of would-conditionals in the minimal classical logic of conditionals $\mathsf{CK}$, introduced in \cite{chellas}, and the properties of material implication.} However, many important properties of the implication in $\mathsf{C}$ can be recovered in several natural extensions of $\mathsf{CnCK}$ and we briefly consider one such extension which we call $\mathsf{CnCK}_R$.

To the best of our knowledge, $\mathsf{CnK}$, $\mathsf{CnCK}$, and $\mathsf{CnCK}_R$ have not been hitherto considered in the existing literature. Still, our work on modal and conditional expansions of $\mathsf{C}$ is far from being the first of its kind. The work on modal extensions of $\mathsf{C}$ is, in fact, as old as $\mathsf{C}$ itself ---  the first modal $\mathsf{C}$-based modal logic was introduced and axiomatized already in \cite{w}, the same paper that introduced $\mathsf{C}$ itself. However, the line of thought that naturally leads to reflecting the properties of the connexive implication of $\mathsf{C}$ in the properties of modalities in the way it is done in $\mathsf{CnK}$, seems to be relatively recent. For example, in \cite[pp. 248--249]{omw3}, the authors describe the tweaking of the falsity conditions for the modalities in the $\mathsf{C}$-based logic introduced in \cite{w}, which leads exactly to the semantics of $\mathsf{CnK}$. The falsity condition for $\Box$ in $\mathsf{CnK}$ is given in  \cite{omw3} explicitly, whereas the falsity condition for $\Diamond$ is only alluded to implicitly, but the allusion is clear enough. The authors of \cite{omw3} do not axiomatize the modal logic arising in this way nor do they look into its properties otherwise; rather, they put it up as a worthy and interesting object of study and end their treatment of the topic by formulating ``the following [open] problem, which may seem to be more like a research proposal'', namely to find ``interesting applications of systems of $\mathsf{FDE}$-based modal logics (including [the modal logic proposed in \cite{w}] in particular) with non-standard modalities obtained by keeping familiar support of truth (falsity) conditions of the modalities and tweaking their support of falsity (truth) conditions'' (\cite[p. 249]{omw3}, adjusted for notation and context). While the present paper cannot be viewed as giving anything approaching a complete answer to this broad question, we hope that it contributes to the discussion of the said question by looking into one prominent example of the modal logics in question, axiomatizing it and exploring some of its elementary properties.\footnote{Another strain of existing work is due to Nissim Francez, who introduces in \cite{francez-mod} a logic called $\mathsf{CS5}$, a connexive version of $\mathsf{S5}$. This logic, however, seems to be incomparable with $\mathsf{CnK}$ due to the classical restrictions imposed in $\mathsf{CS5}$ on the evaluation of propositional letters (see \cite[Section 4]{francez-mod}). Any discussion of differences between $\mathsf{CS5}$ and $\mathsf{CnK}$ is a non-trivial task given that (1) $\mathsf{CS5}$ is defined by a hyper-sequent calculus and lacks a Hilbert-style axiomatization, (2) the semantics given for the $\Diamond$-free fragment of $\mathsf{CS5}$ is not proven (or even explicitly claimed) to be adequate for the hyper-sequent calculus that defines $\mathsf{CS5}$, and (3) the semantics of $\Diamond$ in  $\mathsf{CS5}$ is not discussed in \cite{francez-mod} at all.} 

Even though the conditional logics presented in this paper seem to be the first extensions of $\mathsf{C}$ with conditional operators ever proposed in the existing literature, similar ideas for the semantics of conditionals have been previously tried in a somewhat different propositional environment. The semantics of $\mathsf{CnCK}$ presented in this paper is in fact inspired by the semantics of the conditional logic  $\mathsf{cCCL}$ introduced in \cite{wu}, even though the two logics end up being rather different and are easily shown to be incomparable. Finally, the work on $\mathsf{CnCK}$ presented below is a natural continuation of our previous work on conditional logics, which spans several years and includes systems like $\mathsf{IntCK}$ and $\mathsf{N4CK}$.\footnote{This work has been presented in \cite{olkhovikov1}, \cite{olkhovikov2}, and \cite{olkhovikov3}.} This work, therefore, is best appreciated against the backdrop of the intricate web of connections and embeddings that tie together all of the systems mentioned in this paragraph and include several others that we omit to mention here.

However, in order to keep this text a reasonably sized paper and not a book, we had to confine ourselves to the discussion of the mutual relations of the four logics mentioned above, viz. $\mathsf{C}$,  $\mathsf{CnK}$, $\mathsf{CnCK}$, and $\mathsf{CnCK}_R$ in an accessible and self-contained manner and to keep the mapping-out of their connections to any other related work to an absolute minimum, thus leaving any detailed treatment of it to future publications.

These plans unfold into the following structure for the remaining part of the paper. In Section \ref{S:preliminaries} we fix notational conventions and introduce common terminology applicable to every logic considered below. All of them are defined in the paper by a somewhat uncommon bi-valuational variant of Kripke semantics that needs to be explained in advance. Another subject touched upon in this section is the definition and taxonomy of various connexive properties of logics. The research on connexive logics (see, e.g. \cite{w-sep}) is currently in a very active phase; consequently, the terminology and, more generally, the ideas about a correct angle of view on this whole subject remain in constant flux. Our paper, despite its limited aims, has to take some sort of position in this ongoing debate, given that $\mathsf{C}$ is currently viewed as a primary example of a \textit{hyperconnexive logic} and its connexive properties, therefore, also have to be reflected by its natural expansions to modal and conditional languages. Section \ref{S:c} then gives a quick and self-contained introduction to $\mathsf{C}$ and thus prepares the reader for the main results. Section \ref{S:modal} introduces and axiomatizes the modal logic $\mathsf{CnK}$. It is followed by Section \ref{S:conditional} devoted to the introduction and axiomatization of $\mathsf{CnCK}$; next comes Section \ref{S:reflexive}, in which $\mathsf{CnCK}_R$, a natural extension of $\mathsf{CnCK}$, is evaluated against the backdrop of the properties of $\mathsf{C}$ explained in Section 2 and linked to $\mathsf{CnK}$. 

Admittedly, all of this barely scratches the surface of what needs to be done to fully motivate our choice of $\mathsf{CnCK}$ as the correct minimal $\mathsf{C}$-based conditional logic; we expect to devote at least two further papers to this task. Section \ref{S:conclusions} provides an outlook on the contents of these papers as well as some other related work that we hope to publish in the near future.  

\section{Preliminaries and common notions}\label{S:preliminaries}

\subsection{Notational conventions}\label{sub:notational-conventions}
We use this subsection to fix some notations to be used throughout the paper.

We identify the natural numbers with finite ordinals. We denote by $\omega$ the smallest infinite ordinal. 
For any $n \in \omega$, we will denote by $\bar{o}_n$ the sequence
$(o_1,\ldots, o_n)$ of objects of any kind; moreover, somewhat
abusing the notation, we will sometimes denote $\{o_1,\ldots,
o_n\}$ by $\{\bar{o}_n\}$. The ordered $1$-tuple will
be identified with its only member.
For any given $m,n \in \omega$, the notation $(\bar{p}_m)^\frown(\bar{q}_n)$ denotes the concatenation of $\bar{p}_m$ and $\bar{q}_n$.

We will use IH as the abbreviation for Induction Hypothesis in the inductive proofs; $\alpha:=\beta$ means that we define $\alpha$ as $\beta$. We will use the usual notations for sets and functions. As for the sets, we will write $X\Subset Y$, iff $X\subseteq Y$ and $X$ is finite. We will extensively use ordered couples of sets which we will also call \textit{bi-sets}. The usual set-theoretic relations and operations on bi-sets will be understood componentwise, so that, e.g. $(X,Y)\subseteq(Z,W)$ means that $X \subseteq Z$ and $Y \subseteq W$ and similarly in other cases.

Relations will be understood as sets of ordered tuples where the length of the tuple defines the \textit{arity} of the relation. Given binary relations $R \subseteq X \times Y$ and $S\subseteq Y\times Z$, we set $R\circ S:= \{(a,c)\mid\text{ for some }b \in Y,\,(a, b)\in R\,\&\,(b,c)\in S\}$, and $R^{-1}:=\{(b,a)\mid (a,b)\in R\}$.

Functions will be understood as relations with special properties; we will write $f:X \to Y$ to denote a function $f \subseteq X\times Y$ such that its left projection is all of $X$. If  $f: X \to Y$ and $Z\subseteq X$ then we will denote the image of $Z$ under $f$ by $f(Z)$. 

\subsection{Languages and logics}\label{sub:languages-and-logics}
Within this paper, a \textit{language} is understood as the result of closing the set $Prop:=\{p_i\mid i\in \omega\}$ of propositional letters under the applications of a finite connective set $L = \{f^{i_1}_1,\ldots,f^{i_n}_n\}$ where $i_1,\ldots,i_n \in \omega$ indicate the arities. In this setting, a language can be identified with its own set of generating connectives and will be therefore also denoted by $L$; the inclusion of languages is then identical with inclusion of their corresponding connective sets. Given a language $L$, we will denote by $\mathcal{L}$ its generated formula set.

Below we will consider a relatively limited family of languages understood in this sense. It consists of the following three languages:
\begin{align*}
	PL&:=\{\wedge^2, \vee^2, \to^2, \NEG^1\}\\
	MD&:=PL\cup \{\Box^1, \Diamond^1\}\\
	CN&:=PL\cup \{\boxto^2, \diamondto^2\}
\end{align*}
We will refer to them as the \textit{propositional}, the \textit{modal}, and the \textit{conditional} language, respectively. Their sets of formulas will be denoted by $\mathcal{PL}$, $\mathcal{MD}$ and $\mathcal{CN}$, respectively. 

Given a language $L = \{f^{i_1}_1,\ldots,f^{i_n}_n\}$, $\phi, \psi \in \mathcal{L}$ and a $p \in Prop$, we can define $\phi[\psi/p]$, the result of replacing of all occurrences of $p$ in $\phi$ with occurrences of $\psi$ by the following induction on the construction of $\phi$:
\begin{align*}
	q[\psi/p]&:=\begin{cases}
		\psi,\text{ if }q = p;\\
		q,\text{ otherwise.}
	\end{cases}\\
	f^{i_j}_j(\chi_1,\ldots,\chi_{i_j})[\psi/p]&:= f^{i_j}_j(\chi_1[\psi/p],\ldots,\chi_{i_j}[\psi/p]) &&1 \leq j \leq n
\end{align*}
%
%

%
%
Although one and the same logic can often be formulated over different languages, in this paper we will abstract away from such subtleties, and will simply treat a logic as a consequence relation (in other words, as a set of consecutions)  $\mathsf{L} \subseteq \mathcal{P}(\mathcal{L})\times\mathcal{P}(\mathcal{L})$, for some language $L$, where $(\Gamma, \Delta)\in \mathsf{L}$ iff $\Delta$ $\mathsf{L}$-follows from $\Gamma$ (we will also denote this by $\Gamma\models_\mathsf{L}\Delta$). We will say that $(\Gamma, \Delta)$ is $\mathsf{L}$-satisfiable iff $(\Gamma, \Delta)\notin \mathsf{L}$. Given a $\phi \in \mathcal{L}$, $\phi$ is $\mathsf{L}$-valid (we will also write $\phi \in \mathsf{L}$) iff $(\emptyset, \{\phi\}) \in \mathsf{L}$ and $\phi$ is $\mathsf{L}$-satisfiable iff $(\{\phi\},\emptyset) \notin \mathsf{L}$.

The inclusion of logics is then just inclusion of (consequence) relations. 


\subsection{Connexivity and other properties of logics}\label{sub:properties-of-logics}
Many important properties of logics can be defined already within this (admittedly, rather rudimentary) setting.

For example, we will call a logic $\mathsf{L}\subseteq \mathcal{P}(\mathcal{L})\times\mathcal{P}(\mathcal{L})$ \textit{trivial} iff $\mathsf{L}= \mathcal{P}(\mathcal{L})\times\mathcal{P}(\mathcal{L})$. We will call $\mathsf{L}$ \textit{negation-inconsistent}\footnote{Such definitions make sense since every language considered in this paper extends $PL$.} iff, for some $\phi \in \mathcal{L}$ we have $\phi\wedge\NEG\phi\in \mathsf{L}$. For many logics, for instance, for all extensions of intuitionistic logic which are closed for \textit{modus ponens} and formula substitutions, the triviality and the negation-inconsistency are equivalent; however, we will see that this is not so for the logics considered in this paper.

Next, we will say that $\mathsf{L}$ has the \textit{Disjunction Property} (DP) iff for all $\phi, \psi \in \mathcal{L}$ we have $\phi \vee \psi \in \mathsf{L}$ iff at least one of $\phi, \psi$ is in $\mathsf{L}$. Moreover, $\mathsf{L}$ has the \textit{Constructible Falsity Property} (CFP) iff for all $\phi, \psi \in \mathsf{L}$, we have $\NEG(\phi \wedge \psi) \in \mathsf{L}$ iff at least one of $\NEG\phi, \NEG\psi$ is in $\mathsf{L}$.

Furthermore, let $\ast^2$ be a connective that is either in $L$ or is definable over $L$. We will say that $\ast^2$ is \textit{plainly connexive} in $\mathsf{L}$ iff (1)  for all $\phi, \psi \in \mathcal{L}$, $\mathsf{L}$ includes the following theorems:
\begin{align}
	&\NEG(\NEG\phi\ast\phi)\label{E:ATast}\tag{AT$\ast$}\\
	&(\phi\ast\NEG\psi)\ast\NEG(\phi\ast\psi)\label{E:BTast}\tag{BT$\ast$}
\end{align}
and (2) for some $\phi, \psi \in \mathcal{L}$, $\mathsf{L}$ fails to include the following theorem:
\begin{equation}\label{E:nSast}\tag{nonSym$\ast$}
	(\phi\ast\psi)\ast(\psi\ast\phi)
\end{equation}
We can express the same thought more concisely if we say that $\mathsf{L}$ validates both \eqref{E:ATast} and \eqref{E:BTast} and fails \eqref{E:nSast}. We will adopt this more concise phrasing for the rest of the paper.
 
For historical reasons (further explained in, e.g. \cite{w-sep}), \eqref{E:ATast} and \eqref{E:BTast} are often referred to in the literature as Aristotle's Thesis and Boethius' Thesis for $\ast$, respectively. As indicated by our choice of labels, we adopt this naming also in the current paper. The scheme \eqref{E:nSast} is often described as the non-symmetry requirement; it is usually included into the definition of connexivity to rule out the plain connexivity of connectives like $\leftrightarrow$ over $\mathsf{CL}$.

Moreover, $\ast^2$ is \textit{weakly connexive} in $\mathsf{L}$ iff (1) $\mathsf{L}$ validates both \eqref{E:ATast} and the following consecution scheme:
\begin{align}
	&(\phi\ast\NEG\psi)\models_\mathsf{L}\NEG(\phi\ast\psi)\label{E:WBTast}\tag{WBT$\ast$}
\end{align}
and (2) $\mathsf{L}$ fails the consecution scheme
\begin{equation}\label{E:WnSast}\tag{WnonSym$\ast$}
	(\phi\ast\psi)\models_\mathsf{L}(\psi\ast\phi)
\end{equation} 
Schema \eqref{E:WBTast} is meant to be read (again, in accord with the existing tradition on connexive logics) the \textit{weak Boethius' Thesis for $\ast$}.

Finally, $\ast^2$ is \textit{fully connexive} in $\mathsf{L}$ (we will sometimes just say \textit{connexive} in this case) iff $\ast^2$ is both plainly and weakly connexive in $\mathsf{L}$. 

Possible natural strengthenings of connexivity are a popular subject in the existing literature on the topic. For our purposes, we only briefly address our understanding of one such strengthening, namely, hyperconnexivity. Assuming the hypotheses of our previous connexivity definitions, we say that $\ast^2$ is \textit{plainly hyperconnexive} in $\mathsf{L}$ iff $\ast^2$ is plainly connexive in $\mathsf{L}$, and, in addition, validates the following scheme (which we will also refer to, following the connexive tradition, as the \textit{Converse Boethius' Thesis}):
\begin{align}
	&\NEG(\phi\ast\psi)\ast(\phi\ast\NEG\psi)\label{E:CBTast}\tag{CBT$\ast$}
\end{align}
Next, $\ast^2$ is \textit{weakly hyperconnexive} in $\mathsf{L}$ iff $\ast^2$ is weakly connexive in $\mathsf{L}$, and also validates the following consecution scheme:
\begin{align}
	&\NEG(\phi\ast\psi)\models_\mathsf{L}(\phi\ast\NEG\psi)\label{E:WCBTast}\tag{WCBT$\ast$}
\end{align}
We are not aware of any discussion of this scheme in the literature on connexive logics, but, continuing with the naming style for the previous scheme, it seems natural to christen it as \textit{Weak Converse Boethius' Thesis}.

And, just as with connexivity, $\ast^2$ is \textit{fully hyperconnexive} in $\mathsf{L}$ (we will sometimes just say \textit{hyperconnexive} in this case) iff $\ast^2$ is both plainly and weakly hyperconnexive in $\mathsf{L}$.

Sometimes binary connectives fail to be plainly or weakly connexive while still displaying enough connexive features to merit a special denotation. We will speak of such connectives as being partially connexive. More precisely, $\ast^2$ is \textit{partially connexive} in $\mathsf{L}$ iff $\mathsf{L}$ validates at least one of \eqref{E:ATast}, \eqref{E:BTast}, and fails \eqref{E:nSast}.
Furthermore, $\ast^2$ is \textit{weakly partially connexive} over $\mathsf{L}$ iff $\mathsf{L}$ validates \eqref{E:WBTast} and fails \eqref{E:WnSast}. Finally, $\ast^2$ is \textit{weakly partially hyperconnexive} in $\mathsf{L}$ iff it is weakly partially connexive in $\mathsf{L}$ and verifies \eqref{E:WCBTast}.

Even partially connexive connectives are rarely found among the usual suspects in the field of non-classical logics; in classical logic, they are known to be completely absent. However, we will see many such connectives below, which motivates our quick tour through the connexive terminology.

\subsection{A detour: on terminological differences} 
Before we move on to the next subsection, we would like to briefly comment on our use of connexive terminology. 

Apart from the naming of the main connexive theses, there seems to be no general consensus as to the correct definition of either the basic notion of connexivity or its different variants like hyperconnexivity. Despite the best efforts by some authors (we are thinking about \cite{omw2}) to build an order out of this creative chaos, almost every paper in this field comes with its own take on connexivity properties. Unfortunately, this paper is no exception, so an apology and an explanation both seem to be in order. Below we list and explain the main points of disagreement arising between our terminology and the one found in other sources on connexive logic (especially the ones in the vicinity of \cite{omw1} and \cite{omw2}) and motivate these differences in the context of the setup and the approach of this paper.

1. \textit{Connexivity as a property of implication vs connexivity as a property of an abstract connective}. Connexivity is viewed in this paper as a property applicable to an arbitrary binary connective; however, historically, connexivity was most often understood as a pattern of interaction between \textit{implication} and negation. Accordingly, the sources like \cite{omw1} and \cite{w-sep} only formulate Aristotle's and Boethius' theses for $\to^2$. On the other hand, a tendency to view connexivity as an abstract property of connectives and families thereof is also very much present in the literature. E.g. in \cite{francez} we find a case for connexive conjunction. An even more recent \cite{ce} suggests a view of connexivity which is way more general than our approach. For example, their proposed generalization of (BT$\to$) not only replaces $\to$ in the connexive theses with an arbitrary binary connective as in our \eqref{E:BTast} above; it also replaces every single occurrence of $\to$ and $\NEG$ in (BT$\to$) with a different arbitrary connective of corresponding arity so that this thesis now assumes the form
$$
(\phi\ast_1-_1\psi)\ast_2-_2(\phi\ast_3\psi).
$$
Connexivity, under this reading, is then a property of a family of connectives, in this case of the family consisting of $\{\ast_1,\ldots,\ast_3,-_1,-_2\}$.

Our own opinion is that connexivity properties must be about \textit{something like} implication and negation in the end of the day. Therefore, our sympathies decidedly gravitate towards the more austere approach of \cite{omw2}. However, we also find ourselves unable to carry it through in all its purity within this paper. Nor, in fact, does it appear to be fully possible for the authors of \cite{omw2} itself. When it is explained in \cite[p. 23]{omw2} that the function of the (nonSym$\to$) is to avoid the recognition of connexivity of $\leftrightarrow$ in $\mathsf{CL}$ they already presuppose, in a sense, the applicability of connexive theses to a connective that is different from $\to$, namely to $\leftrightarrow$. When in \cite{omw1a} the same authors argue for what they call \textit{totally connexive} character of the defined\footnote{$\Rightarrow$ is defined in terms of $\to$ and $\NEG$. The same definition of $\Rightarrow$ is also adopted in the present paper, see Section \ref{S:c} below.} connective $\Rightarrow$ in $\mathsf{C3}$, the same type of presupposition is made w.r.t. $\Rightarrow$. And in the paper \cite{u} published in a special journal issue co-edited by both authors of \cite{omw2}, the author just replaces $\to$ in both (AT$\to$) and (WBT$\to$) with the conditional operator $\boxto$ and, without further ado, calls this the definition of connexivity.

It is true that in all these cases one still applies connexive theses to something \textit{implication-like}, something that is either interpreted as a conditional (like $\boxto$ and $\Rightarrow$) or at least defined in terms of $\to$ (like $\leftrightarrow$ and $\Rightarrow$). However, it does not seem to be an easy (or even possible) task to define the \textit{implication-likeness} of a connective with any degree of precision. For example both $\wedge$ and $\Rightarrow$ are definable in classical propositional in terms of $\NEG$ and $\to$; yet the latter has the intuitive appeal of an implication-like connective, whereas the former lacks it altogether. 

This is why we have defined our connexive theses for an arbitrary binary connective even though we will only instantiate them for connectives that appear to our intuition as implication-like, e.g. $\boxto$, $\diamondto$, $\Rightarrow$, etc.\footnote{Theoretically, the replacement of negation with a negation-like abstract unary connective in connexive theses is also interesting. However, in this respect we prefer to stick to the austerity of the original definition, and to speak of negation only, for the following reasons: (1) we do not consider any negation-like connectives different from $\NEG$ below and (2) generalizing negation would also necessitate a longer definition of connexivity as in this case one would have to give every connexive thesis in two different forms.}

2. \textit{Different forms of connexive theses}. It is customary to give both Aristotle's and Boethius' theses in two forms each and to demand that a fully connexive connective satisfies both of these forms. In the previous subsection, however, we represent each of the connexive theses by a single formula (or consecution) scheme. Were we to follow the more traditional approach, our definition of plain connexivity should have been extended by the requirement to validate the following two schemes:
\begin{align}
	&\NEG(\phi\ast\NEG\phi)\label{E:AT'ast}\tag{AT'$\ast$}\\
	&(\phi\ast\psi)\ast\NEG(\phi\ast\NEG\psi)\label{E:BT'ast}\tag{BT'$\ast$}
\end{align}
Moreover, the schemes \eqref{E:WBTast}, \eqref{E:CBTast}, and \eqref{E:WCBTast} should have been duplicated according to this same pattern.

We do not do this since every connective $\ast$ that is definable in every logic considered in this paper satisfies one of \eqref{E:ATast}, \eqref{E:AT'ast} iff it satisfies both schemes. The same holds for the two forms of Boethius' Thesis and for other connexive theses. This situation is due to the properties of negation in our logics of interest which, in particular, always satisfy the law of double negation elimination in a rather strong sense, cf. Lemma \ref{L:c-strong-properties} below.

In choosing a simplified description of connexive theses, we follow the example of \cite{u} where both (AT$\boxto$) and (BT$\boxto$) are given in a single form --- apparently for the same reasons as the ones outlined above, i.e. due to the negation properties.

3. \textit{Plain, weak, and full connexivity}. Our definitions of weak connexivity and partial connexivity correspond (modulo the replacement of $\to$ by abstract binary connective $\ast^2$) to the definition of this notion given in \cite[pp.18--19]{omw2}. However, what we define as plain connexivity is called in \cite[p. 3]{omw2} and in many other sources on connexive logics just connexivity. In contrast to this well-established terminological convention, in this paper we define connexivity (or full connexivity) as a stronger notion, requiring a fully connexive logic to be both plainly and weakly connexive.

The reason for this choice is that connexivity is supposed to be stronger than weak connexivity. This is acknowledged also by the authors of \cite{omw2}, who, on p.18, describe a weakly connexive logic as a logic that satisfies Boethius' Thesis ``only in the rule form''. However, as long is $\ast^2$ is non detachable in $\mathsf{L}$ in the sense that $\mathsf{L}$ fails the consecution scheme $\phi, (\phi\ast\psi)\models_\mathsf{L} \psi$ --- which seems to be assumed in \cite{omw2} ---  \eqref{E:BTast} does not imply \eqref{E:WBTast}; also \eqref{E:WCBTast} and \eqref{E:WnSast} no longer follow from \eqref{E:CBTast} and \eqref{E:nSast}, respectively. In such a context the ``only'' of \cite[p. 18]{omw2} no longer makes sense. Since the failure of detachment is typical for conditional operators like $\boxto$ and $\diamondto$, the reference to plain connexivity as (simply) connexivity becomes misleading in the context of conditional logic. For this reason, we thought it necessary to rename the connexivity of \cite{omw2} as plain connexivity and to give full connexivity a stronger reading.\footnote{Another option would be to rename weak connexivity into rule-connexivity and plain connexivity into formula-connexivity, but we preferred the current choice as it is closer to the terminological conventions used in the existing literature. However, we would advocate switching from plain/weak to rule/formula terminology in the future discussions of connexivity properties.}

For similar reasons, our notion of plain hyperconnexivity is just the hyperconnexivity of \cite{omw2}. Our notions of weak hyperconnexivity and weak partial connexivity appear to be new though. We include them since they come in handy in addressing the properties of the logics to be considered below and since together with other notions, they create a nicely rounded-up nomenclature of connexivity properties.

\subsection{Bi-valuational Kripke semantics}\label{sub:semantics}

All of the logics that we will focus on in this paper will have a common property, namely, that their sets of consequences can be seen as induced by their intended bi-valuational relational (or Kripke) semantics. For all of these logics, their sets of consequences can be alternatively seen as induced by their complete Hilbert-style axiomatizations. In this subsection, we take a quick look at the first of these two approaches to defining a logic. Bi-valuational Kripke semantics is a variant of Kripke semantics in which the models expand their respective Kripke frames with two functions evaluating the atoms instead of just one, as in the usual Kripke semantics. Of course, the restrictions imposed on a frame that is being extended with either one or two functions depend on the character of language which this semantics is supposed to interpret. Since in this paper we are going to consider logics over three languages, $PL$, $MD$, and $CN$ we need a definition of a model that is flexible enough to accommodate our plans: 
\begin{definition}\label{D:Krip}
For any $n \in \{1,2\}$, a structure $\mathcal{M} = (W, R_1,\ldots, R_n, V^+, V^-)$ is called a bi-valuational Kripke (or relational) model iff the following conditions are satisfied:
\begin{itemize}
	\item $W \neq \emptyset$ ($W$ is called the set of \textit{worlds} or \textit{nodes} of $\mathcal{M}$).
	
	\item $R_1 \subseteq W \times W$ is reflexive and transitive. We will normally denote $R_1$ by $\leq$ in what follows.
	
	\item In case $n = 1$, $\mathcal{M}$ is called a \textit{propositional model}.
	
	\item In case $n = 2$,  we must have either have $R_2 \subseteq W\times W$ in which case $\mathcal{M}$ is called a \textit{modal model} or $R_2\subseteq W \times(\mathcal{P}(W))^2\times W$, in which case $\mathcal{M}$ is called a \textit{conditional model}. We will normally denote $R_2$ by $R$ in what follows, and will call it modal or conditional accessibility relation depending on the type of the model.
	
	\item For all $\ast\in \{+, -\}$, $V^\ast:Prop \to \mathcal{P}(W)$ is such that, for every $p\in Prop$:
	$$
	w \leq v\text{ and }p \in V^\ast(w)\text{ implies }p \in V^\ast(v).
	$$
	
	\item The class of all bi-valuational Kripke models (resp. propositional, modal, conditional) models will be denoted by $\mathbb{K}$, (resp. $\mathbb{P}$, $\mathbb{M}$, $\mathbb{C}$). It is clear that $\mathbb{K} = \mathbb{P}\cup \mathbb{M}\cup\mathbb{C}$.
\end{itemize}
\end{definition}
The representation of a Kripke model given in Definition \ref{D:Krip} will serve as a default; namely, given an $\mathcal{M}$ in $\mathbb{P}$ (resp. $\mathbb{M}\cup \mathbb{C}$) we will assume that $\mathcal{M}$ is given in the form $(W, \leq, V)$ (resp. $(W, \leq, R, V)$), and that any decorations are inherited by the model components, in other words, that $\mathcal{M}_n, \mathcal{M}'$ in $\mathbb{P}$ (resp. $\mathbb{M}\cup \mathbb{C}$) are given as $(W_n, \leq_n, V_n), (W', \leq', V')$ (resp. $(W_n, \leq_n, R_n, V_n), (W', \leq', R', V')$) unless explicitly stated otherwise.

Next, \textit{pointed models} can be introduced in the bi-valuational Kripke semantics in the usual way: for a class $Mod \subseteq \mathbb{K}$, we the class of pointed $Mod$-models is defined as
$$
Pt(Mod):= \{(\mathcal{M}, w)\mid\mathcal{M}\in Mod,\,w \in W\}.
$$
In plain Kripke semantics the only evaluation function $V$ of a Kripke model is usually thought of as assigning extensions to propositional atoms (understood as sets of pointed models) in the sense that $V(p)$ is the set of worlds in which $p$ is verified or true at $\mathcal{M}$. 
 The falsifications of atoms are then read off their verifications in some way or another (e.g. by stipulating that $p$ is falsified iff it fails to be verified at a given node, or, perhaps, at every pointed node in a certain subset of $W$).
 
More generally, a sentence is thought of as a certain subset of $W$, and a given class of plain Kripke models can only supply a semantics for a certain fixed language $L$ if the evaluations of atoms are extended to arbitrary $\mathcal{L}$-formulas by means of some satisfaction relation $\triangleright\subseteq Pt(Mod)\times\mathcal{L}$. For such a satisfaction relation, an infix notation is normally adopted, so that one writes $\mathcal{M}, w\triangleright\phi$ meaning $((\mathcal{M}, w), \phi)\in \triangleright$. One can then associate with a given $\phi\in \mathcal{L}$ its \textit{plain extension} in $\mathcal{M}$ understood as the subset of $W$ in which $\phi$ is verified and setting
$$
\lvert \phi\rvert_\mathcal{M}:= \{w \in W\mid\mathcal{M}, w \triangleright \phi\},
$$
ensuring that the equation $V(p) = \lvert p\rvert_\mathcal{M}$ is verified for every $p \in Prop$. 

The most popular way to define such a $\triangleright$ is by induction on the construction of $\phi \in \mathcal{L}$, so that, if  $L = \{f^{i_1}_1,\ldots,f^{i_n}_n\}$, the inductive definition will include $n + 1$ clause $cl_1,\ldots, cl_{n + 1}$ (one for each connective of $L$ and one for the atoms\footnote{Some of these clauses may distinguish between several cases, of course, but for our purposes we will still count this as a single clause.}) . It is then convenient to write down the definition of the satisfaction relation in the following form:
\begin{equation}\label{E:tright}
	\triangleright:= Ind(\{\overline{cl}_{n + 1}\})
\end{equation}
A transition from plain to bi-valuational Kripke semantics is often prompted by a dissatisfaction with the indirect handling of falsifications in plain Kripke semantics, and allows to encode falsifications of atoms more or less independently of their verifications by means of an additional evaluation function. In this setting, $V^+$ is normally read as assigning the atoms their extensions and $V^-$ as assigning these same atoms their anti-extensions, so that a given $p \in Prop$ is thought of as being verified and falsified, within a given $\mathcal{M}\in \mathbb{K}$ at the nodes in sets $V^+(p)$ and $V^-(p)$, respectively. Thus, an atom $p \in Prop$ is being assigned a \textit{bi-extension in} $\mathcal{M}$ which can be represented as $\|p\|_\mathcal{M} = (V^+(p), V^-(p))$. 


In order to set up a logic on top of a given set of intended bi-valuational Kripke models, one usually proceeds in the same way as in plain Kripke semantics as described above, the only difference being that this time we have to extend two evaluation functions to the set of all $\mathcal{L}$-formulas rather than one; accordingly, we need two satisfaction relations $\triangleright^+, \triangleright^-\subseteq Pt(Mod)\times\mathcal{L}$. Their meanings are in harmony with the interpretation of $V^+$ and $V^-$ in that $\triangleright^+$ encodes verifications of a formula at nodes of a model, whereas $\triangleright^-$ encodes falsifications. In this way every $\phi\in \mathcal{L}$ gets assigned its bi-extension in $\mathcal{M}$ understood as:
$$
\|\phi\|_\mathcal{M}:= (\lvert\phi\rvert^+_\mathcal{M},\lvert\phi\rvert^-_\mathcal{M}) = (\{w \in W\mid\mathcal{M}, w \triangleright^+ \phi\},\{w \in W\mid\mathcal{M}, w \triangleright^+ \phi\}),
$$
where $\lvert\phi\rvert^+_\mathcal{M}$, $\lvert\phi\rvert^-_\mathcal{M}$, clearly being just the plain extensions assigned to $\phi$ by the two respective satisfaction relations, can be called the \textit{positive} and the \textit{negative} projection of $\|\phi\|_\mathcal{M}$. 

Just as in the plain case, the definitions of $\triangleright^+$ and $\triangleright^-$ typically proceed by induction on the construction of $\phi \in \mathcal{L}$, and, in case we have $L = \{f^{i_1}_1,\ldots,f^{i_n}_n\}$, consist of $(n + 1)$ inductive clause each. Only this time the inductive clauses for $\triangleright^+$ can cite both $\triangleright^+$- and $\triangleright^-$-satisfactions for subformulas of the main formula (and, of course, vice versa), so that the two relations are usually defined by a mutual induction rather than by two independent plain inductions. In order to reflect this fact, we need to change our notation slightly. If, say $\triangleright^+$ is given by inductive clauses $pcl_1,\ldots, pcl_{n + 1}$, $\triangleright^-$ is given by inductive clauses $ncl_1,\ldots, ncl_{n + 1}$, and some of these clauses may refer to the other satisfaction relation in the pair, we will represent this fact by  writing:
$$
(\triangleright^+,\triangleright^-):= Ind(\{\overline{pcl}_{n + 1}\},\{\overline{ncl}_{n + 1}\}).
$$
Our mutual induction is thus seen to define a pair of two mutually connected relations simultaneously.

Another useful piece of notation will allow us to extend satisfaction relations with new inductive clauses. For simplicity, take the plain case, assuming that $\triangleright$ is given by \eqref{E:tright} and another satisfaction relation $\triangleright'$ (say, for a richer language $L'\supseteq L$) is given by all the clauses of $\triangleright$ plus some additional clauses $cl'_1,\ldots, cl'_m$. We will express this fact by writing $\triangleright' = \triangleright\cup \{cl'_1,\ldots, cl'_m\}$ or, equivalently, $\triangleright = \triangleright'\setminus \{cl'_1,\ldots, cl'_m\}$. We will use the same notation for the satisfaction relations in the context of bi-valuational Kripke semantics.

Next, given a satisfaction relation $\triangleright$ (whether as a single one in the plain case or one of a pair of relations in bi-valuational case), we 
can canonically extend $\triangleright$ to bi-sets of $L$-formulas by setting
$$
\mathcal{M}, w\triangleright(\Gamma, \Delta)\text{ :iff }(\forall \phi\in \Gamma)(\mathcal{M}, w\triangleright\phi)\text{ and }(\forall \psi\in \Delta)(\mathcal{M}, w\not\triangleright\psi)
$$
In other words, we will write $\mathcal{M}, w\triangleright (\Gamma, \Delta)$ iff every formula in $\Gamma$ and none of the formulas in $\Delta$ are $\triangleright$-satisfied at $(\mathcal{M}, w)$.

We can now give a more precise definition of a bi-valuational Kripke semantics and the logics it induces. We understand bi-valuational Kripke semantics\footnote{Needless to say, plain Kripke semantics admits of a similar representation as a logic-forming operator. We omit the exact definition since we only consider in this paper logics given by bi-valuational semantics.} as an operator $\mathfrak{L}$ which, given a triple of arguments of the form $(L, Mod, (\triangleright^+,\triangleright^-))$ such that $L$ is a language, $Mod\subseteq \mathbb{K}$ is the class of intended models for $L$, and $\triangleright^+, \triangleright^-\subseteq Pt(Mod)\times\mathcal{L}$, returns a logic $\mathfrak{L}(L, Mod, (\triangleright^+,\triangleright^-))$ in the sense of Section \ref{sub:languages-and-logics}, such that we have, for any given $\Gamma, \Delta \subseteq \mathcal{L}$:
$$
(\Gamma, \Delta)\in \mathfrak{L}(L,Mod, (\triangleright^+,\triangleright^-))\text{ :iff }(\forall(\mathcal{M}, w)\in Pt(Mod))(\mathcal{M}, w\not\triangleright^+(\Gamma, \Delta)).
$$
In other words, $\Gamma$ $\mathfrak{L}(L,Mod, (\triangleright^+,\triangleright^-))$-entails $\Delta$ iff $\triangleright^+$-satisfaction of every formula in $\Gamma$ entails $\triangleright^+$-satisfaction of at least one formula in $\Delta$. 

In case $\mathsf{L} = \mathfrak{L}(L,Mod, (\triangleright^+,\triangleright^-))$, we will often write $\models^+_\mathsf{L}$ (resp. $\models^-_\mathsf{L}$) instead of $\triangleright^+$ (resp. $\triangleright^-$). This is usually done in the literature on plain Kripke semantics and will not lead to any confusions also in the case of bi-valuational semantics.

It is easy to see that our definition of  $\mathfrak{L}$ prioritizes $\triangleright^+$ over $\triangleright^-$. This choice corresponds to the selection of logics that we are going to consider below. It is, however, possible to make other choices in this respect, and there exists a whole literature on this subject (see, e.g. \cite{wn}).

\subsection{Hilbert-style axiomatizations}\label{sub:hilbert}
As for the Hilbert-style systems, all of them will be given by a finite number of axiomatic schemes $\alpha_1,\ldots,\alpha_n$ augmented with a finite number of inference rules (i.e. consecution schemes) $\rho_1,\ldots,\rho_m$  satisfied by a given logic, so the most general format sufficient for the present paper is $\Sigma(\bar{\alpha}_n; \bar{\rho}_m)$. 

All of the Hilbert-style systems considered in this paper, extend the axiomatic system\footnote{In fact, $\mathtt{S}_0$ is a standard axiomatization of the $\NEG$-free fragment of intuitionistic propositional logic.} $\mathtt{S}_0:= \Sigma(\bar{\alpha}_8;\eqref{E:mp})$, where we assume that:
\begin{align*}
	&\phi \to(\psi\to\phi)\,(\alpha_1),\quad(\phi\to(\psi\to\chi))\to((\phi\to\psi)\to(\phi\to\chi))\,(\alpha_2),\\
	&\quad(\phi\wedge\psi)\to\phi\,(\alpha_3),\quad(\phi\wedge\psi)\to\psi\,(\alpha_4),\quad \phi\to(\psi\to (\phi\wedge\psi))\,(\alpha_5),\\
	&\quad\phi\to(\phi\vee \psi)\,(\alpha_6),\quad\psi\to(\phi\vee \psi)\,(\alpha_7),\quad (\phi\to\chi)\to((\psi \to \chi)\to ((\phi\vee\psi)\to \chi))\,(\alpha_8)
\end{align*}
and:
\begin{align}
	\text{From }\phi, \phi \to \psi&\text{ infer }\psi\label{E:mp}\tag{MP}
\end{align}
Therefore, for our purposes in this paper, it is important to be able to refer to Hilbert-style systems as extensions of other systems. If $\mathtt{S} = \Sigma(\bar{\alpha}_n; \bar{\rho}_m)$, and $\beta_1,\ldots,\beta_k$ are some new axiomatic schemes and $\sigma_1,\ldots,\sigma_r$ are some new consecution schemes, then we will write $\mathtt{S}+(\bar{\beta}_k;\bar{\sigma}_r)$ to denote the system $\Sigma((\bar{\alpha}_n)^\frown(\bar{\beta}_k);(\bar{\rho}_m)^\frown(\bar{\sigma}_r))$. We will also write $\mathtt{S} \subseteq \mathtt{S}'$ to express that $\mathtt{S}' = \mathtt{S}+(\bar{\beta}_k;\bar{\sigma}_r)$ for appropriate $\bar{\beta}_k$ and $\bar{\sigma}_r$.

Axiomatic systems can be viewed as operators generating logics when applied to languages. More precisely, if $\mathtt{S} = \Sigma(\bar{\alpha}_n; \bar{\rho}_m)$ and $L$ is a language then $\mathsf{L} = \mathtt{S}[L]$ can be described as follows. We say that a $\phi \in \mathcal{L}$ is \textit{provable} in $\mathtt{S}$ iff there exists a finite sequence $\psi_1,\ldots,\psi_k$ of formulas in $\mathcal{L}$ such that every formula in this sequence is either a substitution instance of one of $\alpha_1,\ldots,\alpha_n$ or results from an application of one of $\rho_1,\ldots,\rho_m$ to some earlier formulas in the sequence and $\psi_k = \phi$; we will say that $(\Gamma, \Delta) \in \mathtt{S}[L]$ iff $\Gamma, \Delta\subseteq L$ and there exists a sequence $\chi_1,\ldots,\chi_r$ of formulas in $\mathcal{L}$ such that every formula in it is either in $\Gamma$, or is provable in $\mathtt{S}$ or results from an application of \eqref{E:mp} to a pair of earlier formulas in the sequence, and, for some $\theta_1,\ldots,\theta_s\in \Delta$ we have $\chi_r = \theta_1\vee\ldots\vee\theta_s$. This definition makes sense in the context of our paper, since every language that we are going to consider contains $\vee$, and every axiomatic system that we are going to consider contains \eqref{E:mp}. We will also express the fact that  $(\Gamma, \Delta) \in \mathtt{S}[L]$ by writing $\Gamma\vdash_\mathtt{S}\Delta$.

In addition, we will also speak of the rules \textit{derivable} in a logic generated by an axiomatic system. If $\Gamma\cup \{\phi\}\Subset \mathcal{L}$, then we will say that $\phi$ is derivable from $\Gamma$ in $\mathtt{S}[L]$ (and will write $\Gamma\vDdash_\mathtt{S}\phi$) iff there exists a finite sequence $\psi_1,\ldots,\psi_k$ of formulas in $\mathcal{L}$ such that every formula in this sequence is either in $\Gamma$, or is provable in $\mathtt{S}$, or results from an application of one of $\rho_1,\ldots,\rho_m$ to some earlier formulas in the sequence, and $\psi_k = \phi$. It is easy to see that, for a $\mathsf{L} = \mathtt{S}[L]$ and a $\phi \in \mathcal{L}$, we will have $\phi \in \mathsf{L}$ iff $\vdash_\mathtt{S}\phi$ iff $\vDdash_\mathtt{S}\phi$.

Finally, given any $\Gamma, \Delta\subseteq \mathcal{L}$, and an axiomatic system $\mathtt{S}$, we say that $(\Gamma, \Delta)$ is $\mathtt{S}$-\textit{consistent}  iff $\Gamma\not\vdash_\mathtt{S}\Delta$; we will say that $(\Gamma, \Delta)$ is $L$-\textit{complete} iff $\Gamma\cup \Delta = \mathcal{L}$. We will say that $(\Gamma, \Delta)$ is $\mathtt{S}[L]$-\textit{maximal} iff $(\Gamma, \Delta)$ is both $\mathtt{S}$-consistent and $L$-complete.

\section{Logic $\mathsf{C}$ and some of its properties}\label{S:c}

We start by recalling some facts about the logic $\mathsf{C}$. We define this logic by its Kripke semantics:
$$
\mathsf{C}:= \mathfrak{L}(PL, \mathbb{P}, (\models_{\mathsf{C}}^+, \models_{\mathsf{C}}^-)),
$$
where the satisfaction relations are defined by
$$
(\models_{\mathsf{C}}^+, \models_{\mathsf{C}}^-):= Ind(\{\{\eqref{Cl:at+}, \eqref{Cl:con+}, \eqref{Cl:dis+}, \eqref{Cl:neg+}, \eqref{Cl:im+}\}\}, \{\eqref{Cl:at-}, \eqref{Cl:con-}, \eqref{Cl:dis-}, \eqref{Cl:neg-}, \eqref{Cl:im-}\}),
$$
assuming the following inductive clauses:
\begin{align}
	\mathcal{M}, w&\models^+_{\mathsf{C}} p \text{ iff } w \in V^+(p)\qquad\qquad\qquad\qquad\text{for $p\in Prop$}\label{Cl:at+}\tag{at+}\\
	\mathcal{M}, w&\models^-_{\mathsf{C}} p \text{ iff } w \in V^-(p)\qquad\qquad\qquad\qquad\text{for $p\in Prop$}\label{Cl:at-}\tag{at-}\\
	\mathcal{M}, w&\models_{\mathsf{C}}^+ \psi \wedge \chi \text{ iff } \mathcal{M}, w\models_{\mathsf{C}}^+ \psi\text{ and }\mathcal{M}, w\models_{\mathsf{C}}^+ \chi\label{Cl:con+}\tag{$\wedge+$}\\
	\mathcal{M}, w&\models_{\mathsf{C}}^- \psi \wedge \chi \text{ iff } \mathcal{M}, w\models_{\mathsf{C}}^- \psi\text{ or }\mathcal{M}, w\models_{\mathsf{C}}^- \chi\label{Cl:con-}\tag{$\wedge-$}\\
	\mathcal{M}, w&\models_{\mathsf{C}}^+ \psi \vee \chi \text{ iff } \mathcal{M}, w\models_{\mathsf{C}}^+ \psi\text{ or }\mathcal{M}, w\models_{\mathsf{C}}^+ \chi\label{Cl:dis+}\tag{$\vee+$}\\
	\mathcal{M}, w&\models_{\mathsf{C}}^- \psi \vee \chi\text{ iff } \mathcal{M}, w\models_{\mathsf{C}}^- \psi\text{ and }\mathcal{M}, w\models_{\mathsf{C}}^- \chi\label{Cl:dis-}\tag{$\vee-$}\\
	\mathcal{M}, w&\models_{\mathsf{C}}^+ \NEG\psi \text{ iff } \mathcal{M}, w\models_{\mathsf{C}}^- \psi\label{Cl:neg+}\tag{$\NEG+$}\\
	\mathcal{M}, w&\models_{\mathsf{C}}^- \NEG\psi\text{ iff } \mathcal{M}, w\models_{\mathsf{C}}^+ \psi\label{Cl:neg-}\tag{$\NEG-$}\\
	\mathcal{M}, w&\models_{\mathsf{C}}^+ \psi \to \chi \text{ iff } (\forall v \geq w)(\mathcal{M}, v\models_{\mathsf{C}}^+ \psi\text{ implies }\mathcal{M}, v\models_{\mathsf{C}}^+ \chi)\label{Cl:im+}\tag{$\to+$}\\
	\mathcal{M}, w&\models_{\mathsf{C}}^- \psi \to \chi \text{ iff } (\forall v \geq w)(\mathcal{M}, v\models_{\mathsf{C}}^+ \psi\text{ implies }\mathcal{M}, v\models_{\mathsf{C}}^- \chi)\label{Cl:im-}\tag{$\to-$}	
\end{align}
This logic is known to be axiomatizable; more precisely, it was proven in \cite{w}, that we have $\mathsf{C} = \mathtt{C}[PL]$, where $\mathtt{C} = \mathtt{S}_0+(\eqref{Ax:double-neg},\ldots,\eqref{Ax:to-neg};)$ such that
\begin{align}
	\NEG\NEG\phi&\leftrightarrow\phi\label{Ax:double-neg}\tag{$\alpha_9$}\\
	\NEG(\phi\wedge\psi)&\leftrightarrow(\NEG\phi\vee\NEG\psi)\label{Ax:wedge-neg}\tag{$\alpha_{10}$}\\
	\NEG(\phi\vee\psi)&\leftrightarrow(\NEG\phi\wedge\NEG\psi)\label{Ax:vee-neg}\tag{$\alpha_{11}$}\\
	\NEG(\phi\to\psi)&\leftrightarrow(\phi\to\NEG\psi)\label{Ax:to-neg}\tag{$\alpha_{12}$}	
\end{align}
As usual, the equivalence $\phi\leftrightarrow\psi$ abbreviates $(\phi\to\psi)\wedge(\psi\to\phi)$.
 
Let us quickly survey the properties of $\mathsf{C}$. First of all, the monotonicity property stated in Definition \ref{D:Krip} for the atoms can be lifted to arbitrary propositional formulas; in other words, we have the following
\begin{lemma}\label{L:monotonicity}
	Given a $\phi \in \mathcal{PL}$, a $\star \in \{+, -\}$, an $\mathcal{M} \in \mathbb{P}$, and any $w,v \in W$ such that $w \leq v$, $\mathcal{M}, w\models_{\mathsf{C}}^\star \phi$ implies $\mathcal{M}, v\models_{\mathsf{C}}^\star \phi$. 
\end{lemma}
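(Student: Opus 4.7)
The natural approach is a straightforward simultaneous induction on the construction of $\phi$, running the two cases $\star=+$ and $\star=-$ in parallel, since the clauses \eqref{Cl:neg+} and \eqref{Cl:neg-} cross-refer between the two satisfaction relations. I would fix an arbitrary $\mathcal{M}\in\mathbb{P}$ and $w,v\in W$ with $w\leq v$, and prove by induction on $\phi$ that for both $\star\in\{+,-\}$, $\mathcal{M},w\models^\star_{\mathsf{C}}\phi$ implies $\mathcal{M},v\models^\star_{\mathsf{C}}\phi$.

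In the base case $\phi = p\in Prop$, the claim is immediate from Definition \ref{D:Krip}, which explicitly requires $V^+(p)$ and $V^-(p)$ to be upward-closed under $\leq$. For $\phi = \psi\wedge\chi$, $\phi = \psi\vee\chi$, and $\phi = \NEG\psi$, the result follows by a direct application of the IH to the immediate subformulas, possibly swapping between $\star=+$ and $\star=-$ in the negation case (the $+$-monotonicity of $\NEG\psi$ uses the $-$-monotonicity of $\psi$, and vice versa). None of these cases introduces any new accessibility issues.

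The only step that requires a genuine argument beyond direct application of IH is the implication clause $\phi = \psi\to\chi$. Here the inductive clauses \eqref{Cl:im+} and \eqref{Cl:im-} quantify over the $\leq$-successors of the evaluation point, and the crucial observation is that $\leq$ is transitive (by Definition \ref{D:Krip}). Suppose $\mathcal{M},w\models^+_{\mathsf{C}}\psi\to\chi$ and pick any $u\geq v$; by transitivity $u\geq w$, so the clause \eqref{Cl:im+} applied at $w$ yields that $\mathcal{M},u\models^+_{\mathsf{C}}\psi$ implies $\mathcal{M},u\models^+_{\mathsf{C}}\chi$, which is precisely what is needed to verify the clause at $v$. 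The case $\star=-$ proceeds identically, invoking transitivity in the same way but with the consequent replaced by a $-$-satisfaction.

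There is no real obstacle: the lemma is essentially the standard heredity/persistence result for intuitionistic-style Kripke semantics, adapted to the bi-valuational setting. The mildly novel point compared to the standard plain-Kripke proof is only that the induction must be carried out simultaneously on the two relations, but the cross-reference between them is entirely transparent since each of \eqref{Cl:neg+} and \eqref{Cl:neg-} appeals to exactly one of the relations applied to an immediate subformula.
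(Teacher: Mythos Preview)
Your proposal is correct and matches the paper's approach exactly: the paper simply states that the lemma ``is proved by a straightforward induction on the complexity of $\phi \in \mathcal{PL}$'' and omits the details, which are precisely the ones you supply. The simultaneous treatment of $\star\in\{+,-\}$ and the use of transitivity of $\leq$ in the implication case are the standard moves, and nothing more is needed.
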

Lemma \ref{L:monotonicity} is proved by a straightforward induction on the complexity of $\phi \in \mathcal{PL}$. Although relatively trivial, Lemma \ref{L:monotonicity} is instrumental in establishing several properties of $\mathsf{C}$ that allow to describe it as a \textit{constructive} logic. More precisely, we can show 
\begin{proposition}\label{P:C-constructivity}
	$\mathsf{C}$ has both DP and CFP.
\end{proposition}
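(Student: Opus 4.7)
The plan is to establish DP by the standard rooted-disjoint-union construction, and then derive CFP from DP via the semantic equivalence $\NEG(\phi\wedge\psi)\equiv \NEG\phi\vee \NEG\psi$ built into the satisfaction clauses.

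For DP, I would argue contrapositively: suppose $\phi\notin\mathsf{C}$ and $\psi\notin\mathsf{C}$, so there exist pointed models $(\mathcal{M}_1,w_1)$ and $(\mathcal{M}_2,w_2)$ in $\mathbb{P}$ (with $W_1\cap W_2=\emptyset$, achievable by renaming) such that $\mathcal{M}_1,w_1\not\models^+_\mathsf{C}\phi$ and $\mathcal{M}_2,w_2\not\models^+_\mathsf{C}\psi$. Build $\mathcal{M}=(W,\leq,V^+,V^-)$ by taking $W:=W_1\cup W_2\cup\{r\}$ with $r$ a fresh root, $\leq:=\,\leq_1\cup\leq_2\cup\{(r,w)\mid w\in W\}$, and $V^\star(p):=V^\star_1(p)\cup V^\star_2(p)$ for $\star\in\{+,-\}$ and every $p\in Prop$. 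Since $r$ lies in no $V^\star(p)$, the monotonicity of $V^+,V^-$ required by Definition \ref{D:Krip} lifts straightforwardly from $\mathcal{M}_1$ and $\mathcal{M}_2$, and $\leq$ is reflexive and transitive by construction.

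The central technical step is then to prove, by induction on $\chi\in\mathcal{PL}$, that for every $\star\in\{+,-\}$, every $i\in\{1,2\}$, and every $w\in W_i$, one has $\mathcal{M},w\models^\star_\mathsf{C}\chi$ iff $\mathcal{M}_i,w\models^\star_\mathsf{C}\chi$. The atomic base case is immediate from the disjointness of $W_1,W_2$ and the definition of $V^\star$; the Boolean cases (\ref{Cl:con+}--\ref{Cl:neg-}) are routine. The only genuinely delicate case is that of implication (clauses \eqref{Cl:im+} and \eqref{Cl:im-}), and it hinges on the observation that for $w\in W_i$ the $\leq$-successors of $w$ in $\mathcal{M}$ coincide with its $\leq_i$-successors in $\mathcal{M}_i$, because $r$ is below everything but above nothing but itself. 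Granting this preservation lemma, we get $\mathcal{M},w_1\not\models^+_\mathsf{C}\phi$ and $\mathcal{M},w_2\not\models^+_\mathsf{C}\psi$; the contrapositive of Lemma \ref{L:monotonicity} applied to $r\leq w_1$ and $r\leq w_2$ yields $\mathcal{M},r\not\models^+_\mathsf{C}\phi$ and $\mathcal{M},r\not\models^+_\mathsf{C}\psi$, whence by \eqref{Cl:dis+} we conclude $\mathcal{M},r\not\models^+_\mathsf{C}\phi\vee\psi$, so $\phi\vee\psi\notin\mathsf{C}$.

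For CFP, no additional model construction is needed. Chaining the clauses \eqref{Cl:neg+}, \eqref{Cl:con-}, \eqref{Cl:dis+}, and \eqref{Cl:neg+} again shows that $\mathcal{M},w\models^+_\mathsf{C}\NEG(\phi\wedge\psi)$ iff $\mathcal{M},w\models^+_\mathsf{C}\NEG\phi\vee\NEG\psi$ for every $(\mathcal{M},w)\in Pt(\mathbb{P})$, so the two formulas are semantically equivalent and hence $\NEG(\phi\wedge\psi)\in\mathsf{C}$ iff $\NEG\phi\vee\NEG\psi\in\mathsf{C}$; DP then finishes the job. I expect the only real obstacle is bookkeeping in the preservation lemma for the implication clauses — making explicit that the added root contributes no new $\leq$-successors above any world of $W_1\cup W_2$ — but this is purely a matter of carefully unpacking the construction of $\leq$.
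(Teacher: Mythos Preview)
Your argument is correct and, for DP, essentially the same as the paper's: the rooted disjoint union, the preservation claim proved by induction on formulas, and the final appeal to monotonicity at the fresh root all match.

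For CFP you take a slightly different route. The paper simply repeats the DP construction, this time choosing pointed models $(\mathcal{M}_i,w_i)$ with $\mathcal{M}_i,w_i\not\models^-_\mathsf{C}\phi_i$, so that the same preservation claim yields $\mathcal{M},w\not\models^-_\mathsf{C}\phi_1\wedge\phi_2$. You instead observe that the clauses \eqref{Cl:neg+}, \eqref{Cl:con-}, \eqref{Cl:dis+} make $\NEG(\phi\wedge\psi)$ and $\NEG\phi\vee\NEG\psi$ have the same positive extension in every model, and then invoke DP. Both are sound; your reduction is marginally more economical since it avoids repeating the model construction, while the paper's version has the advantage of being stated directly in terms of $\models^-$ and so generalises verbatim to the later systems $\mathsf{CnK}$ and $\mathsf{CnCK}$ without re-checking the De Morgan equivalence in each case.
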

\begin{proof}[Proof (a sketch)]
	The proof of DP is standard: for a given $\phi_1\vee\phi_2 \in \mathcal{PL}$, we assume $(\mathcal{M}_1, w_1), (\mathcal{M}_2, w_2) \in Pt(\mathbb{P})$ such that, for all $i \in \{1,2\}$ we have $\mathcal{M}_i,w_i\not\models_{\mathsf{C}}^+\phi_i$. We then form $\mathcal{M}$ by taking the disjoint union of $\mathcal{M}_1$ and $\mathcal{M}_2$, and adding a fresh world $w$ as an immediate predecessor to $w_1$ and $w_2$ and a successor to none. Next, we show the following:
	
	\textit{Claim}. For every $i \in \{1,2\}$, every $v \in W_i$, every $\psi \in \mathcal{PL}$, and every $\star \in \{+, -\}$, we have $\mathcal{M}_i, v\models_{\mathsf{C}}^\star \psi$ iff $\mathcal{M}, v\models_{\mathsf{C}}^\star \psi$.
	
	The proof of the claim proceeds by induction on the construction of $\psi$. Now, this Claim implies that $\mathcal{M},w_i\not\models_{\mathsf{C}}^+\phi_i$ for all $i \in \{1,2\}$, whence $\mathcal{M},w\not\models_{\mathsf{C}}^+\phi_1\vee\phi_2$ by Lemma \ref{L:monotonicity}.
	
	In order to show CFP, we repeat this proof, now choosing $(\mathcal{M}_1, w_1), (\mathcal{M}_2, w_2) \in Pt(\mathbb{P})$ in such a way that $\mathcal{M}_i,w_i\not\models_{\mathsf{C}}^-\phi_i$ for $i \in \{1,2\}$.
\end{proof}
$\mathsf{C}$ thus has a claim to constructivity, as it shares DP with such contructive logics as intuitionistic logic and Nelson's logic of strong negation.\footnote{In fact, $\mathsf{C}$ is known to conservatively extend the $\NEG$-free fragment of intuitionistic logic.}
Some other properties of $\mathsf{C}$, however, are not so easily found among other non-classical logics. For example, the following proposition shows that every set of propositional formulas can be satisfied in $\mathsf{C}$:
\begin{proposition}\label{P:universal-c-satisfaction}
	Every $\Gamma\subseteq \mathcal{PL}$ is satisfiable in $\mathsf{C}$, in other words, $(\Gamma, \emptyset) \notin \mathsf{C}$.
\end{proposition}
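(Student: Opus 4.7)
The plan is to exhibit a single pointed model in which every formula of $\mathcal{PL}$ is simultaneously verified, so in particular every $\Gamma\subseteq\mathcal{PL}$ is satisfied. The key insight is that in the bi-valuational Kripke semantics of $\mathsf{C}$ the two evaluation functions $V^+$ and $V^-$ are wholly independent: nothing forces them to be disjoint, nor does anything forbid both of them from containing every world. Exploiting this freedom, I will set up the maximally ``overloaded'' atomic assignment and propagate it through the inductive clauses.

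Concretely, I would take $\mathcal{M} = (W,\leq,V^+,V^-)$ with $W=\{w\}$, $\leq\,=\{(w,w)\}$, and $V^+(p)=V^-(p)=\{w\}$ for every $p\in Prop$; this clearly satisfies the requirements of Definition \ref{D:Krip} (the reflexive–transitive condition on $\leq$ is trivial, and monotonicity of the atomic valuations is vacuous because $W$ is a singleton). The engine of the argument is then the following claim, to be proved by a routine induction on the construction of $\phi\in\mathcal{PL}$:
\begin{equation*}
\mathcal{M},w\models^+_{\mathsf{C}}\phi\quad\text{and}\quad\mathcal{M},w\models^-_{\mathsf{C}}\phi.
\end{equation*}

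The base clause is immediate from the choice of $V^+,V^-$. For $\wedge$ and $\vee$, clauses \eqref{Cl:con+}, \eqref{Cl:con-}, \eqref{Cl:dis+}, \eqref{Cl:dis-} reduce the claim to the inductive hypothesis for the immediate subformulas. For $\NEG$, clauses \eqref{Cl:neg+} and \eqref{Cl:neg-} simply swap $+$ and $-$, which preserves the double satisfaction. For $\to$, clauses \eqref{Cl:im+} and \eqref{Cl:im-} require checking the universally quantified conditions over $v\geq w$; but $w$ is the only such $v$, so these conditions reduce to the inductive hypothesis at $w$. Once the claim is established, every formula in $\Gamma$ is verified at $w$, hence $\mathcal{M},w\triangleright^+(\Gamma,\emptyset)$ by the canonical extension of $\triangleright^+$ to bi-sets, and the definition of $\mathfrak{L}$ yields $(\Gamma,\emptyset)\notin\mathsf{C}$.

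There is no real obstacle here; the only conceptual point worth flagging is that the argument relies essentially on the fact that $\mathsf{C}$ imposes no exhaustion or exclusion condition linking $V^+$ and $V^-$. This is what makes the ``doubly satisfied'' induction go through at the implication step, where the $\to-$ clause requires $\psi$ to be $\models^-$-satisfied at every relevant successor state at which $\phi$ is $\models^+$-satisfied; the freedom to collapse $V^+$ and $V^-$ into the same set is exactly what makes this demand harmless.
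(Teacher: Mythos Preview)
Your proposal is correct and is essentially identical to the paper's own proof: the paper also takes the one-world model with $V^+(p)=V^-(p)=\{w\}$ for all $p$ and appeals to an easy induction showing every $\phi\in\mathcal{PL}$ is verified at $w$. Your explicit observation that the induction must carry both $\models^+_{\mathsf{C}}$ and $\models^-_{\mathsf{C}}$ simultaneously (because of the $\NEG$ and $\to$ clauses) is a helpful expansion of what the paper leaves implicit.
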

\begin{proof}[Proof (a sketch)]
	Consider $\mathcal{M}:= (\{w\}, \{(w,w)\}, \{(p, \{w\}\mid p \in Prop)\}, \{(p, \{w\}\mid p \in Prop)\})\in \mathbb{P}$. An easy induction on the construction of the formula shows that we have $\mathcal{M}, w\models_{\mathsf{C}}^+ \phi$ for every $\phi \in \mathcal{PL}$. Therefore $(\Gamma, \emptyset) \notin \mathsf{C}$ for every $\Gamma \subseteq \mathcal{PL}$.
\end{proof}
Furthermore, most of the well-known non-classical propositional logics, constructive or otherwise, are sublogics of classical propositional logic $\mathsf{CL}$. However, this is not the case for $\mathsf{C}$, which, for this reason, provides us with a rare example of \textit{contraclassical} logic. The contraclassicality of $\mathsf{C}$ is already witnessed by the right-to-left direction of \eqref{Ax:to-neg} which is clearly invalid in $\mathsf{CL}$. In fact, it can be shown that the example of \eqref{Ax:to-neg}, which lays down a principle of interaction between $\to$ and $\NEG$ in $\mathsf{C}$, is very typical in the sense that every classically invalid theorem of $\mathsf{C}$ must contain occurrences of both $\to$ and $\NEG$. The contraclassicality of $\mathsf{C}$ is therefore due to its view of negation-implication interaction, and this interaction has a clear connexive flavor in that the above-mentioned axiomatic scheme \eqref{Ax:to-neg} is just the conjunction of (BT$\to$) and (CBT$\to$). The validity of other connexive principles is also easily established so that we get:
\begin{proposition}\label{P:c-connexivity}
	$\to$ is fully hyperconnexive in $\mathsf{C}$.
\end{proposition}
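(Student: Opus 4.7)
The plan is to prove hyperconnexivity by unpacking the satisfaction clauses \eqref{Cl:neg+}, \eqref{Cl:neg-}, \eqref{Cl:im+}, and \eqref{Cl:im-} directly rather than going through the axiomatization. The crucial observation that I would establish first is that, for arbitrary $\phi,\psi\in\mathcal{PL}$, arbitrary $\mathcal{M}\in\mathbb{P}$, and arbitrary $w\in W$,
\[
\mathcal{M},w\models^+_{\mathsf{C}}\phi\to\NEG\psi\;\Longleftrightarrow\;\mathcal{M},w\models^+_{\mathsf{C}}\NEG(\phi\to\psi).
\]
Indeed, the left-hand side unfolds via \eqref{Cl:im+} and \eqref{Cl:neg+} to ``for all $v\geq w$, $\mathcal{M},v\models^+_{\mathsf{C}}\phi$ implies $\mathcal{M},v\models^-_{\mathsf{C}}\psi$'', while the right-hand side unfolds via \eqref{Cl:neg+} and \eqref{Cl:im-} to the very same condition. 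This single biconditional immediately delivers \eqref{E:BTast}, \eqref{E:CBTast}, \eqref{E:WBTast}, and \eqref{E:WCBTast} for $\to$, since the semantic equivalence at every pointed model entails both the validity of the two bi-implication-shaped schemata and, a fortiori, the validity of the corresponding single-premise consecutions.

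Next, for Aristotle's Thesis \eqref{E:ATast} for $\to$, I would again unfold: $\mathcal{M},w\models^+_{\mathsf{C}}\NEG(\NEG\phi\to\phi)$ iff $\mathcal{M},w\models^-_{\mathsf{C}}\NEG\phi\to\phi$ iff for every $v\geq w$, $\mathcal{M},v\models^+_{\mathsf{C}}\NEG\phi$ implies $\mathcal{M},v\models^-_{\mathsf{C}}\phi$. But by \eqref{Cl:neg+} the antecedent here is literally $\mathcal{M},v\models^-_{\mathsf{C}}\phi$, so the conditional is trivially true at every $v$, and hence \eqref{E:ATast} holds at every pointed model.

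For the failure of \eqref{E:nSast} and \eqref{E:WnSast} for $\to$, the obstacle is only producing a concrete countermodel. I would take $\mathcal{M}=(\{w\},\{(w,w)\},V^+,V^-)$ with $V^+(p)=\emptyset$, $V^+(q)=\{w\}$, and arbitrary $V^-$. Then $\mathcal{M},w\models^+_{\mathsf{C}}p\to q$ holds vacuously through \eqref{Cl:im+} because $p$ fails at the only world in the cone above $w$, whereas $\mathcal{M},w\not\models^+_{\mathsf{C}}q\to p$ witnessed by $v=w$. This single pointed model simultaneously refutes $(p\to q)\to(q\to p)$ at $w$ and witnesses $p\to q\not\models_{\mathsf{C}}q\to p$, yielding the failure of both non-symmetry schemes.

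Collecting these three ingredients gives plain connexivity of $\to$ (from \eqref{E:ATast}, \eqref{E:BTast}, and the failure of \eqref{E:nSast}), weak connexivity of $\to$ (from \eqref{E:ATast}, \eqref{E:WBTast}, and the failure of \eqref{E:WnSast}), and the additional Converse Boethius principles \eqref{E:CBTast} and \eqref{E:WCBTast}, hence full hyperconnexivity. I do not foresee a genuine obstacle here; the argument is essentially a direct computation on the satisfaction clauses, and the mild risk lies only in being careful that the equivalence used in the first step really is a pointwise biconditional, not merely a validity — but this follows because every step in the unfolding is a biconditional at the level of individual worlds.
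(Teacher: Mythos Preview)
Your argument is correct. The paper, however, takes the syntactic route: it derives (BT$\to$) and (CBT$\to$) as the two directions of axiom \eqref{Ax:to-neg}, obtains (WBT$\to$) and (WCBT$\to$) from these by \eqref{E:mp}, and gets (AT$\to$) by first proving $\NEG\phi\to\NEG\phi$ in $\mathtt{S}_0$ and then applying \eqref{Ax:to-neg} and \eqref{E:mp}; only the non-symmetry failures are handled semantically, just as you do. Your approach bypasses the Hilbert system entirely and reads everything off the satisfaction clauses, so it does not rely on the completeness theorem $\mathsf{C}=\mathtt{C}[PL]$ cited from \cite{w}. The paper's approach has the advantage of exhibiting the connexive theses as immediate consequences of a single axiom scheme, which fits the surrounding narrative about \eqref{Ax:to-neg} being the source of $\mathsf{C}$'s contraclassicality; your approach is more self-contained and makes the pointwise semantic equivalence $\lvert\phi\to\NEG\psi\rvert^+_\mathcal{M}=\lvert\NEG(\phi\to\psi)\rvert^+_\mathcal{M}$ explicit, which is arguably the real content behind \eqref{Ax:to-neg}.
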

\begin{proof}[Proof (a sketch)] As mentioned above, (BT$\to$) and (CBT$\to$) follow from \eqref{Ax:to-neg}, whence, by \eqref{E:mp}, we also get (WBT$\to$) and (WCBT$\to$). As for (AT$\to$), by $\mathtt{C}\supseteq\mathtt{S}_0$ we can prove $\NEG\phi\to\NEG\phi$, whence, by \eqref{E:mp} and \eqref{Ax:to-neg}, we get that $\NEG(\NEG\phi\to\phi)$.

Finally, a simple semantic argument shows that both $(p_1\to p_2)\to (p_2\to p_1)\notin\mathsf{C}$ and $(p_1\to p_2)\not\models_{\mathsf{C}} (p_2\to p_1)$, thus establishing (nonSym$\to$) and (WnonSym$\to$), respectively.
\end{proof}
Another consequence of the connexive character of $\mathsf{C}$ is its strong degree of paraconsistency:
\begin{proposition}\label{P:c-negation-non-trivial}
	$\mathsf{C}$ is both non-trivial and negation-inconsistent.
\end{proposition}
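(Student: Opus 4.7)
The two claims can be handled almost independently. For non-triviality, all the work is already done by Proposition \ref{P:universal-c-satisfaction}: it shows $(\Gamma, \emptyset) \notin \mathsf{C}$ for every $\Gamma \subseteq \mathcal{PL}$, so in particular $(\emptyset, \emptyset) \notin \mathsf{C}$, whence $\mathsf{C} \neq \mathcal{P}(\mathcal{PL}) \times \mathcal{P}(\mathcal{PL})$.

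For negation-inconsistency, my plan is to exhibit a single $\chi \in \mathcal{PL}$ with both $\chi \in \mathsf{C}$ and $\NEG\chi \in \mathsf{C}$; once this is done, $\alpha_5$ together with \eqref{E:mp} immediately yields $\chi \wedge \NEG\chi \in \mathsf{C}$, as required. The key observation is that Aristotle's thesis $\NEG(\NEG\phi \to \phi)$, validated for every $\phi$ by Proposition \ref{P:c-connexivity}, automatically delivers the $\NEG\chi$-half as soon as I set $\chi := \NEG\phi \to \phi$. The real task is therefore to locate a particular $\phi$ for which $\NEG\phi \to \phi$ is \emph{also} $\mathsf{C}$-derivable --- a condition that fails for most $\phi$, so the witness must be chosen with some care.

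A convenient choice is $\phi := p_1 \vee \NEG p_1$. Via \eqref{Ax:vee-neg} and \eqref{Ax:double-neg}, $\NEG\phi$ is $\mathsf{C}$-equivalent to $\NEG p_1 \wedge p_1$, so $\NEG\phi \to \phi$ is $\mathsf{C}$-equivalent to $(\NEG p_1 \wedge p_1) \to (p_1 \vee \NEG p_1)$, and the latter is already derivable in $\mathtt{S}_0$ by combining the projection axiom $\alpha_4$ with the disjunction-introduction axiom $\alpha_6$. Thus $\chi := \NEG(p_1 \vee \NEG p_1) \to (p_1 \vee \NEG p_1)$ lies in $\mathsf{C}$, $\NEG\chi$ lies in $\mathsf{C}$ as an instance of Aristotle's thesis, and $\chi \wedge \NEG\chi \in \mathsf{C}$ follows. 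The only (mild) conceptual hurdle is spotting that Aristotle's thesis leverages into genuine negation-inconsistency precisely when paired with a $\phi$ whose negation is a strictly stronger formula than $\phi$ in an intuitionistically derivable sense --- a role the self-dual $p_1 \vee \NEG p_1$ plays neatly by collapsing under De Morgan to $\NEG p_1 \wedge p_1$.
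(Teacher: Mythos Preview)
Your proof is correct, but both halves proceed differently from the paper.

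For non-triviality, the paper appeals to the specific failure $(p_1\to p_2)\not\models_{\mathsf{C}}(p_2\to p_1)$ established in the proof of Proposition~\ref{P:c-connexivity}, whereas you invoke Proposition~\ref{P:universal-c-satisfaction}. Your route is arguably cleaner here, since universal satisfiability already gives non-triviality in one stroke.

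For negation-inconsistency, the paper exhibits the witness $\chi_0 := (p_1\wedge\NEG p_1)\to p_1$: the first conjunct $\chi_0$ is an instance of $\alpha_3$, and $\NEG\chi_0$ is obtained from the $\alpha_4$-instance $(p_1\wedge\NEG p_1)\to\NEG p_1$ by a single application of \eqref{Ax:to-neg}. Your witness $\chi := \NEG(p_1\vee\NEG p_1)\to(p_1\vee\NEG p_1)$ is closely related---its antecedent is De~Morgan-equivalent to the paper's---but you package the $\NEG\chi$ half as an instance of Aristotle's thesis from Proposition~\ref{P:c-connexivity} rather than applying \eqref{Ax:to-neg} directly, at the cost of needing \eqref{Ax:vee-neg} and \eqref{Ax:double-neg} to establish the $\chi$ half. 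The paper's argument is slightly more economical (it uses only $\alpha_3$, $\alpha_4$, and \eqref{Ax:to-neg}), while yours has the conceptual merit of showing explicitly that negation-inconsistency is a direct by-product of (AT$\to$) once one finds a $\phi$ with $\NEG\phi\to\phi$ provable.
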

\begin{proof}[Proof (a sketch)]
	For non-triviality, recall that $(p_1\to p_2)\not\models_{\mathsf{C}} (p_2\to p_1)$ is easily established semantically. For negation-inconsistency, note that the following formula is valid in $\mathsf{C}$:
		\begin{equation}\label{Contr}
		((p_1\wedge\NEG p_1)\to p_1)\wedge\NEG((p_1\wedge\NEG p_1)\to p_1)
	\end{equation}
	 Indeed, the first conjunct is an instance of ($\alpha_3$) and the second one is obtained from $(p_1\wedge\NEG p_1)\to \NEG p_1$, which is an instance of ($\alpha_4$), by applying \eqref{Ax:to-neg} and \eqref{E:mp}.
\end{proof}
Now, $\mathsf{CL}$ is well-known to be Post-complete, in other words, any addition of a new principle to it results in a trivial logic. In view of Proposition \ref{P:c-negation-non-trivial}, $\mathsf{C}$ must fail some classic propositional theorems and $\mathsf{CL}\setminus\mathsf{C}$ must be non-empty. In fact, we will see below that $\mathsf{C}$ even happens to fail some important intuitionistic principles.

We start by introducing two further connectives definable over $PL$:
\begin{itemize}
	\item $\phi\Rightarrow \psi$, called \textit{strong implication} and understood as an abbreviation for $(\phi\to\psi)\wedge(\NEG\psi\to\NEG\phi)$.
	
	\item $\phi\Leftrightarrow \psi$, called \textit{strong equivalence} and understood as an abbreviation for $(\phi\Rightarrow\psi)\wedge(\psi\Rightarrow\phi)$ (or, equivalently over $\mathsf{C}$, for $(\phi\leftrightarrow\psi)\wedge(\NEG\phi\leftrightarrow\NEG\psi)$).
\end{itemize}
Introduction of $\Rightarrow$ and $\Leftrightarrow$ into intuitionistic or classical logic would have been meaningless as they are clearly intuitionistically equivalent to $\to$ and $\leftrightarrow$, respectively. This, however, is far from being the case in $\mathsf{C}$, as the following proposition shows:
\begin{proposition}\label{P:c-strong-implication}
	Let $p, q \in Prop$ and let $\phi, \psi, \theta\in \mathcal{PL}$. Then the following statements hold for $\gg \in \{\to, \Rightarrow\}$:
	\begin{enumerate}
		\item $(\phi \Rightarrow \psi)\gg(\NEG\psi\Rightarrow\NEG\phi)\in \mathsf{C}$.
		
		\item However, the same cannot be said about $\to$, since we have $(p \rightarrow q)\gg(\NEG q\rightarrow\NEG p)\notin \mathsf{C}$.
		
		\item We have $(\phi \Rightarrow \psi)\gg(\phi\rightarrow\psi)\in \mathsf{C}$, but not vice versa, so that $\Rightarrow$ is stronger than $\to$.
		
		\item If $(\phi\Leftrightarrow\psi) \in \mathsf{C}$ then also $(\theta[\phi/p]\Leftrightarrow\theta[\psi/p]) \in \mathsf{C}$.
		
		\item However, the same cannot be said about $\leftrightarrow$ since we have, e.g. $((p \wedge q) \to p)\leftrightarrow(p \to p) \in \mathsf{C}$, but $\NEG(((p \wedge q) \to p)\leftrightarrow \NEG(p \to p) \notin \mathsf{C}$.
	\end{enumerate}
\end{proposition}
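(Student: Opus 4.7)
The plan is to address the five parts in two groups: the positive claims (1, 3, 4, and half of 5) will be proved by axiomatic derivations inside $\mathtt{C}$, while the negative claims (2, the non-reversibility in 3, and half of 5) will be settled by explicit single-world countermodels. The recurring technical observation is that \eqref{Ax:double-neg}, \eqref{Ax:to-neg}, \eqref{Ax:wedge-neg}, and \eqref{Ax:vee-neg} together let us push $\NEG$ past any $PL$-connective, effectively converting a negative equivalence into a positive one.

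For Part 1, unfolding the definition of $\Rightarrow$ and applying \eqref{Ax:double-neg} (via standard replacement of $\leftrightarrow$-equivalents in the $\NEG$-free fragment) shows that $\NEG\psi \Rightarrow \NEG\phi$ is $\mathsf{C}$-equivalent to $(\NEG\psi \to \NEG\phi) \wedge (\phi \to \psi)$, which is $\phi \Rightarrow \psi$ up to commutativity of $\wedge$; hence the two formulas are interderivable and both the $\to$- and $\Rightarrow$-forms of the claim follow. For Part 3, $(\phi \Rightarrow \psi) \to (\phi \to \psi)$ is just $\alpha_3$ after unfolding; upgrading to the $\Rightarrow$-form additionally requires $\NEG(\phi \to \psi) \to \NEG(\phi \Rightarrow \psi)$, and successive applications of \eqref{Ax:to-neg} and \eqref{Ax:wedge-neg} rewrite the consequent as $(\phi \to \NEG\psi) \vee (\NEG\psi \to \phi)$, into which $\NEG(\phi \to \psi) = (\phi \to \NEG\psi)$ injects by $\alpha_6$.

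For the negative claims, one single-world template suffices. Let $\mathcal{M} = (\{w\}, \{(w,w)\}, V^+, V^-)$. Putting $V^+(p) = V^+(q) = V^-(p) = \emptyset$ and $V^-(q) = \{w\}$ makes $p \to q$ vacuously true at $w$ while $\NEG q \to \NEG p$ (hence $p \Rightarrow q$) is false there, covering both Part 2 and the failure of the converse in Part 3. For the failure in Part 5, applying \eqref{Ax:to-neg} rewrites the alleged equivalence as $((p \wedge q) \to \NEG p) \leftrightarrow (p \to \NEG p)$; in the one-world model with $V^+(p) = \{w\}$ and all other valuations empty, the left-hand side holds vacuously while the right-hand side fails. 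The positive half of Part 5 is trivial since both $(p \wedge q) \to p$ and $p \to p$ are $\mathtt{S}_0$-theorems, so their material equivalence is in $\mathsf{C}$.

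Part 4 is an induction on the construction of $\theta \in \mathcal{PL}$. The atomic cases are immediate. The inductive step needs three congruence lemmas for $\Leftrightarrow$, all provable in $\mathtt{C}$: preservation under $\NEG$ (a direct consequence of \eqref{Ax:double-neg}); preservation under $\wedge$ and $\vee$ (using \eqref{Ax:wedge-neg} and \eqref{Ax:vee-neg} to reduce the negative half of the new equivalence to positive ones); and, most delicately, preservation under $\to$, where \eqref{Ax:to-neg} turns $\NEG(\alpha \to \gamma) \leftrightarrow \NEG(\beta \to \delta)$ into $(\alpha \to \NEG\gamma) \leftrightarrow (\beta \to \NEG\delta)$, which then follows by standard replacement from $\alpha \leftrightarrow \beta$ and $\NEG\gamma \leftrightarrow \NEG\delta$. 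The main obstacle, such as it is, lies in this last clause: one must carry both the positive and the negative halves of each $\Leftrightarrow$ through the induction in tandem, because $\to$ propagates equivalence only when both are available. This is precisely the double bookkeeping that plain $\leftrightarrow$ fails to supply, which is exactly what the counterexample in Part 5 dramatizes.
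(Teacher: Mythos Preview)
Your proposal is correct. The positive parts and the overall strategy match the paper, but two choices genuinely differ and are worth noting.

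For Part~4 the paper argues semantically: $(\phi\Leftrightarrow\psi)\in\mathsf{C}$ means $\|\phi\|_\mathcal{M}=\|\psi\|_\mathcal{M}$ in every $\mathcal{M}\in\mathbb{P}$, and then a straightforward induction on $\theta$ gives $\|\theta[\phi/p]\|_\mathcal{M}=\|\theta[\psi/p]\|_\mathcal{M}$. Your route is syntactic, establishing object-level congruence lemmas for $\Leftrightarrow$ under each connective using \eqref{Ax:double-neg}--\eqref{Ax:to-neg}. Both work; the semantic argument is shorter, while yours makes explicit exactly which axiom does the work in each clause and highlights why the $\to$-clause forces the double bookkeeping that $\leftrightarrow$ alone cannot supply.

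For the countermodels, the paper uses a single-world model $\mathcal{M}_0$ with $V^+(p)=V^+(q)=V^-(q)=\{w\}$, $V^-(p)=\emptyset$ for Parts~2 and~3, and a \emph{two}-world model $\mathcal{M}_1$ for Part~5. Your single-world models (with $p,q$ both unverified but $q$ falsified for Parts~2--3; with $p$ verified and nothing else for Part~5) are equally valid and in the Part~5 case strictly simpler. The paper's choices are not wasted, though: $\mathcal{M}_0$ and $\mathcal{M}_1$ are reused verbatim in later sections (expanded to $\mathbb{FSM}$ and $\mathbb{FSC}$), so their particular form is chosen with that recycling in mind.
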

\begin{proof}[Proof (a sketch)]
Part 1 is proven by checking the definition of $\Rightarrow$, as for Part 2, consider  $\mathcal{M}_0\in \mathbb{P}$, where $W_0 = \{w\}$, $\leq_0 = \{(w,w)\}$, $V_0^+(p) = V_0^+(q) = V_0^-(q) = W_0$, and $V_0^-(p) = V_0^\ast(r) = \emptyset$ for every $r \in Prop\setminus\{p,q\}$ and every $\ast\in \{+, -\}$. Then we have $\mathcal{M}_0, w\not\models^+_{\mathsf{C}}(p \rightarrow q)\to(\NEG q\rightarrow\NEG p)$, hence also $\mathcal{M}_0, w\not\models^+_{\mathsf{C}}(p \rightarrow q)\Rightarrow(\NEG q\rightarrow\NEG p)$.

As for Part 3, its positive statement is easily checked, and for the model $\mathcal{M}_0$ constructed for Part 2 we also have that $\mathcal{M}_0, w\not\models^+_{\mathsf{C}}(p \rightarrow q)\to(p\Rightarrow q)$, hence also $\mathcal{M}_0, w\not\models^+_{\mathsf{C}}(p \rightarrow q)\Rightarrow(p\Rightarrow q)$.

As for Part 4, note that $(\phi\Leftrightarrow\psi) \in \mathsf{C}$ means that we have $\|\phi\|_\mathcal{M} = \|\psi\|_\mathcal{M}$ for every $\mathcal{M}\in \mathbb{P}$, so that one easily proves the statement by induction on the construction of $\theta\in\mathcal{PL}$.

Finally, we briefly address Part 5. That we have $((p \wedge q) \to p)\leftrightarrow(p \to p) \in \mathsf{C}$ is a consequence of $\mathtt{C}\supseteq \mathtt{S}_0$. As for $\NEG(((p \wedge q) \to p)\leftrightarrow\NEG(p \to p) \notin \mathsf{C}$, consider $\mathcal{M}_1\in \mathbb{P}$, where $W_1 = \{w,v\}$, $\leq_1$ is the reflexive closure of $\{(w,v)\}$, $V_1^+(p) = W$, $V_1^-(p) = V_1^+(q) = \{v\}$, and $V_1^-(q) = V_2^\ast(r) = \emptyset$ for every $r \in Prop\setminus\{p,q\}$ and every $\ast\in \{+, -\}$. Then we must have $\mathcal{M}_1, w\not\models^+_{\mathsf{C}}\NEG(p\to p)$ (the counterexample is given by $w$ itself) but $\mathcal{M}_1, w\models^+_{\mathsf{C}}\NEG(((p \wedge q) \to p)$.
\end{proof}
Proposition \ref{P:c-strong-implication} shows that in the context of $\mathsf{C}$ both $\Rightarrow$ and $\Leftrightarrow$ provide us with stronger, and contraposing versions of the rather weak $\to$ and $\leftrightarrow$. In particular, the consideration of $\Leftrightarrow$ within logics like $\mathsf{C}$ is indispensable since $\Leftrightarrow$ defines the substitutivity format for the logic.

The following lemma lists some of the properties of $\Leftrightarrow$ in $\mathsf{C}$:
\begin{lemma}\label{L:c-strong-properties}
	For all $\phi, \psi, \chi \in \mathcal{PL}$, the following formulas are theorems of $\mathsf{C}$:
	\begin{align}
		&\phi\Leftrightarrow\phi\label{E:t-refl}\tag{t1}\\
		&(\phi\Leftrightarrow\psi)\to(\psi\Leftrightarrow\phi)\label{E:t-sym}\tag{t2}\\
		&((\phi\Leftrightarrow\psi)\wedge(\psi\Leftrightarrow\chi))\to(\phi\Leftrightarrow\chi)\label{E:t-trans}\tag{t3}\\
		&\phi\Leftrightarrow\NEG\NEG\phi\label{E:t-double-neg}\tag{t4}\\
		&\NEG(\phi\wedge\psi)\Leftrightarrow(\NEG\phi\vee\NEG\psi)\label{E:t-neg-wedge}\tag{t5}\\
		&\NEG(\phi\vee\psi)\Leftrightarrow(\NEG\phi\wedge\NEG\psi)\label{E:t-neg-vee}\tag{t6}\\
		&\NEG(\phi\to\psi)\Leftrightarrow(\phi\to\NEG\psi)\label{E:t-neg-to}\tag{t7}
	\end{align}
\end{lemma}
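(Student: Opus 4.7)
The plan is to unfold the definition $\phi \Leftrightarrow \psi := (\phi\Rightarrow\psi)\wedge(\psi\Rightarrow\phi)$, which the paper already notes is equivalent over $\mathsf{C}$ to $(\phi \leftrightarrow \psi) \wedge (\NEG\phi \leftrightarrow \NEG\psi)$. Thus each claim $\phi \Leftrightarrow \psi \in \mathsf{C}$ splits into two halves: a ``positive'' $\leftrightarrow$-theorem and a ``negated'' $\leftrightarrow$-theorem. Alternatively, one could invoke the observation used in the proof of Proposition \ref{P:c-strong-implication}(4), namely that $\phi \Leftrightarrow \psi \in \mathsf{C}$ iff $\|\phi\|_\mathcal{M} = \|\psi\|_\mathcal{M}$ for every $\mathcal{M}\in \mathbb{P}$, and check each item semantically by reading off the clauses \eqref{Cl:at+}--\eqref{Cl:im-}. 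I would follow the syntactic route so that everything stays inside the axiomatization $\mathtt{C}$.

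For the equivalence-relation items (t1)--(t3), both conjuncts of the unfolded $\Leftrightarrow$ are instances of reflexivity, symmetry, and transitivity of $\leftrightarrow$, all derivable in $\mathtt{S}_0$. For (t4), the positive half is exactly $(\alpha_9)$, and the negated half $\NEG\phi\leftrightarrow\NEG\NEG\NEG\phi$ is $(\alpha_9)$ instantiated with $\NEG\phi$. For (t5)--(t7) the positive halves are given directly by $(\alpha_{10})$, $(\alpha_{11})$, and $(\alpha_{12})$, respectively, while the negated halves follow from the ``dual'' axiom (namely $(\alpha_{11})$ for (t5), $(\alpha_{10})$ for (t6), and $(\alpha_{12})$ for (t7)) applied to $\NEG\phi$ and $\NEG\psi$, combined with $(\alpha_9)$ to absorb the resulting double negations. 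For instance, for (t5) I need $\NEG\NEG(\phi\wedge\psi)\leftrightarrow\NEG(\NEG\phi\vee\NEG\psi)$; by $(\alpha_9)$ the left side is $\leftrightarrow$-equivalent to $\phi\wedge\psi$, and by $(\alpha_{11})$ applied to $\NEG\phi,\NEG\psi$ the right side is $\leftrightarrow$-equivalent to $\NEG\NEG\phi\wedge\NEG\NEG\psi$, which is $\leftrightarrow$-equivalent to $\phi\wedge\psi$ by two more uses of $(\alpha_9)$. An entirely analogous chain handles (t6) and (t7).

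There is no real obstacle here; the lemma amounts to routine algebraic bookkeeping over $\mathtt{C}$. The one pitfall to guard against is that Proposition \ref{P:c-strong-implication}(5) warns that $\leftrightarrow$-equivalents are \emph{not} in general intersubstitutable under $\NEG$ in $\mathsf{C}$. The argument above sidesteps this issue because every $\leftrightarrow$-substitution it performs occurs in $\NEG$-free positions (conjuncts, disjuncts, and consequents of implications), where substitution of $\leftrightarrow$-equivalents remains valid already in $\mathtt{S}_0$.
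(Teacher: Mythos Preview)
Your proposal is correct and matches the paper's intent: the paper simply writes ``We omit the obvious proof,'' and the routine unfolding of $\Leftrightarrow$ into its two $\leftrightarrow$-halves that you describe is exactly the kind of bookkeeping being omitted. Your remark about avoiding substitution under $\NEG$ is a nice point of care, and the argument as given is sound.
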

We omit the obvious proof.

Note, however, that we need to be careful and avoid relating the failure of contraposition to the connexivity of $\to$, since the following proposition is well-known and can be easily established:
\begin{proposition}\label{P:c-strong-connexivity}
	$\Rightarrow$ is fully connexive in $\mathsf{C}$ (even though it is neither weakly nor plainly hyperconnexive in this logic).
\end{proposition}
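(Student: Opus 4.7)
The plan is to unpack $\phi \Rightarrow \psi$ as $(\phi\to\psi)\wedge(\NEG\psi\to\NEG\phi)$ and reduce everything to the behaviour of $\to$ and $\NEG$ captured by Lemma \ref{L:c-strong-properties}. The key observations are: (i) \eqref{E:t-neg-to} lets us rewrite $\NEG(\alpha\to\beta)$ as $\alpha\to\NEG\beta$; (ii) \eqref{E:t-neg-wedge} and \eqref{E:t-double-neg} let us move negation across conjunctions and erase double negations uniformly; and (iii) $\Rightarrow$ is detachable in $\mathsf{C}$, since the first conjunct of its definition is $\phi\to\psi$, so any failure of WCBT$\Rightarrow$ automatically refutes CBT$\Rightarrow$ as a theorem.

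For AT$\Rightarrow$, I expand $\NEG(\NEG\phi\Rightarrow\phi)$ by \eqref{E:t-neg-wedge} into $\NEG(\NEG\phi\to\phi)\vee\NEG(\NEG\phi\to\NEG\NEG\phi)$, rewrite the first disjunct via \eqref{E:t-neg-to} as $\NEG\phi\to\NEG\phi$, which is an $\mathtt{S}_0$-theorem, and conclude by disjunction introduction. For WBT$\Rightarrow$, the formula $\phi\Rightarrow\NEG\psi$ entails its first conjunct $\phi\to\NEG\psi$, which by \eqref{E:t-neg-to} is equivalent to $\NEG(\phi\to\psi)$; disjunction introduction followed by \eqref{E:t-neg-wedge} yields $\NEG(\phi\Rightarrow\psi)$. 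Since this argument can be internalised as a chain of $\to$-implications, I also obtain $(\phi\Rightarrow\NEG\psi)\to\NEG(\phi\Rightarrow\psi)$, which is the first conjunct of BT$\Rightarrow$; for the second conjunct $\NEG\NEG(\phi\Rightarrow\psi)\to\NEG(\phi\Rightarrow\NEG\psi)$ I invoke \eqref{E:t-double-neg} and apply the preceding step with $\NEG\psi$ in place of $\psi$, using $(\phi\Rightarrow\NEG\NEG\psi)\Leftrightarrow(\phi\Rightarrow\psi)$.

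For the failures of non-symmetry I use a one-point reflexive model $\mathcal{M}$ with $V^+(p_1)=V^-(p_2)=\emptyset$ and $V^+(p_2)=V^-(p_1)=\{w\}$. Every one of the $\to$-clauses entering $p_1\Rightarrow p_2$ holds vacuously at $w$, whereas $p_2\to p_1$ fails outright there, refuting $p_2\Rightarrow p_1$. This supplies the countermodel to WnonSym$\Rightarrow$; and since both formulas are evaluated at the same world $w$, clause \eqref{Cl:im+} immediately gives $\mathcal{M}, w\not\models^+_{\mathsf{C}}(p_1\Rightarrow p_2)\to(p_2\Rightarrow p_1)$, which via the first conjunct in the definition of $\Rightarrow$ also refutes nonSym$\Rightarrow$.

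Finally, I refute plain and weak hyperconnexivity simultaneously with a second one-point reflexive model in which $V^+(p_1)=V^+(p_2)=\{w\}$ and $V^-(p_1)=V^-(p_2)=\emptyset$. Here the second conjunct of $p_1\Rightarrow p_2$ is falsified vacuously at $w$ via \eqref{Cl:im-} (since $\NEG p_2$ is nowhere verified), so $\NEG(p_1\Rightarrow p_2)$ is verified by combining \eqref{Cl:neg+} with \eqref{Cl:con-}; yet the first conjunct of $p_1\Rightarrow\NEG p_2$, namely $p_1\to\NEG p_2$, fails at $w$, since $p_1$ is verified there while $\NEG p_2$ is not. This disposes of WCBT$\Rightarrow$, and by the detachability remark of the first paragraph it also refutes CBT$\Rightarrow$. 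The main obstacle throughout is bookkeeping of which occurrences of $\to$ inside a nested $\Rightarrow$ are being peeled off at each step, but this is entirely routine once the rewrite equivalences of Lemma \ref{L:c-strong-properties} are in hand.
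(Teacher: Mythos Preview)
Your proof is correct and follows essentially the same strategy as the paper's: axiomatic derivations via the rewrite laws \eqref{E:t-neg-to}, \eqref{E:t-neg-wedge}, \eqref{E:t-double-neg} for the positive connexive theses, and one-point countermodels for the negative claims. The only visible difference is in the hyperconnexivity countermodel: the paper reuses the model $\mathcal{M}_0$ from Proposition~\ref{P:c-strong-implication} (where $p$ is verified but not falsified and $q$ is both verified and falsified), whereas you build a fresh model in which both atoms are verified and neither is falsified; both work, and yours is arguably cleaner since the falsification of the second conjunct of $p_1\Rightarrow p_2$ is vacuous rather than witnessed.
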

\begin{proof}[Proof (a sketch)]
	As for the connexivity of $\Rightarrow$, the proof is similar to the proof of Proposition \ref{P:c-connexivity} but is longer and more tedious. As for the failure of hyperconnexivity, recall the model $\mathcal{M}_0 \in \mathbb{P}$ from the proof of Proposition \ref{P:c-strong-implication} and note that we have $\mathcal{M}_0, w \models_{\mathsf{C}}^+\NEG(p\Rightarrow q)$, but also $\mathcal{M}_0, w \not\models_{\mathsf{C}}^+p\Rightarrow \NEG q$. Thus, both (CBT$\Rightarrow$) and (WCBT$\Rightarrow$) fail in $\mathsf{C}$.
\end{proof}
We note in passing that some negation-inconsistencies of $\mathsf{C}$ allow for a uniform replacement of $\to$ with $\Rightarrow$; for instance, \eqref{Contr} has the following ($\Rightarrow$)-counterpart among the theorems of $\mathsf{C}$:
\begin{equation}\label{Contr-strong}
	((p_1\wedge\NEG p_1)\Rightarrow p_1)\wedge\NEG((p_1\wedge\NEG p_1)\Rightarrow p_1)
\end{equation}
To sum up, it is clear that, whereas many important properties of $\mathsf{C}$ are shared with other non-classical logics, it is the combination of constructivity, hyperconnexivity, contraclassicality, and non-trivial negation-inconsistency that is most closely related to the $\mathsf{C}$'s reading of implication, where the first property appears to be inherited from its (intuitionistic) reading of the true implications and the remaining ones are enforced by its reading of false implications.

\section{Modal logic $\mathsf{CnK}$}\label{S:modal}
\subsection{Definition and axiomatization}\label{sub:modal-axioms}
We are going to define $\mathsf{CnK}$, a basic modal logic expanding $\mathsf{C}$ to $MD$. We start by defining the corresponding notion of a Kripke bi-valuational model:
\begin{definition}\label{D:modal-model}
	A \textit{modal Fischer-Servi} model is a structure $\mathcal{M}\in \mathbb{M}$ which satisfies both of the following Fischer-Servi completion patterns\footnote{Originally introduced in \cite{fischer-servi}.}:
	\begin{align}
		(\leq^{-1}\circ R) &\subseteq (R\circ\leq^{-1})\label{Cond:1}\tag{c1}\\
		(R\circ\leq) &\subseteq (\leq\circ R)\label{Cond:2}\tag{c2}
	\end{align}	
	The class of all Fischer-Servi modal models will be denoted by $\mathbb{FSM}$.
\end{definition}
Conditions \eqref{Cond:1} and \eqref{Cond:2} can be reformulated as requirements to complete the dotted parts of each of the two diagrams given in Figure \ref{Fig:completion-patterns} once the respective straight-line part is given.
\begin{figure}
	\begin{center}
		\begin{tikzpicture}[scale=.7]
			\node (w1) at (-4,-1) {$w$};
			\node (w'1) at (-4,2) {$w'$};
			\node (v1) at (-1,-1) {$v$};
			\node (v'1) at (-1,2) {$v'$};
			\draw[-latex] (w1) to  node[midway, below] {$R$} (v1);
			\draw[-latex] (w1) to node[midway, left] {$\leq$} (w'1);
			\draw[dotted, -latex] (w'1) to  node[midway, above] {$R$} (v'1);
			\draw[dotted, -latex] (v1) to  node[midway, right] {$\leq$} (v'1);
			\node (w2) at (2,-1) {$w$};
			\node (w'2) at (2,2) {$w'$};
			\node (v2) at (5,-1) {$v$};
			\node (v'2) at (5,2) {$v'$};
			\draw[-latex] (w2) to  node[midway, below] {$R$} (v2);
			\draw[dotted, -latex] (w2) to node[midway, left] {$\leq$} (w'2);
			\draw[dotted, -latex] (w'2) to  node[midway, above] {$R$} (v'2);
			\draw[-latex] (v2) to  node[midway, right] {$\leq$} (v'2);
		\end{tikzpicture}
		\caption{}\label{Fig:completion-patterns}
	\end{center}
\end{figure}
We now define $\mathsf{CnK}:= \mathfrak{L}(MD, \mathbb{FSM}, (\models^+_m, \models^-_m))$, setting
$$
(\models^+_m, \models^-_m):= Ind(\models_{\mathsf{C}}^+\cup\{\eqref{Cl:box+},\eqref{Cl:diam+}\},\models_{\mathsf{C}}^-\cup\{\eqref{Cl:box-},\eqref{Cl:diam-}\})
$$
where we assume that:
\begin{align}
	\mathcal{M}, w&\models_{m}^+ \Box\psi \text{ iff } (\forall v \geq w)(\forall u \in W)(v\mathrel{R}u\text{ implies }\mathcal{M}, u\models_{m}^+ \psi)\label{Cl:box+}\tag{$\Box+$}\\
	\mathcal{M}, w&\models_{m}^- \Box\psi \text{ iff } (\forall v \geq w)(\forall u \in W)(v\mathrel{R}u\text{ implies }\mathcal{M}, u\models_{m}^- \psi)\label{Cl:box-}\tag{$\Box-$}\\
	\mathcal{M}, w&\models_{m}^+ \Diamond\psi \text{ iff } (\exists v \in W)(w\mathrel{R}v\text{ and }\mathcal{M}, v\models_{m}^+ \psi)\label{Cl:diam+}\tag{$\Diamond+$}\\
	\mathcal{M}, w&\models_{m}^- \Diamond\psi \text{ iff } (\exists v \in W)(w\mathrel{R}v\text{ and }\mathcal{M}, v\models_{m}^- \psi)\label{Cl:diam-}\tag{$\Diamond-$}	
\end{align}
 We will write $\Gamma\models_m\Delta$ and $\mathcal{M}, w \models_m (\Gamma, \Delta)$, meaning $\Gamma\models_{\mathsf{CnK}}\Delta$ and $\mathcal{M}, w \models_{\mathsf{CnK}} (\Gamma, \Delta)$, respectively.

In this subsection, we obtain a sound and (strongly) complete axiomatization of $\mathsf{CnK}$. We consider the Hilbert-style axiomatic system $\mathtt{CnK}$, for which\footnote{Probably the most contentious part of this axiomatization is given by the schemes \eqref{E:am5} and \eqref{E:am6}, which replace the usual duality principles of classical modal logic. However, it is easy to see that these principles are necessitated by the semantics of $\mathsf{CnK}$. In fact, they constitute the main input of $\mathsf{CnK}$ to the discussion of modal principles in the context of $\mathsf{C}$: replacing \eqref{E:am5} and \eqref{E:am6} with the classical dualities relating $\Box$ and $\Diamond$ simply brings us back to the first $\mathsf{C}$-based modal system ever proposed in the existing literature, namely the modal logic of \cite{w}.} we set $\mathtt{CnK}:= \mathtt{C}+(\eqref{E:am1}-\eqref{E:am6};\eqref{E:Rnec})$, where we assume that:
\begin{align}
	\Box(\phi\to\psi)&\to(\Box\phi\to\Box\psi)\label{E:am1}\tag{$\beta_1$}\\
	\Box(\phi\to\psi)&\to(\Diamond\phi\to\Diamond\psi)\label{E:am2}\tag{$\beta_2$}\\
	\Diamond(\phi\vee\psi)&\to(\Diamond\phi\vee\Diamond\psi)\label{E:am3}\tag{$\beta_3$}\\
	(\Diamond\phi\to\Box\psi)&\to\Box(\phi\to\psi)\label{E:am4}\tag{$\beta_4$}\\
	\NEG\Box\phi&\leftrightarrow\Box\NEG\phi\label{E:am5}\tag{$\beta_5$}\\
	\NEG\Diamond\phi&\leftrightarrow\Diamond\NEG\phi\label{E:am6}\tag{$\beta_6$}\\
	\text{From }\phi &\text{ infer }\Box\phi\label{E:Rnec}\tag{nec}	
\end{align}

Before we go on to prove the soundness and completeness of $\mathtt{CnK}$ relative to $\mathsf{CnK}$, we would like to quickly address the relations between $\mathtt{CnK}$ and $\mathsf{C}$: 
\begin{lemma}\label{L:c}
	The following statements hold:
	\begin{enumerate}
		\item If $\Gamma, \Delta \subseteq\mathcal{PL}$ are such that $\Gamma\models_{\mathsf{C}}\Delta$, and  $\Gamma', \Delta' \subseteq \mathcal{MD}$ are obtained from $\Gamma, \Delta$ by a simultaneous substitution of $\mathcal{MD}$-formulas for variables, then $\Gamma'\vdash_{\mathtt{CnK}}\Delta'$. Moreover, Deduction Theorem holds for $\mathtt{CnK}$ in that for all $\Gamma \cup \{\phi,\psi\}\subseteq \mathcal{MD}$ we have $\Gamma \vdash_{\mathtt{CnK}} \phi\to\psi$ iff $\Gamma, \phi\vdash_{\mathtt{CnK}} \psi$.
		
		\item If $\phi \in \mathcal{PL}$, then $\vdash_{\mathtt{CnK}} \phi$ iff $\phi\in \mathsf{C}$.
	\end{enumerate}
\end{lemma}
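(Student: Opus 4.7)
The first half combines two easy observations. Direct comparison of the axiom sets gives $\mathtt{C}\subseteq \mathtt{CnK}$, so every $\mathtt{C}$-derivation is already a $\mathtt{CnK}$-derivation. Moreover all axioms of $\mathtt{C}$ are presented as schemes and \eqref{E:mp} commutes with uniform substitution, so applying any substitution of $\mathcal{MD}$-formulas for propositional variables to each formula of a derivation yields another legitimate derivation. Combining: from $\Gamma\models_{\mathsf{C}}\Delta$, which by $\mathsf{C}=\mathtt{C}[PL]$ just means $\Gamma\vdash_\mathtt{C}\Delta$, we first push the derivation through the substitution to obtain a $\mathtt{C}$-derivation of $\Delta'$ from $\Gamma'$, and then reinterpret it as a $\mathtt{CnK}$-derivation. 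For the Deduction Theorem, the key observation is that the definition of $\vdash$ in Section \ref{sub:hilbert} allows only \eqref{E:mp} to appear between lines of a derivation from premises; rules such as \eqref{E:Rnec} enter only through the ``provable in $\mathtt{CnK}$'' clause, i.e.\ only as applied to genuine theorems. The standard textbook induction on derivation length therefore applies verbatim: base cases are handled by $(\alpha_1)$, the \eqref{E:mp}-step by $(\alpha_2)$, and the converse direction by $(\alpha_1)$ together with \eqref{E:mp}.

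\textbf{Part 2: conservativity over $\mathsf{C}$.} The ($\Leftarrow$) direction is immediate from Part 1 taken with the identity substitution. For ($\Rightarrow$), the plan is contrapositive and semantic: given $\phi\in \mathcal{PL}$ with $\phi\notin \mathsf{C}$, pick a refuting pointed model $(\mathcal{M}, w)\in Pt(\mathbb{P})$ and extend it to $\mathcal{M}'$ by appending the empty modal accessibility relation $R':=\emptyset$ while keeping $W,\leq, V^+, V^-$ intact. Both Fischer--Servi conditions \eqref{Cond:1} and \eqref{Cond:2} hold trivially once $R'=\emptyset$, so $\mathcal{M}'\in \mathbb{FSM}$. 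A routine mutual induction on $\psi\in \mathcal{PL}$ shows that for all $v\in W$ and all $\star\in \{+,-\}$, $\mathcal{M}',v\models^\star_m \psi$ iff $\mathcal{M},v\models^\star_{\mathsf{C}}\psi$, because the inductive clauses for the $PL$-connectives coincide in the two satisfaction definitions and no modal clauses are ever invoked along the induction. Hence $\mathcal{M}',w\not\models^+_m \phi$, i.e.\ $\phi\notin \mathsf{CnK}$. A routine soundness check for $\mathtt{CnK}$ relative to $\mathsf{CnK}$ (verifying each of \eqref{E:am1}--\eqref{E:am6} against the semantic clauses and showing that \eqref{E:Rnec} preserves validity) then yields $\not\vdash_\mathtt{CnK} \phi$, completing the contrapositive.

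\textbf{Main obstacle.} None of the individual steps is deep; the point that most deserves care is the Deduction Theorem. In Hilbert calculi for normal modal logics it is customary for the Deduction Theorem to fail precisely because of necessitation. In the present setup that failure is preempted by the convention of Section \ref{sub:hilbert}, which decrees that rules other than \eqref{E:mp} are invisible to the derivability relation $\vdash$ between formulas. When writing out the proof one should therefore explicitly verify that $\Gamma\vdash_\mathtt{CnK}\Delta$ is being read in accordance with that convention, and not in a stronger sense that would allow \eqref{E:Rnec} to be applied to non-theorems during a derivation from $\Gamma$.
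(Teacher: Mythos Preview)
Your proof is correct. Part 1 matches the paper's treatment (the paper simply calls it trivial; your explicit remark about why \eqref{E:Rnec} does not obstruct the Deduction Theorem is accurate and worth making).

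For Part 2 ($\Rightarrow$) you take a genuinely different route. The paper argues \emph{syntactically}: given a $\mathtt{CnK}$-proof of $\phi\in\mathcal{PL}$, erase all occurrences of $\Box$ and $\Diamond$ from every formula in the proof; each instance of \eqref{E:am1}--\eqref{E:am6} collapses to a $\mathtt{C}$-provable instance of $\chi\to\chi$ or $\chi\leftrightarrow\chi$, each application of \eqref{E:Rnec} becomes a repetition, and the endformula $\phi$ is unchanged. This yields a $\mathtt{C}$-proof of $\phi$ directly, with no appeal to semantics. Your argument instead expands a $\mathsf{C}$-countermodel by the empty accessibility relation and then invokes soundness of $\mathtt{CnK}$. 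Both are valid; the paper's erasure trick is self-contained and avoids forward reference to Lemma~\ref{L:modal-soundness}, while your semantic route is the one the paper itself adopts for the analogous conditional result (Lemma~\ref{L:cond-c}), where a clean syntactic erasure is less convenient.
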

\begin{proof}[Proof (a sketch)]
	Part 1 is trivial. As for Part 2, its ($\Leftarrow$)-part is also trivial, and its ($\Rightarrow$)-part follows from the observation that, given a proof of $\phi \in \mathcal{PL}$ in ${\mathtt{CnK}}$ we can turn it into a proof in $\mathtt{C}$ by omitting all the boxes and diamonds from its formulas. Then all the instances of \eqref{E:am1}--\eqref{E:am6} turn into $\mathtt{C}$-provable instances of $\phi\to\phi$ and every application of \eqref{E:Rnec} turns into a repetition of its premise. The last formula of this proof is still  $\phi \in \mathcal{PL}$, since none of its subformulas gets replaced. 
\end{proof}
Turning now to the relations between $\mathtt{CnK}$ and $\mathsf{CnK}$, we observe, first, that  $\mathtt{CnK}$ only allows us to deduce theorems of $\mathsf{CnK}$:
\begin{lemma}\label{L:modal-soundness}
	For every $\phi\in\mathcal{MD}$, if $\vdash_{\mathtt{CnK}}\phi$, then $\phi\in\mathsf{CnK}$.
\end{lemma}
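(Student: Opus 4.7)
The plan is a standard soundness-by-induction on the length of a $\mathtt{CnK}$-proof: verify that every axiomatic scheme is $\mathsf{CnK}$-valid and that both inference rules preserve $\mathsf{CnK}$-validity. Before doing the axiom-by-axiom check, I would first extend Lemma \ref{L:monotonicity} to arbitrary $\phi \in \mathcal{MD}$, that is, show by induction on $\phi$ that for all $\mathcal{M}\in \mathbb{FSM}$, all $\star\in\{+,-\}$, and all $w\leq v$, $\mathcal{M}, w\models_m^\star \phi$ implies $\mathcal{M}, v\models_m^\star \phi$. The propositional cases reproduce the proof of Lemma \ref{L:monotonicity}, the $\Box\pm$ cases follow from the transitivity of $\leq$ alone (any $v'\geq v$ is already $\geq w$), and the crucial $\Diamond\pm$ cases require exactly the Fischer-Servi condition \eqref{Cond:1}: if $wRu$ witnesses $\mathcal{M},w\models_m^\star \Diamond\psi$ and $w\leq v$, then \eqref{Cond:1} supplies a $u'\geq u$ with $vRu'$, and IH lifts $\psi$-satisfaction from $u$ to $u'$.

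With monotonicity in hand, the validity of the propositional axioms $\alpha_1$--$\alpha_{12}$ is inherited from the soundness of $\mathtt{C}$ for $\mathsf{C}$: the inductive clauses \eqref{Cl:con+}--\eqref{Cl:im-} are the same, so the verifications go through verbatim for arbitrary $\mathcal{MD}$-substitution instances. Axioms \eqref{E:am5} and \eqref{E:am6} are immediate by unfolding: both $\NEG\Box\phi$ and $\Box\NEG\phi$ unfold to ``for all $v\geq w$ and $u$ with $vRu$, $\mathcal{M},u\models_m^-\phi$'' using \eqref{Cl:neg+} and \eqref{Cl:box-}/\eqref{Cl:box+}, and similarly for $\Diamond$. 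Axioms \eqref{E:am1}--\eqref{E:am3} are routine: for \eqref{E:am1} and \eqref{E:am2} one unfolds, uses the transitivity of $\leq$ and the reflexivity at the target world to apply $\phi\to\psi$; for \eqref{E:am3} one just pushes the disjunction through the existential.

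The main obstacle is axiom \eqref{E:am4}, which is where the second Fischer-Servi pattern \eqref{Cond:2} does its work. Assume $\mathcal{M},w\models_m^+ \Diamond\phi\to\Box\psi$ and pick $v\geq w$, $u$ with $vRu$, and $u'\geq u$ with $\mathcal{M}, u'\models_m^+\phi$, aiming for $\mathcal{M}, u'\models_m^+\psi$. Condition \eqref{Cond:2} yields a $v'\geq v$ with $v'Ru'$, so $\mathcal{M}, v'\models_m^+\Diamond\phi$; by monotonicity of the hypothesis from $w$ to $v'\geq v\geq w$ we get $\mathcal{M}, v'\models_m^+\Box\psi$, whence $\mathcal{M}, u'\models_m^+\psi$, as required.

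For the rules, \eqref{E:mp} preserves validity by the clause \eqref{Cl:im+} and reflexivity of $\leq$, and \eqref{E:Rnec} preserves validity trivially: if $\phi$ is true at every pointed Fischer-Servi model, then in particular $\mathcal{M},u\models_m^+\phi$ for every $v\geq w$ and every $u$ with $vRu$, so $\mathcal{M},w\models_m^+\Box\phi$. A straightforward induction on the length of a $\mathtt{CnK}$-proof of $\phi$ then concludes $\phi\in\mathsf{CnK}$.
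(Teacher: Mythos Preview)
Your proposal is correct and is exactly the standard argument the paper has in mind when it says the proof ``is straightforward to obtain and is therefore omitted.'' The monotonicity extension you establish up front is stated separately in the paper as Lemma~\ref{L:mod-monotonicity} (placed after completeness), but your anticipation of it here is both necessary and correctly argued, including the use of \eqref{Cond:1} for the $\Diamond$-cases and \eqref{Cond:2} for axiom \eqref{E:am4}.
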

Its proof is straightforward to obtain and is therefore omitted. We are now going to show the converse of Lemma \ref{L:modal-soundness}, and we start our work by proving some theorems and derived rules in $\mathtt{CnK}$, which we collect in the following lemma: 
\begin{lemma}\label{L:mod-theorems}
	Let $\phi, \psi\in \mathcal{MD}$ and let $\mu \in \{\Box, \Diamond\}$. The following theorems and derived rules can be deduced in $\mathtt{CnK}$:
	\begin{align}
		\phi \to \psi &\vDdash \mu\phi \to \mu\psi\label{E:Rmod-box}\tag{RM$\mu$}\\
		\NEG\mu\phi&\Leftrightarrow\mu\NEG\phi\label{E:tm0}\tag{T0$\mu$}\\
		(\Box\phi\wedge\Box\psi)&\leftrightarrow\Box(\phi\wedge\psi)\label{E:tm1}\tag{T1}\\
		\Diamond(\phi\to\psi)&\to(\Box\phi\to\Diamond\psi)\label{E:tm4}\tag{T2}
	\end{align}
\end{lemma}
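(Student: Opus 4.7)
The plan is to treat the four items in an order that lets each build on the previous ones: first the rule \eqref{E:Rmod-box}, then \eqref{E:tm1}, then \eqref{E:tm0}, and finally \eqref{E:tm4}, which I expect to be the trickiest of the four.

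For \eqref{E:Rmod-box}, I would give a two-line derivation in each case. Starting from the premise $\phi\to\psi$, apply necessitation \eqref{E:Rnec} to obtain $\Box(\phi\to\psi)$, and then combine this either with \eqref{E:am1} (for $\mu=\Box$) or with \eqref{E:am2} (for $\mu=\Diamond$) via \eqref{E:mp} to arrive at $\mu\phi\to\mu\psi$. For \eqref{E:tm1}, the right-to-left inclusion $\Box(\phi\wedge\psi)\to(\Box\phi\wedge\Box\psi)$ is immediate by applying \eqref{E:Rmod-box} to $\alpha_3$ and $\alpha_4$ and combining conjunctively using standard propositional reasoning (imported via Part 1 of Lemma \ref{L:c}). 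The converse direction starts from the instance $\phi\to(\psi\to(\phi\wedge\psi))$ of $\alpha_5$: an application of \eqref{E:Rmod-box} yields $\Box\phi\to\Box(\psi\to(\phi\wedge\psi))$, and then an application of \eqref{E:am1} distributes the remaining $\Box$, producing $\Box\phi\to(\Box\psi\to\Box(\phi\wedge\psi))$, which rearranges into the goal by Deduction Theorem.

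For \eqref{E:tm0}, I unfold the abbreviation $\Leftrightarrow$ into the two biconditionals $\NEG\mu\phi\leftrightarrow\mu\NEG\phi$ and $\NEG\NEG\mu\phi\leftrightarrow\NEG\mu\NEG\phi$. The first is exactly \eqref{E:am5} (resp.\ \eqref{E:am6}). For the second, one side reduces via \eqref{Ax:double-neg} (imported into $\mathtt{CnK}$ through Lemma \ref{L:c}, Part 1) from $\NEG\NEG\mu\phi$ to $\mu\phi$; the other side is handled by rewriting $\NEG\mu\NEG\phi$ as $\mu\NEG\NEG\phi$ using \eqref{E:am5}/\eqref{E:am6} instantiated at $\NEG\phi$, and then rewriting $\mu\NEG\NEG\phi$ as $\mu\phi$ by applying \eqref{E:Rmod-box} to both directions of \eqref{Ax:double-neg}.

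For \eqref{E:tm4}, the main obstacle, the key idea is to move the ``modus ponens pattern'' inside the scope of a $\Box$ and then let \eqref{E:am2} carry it through. Concretely, start from the intuitionistic tautology $\phi\to((\phi\to\psi)\to\psi)$, which is available in $\mathtt{CnK}$ via $\alpha_1,\alpha_2$ and Lemma \ref{L:c}, Part 1. Apply \eqref{E:Rmod-box} to obtain $\Box\phi\to\Box((\phi\to\psi)\to\psi)$. Next, instantiate \eqref{E:am2} with $\phi\to\psi$ in place of the antecedent and $\psi$ in place of the consequent, which gives $\Box((\phi\to\psi)\to\psi)\to(\Diamond(\phi\to\psi)\to\Diamond\psi)$. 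Chaining the two yields $\Box\phi\to(\Diamond(\phi\to\psi)\to\Diamond\psi)$, and a final rearrangement of antecedents via the Deduction Theorem produces $\Diamond(\phi\to\psi)\to(\Box\phi\to\Diamond\psi)$, as desired.
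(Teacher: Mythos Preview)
Your proposal is correct and follows essentially the same route as the paper's own sketch: the paper also derives (RM$\mu$) via \eqref{E:Rnec} followed by \eqref{E:am1}/\eqref{E:am2}, treats \eqref{E:tm1} as in classical $\mathsf{K}$, obtains \eqref{E:tm0} from \eqref{E:am5}/\eqref{E:am6} together with double negation, and proves \eqref{E:tm4} by starting from $\phi\to((\phi\to\psi)\to\psi)$, necessitating, and then invoking \eqref{E:am2}. Your write-up is simply more explicit than the paper's terse sketch; in particular, you make visible the intermediate use of \eqref{E:am1} (hidden inside your appeal to RM$\Box$) in the proof of \eqref{E:tm4}, which the paper leaves implicit.
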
 
\begin{proof}[Proof (a sketch)]
(RM$\Box$): The proof is the same as in the case of the well-known basic classical modal logic $\mathsf{K}$. (RM$\Diamond$): Same as for (RM$\Box$), but use \eqref{E:am2} instead of \eqref{E:am1}. 

Next, (T0$\Box$) (resp. (T0$\Diamond$)) is entailed by \eqref{E:am5} (resp. \eqref{E:am6}) and \eqref{E:t-double-neg}, the proof of \eqref{E:tm1} is the same as in $\mathsf{K}$, and \eqref{E:tm4} is proved by applying to $\phi\to((\phi\to\psi)\to \psi)$ first \eqref{E:Rnec} and then \eqref{E:am2}.  
\end{proof}

For the rest of this subsection we will write (in)consistency (resp. completeness, maximality) meaning $\mathtt{CnK}$-(in)consistency (resp. $MD$-completeness, $\mathtt{CnK}[MD]$-maximality). Next, we observe that Lemma \ref{L:c}.1 allows for the following equivalent definition of inconsistency:
\begin{lemma}\label{L:mod-alt-consistency}
	A bi-set $(\Gamma, \Delta)\in \mathcal{P}(\mathcal{MD})\times\mathcal{P}(\mathcal{MD})$ is $\mathtt{CnK}$-inconsistent iff, for some $m,n\in \omega$ some $\phi_1,\ldots,\phi_n\in \Gamma$ and some $\psi_1,\ldots,\psi_m\in \Delta$ we have: $
	\bigwedge^n_{i = 1}\phi_i\vdash\bigvee^m_{j = 1}\psi_j$, or, equivalently, $\vdash
	\bigwedge^n_{i = 1}\phi_i\to\bigvee^m_{j = 1}\psi_j$.
\end{lemma}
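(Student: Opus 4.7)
The plan is to prove both directions of the equivalence by relying on Lemma \ref{L:c}.1 (which gives us the Deduction Theorem for $\mathtt{CnK}$) and on the fact that $\mathtt{S}_0 \subseteq \mathtt{CnK}$ supplies all the standard intuitionistic manipulation of conjunctions and disjunctions. The equivalence between the two forms of the right-hand side — $\bigwedge_{i=1}^n \phi_i \vdash_\mathtt{CnK} \bigvee_{j=1}^m \psi_j$ and $\vdash_\mathtt{CnK} \bigwedge_{i=1}^n \phi_i \to \bigvee_{j=1}^m \psi_j$ — is an immediate application of Deduction Theorem in one direction and of \eqref{E:mp} in the other, so it suffices to establish the iff using whichever form is more convenient.

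For the ($\Leftarrow$) direction I would start with elements $\phi_1,\ldots,\phi_n \in \Gamma$ and $\psi_1,\ldots,\psi_m \in \Delta$ such that $\bigwedge_{i=1}^n \phi_i \vdash_\mathtt{CnK} \bigvee_{j=1}^m \psi_j$, and explicitly build a witnessing derivation sequence for inconsistency: first list $\phi_1,\ldots,\phi_n$ (all legal, since they lie in $\Gamma$), then use iterated applications of $\alpha_5$ and \eqref{E:mp} to derive $\bigwedge_{i=1}^n \phi_i$, then splice in the given derivation of $\bigvee_{j=1}^m \psi_j$ from $\bigwedge_{i=1}^n \phi_i$. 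The final formula is a disjunction of elements of $\Delta$, which is precisely what the definition of inconsistency requires.

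For the ($\Rightarrow$) direction I would fix a witnessing sequence $\chi_1,\ldots,\chi_r$ and let $\phi_1,\ldots,\phi_n$ enumerate those formulas in the sequence which lie in $\Gamma$ (finitely many); every other formula in the sequence is either provable in $\mathtt{CnK}$ or arises by \eqref{E:mp}, so the whole sequence is a derivation showing $\phi_1,\ldots,\phi_n \vdash_\mathtt{CnK} \chi_r$. By iterated use of the Deduction Theorem I obtain $\vdash_\mathtt{CnK} \phi_1 \to (\phi_2 \to \cdots \to (\phi_n \to \chi_r)\cdots)$, and then a standard intuitionistic manipulation inside $\mathtt{S}_0$ converts this into $\vdash_\mathtt{CnK} \bigwedge_{i=1}^n \phi_i \to \chi_r$. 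Since $\chi_r = \psi_1 \vee \cdots \vee \psi_m$ with $\psi_1,\ldots,\psi_m \in \Delta$, this yields the desired form.

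I do not anticipate any genuine obstacle; the only delicate point is the handling of the degenerate boundary cases (e.g.\ $n = 0$, which corresponds to $\vdash_\mathtt{CnK} \bigvee_{j=1}^m \psi_j$, and $m = 0$, which by the definition of inconsistency cannot occur since the last formula of a witnessing sequence must be a nonempty disjunction of elements of $\Delta$). These can be dispatched by reading the empty conjunction as an arbitrary $\mathtt{CnK}$-theorem (e.g.\ $p_1 \to p_1$) and by noting that $\Delta = \emptyset$ makes both sides of the equivalence vacuously false, so the iff still holds.
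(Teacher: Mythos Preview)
Your proposal is correct and matches the paper's approach: the paper does not spell out a proof at all but simply introduces the lemma with the remark that Lemma~\ref{L:c}.1 (the Deduction Theorem for $\mathtt{CnK}$) yields this equivalent description of inconsistency, which is precisely the tool you invoke together with routine $\mathtt{S}_0$-reasoning. Your more detailed unpacking of both directions, including the handling of the degenerate cases $n=0$ and $m=0$, is a faithful expansion of what the paper leaves implicit.
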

The next two lemmas present some properties of the consistent and maximal bi-sets, respectively:
\begin{lemma}\label{L:mod-consistent}
	Let	$(\Gamma, \Delta)\in \mathcal{P}(\mathcal{MD})\times\mathcal{P}(\mathcal{MD})$ be consistent. Then the following statements hold:
	\begin{enumerate}
		\item For every $\phi \in \mathcal{MD}$, either $(\Gamma \cup \{\phi\}, \Delta)$ or $(\Gamma, \Delta\cup \{\phi\})$ is consistent.
		
		\item For every $\phi \to \psi \in \Delta$, $(\Gamma \cup \{\phi\}, \{\psi\})$ is consistent.
		
		\item For every $\Box\phi\in \Delta$, the bi-set $(\{\psi\mid\Box\psi \in \Gamma\},\{\phi\})$ is consistent.
		
		\item For every $\Diamond\phi\in \Gamma$, the bi-set $(\{\phi\}\cup\{\psi\mid\Box\psi \in \Gamma\},\{\chi\mid\Diamond\chi\in\Delta\})$ is consistent.
	\end{enumerate}
\end{lemma}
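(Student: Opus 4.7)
The plan is to prove each of the four parts by contradiction: assuming the putatively consistent sub-bi-set is in fact inconsistent, I would apply Lemma \ref{L:mod-alt-consistency} to unpack that inconsistency into the $\mathtt{CnK}$-provability of an implication of the form $\bigwedge_{i} \phi_i \to \bigvee_{j} \psi_j$ with all $\phi_i, \psi_j$ drawn from the finite parts of the sub-bi-set, and then manipulate this witness --- using the axioms of $\mathtt{CnK}$ and the derived rules collected in Lemma \ref{L:mod-theorems} --- until it contradicts the consistency of $(\Gamma, \Delta)$ itself. All four parts share this skeleton; they differ only in the logical ingredients required.

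Parts 1 and 2 are essentially propositional. For Part 1, I would assume both $(\Gamma \cup \{\phi\}, \Delta)$ and $(\Gamma, \Delta \cup \{\phi\})$ are inconsistent, extract witnesses of the form $\vdash A \wedge \phi \to B$ and $\vdash A' \to \phi \vee B'$ where $A, A'$ are finite conjunctions of formulas from $\Gamma$ and $B, B'$ are finite disjunctions of formulas from $\Delta$, then combine them inside $\mathtt{C} \subseteq \mathtt{CnK}$ by case analysis on $\phi \vee B'$ (via $\alpha_8$) to obtain a single witness $\vdash A \wedge A' \to B \vee B'$ for the inconsistency of $(\Gamma, \Delta)$. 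Part 2 is a one-line application of the Deduction Theorem from Lemma \ref{L:c}.1: inconsistency of $(\Gamma \cup \{\phi\}, \{\psi\})$ yields $\vdash A \wedge \phi \to \psi$ for a finite $A$ built from $\Gamma$, which becomes $\vdash A \to (\phi \to \psi)$ and directly contradicts the consistency of $(\Gamma, \Delta)$ since $\phi \to \psi \in \Delta$.

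Part 3 introduces the modal apparatus. From the inconsistency of $(\{\psi \mid \Box \psi \in \Gamma\}, \{\phi\})$ one extracts $\vdash \bigwedge_i \psi_i \to \phi$ with each $\Box \psi_i \in \Gamma$. Applying \eqref{E:Rnec} and then \eqref{E:am1} gives $\vdash \Box(\bigwedge_i \psi_i) \to \Box \phi$, and an iteration of \eqref{E:tm1} pushes $\Box$ across the finite conjunction to yield $\vdash \bigwedge_i \Box \psi_i \to \Box \phi$. Since $\Box \phi \in \Delta$ and every $\Box \psi_i \in \Gamma$, this contradicts the consistency of $(\Gamma, \Delta)$.

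Part 4 is the main obstacle, since its derivation has to juggle $\Box$-formulas from $\Gamma$, the single witness $\Diamond \phi \in \Gamma$, and $\Diamond$-formulas from $\Delta$ simultaneously. The plan is to extract a witness $\vdash \phi \wedge \bigwedge_i \psi_i \to \bigvee_j \chi_j$ with $\Box \psi_i \in \Gamma$ and $\Diamond \chi_j \in \Delta$; rewrite it as $\vdash \bigwedge_i \psi_i \to (\phi \to \bigvee_j \chi_j)$; apply \eqref{E:Rnec}, \eqref{E:am1}, and \eqref{E:tm1} exactly as in Part 3 to obtain $\vdash \bigwedge_i \Box \psi_i \to \Box(\phi \to \bigvee_j \chi_j)$; then chain \eqref{E:am2} to pass to $\Diamond \phi \to \Diamond \bigvee_j \chi_j$, and iterate \eqref{E:am3} to distribute $\Diamond$ over the finite disjunction into $\bigvee_j \Diamond \chi_j$. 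Composing these yields $\vdash \bigwedge_i \Box \psi_i \wedge \Diamond \phi \to \bigvee_j \Diamond \chi_j$, contradicting the consistency of $(\Gamma, \Delta)$ because $\Box \psi_i, \Diamond \phi \in \Gamma$ and $\Diamond \chi_j \in \Delta$. Degenerate cases where the conjunction or disjunction is empty are handled by the usual reading of empty conjunctions as eliminable antecedents.
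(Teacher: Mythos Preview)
Your proposal is correct and follows essentially the same approach as the paper: for Parts 1--2 the paper simply defers to the intuitionistic/$\mathsf{C}$ case, and for Parts 3--4 it argues by contradiction exactly as you do, extracting a finite witness, applying \eqref{E:Rmod-box} and \eqref{E:tm1} for the $\Box$-side, and then \eqref{E:am2} followed by \eqref{E:am3} for the $\Diamond$-side in Part 4. The only cosmetic difference is that the paper phrases the final steps as deductions $\Gamma\vdash_{\mathtt{CnK}}\cdots$ rather than as provable implications, but the content is the same.
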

\begin{proof}
	Parts 1 and 2 are proved as in the case of $\mathsf{C}$ (in which respect $\mathsf{C}$ also just repeats the similar reasoning for intuitionistic logic). As for Part 3, assume that  $\Box\phi\in \Delta$, and assume, towards  contradiction, that the bi-set $(\{\psi\mid\Box\psi \in \Gamma\},\{\phi\})$ is inconsistent. Then there must be $\Box\psi_1,\ldots,\Box\psi_n \in \Gamma$ such that, for $\psi:= \bigwedge^n_{i= 1}\psi_i$, we have $\psi\vdash_{\mathtt{CnK}} \phi$, hence also $\vdash_{\mathtt{CnK}}\psi\to\phi$ by Lemma \ref{L:c}.1 and $\vdash_{\mathtt{CnK}}\Box\psi\to\Box\phi$ by \eqref{E:Rmod-box}. Next, \eqref{E:tm1} implies that $\Gamma\vdash_{\mathtt{CnK}}\Box\psi$, whence also $\Gamma\vdash_{\mathtt{CnK}}\Box\phi$. But then  the assumption that $\Box\phi \in \Delta$ clearly contradicts the consistency of $(\Gamma,\Delta)$. The obtained contradiction shows that $(\{\chi\mid\phi\boxto\chi \in \Gamma\},\{\psi\})$ must be consistent.
	
	Finally, as for Part 4, assume that $\Diamond\phi \in \Gamma$, and assume, towards contradiction, that the bi-set $(\{\phi\}\cup\{\psi\mid\Box\psi \in \Gamma\},\{\chi\mid\Diamond\chi\in\Delta\})$ is inconsistent. Then there must be some $\Box\psi_1,\ldots,\Box\psi_n \in \Gamma$ and $\Diamond\chi_1,\ldots,\Diamond\chi_m \in \Delta$ such that, for $\psi:= \bigwedge^n_{i= 1}\psi_i$ and $\chi:= \bigvee^m_{j= 1}\chi_j$,  we have $\psi, \phi\vdash_{\mathtt{CnK}} \chi$. By \eqref{E:tm1}, we  get $\Gamma\vdash_{\mathtt{CnK}} \Box\psi$. On the other hand, by Lemma \ref{L:c}.1, we get $\vdash_{\mathtt{CnK}}\psi\to(\phi\to\chi)$, whence, by \eqref{E:Rmod-box}, it follows that $\vdash_{\mathtt{CnK}}\Box\psi\to\Box(\phi\to\chi)$. Hence, $\Gamma\vdash_{\mathtt{CnK}} \Box(\phi\to\chi)$, which entails, by \eqref{E:am2}, that $\Gamma\vdash_{\mathtt{CnK}} \Diamond\phi\to\Diamond\chi$, and, by the choice of $\phi$, that $\Gamma\vdash_{\mathtt{CnK}} \Diamond\chi$. Now \eqref{E:am3} yields that $\Gamma\vdash_{\mathtt{CnK}} \Diamond\chi_1\vee\ldots\vee\Diamond\chi_m$, which contradicts the consistency of $(\Gamma, \Delta)$.
\end{proof} 
\begin{lemma}\label{L:mod-maximal}
	Let	$(\Gamma, \Delta), (\Gamma_0,\Delta_0), (\Gamma_1,\Delta_1) \in \mathcal{P}(\mathcal{MD})\times\mathcal{P}(\mathcal{MD})$ be maximal bi-sets such that $\{\phi\mid \Box\phi\in \Gamma_0\}\subseteq \Gamma_1$, and $\{\Diamond\phi\mid \phi\in \Gamma_1\}\subseteq \Gamma_0$, and let $\phi,\psi\in\mathcal{MD}$. Then the following statements are true:
	\begin{enumerate}
		\item If $\Gamma\vdash\phi$, then $\phi\in \Gamma$.
		
		\item $\phi\wedge\psi\in\Gamma$ iff $\phi, \psi\in \Gamma$.
		
		\item $\phi\vee\psi \in \Gamma$ iff $\phi \in \Gamma$ or $\psi\in\Gamma$.
		
		\item If $\phi\to\psi, \phi \in \Gamma$, then $\psi \in \Gamma$.
		
%
%
		
		\item If $\Gamma_0 \subseteq \Gamma$,  then
		$(\Gamma_1 \cup \{\phi\mid \Box\phi\in \Gamma\}, \{\phi\mid \Diamond\phi\in \Delta\})$ is consistent.
		
		\item If $\Gamma_1 \subseteq \Gamma$, then $(\Gamma_0 \cup \{\Diamond\phi\mid \phi\in \Gamma\}, \{\Box\phi\mid \phi\in \Delta\})$ is consistent.
	\end{enumerate}
\end{lemma}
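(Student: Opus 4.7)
The plan is to dispatch Parts 1--4 by standard Lindenbaum-style manipulations and then to prove Parts 5 and 6 by contradiction, extracting finite witnesses via Lemma \ref{L:mod-alt-consistency} and pushing them through the modalities using the derived rules and theorems of Lemma \ref{L:mod-theorems}. For Part 1, if $\phi \notin \Gamma$ then $\phi \in \Delta$ by $MD$-completeness, so $\Gamma \vdash_{\mathtt{CnK}} \phi$ together with $\phi \in \Delta$ at once contradicts $\mathtt{CnK}$-consistency. Parts 2--4 follow straightforwardly from the corresponding $\mathtt{S}_0$-axioms ($\alpha_3$--$\alpha_5$ for $\wedge$, $\alpha_6$--$\alpha_8$ together with the disjunctive format of $\vdash_{\mathtt{CnK}}$ for $\vee$, and \eqref{E:mp} for $\to$), each closed off by Part 1.

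For Part 5, assume the bi-set is inconsistent with witnesses $\xi_1, \ldots, \xi_k \in \Gamma_1$, $\Box\psi_1, \ldots, \Box\psi_n \in \Gamma$, $\Diamond\chi_1, \ldots, \Diamond\chi_m \in \Delta$, and set $\xi := \bigwedge_i \xi_i$, $\psi := \bigwedge_j \psi_j$, $\chi := \bigvee_l \chi_l$. Part 2 applied to $\Gamma_1$ gives $\xi \in \Gamma_1$, whence by the second hypothesis $\Diamond\xi \in \Gamma_0 \subseteq \Gamma$; Part 2 applied to $\Gamma$ together with iterated \eqref{E:tm1} gives $\Box\psi \in \Gamma$. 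The derivation $\xi, \psi \vdash_{\mathtt{CnK}} \chi$ and the Deduction Theorem of Lemma \ref{L:c}.1 yield $\vdash_{\mathtt{CnK}} \xi \to (\psi \to \chi)$; (RM$\Diamond$) then delivers $\Diamond(\psi \to \chi) \in \Gamma$, \eqref{E:tm4} delivers $\Diamond\chi \in \Gamma$, and iterated \eqref{E:am3} together with Part 3 deliver some $\Diamond\chi_l \in \Gamma$, contradicting $\Diamond\chi_l \in \Delta$.

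For Part 6 the dual argument hinges on \eqref{E:am4}. Given witnesses $\chi_i \in \Gamma_0$, $\phi_j \in \Gamma$, $\psi_l \in \Delta$, set $\chi := \bigwedge_i \chi_i$, $\phi := \bigwedge_j \phi_j$, $\psi := \bigvee_l \psi_l$. The key idea is to collapse the several $\Diamond\phi_j$s into a single $\Diamond\phi$ via (RM$\Diamond$) applied to $\phi \to \phi_j$, and to collapse the conclusion $\Box\psi_1 \vee \ldots \vee \Box\psi_m$ into $\Box\psi$ via (RM$\Box$) applied to $\psi_l \to \psi$, reducing the derivation to $\chi \vdash_{\mathtt{CnK}} \Diamond\phi \to \Box\psi$. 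Since $\chi \in \Gamma_0$ by Part 2 for $\Gamma_0$, Parts 1 and 4 put $\Diamond\phi \to \Box\psi \in \Gamma_0$; the critical step \eqref{E:am4} then upgrades this to $\Box(\phi \to \psi) \in \Gamma_0$. The first hypothesis yields $\phi \to \psi \in \Gamma_1 \subseteq \Gamma$, Part 4 (using $\phi \in \Gamma$ from Part 2) then gives $\psi \in \Gamma$, and Part 3 picks out some $\psi_l \in \Gamma$, contradicting $\psi_l \in \Delta$.

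The main obstacle I anticipate is the careful bookkeeping for how conjunctions and disjunctions distribute across $\Box$ and $\Diamond$: Part 5 hinges on chaining (RM$\Diamond$), \eqref{E:tm4}, and \eqref{E:am3} in exactly the right order, while Part 6 depends on first collapsing both sides of the derivation into the single conditional $\Diamond\phi \to \Box\psi$ before \eqref{E:am4} can be invoked. Because \eqref{E:am5} and \eqref{E:am6} replace the classical $\Box$/$\Diamond$ duality, the usual classical shortcuts (such as reading $\NEG\Box\phi$ as $\Diamond\NEG\phi$) are unavailable and every distribution step must be justified inside the weaker $\mathsf{C}$-style proof theory inherited from $\mathtt{C}$.
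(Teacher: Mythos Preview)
Your proposal is correct and follows essentially the same route as the paper: Parts 1--4 are dismissed as standard, and Parts 5 and 6 are handled by contradiction via finite witnesses, using \eqref{E:tm1}, \eqref{E:tm4}, \eqref{E:am3} for Part 5 and the collapse-then-\eqref{E:am4} manoeuvre for Part 6. The only cosmetic difference is that in Part 5 the paper first places $\psi\to\chi$ into $\Gamma_1$ via Part 1 and then lifts it to $\Diamond(\psi\to\chi)\in\Gamma_0\subseteq\Gamma$ directly through the hypothesis $\{\Diamond\phi\mid\phi\in\Gamma_1\}\subseteq\Gamma_0$, whereas you instead apply (RM$\Diamond$) to the theorem $\xi\to(\psi\to\chi)$ and combine with $\Diamond\xi\in\Gamma$; both reach the same intermediate conclusion.
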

\begin{proof}
	The Parts 1--4 are handled as in the case of intuitionistic logic. As for Part 5, assume its hypothesis and suppose, towards contradiction, that $(\Gamma_1 \cup \{\phi\mid \Box\phi\in \Gamma\}, \{\phi\mid \Diamond\phi\in \Delta\})$  is inconsistent. Then there must exist some $\phi_1,\ldots,\phi_n \in \Gamma_1$, $\Box\psi_1,\ldots,\Box\psi_m\in \Gamma$ and some $\Diamond\chi_1,\ldots,\Diamond\chi_k\in \Delta$, such that, for $\phi:= \bigwedge^n_{i=1}\phi_i$, $\psi:= \bigwedge^m_{j=1}\psi_j$, and $\chi:= \bigvee^k_{r=1}\chi_r$ we have $\phi,\psi\vdash_{\mathtt{CnK}}\chi$. But then, by Lemma \ref{L:c}.1, $\phi\vdash_{\mathtt{CnK}}\psi\to\chi$, whence, by Part 1, $\psi\to\chi\in \Gamma_1$. Recall that we assume $\{\Diamond\phi\mid \phi\in \Gamma_1\}\subseteq \Gamma_0\subseteq \Gamma$, which entails that $\Diamond(\psi\to\chi)\in \Gamma$. By \eqref{E:tm4}, $\Gamma\vdash_{\mathtt{CnK}} \Box\psi\to\Diamond\chi$, whereas, by \eqref{E:tm1} $\Gamma\vdash_{\mathtt{CnK}} \Box\psi$. Now we have $\Gamma\vdash_{\mathtt{CnK}} \Diamond\chi$, whence \eqref{E:am3} implies $\Gamma\vdash_{\mathtt{CnK}} \Diamond\chi_1\vee\ldots\vee\Diamond\chi_m$, which contradicts the consistency of $(\Gamma, \Delta)$. 
		
	For Part 6,  assume its hypothesis and suppose that $(\Gamma_0 \cup \{\Diamond\phi\mid \phi\in \Gamma\}, \{\Box\phi\mid \phi\in \Delta\})$ is inconsistent. Then there must exist some $\phi_1,\ldots,\phi_n \in \Gamma_0$, $\psi_1,\ldots,\psi_m\in \Gamma$ and some $\chi_1,\ldots,\chi_k\in \Delta$, such that $\bigwedge^n_{i=1}\phi_i,\bigwedge^m_{j = 1}\Diamond\psi_j\vdash_{\mathtt{CnK}}\bigvee^k_{r = 1}\Box\chi_r$.
	Again, we set $\phi:= \bigwedge^n_{i=1}\phi_i$, $\psi:= \bigwedge^m_{j=1}\psi_j$, and $\chi:= \bigvee^k_{r=1}\chi_r$, and reason as follows. By Lemma \ref{L:c}.1, we get $\Gamma_0\vdash_{\mathtt{CnK}}\bigwedge^m_{j = 1}\Diamond\psi_j\to \bigvee^k_{r = 1}\Box\chi_r$, whence, by \eqref{E:Rmod-box} and ($\alpha_3$)--($\alpha_6$), we obtain that $\Gamma_0\vdash_{\mathtt{CnK}}\Diamond\psi\to \Box\chi$. Applying now \eqref{E:am4}, we get that $\Gamma_0\vdash_{\mathtt{CnK}}\Box(\psi\to \chi)$, whence, by Part 1 and the choice of $\Gamma_1$ it follows that $\Box(\psi\to \chi) \in \Gamma_0$ and $\psi\to \chi \in \Gamma_1\subseteq \Gamma$. Since clearly $\Gamma\vdash_{\mathtt{CnK}}\psi$, the latter contradicts the consistency of $(\Gamma, \Delta)$.
\end{proof}
We observe, next, that we can use the usual Lindenbaum construction to extend every consistent bi-set to a maximal one:
\begin{lemma}\label{L:mod-lindenbaum}
	Let $(\Gamma, \Delta)\in \mathcal{P}(\mathcal{MD})\times\mathcal{P}(\mathcal{MD})$ be consistent. Then there exists a maximal $(\Xi, \Theta)\in \mathcal{P}(\mathcal{MD})\times\mathcal{P}(\mathcal{MD})$ such that $\Gamma \subseteq \Xi$ and $\Delta \subseteq \Theta$. 
\end{lemma}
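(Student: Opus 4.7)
The plan is the standard Lindenbaum-style construction, adapted to the bi-set setting by exploiting the finitary character of inconsistency and the extension lemma (Lemma~\ref{L:mod-consistent}.1).

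First, I would enumerate the formulas of $\mathcal{MD}$ as $\phi_0, \phi_1, \ldots$ (this is possible since $MD$ is a countable language). I then define a chain $(\Gamma_n, \Delta_n)_{n \in \omega}$ of consistent bi-sets by recursion: set $(\Gamma_0, \Delta_0) := (\Gamma, \Delta)$; given $(\Gamma_n, \Delta_n)$, apply Lemma~\ref{L:mod-consistent}.1 to $\phi_n$, which guarantees that at least one of $(\Gamma_n \cup \{\phi_n\}, \Delta_n)$ and $(\Gamma_n, \Delta_n \cup \{\phi_n\})$ is consistent, and let $(\Gamma_{n+1}, \Delta_{n+1})$ be such a consistent extension (breaking ties, say, in favour of the left component).

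Having built the chain, I take the union $(\Xi, \Theta) := \bigl(\bigcup_{n \in \omega} \Gamma_n, \bigcup_{n \in \omega} \Delta_n\bigr)$ and claim that this is the desired maximal bi-set. The inclusions $\Gamma \subseteq \Xi$ and $\Delta \subseteq \Theta$ are immediate from $(\Gamma_0, \Delta_0) = (\Gamma, \Delta)$. Completeness ($\Xi \cup \Theta = \mathcal{MD}$) follows from the enumeration, since $\phi_n \in \Gamma_{n+1} \cup \Delta_{n+1} \subseteq \Xi \cup \Theta$ for every $n$.

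The one step that requires a moment's thought is consistency of $(\Xi, \Theta)$, and this is precisely where Lemma~\ref{L:mod-alt-consistency} carries the weight. If $(\Xi, \Theta)$ were inconsistent, then there would exist finite $\{\phi_{i_1}, \ldots, \phi_{i_n}\} \subseteq \Xi$ and $\{\psi_{j_1}, \ldots, \psi_{j_m}\} \subseteq \Theta$ with $\bigwedge_{k} \phi_{i_k} \vdash_{\mathtt{CnK}} \bigvee_{l} \psi_{j_l}$; but choosing $N$ large enough that all these formulas already appear in $\Gamma_N$ or $\Delta_N$ respectively would contradict the consistency of $(\Gamma_N, \Delta_N)$. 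Since none of the individual stages of the construction is hard, the only mild obstacle is just the bookkeeping to confirm that inconsistency is genuinely finitary in the sense of Lemma~\ref{L:mod-alt-consistency}, which has already been established. Hence $(\Xi, \Theta)$ is maximal, as required.
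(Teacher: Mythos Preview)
Your proposal is correct and is precisely the ``usual Lindenbaum construction'' that the paper invokes without spelling out; the paper gives no explicit proof here, merely observing that the standard argument applies, and your write-up faithfully supplies those details using exactly the two ingredients the paper has prepared (Lemma~\ref{L:mod-consistent}.1 for the extension step and Lemma~\ref{L:mod-alt-consistency} for the finitary character of inconsistency).
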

Next, we define the canonical model $\mathcal{M}_m$ for $\mathtt{CnK}$:
\begin{definition}\label{D:mod-canonical-model}
	The structure $\mathcal{M}_m$ is the tuple $(W_m, \leq_m, R_m, V^+_m, V^-_m)$ such that:
	\begin{itemize}
		\item $W_m:=\{(\Gamma, \Delta)\in \mathcal{P}(\mathcal{MD})\times\mathcal{P}(\mathcal{MD})\mid (\Gamma, \Delta)\text{ is maximal }\}$.
		
		\item $(\Gamma_0,\Delta_0)\leq_m(\Gamma_1,\Delta_1)$ iff $\Gamma_0\subseteq\Gamma_1$ for all $(\Gamma_0,\Delta_0),(\Gamma_1,\Delta_1)\in W_m$.
		
		\item For all $(\Gamma_0,\Delta_0),(\Gamma_1,\Delta_1)\in W_m$ we have $((\Gamma_0,\Delta_0),(\Gamma_1,\Delta_1)) \in R_m$ iff we have both:
		\begin{itemize}
			\item $\{\phi\mid\Box\phi\in \Gamma_0\}\subseteq \Gamma_1$.
			
			\item $\{\Diamond\psi\mid\psi\in \Gamma_1\}\subseteq \Gamma_0$.
		\end{itemize}
		
		\item $V^+_m(p):=\{(\Gamma,\Delta)\in W_m\mid p\in\Gamma\}$ for every $p \in Prop$.
		
		\item $V^-_m(p):=\{(\Gamma,\Delta)\in W_m\mid \NEG p\in\Gamma\}$ for every $p \in Prop$.	
	\end{itemize}
\end{definition}
First of all, we have to make sure that we have indeed just defined a model:
\begin{lemma}\label{L:mod-canonical-model}
	$\mathcal{M}_c \in \mathbb{FSM}$.	
\end{lemma}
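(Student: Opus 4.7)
The goal is to verify every clause of Definition~\ref{D:Krip} for $\mathcal{M}_m$ together with the two Fischer--Servi conditions \eqref{Cond:1} and \eqref{Cond:2}. The routine parts can be handled quickly: $W_m \neq \emptyset$ because the empty bi-set $(\emptyset,\emptyset)$ is trivially consistent (no derivation can end in a disjunction of formulas from $\emptyset$), so Lemma~\ref{L:mod-lindenbaum} yields a maximal extension; reflexivity and transitivity of $\leq_m$ are inherited from $\subseteq$; $R_m \subseteq W_m \times W_m$ holds by construction; and monotonicity of $V^+_m$ and $V^-_m$ under $\leq_m$ follows directly from the definitions, since an upward step in $\leq_m$ can only enlarge the positive component of a bi-set.

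The substantive content is the verification of \eqref{Cond:1} and \eqref{Cond:2}, and parts~5 and~6 of Lemma~\ref{L:mod-maximal} are designed precisely for these. For \eqref{Cond:1}, given $(\Gamma_0,\Delta_0) \leq_m (\Gamma_0',\Delta_0')$ and $(\Gamma_0,\Delta_0)\mathrel{R_m}(\Gamma_1,\Delta_1)$, I would instantiate Lemma~\ref{L:mod-maximal}.5 with $(\Gamma,\Delta) := (\Gamma_0',\Delta_0')$ to obtain consistency of the bi-set $(\Gamma_1 \cup \{\phi \mid \Box\phi \in \Gamma_0'\},\; \{\phi \mid \Diamond\phi \in \Delta_0'\})$, and then apply Lemma~\ref{L:mod-lindenbaum} to extend it to a maximal $(\Gamma_1',\Delta_1')$. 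The inclusions $\Gamma_1 \subseteq \Gamma_1'$ (which gives $(\Gamma_1,\Delta_1) \leq_m (\Gamma_1',\Delta_1')$) and $\{\phi \mid \Box\phi \in \Gamma_0'\} \subseteq \Gamma_1'$ hold by construction; the remaining clause $\{\Diamond\psi \mid \psi \in \Gamma_1'\} \subseteq \Gamma_0'$ needed for $(\Gamma_0',\Delta_0')\mathrel{R_m}(\Gamma_1',\Delta_1')$ is the only nontrivial point, and it falls to a short maximality/consistency argument: if $\psi \in \Gamma_1'$ but $\Diamond\psi \notin \Gamma_0'$, then $MD$-completeness forces $\Diamond\psi \in \Delta_0'$, whence $\psi \in \{\phi \mid \Diamond\phi \in \Delta_0'\} \subseteq \Delta_1'$, contradicting the consistency of $(\Gamma_1',\Delta_1')$.

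The argument for \eqref{Cond:2} is dual: starting from $(\Gamma_0,\Delta_0)\mathrel{R_m}(\Gamma_1,\Delta_1)$ and $(\Gamma_1,\Delta_1) \leq_m (\Gamma_1',\Delta_1')$, I apply Lemma~\ref{L:mod-maximal}.6 with $(\Gamma,\Delta) := (\Gamma_1',\Delta_1')$ to obtain consistency of the bi-set $(\Gamma_0 \cup \{\Diamond\phi \mid \phi \in \Gamma_1'\},\; \{\Box\phi \mid \phi \in \Delta_1'\})$, extend it to a maximal $(\Gamma_0',\Delta_0')$ via Lemma~\ref{L:mod-lindenbaum}, and reduce the last remaining clause $\{\phi \mid \Box\phi \in \Gamma_0'\} \subseteq \Gamma_1'$ to a one-line consistency argument: if $\Box\phi \in \Gamma_0'$ and $\phi \notin \Gamma_1'$, then $\phi \in \Delta_1'$ by $MD$-completeness, so $\Box\phi \in \{\Box\psi \mid \psi \in \Delta_1'\} \subseteq \Delta_0'$, contradicting consistency. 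The main obstacle I anticipate is essentially bookkeeping---ensuring the $\Delta$-side data is preserved through Lindenbaum so that the ``$\Diamond$-up'' and ``$\Box$-down'' inclusions defining $R_m$ can be recovered by these short arguments; all the genuine modal reasoning involving \eqref{E:am2}, \eqref{E:am3}, and \eqref{E:am4} is already packaged inside Lemma~\ref{L:mod-maximal}.5--6.
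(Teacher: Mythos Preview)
Your proposal is correct and follows essentially the same route as the paper: invoke Lemma~\ref{L:mod-maximal}.5 for \eqref{Cond:1} and Lemma~\ref{L:mod-maximal}.6 for \eqref{Cond:2}, extend the resulting consistent bi-set via Lindenbaum, and close out the remaining $R_m$-clause with the short completeness/consistency argument you give. The only cosmetic difference is that the paper witnesses $W_m\neq\emptyset$ via the bi-set $(\emptyset,\{p\})$ rather than $(\emptyset,\emptyset)$, which sidesteps any worry about what an empty disjunction means in the definition of $\vdash_{\mathtt{CnK}}$.
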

\begin{proof}
	We show, first, that $W_m \neq \emptyset$. Indeed, choose any $p \in Prop$ and consider the bi-set $(\emptyset,\{p\})$. Clearly, $p\in\mathcal{PL}\setminus\mathsf{C}$, whence Lemma \ref{L:c} implies that $(\emptyset,\{p\})$ is consistent. Therefore, by Lemma \ref{L:mod-lindenbaum}, there must exist a maximal $(\Gamma, \Delta)\supseteq(\emptyset,\{p\})$; and we will have, by Definition \ref{D:mod-canonical-model}, that $(\Gamma, \Delta)\in W_m$.
	
	It is also clear from Definition  \ref{D:mod-canonical-model} that $\leq_m$ is a pre-order, and that $R_m\subseteq W_m\times W_m$. So it only remains to check the satisfaction of conditions \eqref{Cond:1} and \eqref{Cond:2} from Definition \ref{D:modal-model}. As for \eqref{Cond:1}, assume that $(\Gamma,\Delta), (\Gamma_0,\Delta_0), (\Gamma_1,\Delta_1) \in W_m$ are such that we have $(\Gamma,\Delta) \mathrel{\geq_m}(\Gamma_0,\Delta_0)\mathrel{R_m}(\Gamma_1,\Delta_1)$. Then, in particular, $\Gamma\supseteq\Gamma_0$. Moreover,  we have:
	\begin{align}
		&\{\phi\mid\Box\phi\in \Gamma_0\}\subseteq \Gamma_1\label{E:mod2}\\
		&\{\Diamond\psi\mid\psi\in \Gamma_1\}\subseteq \Gamma_0\label{E:mod3}
	\end{align}
	By Lemma \ref{L:mod-maximal}.5, the bi-set $(\Gamma_1 \cup \{\phi\mid \Box\phi\in \Gamma\}, \{\phi\mid \Diamond\phi\in \Delta\})$ must then be consistent, so that, by Lemma \ref{L:mod-lindenbaum}, this bi-set must be extendable to some $(\Gamma',\Delta')\in W_m$. We will have then $\Gamma'\supseteq \Gamma_1$ whence clearly $(\Gamma',\Delta')\mathrel{\geq_m}(\Gamma_1,\Delta_1)$. Now, we get $\{\phi\mid\Box\phi\in \Gamma\}\subseteq \Gamma'$ trivially by the choice of $(\Gamma',\Delta')$. Moreover, if $\psi \in \Gamma'$, then $\psi\notin\Delta'$ by the consistency of $(\Gamma',\Delta')$. But this means that we cannot have $\Diamond\psi\in\Delta$, so $\Diamond\psi\in \Gamma$ by the completeness of $(\Gamma,\Delta)$. Thus we have shown that also $\{\Diamond\psi\mid\psi\in \Gamma'\}\subseteq \Gamma$. Summing everything up, we get that $
	(\Gamma,\Delta)\mathrel{R_m}(\Gamma',\Delta')\mathrel{\geq_m}(\Gamma_1,\Delta_1)$, and condition \eqref{Cond:1} is shown to be satisfied.
	
	As for \eqref{Cond:2}, assume that $(\Gamma,\Delta), (\Gamma_0,\Delta_0), (\Gamma_1,\Delta_1)\in W_m$ are such that $(\Gamma_0,\Delta_0)\mathrel{R_m} (\Gamma_1,\Delta_1)\mathrel{\leq_m}(\Gamma,\Delta)$. Then $\Gamma\supseteq\Gamma_1$. By Lemma \ref{L:mod-maximal}.6, the bi-set $(\Gamma_0 \cup \{\Diamond\phi\mid \phi\in \Gamma\}, \{\Box\phi\mid \phi\in \Delta\})$ must then be consistent, and, by Lemma \ref{L:mod-lindenbaum}, extendable to a $(\Gamma',\Delta')\in W_m$. Clearly, $\Gamma'\supseteq \Gamma_0$ whence also $(\Gamma',\Delta')\mathrel{_m\geq}(\Gamma_0,\Delta_0)$.
 Next, assume that $\Box\phi\in \Gamma'$. If $\phi\notin\Gamma$, then $\phi \in\Delta$ by the completeness of $(\Gamma,\Delta)$, whence $\Box\phi\in \Delta'$. But the latter contradicts the consistency of $(\Gamma',\Delta')$. Therefore $\phi\in\Gamma$. Since the choice of $\phi$ was arbitrary, we have shown that $\{\phi\mid\Box\phi\in \Gamma'\}\subseteq \Gamma$. Moreover, we have $\{\Diamond\phi\mid \phi\in \Gamma\}\subseteq \Gamma'$ by the choice of $(\Gamma',\Delta')$. Thus we get that $(\Gamma_0,\Delta_0)\mathrel{\leq_c}(\Gamma',\Delta')\mathrel{R_m}(\Gamma,\Delta)$, and condition \eqref{Cond:2} is shown to be satisfied.
\end{proof}
The truth lemma for this model then looks as follows:
\begin{lemma}\label{L:mod-truth}
	For every $\phi\in\mathcal{MD}$ and for every $(\Gamma,\Delta)\in W_m$, the following statements hold:
	\begin{enumerate}
		\item $
		\mathcal{M}_m,(\Gamma,\Delta)\models_m^+\phi$ iff $\phi \in \Gamma$.
		
		\item $
		\mathcal{M}_m,(\Gamma,\Delta)\models_m^-\phi$ iff $\NEG\phi \in \Gamma$.
	\end{enumerate} 
\end{lemma}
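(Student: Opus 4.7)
The plan is to proceed by simultaneous induction on the complexity of $\phi$, proving parts (1) and (2) in lockstep. The base case for an atomic $p$ is immediate from the definitions of $V^+_m$ and $V^-_m$ in Definition \ref{D:mod-canonical-model}. The inductive cases for the propositional connectives are handled by standard reasoning using the properties of maximal bi-sets collected in Lemma \ref{L:mod-maximal}; for the negative clauses, the negation-distributing axioms of $\mathtt{C}$ (notably \eqref{Ax:double-neg}--\eqref{Ax:to-neg}) inherited by $\mathtt{CnK}$ allow us to reduce the negative clause for each connective to the positive or negative clauses for its arguments. The positive case of $\to$ additionally invokes Lemma \ref{L:mod-consistent}.2 together with a Lindenbaum extension via Lemma \ref{L:mod-lindenbaum} in the RTL direction, exactly as in the standard intuitionistic setting.

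For the modal inductive cases, the LTR directions are straightforward. For positive $\Box\phi$, if $\Box\phi\in \Gamma$ and $(\Gamma,\Delta)\leq_m(\Gamma',\Delta')R_m(\Gamma'',\Delta'')$, then $\Box\phi\in \Gamma'$ by the definition of $\leq_m$, and $\phi\in \Gamma''$ by the definition of $R_m$, so the IH closes the case. For positive $\Diamond\phi$, an $R_m$-successor of $(\Gamma,\Delta)$ containing $\phi$ is produced by combining Lemma \ref{L:mod-consistent}.4 with Lemma \ref{L:mod-lindenbaum}. The RTL for positive $\Diamond$ is an immediate consequence of the definition of $R_m$ and the IH. For the negative modal cases, axioms \eqref{E:am5} and \eqref{E:am6} (applied via Lemma \ref{L:mod-maximal}.1) convert $\NEG\Box\phi\in\Gamma$ into $\Box\NEG\phi\in\Gamma$ and analogously for $\Diamond$; the required equivalence then follows by running the same argument as in the corresponding positive case with $\NEG\phi$ playing the role of $\phi$, together with the IH for $\phi$ on the negative side.

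The main obstacle is the RTL direction for positive $\Box$, which calls for a two-stage construction. Assuming $\Box\phi\in\Delta$, Lemma \ref{L:mod-consistent}.3 combined with Lemma \ref{L:mod-lindenbaum} delivers a maximal $(\Gamma^0,\Delta^0)\in W_m$ with $\{\psi\mid \Box\psi \in \Gamma\}\subseteq \Gamma^0$ and $\phi \in \Delta^0$. To obtain the required $(\Gamma',\Delta')\geq_m(\Gamma,\Delta)$ with $(\Gamma',\Delta')R_m(\Gamma^0,\Delta^0)$, I would establish the consistency of the bi-set $(\Gamma\cup \{\Diamond\psi\mid\psi\in \Gamma^0\},\{\Box\chi\mid \chi\in \Delta^0\})$ by adapting the Fischer-Servi argument from the proof of Lemma \ref{L:mod-maximal}.6: any hypothetical inconsistency can be bundled, using \eqref{E:Rmod-box}, ($\alpha_3$)--($\alpha_6$), and the closures of $\Gamma^0$ under $\wedge$ and $\Delta^0$ under $\vee$, into a derivation $\theta\vdash_{\mathtt{CnK}}\Diamond\psi\to\Box\chi$ with $\theta\in\Gamma$, $\psi\in\Gamma^0$, $\chi\in\Delta^0$; axiom \eqref{E:am4} then yields $\Box(\psi\to\chi)\in \Gamma$, hence $\psi\to\chi\in\Gamma^0$ by the inclusion above, whence $\chi\in\Gamma^0$ by Lemma \ref{L:mod-maximal}.4, contradicting $\chi\in\Delta^0$. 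A final Lindenbaum extension then produces $(\Gamma',\Delta')$, and the IH for $\phi$ on the positive side closes the argument.
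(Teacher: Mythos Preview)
Your proposal is correct and follows essentially the same route as the paper's proof: simultaneous induction with the propositional cases handled via Lemma~\ref{L:mod-maximal} and the negation axioms, the $\Diamond$ cases via Lemma~\ref{L:mod-consistent}.4 and the definition of $R_m$, and the crucial RTL $\Box$ case via the two-stage construction using Lemma~\ref{L:mod-consistent}.3 followed by the $\eqref{E:am4}$-based consistency argument for $(\Gamma\cup\{\Diamond\psi\mid\psi\in\Gamma^0\},\{\Box\chi\mid\chi\in\Delta^0\})$. The paper carries out that consistency argument in place rather than citing Lemma~\ref{L:mod-maximal}.6, and it explicitly verifies that the resulting extension $(\Gamma',\Delta')$ satisfies both clauses of $R_m$ with respect to $(\Gamma^0,\Delta^0)$, a detail you leave implicit but which goes through exactly as you would expect.
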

\begin{proof}
	We prove both parts by simultaneous induction on the construction of $\phi$.
	
	\textit{Basis}. If $\phi = p \in Prop$, then the lemma holds by the definition of $\mathcal{M}_m$.
	
	\textit{Induction step}. The following cases arise:
	
	\textit{Case 1}. $\phi = \NEG\psi$. Then, as for Part 1, we have $
	\mathcal{M}_m,(\Gamma,\Delta)\models_m^+\phi$ iff $
	\mathcal{M}_m,(\Gamma,\Delta)\models_m^-\psi$ iff, by IH for Part 2, $\phi = \NEG\psi \in \Gamma$. As for Part 2, we have $
	\mathcal{M}_m,(\Gamma,\Delta)\models_m^-\phi$ iff $
	\mathcal{M}_m,(\Gamma,\Delta)\models_m^+\psi$ iff, by IH for Part 1, $\psi \in \Gamma$, iff, by \eqref{Ax:double-neg} and Lemma \ref{L:mod-maximal}.1, $\NEG\phi = \NEG\NEG\psi \in \Gamma$.
	
	\textit{Case 2}. $\phi = \psi\ast\chi$ for $\ast \in \{\wedge, \vee, \to\}$. Part 1 is proved as in the case of intuitionistic logic. Assuming this part, the parallel cases for Part 2 follow easily by the properties of negation in $\mathsf{C}$. As an example we consider the subcase when $\ast = \to$.
	
	Let $\phi = (\psi\to\chi)$. If $\NEG\phi = \NEG(\psi\to\chi)\in \Gamma$, then , by Lemma \ref{L:mod-maximal}.1 and \eqref{Ax:to-neg}, $\psi \to \NEG\chi \in \Gamma$. If $(\Gamma_0, \Delta_0)\in W_m$ is such that $(\Gamma,\Delta) \leq_m (\Gamma_0, \Delta_0)$, then $\psi \to \NEG\chi \in \Gamma\subseteq\Gamma_0$. If, furthermore, $\mathcal{M}_m,(\Gamma_0,\Delta_0)\models_m^+\psi$, then, by IH for Part 1, $\psi\in \Gamma_0$, whence, by Lemma \ref{L:mod-maximal}.4, $\NEG\chi\in \Gamma_0$, and, by IH for Part 2, $\mathcal{M}_m,(\Gamma,\Delta)\models_m^-\chi$.
	
	In the other direction, if $\NEG\phi = \NEG(\psi\to\chi)\notin \Gamma$, then, by Lemma \ref{L:mod-maximal}.1 and \eqref{Ax:to-neg}, $\psi \to \NEG\chi \notin \Gamma$, whence  $\psi \to \NEG\chi \in \Delta$ by completeness. By Lemma \ref{L:mod-consistent}.2, the bi-set $(\Gamma\cup\{\psi\},\{\NEG\chi\})$ is consistent, and thus, by Lemma \ref{L:mod-lindenbaum}, extendable to a $(\Gamma_0,\Delta_0)\in W_m$. By the choice of $(\Gamma_0,\Delta_0)$, we have $(\Gamma,\Delta) \leq_m (\Gamma_0, \Delta_0)$, $\psi\in \Gamma_0$, and $\NEG\chi\in \Delta_0$, whence, by maximality of $(\Gamma_0,\Delta_0)$, $\NEG\chi\notin\Gamma_0$. By IH, this means that both $\mathcal{M}_m,(\Gamma_0,\Delta_0)\models_m^+\psi$ and $\mathcal{M}_m,(\Gamma_0,\Delta_0)\not\models_m^-\chi$, which entails that $\mathcal{M}_m,(\Gamma_0,\Delta_0)\not\models_m^-\psi\to\chi$.

	We treat the modal cases next:
	
	\textit{Case 3}. $\phi = \Box\psi$. 
	
	\textit{Part 1}. Let $(\Gamma,\Delta)\in W_m$ be such that $\phi \in \Gamma$, and let $(\Gamma_0,\Delta_0), (\Gamma_1,\Delta_1)\in W_m$ be such that $(\Gamma,\Delta)\mathrel{\leq_m}(\Gamma_0,\Delta_0)\mathrel{R_m}(\Gamma_1,\Delta_1)$. Then $\Gamma \subseteq \Gamma_0$, so that $\Box\psi \in \Gamma_0$. 
It follows by $(\Gamma_0,\Delta_0)\mathrel{R_m}(\Gamma_1,\Delta_1)$, that $\psi\in \Gamma_1$. Next, IH implies that  $\mathcal{M}_m,(\Gamma_1,\Delta_1)\models_m^+\psi$. Since the choice of  $(\Gamma_0,\Delta_0), (\Gamma_1,\Delta_1)\in W_m$ such that $(\Gamma,\Delta)\mathrel{\leq_m}(\Gamma_0,\Delta_0)\mathrel{R_m}(\Gamma_1,\Delta_1)$ was made arbitrarily, it follows that we must have $\mathcal{M}_m,(\Gamma,\Delta)\models_m^+ \Box\psi = \phi$.
	
	Conversely, let $(\Gamma,\Delta)\in W_m$ be such that $\phi \notin \Gamma$. Then $\Box\psi \in \Delta$ by completeness of $(\Gamma, \Delta)$, and $(\{\chi\mid\Box\chi\in\Gamma\},\{\psi\})$ must be consistent by Lemma \ref{L:mod-consistent}.3. By Lemma \ref{L:mod-lindenbaum}, we can extend it to a maximal $(\Gamma',\Delta')\supseteq (\{\chi\mid\Box\chi\in\Gamma\},\{\psi\})$. Now, set $(\Gamma_0,\Delta_0):= (\Gamma\cup\{\Diamond\chi\mid\chi\in \Gamma'\}, \{\Box\theta\mid\theta\in\Delta'\})$. We claim that $(\Gamma_0,\Delta_0)$ is consistent. Otherwise, we can choose $\gamma_1,\ldots,\gamma_n\in\Gamma$, $\tau_1,\ldots,\tau_m\in\Gamma'$ and $\xi_1,\ldots,\xi_k\in\Delta'$ such that $\bigwedge^n_{i = 1}\gamma_i,\bigwedge^m_{j = 1}\Diamond\tau_j\vdash_{\mathtt{CnK}}\bigvee^k_{r = 1}\Box\xi_r$. But then, setting $\tau:= \bigwedge^m_{j = 1}\tau_j$ and $\xi:= \bigvee^k_{r = 1}\xi_r$, we have $\Gamma\vdash_{\mathtt{CnK}} \bigwedge^m_{j = 1}\Diamond\tau_j\to\bigvee^k_{r = 1}\Box\xi_r$ by Lemma \ref{L:c}.1, whence $\Gamma\vdash_{\mathtt{CnK}} \Diamond\tau\to\Box\xi$ by \eqref{E:Rmod-box} and ($\alpha_3$)--($\alpha_6$), whence further $\Gamma\vdash_{\mathtt{CnK}} \Box(\tau\to\xi)$ by \eqref{E:am4} and $\Box(\tau\to\xi)\in \Gamma$ by Lemma \ref{L:mod-maximal}.1. By the choice of $(\Gamma',\Delta')$, we get that $\tau\to\xi \in \Gamma'$ which contradicts the consistency of this bi-set and shows that $(\Gamma_0,\Delta_0)$ must have been consistent. Therefore, $(\Gamma_0,\Delta_0)$ is extendable to a maximal bi-set $(\Gamma_1,\Delta_1)\supseteq(\Gamma_0,\Delta_0)$. 
	
	We now claim that we have $(\Gamma,\Delta)\mathrel{\leq_m}(\Gamma_1,\Delta_1)\mathrel{R_m}(\Gamma',\Delta')$. The first part is trivial since we have $\Gamma_1\supseteq\Gamma_0 \supseteq\Gamma$ by the choice of $(\Gamma_1,\Delta_1)$ and $(\Gamma_0,\Delta_0)$. As for the second part, note that (a) for every $\theta\in\mathcal{MD}$, if $\Box\theta\in \Gamma_1$ and $\theta \notin \Gamma'$, then, by the completeness of $(\Gamma',\Delta')$, we must have $\theta \in \Delta'$. But then $\Box\theta\in\Delta_0\subseteq\Delta_1$, which contradicts the consistency  of $(\Gamma_1,\Delta_1)$. The obtained contradiction shows that  $\{\theta\mid\Box\theta\in \Gamma_1\}\subseteq \Gamma'$. Next, (b) we trivially get that $\{\Diamond\chi\mid\chi\in \Gamma'\}\subseteq\Gamma_0 \subseteq \Gamma_1$. Summing up (a) and (b), we get that $((\Gamma_1,\Delta_1),(\Gamma',\Delta'))\in R_m$.
	
	Since also $\psi\in \Delta'$, we have $\psi\notin\Gamma'$ by the consistency of $(\Gamma',\Delta')$ and $\mathcal{M}_m,(\Gamma',\Delta')\not\models_m^+ \psi$ by IH. Given that $(\Gamma,\Delta)\mathrel{\leq_m}(\Gamma_1,\Delta_1)\mathrel{R_m}(\Gamma',\Delta')$, we get $\mathcal{M}_m,(\Gamma',\Delta')\not\models_m^+ \Box\psi = \phi$, as desired.
	
	\textit{Part 2}. Let $(\Gamma,\Delta)\in W_m$ be such that $\NEG\phi \in \Gamma$, and let $(\Gamma_0,\Delta_0), (\Gamma_1,\Delta_1)\in W_m$ be such that $(\Gamma,\Delta)\mathrel{\leq_m}(\Gamma_0,\Delta_0)\mathrel{R_m}(\Gamma_1,\Delta_1)$. Then $\Gamma \subseteq \Gamma_0$, so that $\NEG\Box\psi \in \Gamma_0$; then \eqref{E:am5} and Lemma \ref{L:mod-maximal}.1 together imply that $\Box\NEG\psi \in \Gamma_0$.
	It follows by $(\Gamma_0,\Delta_0)\mathrel{R_m}(\Gamma_1,\Delta_1)$, that $\NEG\psi\in \Gamma_1$. Next, IH implies that  $\mathcal{M}_m,(\Gamma_1,\Delta_1)\models_m^-\psi$. Since the choice of  $(\Gamma_0,\Delta_0), (\Gamma_1,\Delta_1)\in W_m$ such that $(\Gamma,\Delta)\mathrel{\leq_m}(\Gamma_0,\Delta_0)\mathrel{R_m}(\Gamma_1,\Delta_1)$ was made arbitrarily, it follows that we must have $\mathcal{M}_m,(\Gamma,\Delta)\models_m^- \Box\psi = \phi$.
	
	Conversely, let $(\Gamma,\Delta)\in W_m$ be such that $\NEG\phi \notin \Gamma$. Then $\NEG\Box\psi \in \Delta$ by completeness of $(\Gamma, \Delta)$, which entails $\Box\NEG\psi \in \Delta$ by \eqref{E:am5} and maximality.  But then $(\{\chi\mid\Box\chi\in\Gamma\},\{\NEG\psi\})$ must be consistent by Lemma \ref{L:mod-consistent}.3. By Lemma \ref{L:mod-lindenbaum}, we can extend it to a maximal $(\Gamma',\Delta')\supseteq (\{\chi\mid\Box\chi\in\Gamma\},\{\NEG\psi\})$. Now, set $(\Gamma_0,\Delta_0):= (\Gamma\cup\{\Diamond\chi\mid\chi\in \Gamma'\}, \{\Box\theta\mid\theta\in\Delta'\})$. Arguing as in Part 1, we show that $(\Gamma_0,\Delta_0)$ is consistent and thus extendable to a maximal bi-set $(\Gamma_1,\Delta_1)\supseteq(\Gamma_0,\Delta_0)$ for which we then can demonstrate that $(\Gamma,\Delta)\mathrel{\leq_m}(\Gamma_1,\Delta_1)\mathrel{R_m}(\Gamma',\Delta')$. 
	
	Since also $\NEG\psi\in \Delta'$, whence $\NEG\psi\notin\Gamma'$ by the consistency of $(\Gamma',\Delta')$ and $\mathcal{M}_m,(\Gamma',\Delta')\not\models_m^-\psi$ by IH. Given that $(\Gamma,\Delta)\mathrel{\leq_m}(\Gamma_1,\Delta_1)\mathrel{R_m}(\Gamma',\Delta')$, we get $\mathcal{M}_m,(\Gamma',\Delta')\not\models_m^- \Box\psi = \phi$, as desired.
	
	\textit{Case 4}. $\phi = \Diamond\psi$. 
	
	\textit{Part 1}. Let $(\Gamma,\Delta)\in W_m$ be such that $\phi \in \Gamma$. By Lemma \ref{L:mod-consistent}.4, the bi-set $(\{\psi\}\cup\{\xi\mid\Box\xi \in \Gamma\},\{\chi\mid\Diamond\chi\in\Delta\})$ must be consistent and thus extendable to a $(\Gamma_0,\Delta_0)\in W_m$. We show that $(\Gamma,\Delta)\mathrel{R_m}(\Gamma_0,\Delta_0)$. Indeed, we have $\{\xi\mid\Box\xi \in \Gamma\}\subseteq \Gamma_0$ by the choice of $(\Gamma_0,\Delta_0)$, and if $\chi\in\Gamma_0$, then, by consistency, $\chi\notin\Delta_0$, which means that we must have $\Diamond\chi\notin\Delta$; the latter entails, by completeness, that $\Diamond\chi\in\Gamma$. Since $\chi\in\mathcal{MD}$ was chosen arbitrarily, this means that $\{\Diamond\chi\mid\chi\in\Gamma_0\}\subseteq\Gamma$. 
	
	So we have shown that $(\Gamma,\Delta)\mathrel{R_m}(\Gamma_0,\Delta_0)$ and the choice of $(\Gamma_0,\Delta_0)$ also implies that $\psi\in \Gamma_0$, whence, by IH $\mathcal{M}_m,(\Gamma_0,\Delta_0)\models_m^+ \psi$. But the latter means that $\mathcal{M}_m,(\Gamma,\Delta)\models_m^+ \Diamond\psi = \phi$.
	
	In the other direction, if $(\Gamma,\Delta)\in W_m$ is such that $\phi \notin \Gamma$ and $(\Gamma_0,\Delta_0),\in W_m$ is such that $(\Gamma,\Delta)\mathrel{R_m}(\Gamma_0,\Delta_0)$, then $\mathcal{M}_m,(\Gamma_0,\Delta_0)\models_m^+ \psi$ means, by IH for Part 1, that $\psi\in \Gamma_0$, which, by $(\Gamma,\Delta)\mathrel{R_m}(\Gamma_0,\Delta_0)$ implies that $\Diamond\psi = \phi \in \Gamma$, which contradicts our assumption. 
	
	\textit{Part 2}. Let $(\Gamma,\Delta)\in W_m$ be such that $\NEG\phi \in \Gamma$. Then \eqref{E:am6} and Lemma \ref{L:mod-maximal}.1 imply that $\Diamond\NEG\psi\in\Gamma$. By Lemma \ref{L:mod-consistent}.4, the bi-set $(\{\NEG\psi\}\cup\{\xi\mid\Box\xi \in \Gamma\},\{\chi\mid\Diamond\chi\in\Delta\})$ must be consistent and thus extendable to a $(\Gamma_0,\Delta_0)\in W_m$. Arguing as in Part 1, we show that $(\Gamma,\Delta)\mathrel{R_m}(\Gamma_0,\Delta_0)$; of course, the choice of $(\Gamma_0,\Delta_0)$ also implies that $\NEG\psi\in\Gamma_0$, which means, by IH for Part 2, that $\mathcal{M}_m,(\Gamma_0,\Delta_0)\models_m^- \psi$. But then we must also have $\mathcal{M}_m,(\Gamma,\Delta)\models_m^- \Diamond\psi = \phi$.
	
	Conversely, if $(\Gamma,\Delta)\in W_m$ is such that $\NEG\phi \notin \Gamma$, then, by \eqref{E:am6} and Lemma \ref{L:mod-maximal}.1, we must also have $\Diamond\NEG\psi\notin\Gamma$. Now, if $(\Gamma_0,\Delta_0),\in W_m$ is chosen arbitrarily under the condition that $(\Gamma,\Delta)\mathrel{R_m}(\Gamma_0,\Delta_0)$, then $\mathcal{M}_m,(\Gamma_0,\Delta_0)\models_m^- \psi$ means, by IH for Part 2, that $\NEG\psi\in \Gamma_0$, which, by $(\Gamma,\Delta)\mathrel{R_m}(\Gamma_0,\Delta_0)$ implies that $\Diamond\NEG\psi \in \Gamma$. The resulting contradiction shows that we must have $\mathcal{M}_m,(\Gamma_0,\Delta_0)\models_m^- \psi$ for every $(\Gamma_0,\Delta_0),\in W_m$ such that $(\Gamma,\Delta)\mathrel{R_m}(\Gamma_0,\Delta_0)$, which is equivalent to $\mathcal{M}_m,(\Gamma,\Delta)\not\models_m^- \Diamond\psi = \phi$.
\end{proof}
The truth lemma allows us to deduce the (strong) soundness and completeness of $\mathtt{CnK}$ relative to $\mathsf{CnK}$ in the usual way:
\begin{theorem}\label{T:mod-completeness}
	$\mathsf{CnK}= \mathtt{CnK}[MD]$. In particular, for every $\phi\in\mathcal{MD}$, $\vdash_{\mathtt{CnK}}\phi$ iff $\phi\in\mathsf{CnK}$.
\end{theorem}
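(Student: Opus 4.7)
The plan is to assemble the theorem from the lemmas already in place, splitting it into the two standard inclusions.

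For the direction $\mathtt{CnK}[MD] \subseteq \mathsf{CnK}$, I would lift Lemma \ref{L:modal-soundness} from theorems to consequences. Recall from Section \ref{sub:hilbert} that $\Gamma \vdash_\mathtt{CnK} \Delta$ unpacks to the existence of a finite sequence $\chi_1,\ldots,\chi_r$ whose steps are either elements of $\Gamma$, $\mathtt{CnK}$-provable, or outputs of \eqref{E:mp}, with $\chi_r = \theta_1 \vee \ldots \vee \theta_s$ for some $\theta_1,\ldots,\theta_s \in \Delta$. Pick any $(\mathcal{M},w) \in Pt(\mathbb{FSM})$ with $\mathcal{M},w \models^+_m \Gamma$ and show by induction on the length of this sequence that $\mathcal{M},w \models^+_m \chi_i$ for every $i$: the $\Gamma$-case is immediate, the provable case is Lemma \ref{L:modal-soundness}, and the MP-case is routine from clause \eqref{Cl:im+}. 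Thus $\mathcal{M},w \models^+_m \chi_r$, and clause \eqref{Cl:dis+} forces $\mathcal{M},w \models^+_m \theta_j$ for some $j$, witnessing $\Gamma \models_m \Delta$.

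For the converse $\mathsf{CnK} \subseteq \mathtt{CnK}[MD]$, I would argue by contraposition. Suppose $(\Gamma,\Delta) \notin \mathtt{CnK}[MD]$, i.e.\ $(\Gamma,\Delta)$ is $\mathtt{CnK}$-consistent. By Lemma \ref{L:mod-lindenbaum}, extend it to a maximal $(\Xi,\Theta)$, which lies in $W_m$ by Definition \ref{D:mod-canonical-model}, and recall from Lemma \ref{L:mod-canonical-model} that $\mathcal{M}_m \in \mathbb{FSM}$. Applying Lemma \ref{L:mod-truth}.1, every $\phi \in \Gamma \subseteq \Xi$ satisfies $\mathcal{M}_m,(\Xi,\Theta) \models^+_m \phi$; and for each $\psi \in \Delta \subseteq \Theta$, consistency forces $\psi \notin \Xi$, so $\mathcal{M}_m,(\Xi,\Theta) \not\models^+_m \psi$. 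Hence $\mathcal{M}_m,(\Xi,\Theta) \models_m (\Gamma,\Delta)$ in the bi-set sense fixed in Section \ref{sub:semantics}, which gives $(\Gamma,\Delta) \notin \mathsf{CnK}$.

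The ``in particular'' clause on single theorems is the instance $\Gamma = \emptyset$, $\Delta = \{\phi\}$. There is no real obstacle here, since the work has been concentrated in Lemmas \ref{L:modal-soundness}--\ref{L:mod-truth}; what must still be watched is merely the bookkeeping in the soundness direction, namely that the disjunctive clause at the end of the definition of $\vdash_\mathtt{CnK}$ is compatible with the $\triangleright^+$-based reading of $\models_\mathsf{L}$ in Section \ref{sub:semantics}, which is exactly what \eqref{Cl:dis+} delivers.
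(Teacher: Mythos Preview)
Your proposal is correct and is precisely the standard argument the paper has in mind when it writes ``We omit the standard proof'': lift Lemma~\ref{L:modal-soundness} to consecutions via the definition of $\vdash_{\mathtt{CnK}}$ and clauses \eqref{Cl:im+}, \eqref{Cl:dis+} for the soundness direction, and run the canonical-model argument (Lemmas~\ref{L:mod-lindenbaum}, \ref{L:mod-canonical-model}, \ref{L:mod-truth}) by contraposition for completeness. There is nothing to add.
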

We omit the standard proof. As a usual corollary, we obtain the compactness of $\mathsf{CnK}$ for bi-sets:
\begin{corollary}\label{C:mod-compactness}
	For $(\Gamma,\Delta)\in \mathcal{P}(\mathcal{MD})\times\mathcal{P}(\mathcal{MD})$, we have $\Gamma\not\models_{\mathsf{CnK}}\Delta$ iff, for all $\Gamma'\Subset\Gamma$, $\Delta'\Subset\Delta$, $\Gamma'\not\models_{\mathsf{CnK}}\Delta'$.
\end{corollary}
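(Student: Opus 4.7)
The plan is to derive the corollary from the strong completeness theorem (Theorem~\ref{T:mod-completeness}) together with the inherent finiteness of Hilbert-style derivations. The statement is the contrapositive form of the standard compactness claim, so I will prove it by establishing both implications separately.

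For the ($\Rightarrow$) direction I would argue by monotonicity of the bi-set satisfaction relation. If $\Gamma\not\models_{\mathsf{CnK}}\Delta$, then by the definition of $\mathfrak{L}$ in Section~\ref{sub:semantics} there is some $(\mathcal{M},w)\in Pt(\mathbb{FSM})$ with $\mathcal{M},w\models^+_m(\Gamma,\Delta)$, i.e.\ every formula of $\Gamma$ is $\models^+_m$-satisfied at $w$ and none of the formulas of $\Delta$ is. Given any $\Gamma'\Subset\Gamma$ and $\Delta'\Subset\Delta$, we trivially still have $\mathcal{M},w\models^+_m(\Gamma',\Delta')$, whence $\Gamma'\not\models_{\mathsf{CnK}}\Delta'$.

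For the ($\Leftarrow$) direction I would argue contrapositively: assume $\Gamma\models_{\mathsf{CnK}}\Delta$ and show that some finite witnesses exist. By Theorem~\ref{T:mod-completeness} we have $\Gamma\vdash_{\mathtt{CnK}}\Delta$. Unwinding the definition of $\vdash_{\mathtt{CnK}}$ given in Section~\ref{sub:hilbert}, this means there is a finite sequence $\chi_1,\ldots,\chi_r$ of $\mathcal{MD}$-formulas, each of which is either in $\Gamma$, or is $\mathtt{CnK}$-provable, or obtained from earlier formulas in the sequence by \eqref{E:mp}, such that $\chi_r = \theta_1\vee\ldots\vee\theta_s$ for some $\theta_1,\ldots,\theta_s\in\Delta$. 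Let $\Gamma'$ be the (finite) set of those $\chi_i$ which are taken from $\Gamma$, and let $\Delta'=\{\theta_1,\ldots,\theta_s\}$; then $\Gamma'\Subset\Gamma$, $\Delta'\Subset\Delta$, and the very same sequence witnesses $\Gamma'\vdash_{\mathtt{CnK}}\Delta'$. Applying Theorem~\ref{T:mod-completeness} in the soundness direction yields $\Gamma'\models_{\mathsf{CnK}}\Delta'$, contradicting the hypothesis that all finite sub-bi-sets are unentailing.

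I do not expect any real obstacle here: both directions are essentially bookkeeping. The only point that deserves a quick check is that the definition of $\vdash_{\mathtt{CnK}}$ from Section~\ref{sub:hilbert} genuinely only inspects finitely many members of $\Gamma$ and finitely many members of $\Delta$ in any given proof, which is immediate from the stipulation that the proof sequence is finite and that the final formula is a disjunction of finitely many elements of~$\Delta$. Everything else is an immediate application of strong completeness.
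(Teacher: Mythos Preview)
Your proof is correct and follows exactly the standard route the paper has in mind: the corollary is introduced as ``a usual corollary'' of Theorem~\ref{T:mod-completeness} with the proof omitted, and your argument---monotonicity of bi-set satisfaction for one direction, finiteness of Hilbert-style derivations plus strong completeness for the other---is precisely that standard derivation.
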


\subsection{Some properties of $\mathsf{CnK}$}\label{sub:mod-properties}
In this subsection, we consider several properties of $\mathsf{CnK}$ and assess the degree to which these properties can be viewed as a natural development of their counterparts in $\mathsf{C}$ in the richer environment provided by $MD$ and $\mathbb{FSM}$ as compared to $PL$ and $\mathbb{P}$ respectively.

We start by observing that several propositions about $\mathsf{C}$ easily extend to $\mathsf{CnK}$.
\begin{lemma}\label{L:mod-monotonicity}
	Given a $\phi \in \mathcal{MD}$, a $\star \in \{+, -\}$, an $\mathcal{M} \in \mathbb{FSM}$, and any $w,v \in W$ such that $w \leq v$, $\mathcal{M}, w\models_{m}^\star \phi$ implies $\mathcal{M}, v\models_{m}^\star \phi$. 
\end{lemma}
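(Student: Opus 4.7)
The plan is a simultaneous induction on the construction of $\phi \in \mathcal{MD}$, running the two polarities $\star = +$ and $\star = -$ in parallel in order to accommodate the negation clauses \eqref{Cl:neg+} and \eqref{Cl:neg-}, which swap the two satisfaction relations. The base case and all the propositional inductive cases go through exactly as in the proof of Lemma \ref{L:monotonicity}; they rely only on the monotonicity of the atomic valuations stipulated in Definition \ref{D:Krip} and on the transitivity of $\leq$, both of which are still available for any $\mathcal{M} \in \mathbb{FSM}$. The truly new work is confined to the two modal cases.

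The case $\phi = \Box\psi$ is actually trivial in both polarities, because the shape of the clauses \eqref{Cl:box+} and \eqref{Cl:box-} already absorbs monotonicity: they both quantify universally over all $v \geq w$. If $w \leq w'$ and the clause holds at $w$, then any $v' \geq w'$ also satisfies $v' \geq w$ by transitivity of $\leq$, so the requirement at $w'$ follows directly from the hypothesis at $w$, with no appeal either to the induction hypothesis or to the Fischer-Servi completion patterns.

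The only substantive case is $\phi = \Diamond\psi$, and this is precisely where the Fischer-Servi condition \eqref{Cond:1} earns its keep. The argument proceeds uniformly for $\star \in \{+, -\}$: assuming $w \leq w'$ and $\mathcal{M}, w \models_m^\star \Diamond\psi$, the relevant existential clause delivers some $v$ with $w R v$ and $\mathcal{M}, v \models_m^\star \psi$. Then $(w', v) \in {\leq^{-1} \circ R}$, so \eqref{Cond:1} yields a $v'$ with $w' R v'$ and $v \leq v'$. Applying the induction hypothesis to $\psi$ at polarity $\star$ gives $\mathcal{M}, v' \models_m^\star \psi$, whence $\mathcal{M}, w' \models_m^\star \Diamond\psi$, as required. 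This single appeal to \eqref{Cond:1} is the one step that genuinely goes beyond the propositional proof of Lemma \ref{L:monotonicity}, and it is the main --- though rather modest --- obstacle of the argument; note in particular that condition \eqref{Cond:2} is not needed here and must earn its keep elsewhere.
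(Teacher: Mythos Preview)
Your proposal is correct and follows exactly the approach the paper indicates: a straightforward simultaneous induction on the construction of $\phi$, with the $\Diamond$-case being the one place where the frame condition \eqref{Cond:1} is invoked. The paper itself gives no more detail than the phrase ``straightforward induction on the complexity of $\phi \in \mathcal{MD}$'', so your write-up simply fills in what the paper leaves implicit.
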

Again, the proof is by a straightforward induction on the complexity of $\phi \in \mathcal{MD}$.
\begin{proposition}\label{P:Cnk-constructivity}
	$\mathsf{CnK}$ has both DP and CFP.
\end{proposition}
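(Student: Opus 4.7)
The plan is to mirror the proof of Proposition \ref{P:C-constructivity} for $\mathsf{C}$, adapting the disjoint-union-with-fresh-root construction to the modal setting. For DP, I would assume $\phi_1\vee\phi_2 \in \mathsf{CnK}$ while neither $\phi_i \in \mathsf{CnK}$, fix witnesses $(\mathcal{M}_i, w_i)\in Pt(\mathbb{FSM})$ with $\mathcal{M}_i, w_i \not\models^+_m \phi_i$, and construct $\mathcal{M}$ by taking the disjoint union of $\mathcal{M}_1, \mathcal{M}_2$, adjoining a fresh node $w$ as a common $\leq$-predecessor of $w_1, w_2$ (then taking the reflexive-transitive closure), leaving $R$ empty at $w$, and declaring $w \notin V^\pm(p)$ for every atom $p$. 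For CFP, the construction is identical; only the witnesses are chosen so that $\mathcal{M}_i, w_i \not\models^-_m \phi_i$.

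The central step is a Claim: for each $i \in \{1,2\}$, every $v \in W_i$, every $\psi \in \mathcal{MD}$, and every $\star\in\{+, -\}$, we have $\mathcal{M}_i, v \models^\star_m \psi$ iff $\mathcal{M}, v \models^\star_m \psi$, proved by simultaneous induction on $\psi$. The propositional cases run as in the $\mathsf{C}$-case, since the $\leq$-upset of any $v \in W_i$ in $\mathcal{M}$ coincides with its $\leq_i$-upset in $\mathcal{M}_i$. For the modal clauses \eqref{Cl:box+}, \eqref{Cl:box-}, \eqref{Cl:diam+}, \eqref{Cl:diam-}, the key observation is that both $\leq$ and $R$ starting from a node in $W_i$ never leave $W_i$ --- in particular the fresh node $w$ is neither a $\leq$- nor an $R$-successor of anything in $W_1\cup W_2$ --- so the modal quantifiers evaluated in $\mathcal{M}$ at such a node range over exactly the same witnesses as in $\mathcal{M}_i$.

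Given the Claim, the conclusion is then immediate: from $\mathcal{M}_i, w_i \not\models^+_m \phi_i$ (resp.\ $\mathcal{M}_i, w_i \not\models^-_m \phi_i$) we obtain $\mathcal{M}, w_i \not\models^+_m \phi_i$ (resp.\ $\mathcal{M}, w_i \not\models^-_m \phi_i$) via the Claim, then $\mathcal{M}, w \not\models^+_m \phi_i$ (resp.\ $\mathcal{M}, w \not\models^-_m \phi_i$) for both $i$ via the contrapositive of Lemma \ref{L:mod-monotonicity} applied along $w \leq w_i$; clause \eqref{Cl:dis+} then yields $\mathcal{M}, w \not\models^+_m \phi_1\vee\phi_2$ for DP, while clauses \eqref{Cl:con-} and \eqref{Cl:neg+} together yield $\mathcal{M}, w \not\models^+_m \NEG(\phi_1\wedge\phi_2)$ for CFP, in each case contradicting the hypothesis.

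The only piece requiring real thought beyond the $\mathsf{C}$-case is checking that $\mathcal{M}$ actually lies in $\mathbb{FSM}$, i.e.\ that the Fischer-Servi conditions \eqref{Cond:1} and \eqref{Cond:2} both hold. I expect this to go through cheaply: whenever the premise of either condition sits entirely inside some $W_i$, the required completion is inherited from $\mathcal{M}_i\in\mathbb{FSM}$; and the only way $w$ can contribute to either premise is via some $R$-edge out of $w$, which by construction does not exist. Hence no nontrivial completion is needed at $w$ itself, and both Fischer-Servi conditions are satisfied componentwise.
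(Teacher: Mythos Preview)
Your proposal is correct and follows essentially the same approach as the paper: the paper's proof simply states that one repeats the argument of Proposition~\ref{P:C-constructivity}, using Lemma~\ref{L:mod-monotonicity} in place of Lemma~\ref{L:monotonicity} and setting $R := R_1 \cup R_2$ (which is exactly your ``leave $R$ empty at $w$''), with the Fischer-Servi conditions inherited componentwise from $\mathcal{M}_1$ and $\mathcal{M}_2$. Your write-up is more explicit about why \eqref{Cond:1} and \eqref{Cond:2} go through at the fresh root, but the construction and the logic are the same.
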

The proof is the same as for Proposition \ref{P:C-constructivity} above, using Lemma \ref{L:mod-monotonicity} in place of Lemma \ref{L:monotonicity}, and setting, for the modal accessibility relation, $R:= R_1 \cup R_2$. The satisfaction of conditions \eqref{Cond:1} and \eqref{Cond:2} then follows immediately from their assumed satifaction by the component models $\mathcal{M}_1$ and $\mathcal{M}_2$, respectively.
\begin{proposition}\label{P:universal-mod-satisfaction}
	Every $\Gamma\subseteq \mathcal{MD}$ is satisfiable in $\mathsf{CnK}$, in other words, $(\Gamma, \emptyset) \notin \mathsf{CnK}$.
\end{proposition}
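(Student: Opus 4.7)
The plan is to mimic the proof of Proposition \ref{P:universal-c-satisfaction}, replacing the trivial single-point propositional model with a trivial single-point Fischer–Servi modal model and then verifying that every $\mathcal{MD}$-formula is simultaneously $\models^+_m$- and $\models^-_m$-satisfied at the only world.

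Concretely, I would consider $\mathcal{M} = (\{w\}, \leq, R, V^+, V^-) \in \mathbb{M}$, setting $\leq = R = \{(w,w)\}$, and $V^+(p) = V^-(p) = \{w\}$ for every $p \in Prop$. First I would check that $\mathcal{M} \in \mathbb{FSM}$: since both $\leq$ and $R$ coincide with the identity on the single-element set $\{w\}$, the compositions $\leq^{-1}\circ R$, $R\circ \leq^{-1}$, $R\circ \leq$, and $\leq\circ R$ all coincide with $\{(w,w)\}$, so conditions \eqref{Cond:1} and \eqref{Cond:2} are trivially satisfied. The heredity requirement on $V^+, V^-$ is also immediate.

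Next I would prove, by simultaneous induction on the construction of $\phi \in \mathcal{MD}$, that $\mathcal{M}, w\models_m^+\phi$ and $\mathcal{M}, w\models_m^-\phi$. The base case holds by construction of $V^\pm$. The clauses for $\wedge$, $\vee$, $\NEG$ are handled exactly as in Proposition \ref{P:universal-c-satisfaction} using \eqref{Cl:con+}--\eqref{Cl:neg-}, and both directions of the induction for $\to$ go through because $\leq$ has $w$ as its only successor: the universally quantified requirements in \eqref{Cl:im+} and \eqref{Cl:im-} reduce to the IH at $w$ itself. For the modal cases, observe that the only $v \geq w$ is $w$ and the only $u$ with $wRu$ is $w$, so \eqref{Cl:box+}, \eqref{Cl:box-}, \eqref{Cl:diam+}, and \eqref{Cl:diam-} all reduce immediately to the corresponding IH clauses applied at $w$.

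Since $\mathcal{M}, w\models_m^+\phi$ for every $\phi \in \mathcal{MD}$, we have in particular $\mathcal{M}, w \models_m (\Gamma, \emptyset)$ for any $\Gamma \subseteq \mathcal{MD}$, so $(\Gamma, \emptyset) \notin \mathsf{CnK}$, as required. I do not anticipate any genuine obstacle here: the only thing to be careful about is that the Fischer–Servi conditions be verified and that the inductive clauses for $\Box$ and $\Diamond$ be correctly paired with the symmetric verification/falsification clauses — both are routine given the reflexive singleton structure.
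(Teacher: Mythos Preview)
Your proposal is correct and follows essentially the same approach as the paper: the paper also expands the one-world model from Proposition~\ref{P:universal-c-satisfaction} by setting $R := \{(w,w)\}$ and then appeals to the straightforward induction showing every formula is both verified and falsified at $w$. You simply spell out the details the paper leaves implicit.
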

\begin{proof}[Proof (a sketch)]
	Expand $\mathcal{M} \in \mathbb{P}$ from the proof of Proposition \ref{P:universal-c-satisfaction} to $\mathcal{M}^m$ by setting $R^m := \{(w,w)\}$. It is straightforward to show that $\mathcal{M}^m\in \mathbb{FSM}$, and that we have $\mathcal{M}^m, w\models^\star_m \phi$ for every $\phi \in \mathcal{MD}$ and every $\star \in \{+, -\}$.
\end{proof}
Moreover, in view of Lemma \ref{L:c}, and  Propositions \ref{P:c-negation-non-trivial}, \ref{P:c-connexivity}, and \ref{P:c-strong-connexivity} it is easy to show the following:
\begin{corollary}\label{C-legacy-mod}
	$\mathsf{CnK}$ is both non-trivial and negation-inconsistent. Moreover, $\to$ is hyperconnexive in $\mathsf{CnK}$ and $\Rightarrow$ is connexive in $\mathsf{CnK}$, but neither plainly nor weakly hyperconnexive in $\mathsf{CnK}$. 
\end{corollary}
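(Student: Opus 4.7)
The plan is to reduce every clause of the corollary to a corresponding fact about $\mathsf{C}$ already proved in Section \ref{S:c}, using two transfer mechanisms: Lemma \ref{L:c}.2, which asserts that $\mathsf{CnK}$ is conservative over $\mathsf{C}$ on the propositional fragment (so $\phi \in \mathcal{PL}$ belongs to $\mathsf{CnK}$ iff it belongs to $\mathsf{C}$), together with Theorem \ref{T:mod-completeness}; and a semantic embedding $\mathbb{P} \hookrightarrow \mathbb{FSM}$ that lifts propositional counterexamples to Fischer-Servi modal counterexamples.

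For non-triviality and negation-inconsistency, I would pick the witnessing $\mathcal{PL}$-formulas already identified in the proof of Proposition \ref{P:c-negation-non-trivial}: for non-triviality, $(p_1 \to p_2)\to(p_2\to p_1)$, which lies in $\mathcal{PL}\setminus \mathsf{C}$ and hence, by Lemma \ref{L:c}.2 and Theorem \ref{T:mod-completeness}, in $\mathcal{MD}\setminus\mathsf{CnK}$; for negation-inconsistency, the $\mathcal{PL}$-theorem \eqref{Contr} of $\mathsf{C}$, which transfers to $\mathsf{CnK}$ by the same route. For the positive halves of connexivity and hyperconnexivity of $\to$ and of connexivity of $\Rightarrow$, each of the schemes (AT$\to$), (BT$\to$), (CBT$\to$), (AT$\Rightarrow$), (BT$\Rightarrow$) is a $\mathcal{PL}$-formula valid in $\mathsf{C}$ by Propositions \ref{P:c-connexivity} and \ref{P:c-strong-connexivity}, so it transfers directly; the rule-form theses (WBT$\to$), (WCBT$\to$), (WBT$\Rightarrow$) then follow from their formula counterparts by \eqref{E:mp}, which is a rule of $\mathtt{CnK}$.

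The remaining (negative) claims require semantic counterexamples inside $\mathbb{FSM}$: the failures of (nonSym$\to$), (WnonSym$\to$), (nonSym$\Rightarrow$), (WnonSym$\Rightarrow$) (needed for proper connexivity) and of (CBT$\Rightarrow$), (WCBT$\Rightarrow$) (needed for the failure of hyperconnexivity of $\Rightarrow$). Here the key observation is that given any $\mathcal{M} = (W,\leq,V^+,V^-) \in \mathbb{P}$, the structure $\mathcal{M}^\ast := (W,\leq,\emptyset,V^+,V^-)$ lies in $\mathbb{FSM}$: both Fischer-Servi conditions \eqref{Cond:1} and \eqref{Cond:2} hold vacuously when $R = \emptyset$. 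A straightforward induction on $\phi \in \mathcal{PL}$ shows that $\mathcal{M}, w\models^\star_\mathsf{C}\phi$ iff $\mathcal{M}^\ast, w\models^\star_m\phi$ for $\star \in \{+,-\}$, because the inductive clauses for connectives of $PL$ coincide in the two semantics. Applying this to the propositional counterexamples used in Propositions \ref{P:c-connexivity} and \ref{P:c-strong-connexivity} (in particular the model $\mathcal{M}_0$ from the proof of Proposition \ref{P:c-strong-implication}) yields all the required $\mathbb{FSM}$-counterexamples at once.

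No real obstacle is anticipated: the entire argument is a bookkeeping exercise combining Lemma \ref{L:c}.2 with the trivial modal extension $R := \emptyset$. The only point that invites a moment's care is making the embedding $\mathbb{P} \hookrightarrow \mathbb{FSM}$ explicit and checking that the mutual induction for $(\models^+_m,\models^-_m)$ restricted to $\mathcal{PL}$-formulas coincides clause-by-clause with that for $(\models^+_\mathsf{C},\models^-_\mathsf{C})$, which is immediate from the way $(\models^+_m,\models^-_m)$ was defined as an extension of $(\models^+_\mathsf{C},\models^-_\mathsf{C})$ by the modal clauses only.
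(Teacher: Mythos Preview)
Your proposal is correct and follows essentially the same approach as the paper: transfer the positive claims from $\mathsf{C}$ via Lemma \ref{L:c}, and lift the propositional countermodel $\mathcal{M}_0$ to $\mathbb{FSM}$ for the negative ones. The only cosmetic differences are that the paper uses the simpler witness $p\notin\mathsf{C}$ for non-triviality and expands $\mathcal{M}_0$ with $R^m_0:=\{(w,w)\}$ rather than your $R:=\emptyset$; both choices work equally well for the present corollary (the paper's choice of a reflexive $R$ is made with an eye to reusing $\mathcal{M}^m_0$ later for claims about $\to_s$ and $\Rightarrow_s$, where $R=\emptyset$ would trivialize $\Box$).
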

\begin{proof}[Proof (a sketch)]
By Lemma \ref{L:c} and the proof of Proposition \ref{P:c-negation-non-trivial} we know that \eqref{Contr} is a theorem of $\mathsf{CnK}$ and that, for any $p \in Prop$, $p$ is not provable in $\mathsf{CnK}$. Moreover, Lemma \ref{L:c} implies the full hyperconnexivity of $\to$ and the full connexivity of $\Rightarrow$ also in $\mathsf{CnK}$. The failure of both forms of hyperconnexivity for $\Rightarrow$ in $\mathsf{CnK}$ can be seen by considering the model $\mathcal{M}^m_0\in \mathbb{FSM}$ which expands the model $\mathcal{M}_0\in \mathbb{P}$ from the proof of Proposition \ref{P:c-strong-implication} by setting $R^m_0 := \{(w,w)\}$.
\end{proof}
However, $\mathsf{CnK}$ also develops the connexivity profile of $\mathsf{C}$ by allowing the definitions of further connexive connectives which can be introduced as follows:
\begin{itemize}
	\item $\phi\to_s\psi$ called \textit{strict implication} and abbreviating $\Box(\phi\to\psi)$.
	
	\item $\phi\leftrightarrow_s\psi$ called \textit{strict equivalence} and abbreviating $(\phi\to_s\psi)\wedge(\psi\to_s\phi)$.
	
	\item $\phi\Rightarrow_s\psi$ called \textit{strong strict implication} and abbreviating $\Box(\phi\Rightarrow\psi)$ (or, strongly equivalently over $\mathsf{CnK}$, $(\phi\to_s \psi)\wedge(\NEG\psi\to_s\NEG\phi)$).
	
	\item $\phi\Leftrightarrow_s\psi$ called \textit{strong strict equivalence} and abbreviating $(\phi\Rightarrow_s\psi)\wedge(\psi\Rightarrow_s\phi)$  (or, strongly equivalently over $\mathsf{CnK}$, $\Box(\phi\Leftrightarrow \psi)$).
\end{itemize}
Strict implication is well-known since the early days of modern logic (see, e.g. \cite{lewis}). To the best of our knowledge, strong strict implication has not been looked into as yet, which is easily explainable by the fact that it is both classically and intuitionistically equivalent to the strict implication. However, given the weakness of implication in $\mathsf{C}$ which was already pointed out in Proposition \ref{P:c-strong-implication}, it is no longer reasonable to expect this equivalence for $\mathsf{C}$-based modal logics. Before we assess the resemblance between these new connectives and their counterparts in $\mathsf{C}$, we would like to look into their properties a little bit further:
\begin{lemma}\label{L:strict-implication}
For all $\phi, \psi\in \mathcal{MD}$, $\phi\to_s\psi\in \mathsf{CnK}$ iff $\phi\to\psi\in \mathsf{CnK}$.	
\end{lemma}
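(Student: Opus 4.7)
The plan is to prove the two directions of the biconditional separately, one syntactically and one semantically.

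The right-to-left direction is short and syntactic. If $\phi\to\psi\in\mathsf{CnK}$, then by the completeness half of Theorem \ref{T:mod-completeness} we have $\vdash_{\mathtt{CnK}}\phi\to\psi$. Applying the necessitation rule \eqref{E:Rnec} gives $\vdash_{\mathtt{CnK}}\Box(\phi\to\psi)$, i.e.\ $\vdash_{\mathtt{CnK}}\phi\to_s\psi$, and by soundness $\phi\to_s\psi\in\mathsf{CnK}$.

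For the left-to-right direction I would argue contrapositively via the Kripke semantics. Assume $\phi\to\psi\notin\mathsf{CnK}$; by soundness there exist $\mathcal{M}=(W,\leq,R,V^+,V^-)\in\mathbb{FSM}$ and $w\in W$ with $\mathcal{M},w\not\models^+_m\phi\to\psi$. I would then build an expanded model $\mathcal{M}'$ by adding a single fresh world $u\notin W$, putting $W':=W\cup\{u\}$, $\leq'\,:=\,\leq\cup\{(u,u)\}$, $R':=R\cup\{(u,v)\mid v\in W,\,v\geq w\}$, and $V'^\pm(p):=V^\pm(p)$ for every $p\in Prop$. Note that $u$ is $\leq'$-minimal (only $\leq'$-related to itself) and supports no atomic formula positively or negatively.

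The first verification step is to show $\mathcal{M}'\in\mathbb{FSM}$. Monotonicity of $V'^\pm$ is immediate; for \eqref{Cond:1}, the only non-obvious case is $v\geq' u$ with $u R' u_2$, which forces $v=u$, and then $u R' u_2$ together with $u_2\leq' u_2$ closes the square. For \eqref{Cond:2}, the delicate case is $u R' u_2$ together with $u_2\leq' u_3$: then $u_2\geq w$, hence $u_3\in W$ with $u_3\geq w$ by transitivity of $\leq$, so $u R' u_3$ and $u\leq' u$ provides the required witness. The second verification step is a standard simultaneous induction on $\theta\in\mathcal{MD}$ showing that for every $x\in W$ and every $\star\in\{+,-\}$, $\mathcal{M},x\models^\star_m\theta$ iff $\mathcal{M}',x\models^\star_m\theta$. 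This works because both $\leq'$- and $R'$-successors of any $x\in W$ remain in $W$: the only new edges emanate from $u$, and $u$ is not $\leq'$-above any world in $W$. Combining the two steps, $\mathcal{M}',w\not\models^+_m\phi\to\psi$, and since $u\leq' u$ and $u R' w$, we obtain $\mathcal{M}',u\not\models^+_m\Box(\phi\to\psi)$, whence $\phi\to_s\psi\notin\mathsf{CnK}$.

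The main obstacle is a certain tension in choosing $R'$: it must be rich enough so that condition \eqref{Cond:2} holds at the new world $u$ (forcing the inclusion of $(u,v)$ for every $v\geq w$, not just $(u,w)$), yet restricted enough that no formula changes its evaluation at any old world. The second requirement is what motivates making $u$ incomparable with $W$ under $\leq'$ and attaching no new $R'$-edges into $u$, so that the support of formulas at worlds of $W$ is insulated from the new addition.
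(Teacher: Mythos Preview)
Your proof is correct and follows essentially the same approach as the paper: the right-to-left direction via necessitation, and the left-to-right direction by contraposition via adjoining a fresh $\leq'$-isolated world whose $R'$-successors are exactly the $\leq$-upset of $w$. Your verification of \eqref{Cond:1} and \eqref{Cond:2} and of the invariance of satisfaction at old worlds is more detailed than the paper's sketch but matches it exactly; the only cosmetic difference is that the paper picks the witnessing world so that it directly verifies $\phi$ and fails $\psi$, whereas you pick it so that it fails $\phi\to\psi$, but since $u R' w$ and $\Box(\phi\to\psi)$ requires $\phi\to\psi$ at every $R'$-successor, this makes no difference.
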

\begin{proof}
Right-to-left part follows by \eqref{E:Rnec}. In the other direction, we argue by contraposition. If $\phi\to\psi\notin \mathsf{CnK}$, we can choose a model $(\mathcal{M}, w)\in Pt(\mathbb{FSM})$ such that, for $\mathcal{M},w\models^+_m(\{\phi\}, \{\psi\})$. Now choose a $v \notin W$ and consider the model $\mathcal{M}'$, setting $W':= W \cup \{v\}$, $\leq':= \leq \cup \{(v,v)\}$ and $R':= R\cup \{(v, u)\mid w\leq' u\}$. It is straightforward to check that we have both $\mathcal{M}'\in \mathbb{FSM}$ and $\mathcal{M}',v\not\models^+_m\phi\to_s\psi$.	
\end{proof}
This easy lemma immediately entails the following corollary:
\begin{corollary}\label{C:strict-implication}
For all $\phi, \psi\in \mathcal{MD}$, $\phi\leftrightarrow_s\psi\in \mathsf{CnK}$ (resp. $\phi\Rightarrow_s\psi\in \mathsf{CnK}$, $\phi\Leftrightarrow_s\psi\in \mathsf{CnK}$) iff $\phi\leftrightarrow\psi\in \mathsf{CnK}$ (resp. $\phi\Rightarrow\psi\in \mathsf{CnK}$, $\phi\Leftrightarrow\psi\in \mathsf{CnK}$).
\end{corollary}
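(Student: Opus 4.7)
The plan is to reduce each of the three equivalences to Lemma \ref{L:strict-implication} by decomposing $\leftrightarrow_s$, $\Rightarrow_s$, and $\Leftrightarrow_s$ into conjunctions of formulas of the form $\alpha \to_s \beta$, and then exploiting the elementary fact that for any $\chi_1, \chi_2 \in \mathcal{MD}$ we have $\chi_1 \wedge \chi_2 \in \mathsf{CnK}$ iff both $\chi_1 \in \mathsf{CnK}$ and $\chi_2 \in \mathsf{CnK}$; this is immediate from axioms $(\alpha_3), (\alpha_4), (\alpha_5)$ of $\mathtt{S}_0 \subseteq \mathtt{CnK}$ together with \eqref{E:mp}, using Theorem \ref{T:mod-completeness}.

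First I would dispatch the biconditional case. Since $\phi \leftrightarrow_s \psi$ abbreviates $(\phi \to_s \psi) \wedge (\psi \to_s \phi)$, the conjunction observation splits the problem into two strict-implication questions, each of which is reduced by Lemma \ref{L:strict-implication} to the corresponding plain implication. Combining, $\phi \leftrightarrow_s \psi \in \mathsf{CnK}$ iff $\phi \to \psi \in \mathsf{CnK}$ and $\psi \to \phi \in \mathsf{CnK}$, i.e.\ iff $\phi \leftrightarrow \psi \in \mathsf{CnK}$.

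Next I would treat $\Rightarrow_s$. Its defining form $\Box(\phi \Rightarrow \psi)$ is not directly a strict implication, so I would first justify the bracketed alternative noted in the text, namely that $\Box(\phi \Rightarrow \psi)$ is strongly equivalent over $\mathsf{CnK}$ to $(\phi \to_s \psi) \wedge (\NEG \psi \to_s \NEG \phi)$. Unfolding $\Rightarrow$ gives $\Box((\phi \to \psi) \wedge (\NEG\psi \to \NEG\phi))$, and \eqref{E:tm1} distributes $\Box$ over $\wedge$; since strong equivalence preserves $\mathsf{CnK}$-membership (by the very definition of $\Leftrightarrow$ together with \eqref{E:mp}), I may work with the conjunctive form. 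The same conjunction argument then reduces the question to $\phi \to \psi \in \mathsf{CnK}$ and $\NEG\psi \to \NEG\phi \in \mathsf{CnK}$, which is precisely $\phi \Rightarrow \psi \in \mathsf{CnK}$.

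The $\Leftrightarrow_s$ case follows by one more application of the conjunction argument to $(\phi \Rightarrow_s \psi) \wedge (\psi \Rightarrow_s \phi)$, feeding in the $\Rightarrow_s$ case just proved. I do not foresee a real obstacle: the only non-trivial ingredient is already packaged inside Lemma \ref{L:strict-implication}, and the remainder is bookkeeping using the positive conjunction axioms of $\mathtt{S}_0$ and the $\Box$-over-$\wedge$ distribution from \eqref{E:tm1}. If anything merits extra care, it is the reduction of $\Box(\phi \Rightarrow \psi)$ to the two-conjunct form, since one has to check both directions of the strong equivalence; but this is a straightforward application of \eqref{E:tm1} and \eqref{E:Rmod-box} and does not require any new constructions.
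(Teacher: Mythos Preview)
Your proposal is correct and matches what the paper intends: the paper gives no explicit proof of the corollary, simply stating that Lemma~\ref{L:strict-implication} ``immediately entails'' it, and your decomposition via the conjunction-splitting fact together with \eqref{E:tm1} is precisely the natural way to cash out that remark. The only mild redundancy is your invocation of \emph{strong} equivalence for the $\Rightarrow_s$ reduction; plain $\leftrightarrow$ (as supplied directly by \eqref{E:tm1}) already suffices to transfer $\mathsf{CnK}$-membership, so you need not appeal to \eqref{E:Rmod-box} or check the negative conjuncts of $\Leftrightarrow$.
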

Turning now to a direct comparison between the connective pairs $\{\Rightarrow, \Leftrightarrow\}$ and $\{\Rightarrow_s, \Leftrightarrow_s\}$, we observe that a full analogue to Proposition \ref{P:c-strong-implication} can be proven for $\{\Rightarrow_s, \Leftrightarrow_s\}$ in $\mathsf{CnK}$:
\begin{proposition}\label{P:c-strong-strict-implication}
	Let $p, q \in Prop$ and let $\phi, \psi, \theta\in \mathcal{MD}$. Then the following statements hold for all $\gg \in \{\Rightarrow, \Rightarrow_s, \to, \to_s\}$:
	\begin{enumerate}
		\item $(\phi \Rightarrow_s \psi)\gg(\NEG\psi\Rightarrow_s\NEG\phi)\in \mathsf{CnK}$.
		
		\item However, the same cannot be said about $\to_s$, since we have $(p \to_s q)\gg(\NEG q\to_s\NEG p)\notin \mathsf{CnK}$.
		
		\item We have $(\phi \Rightarrow_s \psi)\gg(\phi\to_s\psi)\in \mathsf{CnK}$, but not vice versa, so that $\Rightarrow_s$ is stronger than $\to_s$.
		
		\item If $(\phi\Leftrightarrow\psi) \in \mathsf{CnK}$ then also $\theta[\phi/p]\Leftrightarrow\theta[\psi/p] \in \mathsf{CnK}$.
		
		\item If $(\phi\Leftrightarrow_s\psi) \in \mathsf{CnK}$ then also $\theta[\phi/p]\Leftrightarrow_s\theta[\psi/p] \in \mathsf{CnK}$.
		
		\item However, the same cannot be said about $\leftrightarrow$ and $\leftrightarrow_s$, since we have, e.g. $((p \wedge q) \to p)\leftrightarrow(p \to p), ((p \wedge q) \to p)\leftrightarrow_s(p \to p) \in \mathsf{CnK}$, but $\NEG(((p \wedge q) \to p)\leftrightarrow \NEG(p \to p), \NEG(((p \wedge q) \to p)\leftrightarrow_s \NEG(p \to p) \notin \mathsf{CnK}$.
	\end{enumerate}
\end{proposition}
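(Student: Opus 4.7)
The proposition is a modal analogue of Proposition \ref{P:c-strong-implication}, so the plan is to lift each of its parts to $\mathsf{CnK}$ either by combining the algebraic unpacking of $\Rightarrow_s$ and $\to_s$ with the basic modal principles of Lemma \ref{L:mod-theorems}, or by promoting the propositional counter-models from that proposition to Fischer-Servi models. The key identities to use throughout are $\phi \Rightarrow_s \psi \equiv \Box(\phi \Rightarrow \psi)$, which via \eqref{E:tm1} and \eqref{E:Rmod-box} is in turn equivalent to $(\phi \to_s \psi) \wedge (\NEG\psi \to_s \NEG\phi)$, and the fact that \eqref{Cl:neg+} and \eqref{Cl:neg-} make $\chi$ and $\NEG\NEG\chi$ share bi-extensions at every world of every $\mathcal{M} \in \mathbb{FSM}$.

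For Part 1, I would argue semantically: using the double-negation coincidence, the bi-extensions of $\phi \to \psi$ and $\NEG\NEG\phi \to \NEG\NEG\psi$ coincide at every world, so combined with commutativity of $\wedge$ one obtains that $\phi \Rightarrow \psi$ and $\NEG\psi \Rightarrow \NEG\phi$ have equal bi-extensions, which lifts under $\Box$ to $\phi \Rightarrow_s \psi \Leftrightarrow_s \NEG\psi \Rightarrow_s \NEG\phi$; this strong equivalence comfortably suffices for the formula of Part 1 under each of the four choices of $\gg$. The positive half of Part 3 reduces, via the conjunction-unpacking above, to conjunction elimination inside $\Box$, handled by \eqref{E:Rmod-box}; where $\gg \in \{\Rightarrow, \Rightarrow_s\}$ requires the contrapositive direction too, I would combine \eqref{E:am5} with \eqref{Ax:wedge-neg} and \eqref{Ax:to-neg} to rewrite $\NEG(\phi \Rightarrow \psi)$ as a disjunction implied by $\NEG(\phi \to \psi)$, then push $\Box$ monotonically.

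For Parts 4 and 5, I would run an induction on the construction of $\theta$ establishing that if $\phi \Leftrightarrow \psi \in \mathsf{CnK}$ then $\|\theta[\phi/p]\|_\mathcal{M} = \|\theta[\psi/p]\|_\mathcal{M}$ at every world of every $\mathcal{M} \in \mathbb{FSM}$. The propositional cases proceed exactly as in the proof of Proposition \ref{P:c-strong-implication}.4, while the new modal cases for $\Box\chi$ and $\Diamond\chi$ are immediate since \eqref{Cl:box+}--\eqref{Cl:diam-} only consult the bi-extension of the immediate subformula at $R$-successors, on which the inductive hypothesis already gives coincidence. Part 5 then reduces to Part 4 via Corollary \ref{C:strict-implication}, which equates $\mathsf{CnK}$-theoremhood of $\chi \Leftrightarrow \xi$ and $\chi \Leftrightarrow_s \xi$.

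The negative parts of 2, 3, and 6 are obtained by expanding the propositional counter-models $\mathcal{M}_0$ and $\mathcal{M}_1$ from Proposition \ref{P:c-strong-implication} to modal models by setting $R := \{(w, w) \mid w \in W\}$; both \eqref{Cond:1} and \eqref{Cond:2} are then trivially satisfied, and a short induction shows that the bi-extensions of the propositional subformulas of the intended failure formula are unchanged (no modal operators occur in those subformulas, and the new trivial $R$ reduces the clauses for $\Box$ and $\Diamond$ to local read-offs). The main technical nuisance I anticipate is not conceptual but the careful bookkeeping inside the substitution induction of Parts 4 and 5, where one has to track how pointwise coincidence of bi-extensions interacts with the Fischer-Servi completion patterns \eqref{Cond:1} and \eqref{Cond:2} so as to ensure that it genuinely propagates through the modal subformulas.
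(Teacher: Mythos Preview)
Your proposal is correct and matches the paper's approach almost exactly: the paper also expands the propositional counter-models $\mathcal{M}_0$ and $\mathcal{M}_1$ to Fischer-Servi models by taking the identity accessibility relation, handles Part 4 by the same bi-extension induction, and reduces Part 5 to Part 4 via Corollary \ref{C:strict-implication}. Your anticipated nuisance about the Fischer-Servi conditions \eqref{Cond:1} and \eqref{Cond:2} in the substitution induction is unfounded, however: the clauses \eqref{Cl:box+}--\eqref{Cl:diam-} are purely compositional in the bi-extension of the immediate subformula, so equal bi-extensions propagate through $\Box$ and $\Diamond$ with no appeal to the frame conditions at all.
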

\begin{proof}[Proof (a sketch)]
	Part 1 follows from the defintion of $\Rightarrow_s$ and \eqref{E:Rnec}, for Part 2, recall the model $\mathcal{M}^m_0$ defined in the proof of Corollary \ref{C-legacy-mod}. As for Part 3, its positive statement is immediate to see, its negative statement follows by considering the model $\mathcal{M}^m_0$ again.
	
	The proof of Part 4 is similar to the proof of Proposition \ref{P:c-strong-implication}.4. As for Part 5, note that if $(\phi\Leftrightarrow_s\psi) \in \mathsf{CnK}$ then, by Corollary \ref{C:strict-implication}, also $(\phi\Leftrightarrow\psi) \in \mathsf{CnK}$, whence $\theta[\phi/p]\Leftrightarrow\theta[\psi/p] \in \mathsf{CnK}$ by Part 4, whence further $\theta[\phi/p]\Leftrightarrow_s\theta[\psi/p] \in \mathsf{CnK}$ by Corollary \ref{C:strict-implication} again.
	
	
	Finally, as for Part 6, its positive statement for $\leftrightarrow$ follows from Proposition \ref{P:c-strong-implication}.5 and Lemma \ref{L:c}; this statement also entails the positive statement for $\leftrightarrow_s$ by \eqref{E:Rnec}. The negative statements follow by consideration of model $\mathcal{M}^m_1\in \mathbb{FSM}$ which expands the model $\mathcal{M}_1\in \mathbb{FSM}$ defined in the proof of Proposition \ref{P:c-strong-implication} by setting $R^m_1:= \{(w,w), (v,v)\}$.
\end{proof}
Next, as far as the connexivity of the two versions of strict implication is concerned, $\mathsf{CnK}$ is in complete harmony with $\mathsf{C}$:
\begin{proposition}\label{P:modal-connexivity}
	$\to_s$ is fully hyperconnexive in $\mathsf{CnK}$ and $\Rightarrow_s$ is fully connexive (but neither plainly nor weakly hyperconnexive) in $\mathsf{CnK}$. 
\end{proposition}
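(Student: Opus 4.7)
The plan is to exploit the axiom \eqref{E:am5}, which lets us commute $\NEG$ past an outer $\Box$, so as to reduce every connexive thesis for $\to_s$ (resp. $\Rightarrow_s$) to the corresponding thesis for $\to$ (resp. $\Rightarrow$) inside $\mathsf{C}$, already established by Propositions \ref{P:c-connexivity} and \ref{P:c-strong-connexivity}. Lemma \ref{L:c} makes every $\mathsf{C}$-theorem available as a $\mathtt{CnK}$-theorem, and the main modal machinery we need consists of \eqref{E:Rnec}, \eqref{E:am1}, \eqref{E:am5}, and the derived rule \eqref{E:Rmod-box}.

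First I would handle the full hyperconnexivity of $\to_s$. For (AT$\to_s$): \eqref{E:c-connexivity} gives $\NEG(\NEG\phi\to\phi)\in\mathsf{CnK}$; apply \eqref{E:Rnec} to obtain $\Box\NEG(\NEG\phi\to\phi)$ and then \eqref{E:am5} to conclude $\NEG\Box(\NEG\phi\to\phi) = \NEG(\NEG\phi\to_s\phi)$. For (BT$\to_s$), start from (BT$\to$), namely $(\phi\to\NEG\psi)\to\NEG(\phi\to\psi)$, apply (RM$\Box$) to get $\Box(\phi\to\NEG\psi)\to\Box\NEG(\phi\to\psi)$, then rewrite $\Box\NEG(\phi\to\psi)$ as $\NEG\Box(\phi\to\psi)$ via \eqref{E:am5}, and finally apply \eqref{E:Rnec} once more to box the whole implication; this yields $\Box((\phi\to_s\NEG\psi)\to\NEG(\phi\to_s\psi)) = (\phi\to_s\NEG\psi)\to_s\NEG(\phi\to_s\psi)$. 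The same template, starting from (CBT$\to$), gives (CBT$\to_s$). The weak versions (WBT$\to_s$) and (WCBT$\to_s$) drop the last application of \eqref{E:Rnec} and use \eqref{E:mp} on the resulting implication $(\phi\to_s\NEG\psi)\to\NEG(\phi\to_s\psi)$ (resp.\ its converse).

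For $\Rightarrow_s$ we run exactly the same proof, with $\Rightarrow$ playing the role of $\to$ and using the connexivity half of Proposition \ref{P:c-strong-connexivity}; here \eqref{E:am5} acts on the outer $\Box$ in the definition $\phi\Rightarrow_s\psi := \Box(\phi\Rightarrow\psi)$ in the same way. The failure halves are then witnessed semantically. For (nonSym) and (WnonSym) of both connectives, take a one-point reflexive model with $V^+(p)=\emptyset$ and $V^+(q)=\{w\}$; Fischer-Servi is trivial, and $p\to_s q$ holds while $q\to_s p$ (and $p\Rightarrow_s q$ vs.\ $q\Rightarrow_s p$) does not, refuting both the formula and consecution schemes. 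For the failures of plain and weak hyperconnexivity of $\Rightarrow_s$, use the model $\mathcal{M}^m_0\in\mathbb{FSM}$ from the proof of Corollary \ref{C-legacy-mod}: since $R^m_0=\{(w,w)\}$ collapses the modal layer on this single-world frame, $\|\Box\chi\|_{\mathcal{M}^m_0}=\|\chi\|_{\mathcal{M}^m_0}$ and the counterexamples to (CBT$\Rightarrow$) and (WCBT$\Rightarrow$) already verified in the proof of Proposition \ref{P:c-strong-connexivity} transfer verbatim.

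The only genuinely delicate point is making sure that \eqref{E:am5}, rather than any classical $\NEG\Box\leftrightarrow\Diamond\NEG$ duality, is what carries the argument through: this is where $\mathsf{CnK}$ differs from a classically-based modal logic and is precisely the feature that transports the connexive profile of $\mathsf{C}$ upward to the strict implication. Everything else is routine bookkeeping with (RM$\Box$), \eqref{E:Rnec}, and \eqref{E:t-double-neg}.
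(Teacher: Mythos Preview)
Your proposal is correct and matches the paper's approach, which for the positive part simply says ``a straightforward but tedious check based on the definitions'' and for the negative part invokes the same model $\mathcal{M}^m_0$ you use. Two small points worth tightening: first, the phrase ``run exactly the same proof'' for $\Rightarrow_s$ glosses over the fact that $\phi\Rightarrow\psi$ is a conjunction, so before applying (RM$\Box$) you must split (BT$\Rightarrow$) into its two $\to$-conjuncts, push $\Box$ through each, reassemble, and only then apply \eqref{E:Rnec}; second, your one-point (nonSym)-countermodel should also specify $V^-$ (e.g.\ $V^-(p)=\{w\}$, $V^-(q)=\emptyset$) so that $p\Rightarrow_s q$ actually holds. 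Neither affects the soundness of your plan.
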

\begin{proof}[Proof (a sketch)]
	Again, we omit the proof for the positive part, which consists in a straightforward but tedious check based on the definitions. As for the negative part of the statement, one can again consider the model $\mathcal{M}^m_0$ defined in the proof of Corollary \ref{C-legacy-mod}, which fails both (CBT$\Rightarrow_s$) and (WCBT$\Rightarrow_s$).	
\end{proof}
A similar harmony arises relative to the negation-inconsistency: $\mathsf{CnK}$ not only inherits the negation-inconsistencies provable in $\mathsf{C}$, but also systematically supplies modal counterparts for them, as the next proposition shows:
\begin{proposition}\label{P:mod-negation-non-trivial}
For every $\phi\wedge\NEG\phi\in\mathsf{CnK}$, we have $\Box\phi\wedge\NEG\Box\phi\in\mathsf{CnK}$. In particular, whenever $(\phi\to\psi)\wedge\NEG(\phi\to\psi)\in\mathsf{CnK}$, we have $(\phi\to_s\psi)\wedge\NEG(\phi\to_s\psi)\in\mathsf{CnK}$. Moreover, if $(\phi\Rightarrow\psi)\wedge\NEG(\phi\Rightarrow\psi)\in\mathsf{CnK}$, then $(\phi\Rightarrow_s\psi)\wedge\NEG(\phi\Rightarrow_s\psi)\in\mathsf{CnK}$.
\end{proposition}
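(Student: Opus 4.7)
The proof is essentially a routine application of the axiomatization obtained in Theorem \ref{T:mod-completeness}, together with the definitions of $\to_s$ and $\Rightarrow_s$. The plan is to handle the main claim first and then derive the two particular cases by instantiation.

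For the main claim, assume $\phi\wedge\NEG\phi\in\mathsf{CnK}$. By Theorem \ref{T:mod-completeness}, this gives $\vdash_{\mathtt{CnK}}\phi\wedge\NEG\phi$, and the axioms $(\alpha_3)$ and $(\alpha_4)$ together with \eqref{E:mp} yield both $\vdash_{\mathtt{CnK}}\phi$ and $\vdash_{\mathtt{CnK}}\NEG\phi$. Two applications of the necessitation rule \eqref{E:Rnec} then give $\vdash_{\mathtt{CnK}}\Box\phi$ and $\vdash_{\mathtt{CnK}}\Box\NEG\phi$. Now axiom \eqref{E:am5} supplies $\vdash_{\mathtt{CnK}}\Box\NEG\phi\to\NEG\Box\phi$, so by \eqref{E:mp} we obtain $\vdash_{\mathtt{CnK}}\NEG\Box\phi$. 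An application of $(\alpha_5)$ and \eqref{E:mp} finally produces $\vdash_{\mathtt{CnK}}\Box\phi\wedge\NEG\Box\phi$, and invoking Theorem \ref{T:mod-completeness} once more converts this back into $\Box\phi\wedge\NEG\Box\phi\in\mathsf{CnK}$, as required.

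For the two corollary statements, I would simply specialize the result just proved. Setting $\chi:=\phi\to\psi$, the assumption $(\phi\to\psi)\wedge\NEG(\phi\to\psi)\in\mathsf{CnK}$ is $\chi\wedge\NEG\chi\in\mathsf{CnK}$, and the main claim yields $\Box\chi\wedge\NEG\Box\chi\in\mathsf{CnK}$, which by the definition $\phi\to_s\psi:=\Box(\phi\to\psi)$ is exactly $(\phi\to_s\psi)\wedge\NEG(\phi\to_s\psi)\in\mathsf{CnK}$. The $\Rightarrow_s$ case is handled identically with $\chi:=\phi\Rightarrow\psi$, using the definition $\phi\Rightarrow_s\psi:=\Box(\phi\Rightarrow\psi)$.

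There is no real obstacle here: the whole argument rests on the fact that \eqref{E:am5} forces $\Box$ and $\NEG$ to commute, so that necessitation preserves not only theoremhood of $\phi$ but also theoremhood of $\NEG\phi$ into theoremhood of $\NEG\Box\phi$. The only thing worth double-checking is that no appeal to an additional derived principle (such as a version of monotonicity) is required; a direct glance at the axiomatization confirms that $(\alpha_3)$--$(\alpha_5)$, \eqref{E:mp}, \eqref{E:am5}, and \eqref{E:Rnec} suffice, so the entire argument can be compressed into four or five lines.
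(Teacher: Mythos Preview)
Your proof is correct and follows essentially the same route as the paper's sketch: the paper cites \eqref{E:Rnec}, \eqref{E:tm1}, and \eqref{E:am5}, applying necessitation to the conjunction and then distributing via \eqref{E:tm1}, whereas you split the conjunction first with $(\alpha_3)$--$(\alpha_5)$ and necessitate each conjunct separately, which is an inessential reordering. The treatment of the two particular cases by instantiating the main claim with $\chi=\phi\to\psi$ and $\chi=\phi\Rightarrow\psi$ matches the paper exactly.
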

\begin{proof}[Proof (a sketch)]
If  $\phi\wedge\NEG\phi\in\mathsf{CnK}$, then $\Box\phi\wedge\NEG\Box\phi\in\mathsf{CnK}$ follows by \eqref{E:Rnec}, \eqref{E:tm1} and \eqref{E:am5}. The particular cases follow by definitions of $\to_s$ and $\Rightarrow_s$.
\end{proof}
Now Lemma \ref{L:c} and Proposition \ref{P:mod-negation-non-trivial} together imply that:
\begin{corollary}\label{C:mod-negation-non-trivial}
For every $\phi\wedge\NEG\phi\in\mathsf{C}$, we have $\Box\phi\wedge\NEG\Box\phi\in\mathsf{CnK}$. In particular, whenever $(\phi\to\psi)\wedge\NEG(\phi\to\psi)\in\mathsf{C}$, we have $(\phi\to_s\psi)\wedge\NEG(\phi\to_s\psi)\in\mathsf{CnK}$. Moreover, if $(\phi\Rightarrow\psi)\wedge\NEG(\phi\Rightarrow\psi)\in\mathsf{C}$, then $(\phi\Rightarrow_s\psi)\wedge\NEG(\phi\Rightarrow_s\psi)\in\mathsf{CnK}$.	
\end{corollary}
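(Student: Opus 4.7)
The proof plan is to simply chain together the two results cited in the setup of the corollary. The statement is essentially a packaging exercise, so my intent is to do minimal new work and lean entirely on Lemma \ref{L:c} together with Proposition \ref{P:mod-negation-non-trivial}.

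First I would handle the general claim. Assume $\phi \in \mathcal{PL}$ with $\phi\wedge\NEG\phi\in\mathsf{C}$. By Lemma \ref{L:c}.2, we obtain $\vdash_{\mathtt{CnK}}\phi\wedge\NEG\phi$, and hence, by Theorem \ref{T:mod-completeness}, also $\phi\wedge\NEG\phi\in\mathsf{CnK}$. Now apply Proposition \ref{P:mod-negation-non-trivial} to this formula (which is a legitimate $\mathcal{MD}$-formula since $\mathcal{PL}\subseteq\mathcal{MD}$) to conclude $\Box\phi\wedge\NEG\Box\phi\in\mathsf{CnK}$, as required.

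For the two particular cases, I would instantiate the general claim. If $(\phi\to\psi)\wedge\NEG(\phi\to\psi)\in\mathsf{C}$, then applying the general part with $(\phi\to\psi)$ in place of $\phi$ yields $\Box(\phi\to\psi)\wedge\NEG\Box(\phi\to\psi)\in\mathsf{CnK}$, which by the definition of $\to_s$ is precisely $(\phi\to_s\psi)\wedge\NEG(\phi\to_s\psi)\in\mathsf{CnK}$. The $\Rightarrow$-to-$\Rightarrow_s$ case is completely analogous, unfolding $\Rightarrow_s$ via its definition as $\Box(\phi\Rightarrow\psi)$.

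There is no real obstacle here; everything is a direct quotation of the two prior results and an unfolding of the abbreviations $\to_s$ and $\Rightarrow_s$. The only point that warrants explicit mention in the write-up is that the transition from $\mathsf{C}$-validity to $\mathsf{CnK}$-validity passes through the axiomatizations $\mathtt{C}$ and $\mathtt{CnK}$ via Lemma \ref{L:c}.2 and soundness/completeness, rather than being a direct semantic inclusion.
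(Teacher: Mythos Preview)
Your proposal is correct and follows essentially the same route as the paper, which simply states that the corollary follows from Lemma~\ref{L:c} together with Proposition~\ref{P:mod-negation-non-trivial}. The only minor remark is that invoking the full Theorem~\ref{T:mod-completeness} is more than necessary: the soundness direction (Lemma~\ref{L:modal-soundness}) already suffices to pass from $\vdash_{\mathtt{CnK}}\phi\wedge\NEG\phi$ to $\phi\wedge\NEG\phi\in\mathsf{CnK}$.
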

In particular, by Corollary \ref{C:mod-negation-non-trivial}, Lemma \ref{L:c}, \eqref{Contr}, and \eqref{Contr-strong}, we get that the following formula
\begin{align}
	((p_1\wedge \NEG p_1)\gg p_1)&\wedge\NEG((p_1\wedge \NEG p_1)\gg p_1)\label{Contr-m}
\end{align}
is a theorem of $\mathsf{CnK}$ for all $\gg \in \{\to, \Rightarrow, \to_s, \Rightarrow_s\}$.

Another important group of analogies that needs to be observed when discussing the merits of different $\mathsf{C}$-based modal logics stems from the comparison between the modalities and the quantifiers. Although a full appreciation of this sort of analogies is beyond the scope of the present paper, 
it is easy to see that the interaction patterns between the modalities and $\NEG$ in $\mathsf{CnK}$ exactly mirror the interaction between $\NEG$ and the non-standard quantifiers recently proposed as an addition to $\mathsf{QC}$, the first-order version of $\mathsf{C}$ (see \cite{ww} and \cite[Section 6]{olkhovikov0}).

\section{Conditional logic $\mathsf{CnCK}$}\label{S:conditional}
\subsection{$\mathsf{CnCK}$ and its axiomatization}\label{sub:cond-axioms}
We now proceed to the discussion of an extension of $\mathsf{C}$ with conditional operators that we will refer to as $\mathsf{CnCK}$.

We will supply $\mathsf{CnCK}$ with a version of Chellas semantics of conditionals introduced in \cite{chellas}. In particular, the relevant notion of a model can be defined in the following way:
\begin{definition}\label{D:cond-model}
	A bi-valuational Fischer-Servi conditional model is a structure $\mathcal{M} = (W, \leq, R, V^+, V^-)\in \mathbb{C}$ such that, for all $X,Y \subseteq W$ the binary relation $R_{(X,Y)}$ defined by 
	$$
	R_{(X,Y)} := \{(w,v)\mid R(w, (X, Y), v)\}
	$$
	is such that $(W, \leq, R_{(X,Y)}, V^+, V^-)\in \mathbb{FSM}$.
The class of all bi-valuational Fischer-Servi conditional models will be denoted by $\mathbb{FSC}$.	
\end{definition} 
We now define $\mathsf{CnCK}:= \mathfrak{L}(CN, \mathbb{FSC}, (\models^+, \models^-))$, assuming that
$$
(\models^+, \models^-):= Ind(\models_{\mathsf{C}}^+\cup\{\eqref{Cl:boxto+},\eqref{Cl:diamto+}\},\models_{\mathsf{C}}^-\cup\{\eqref{Cl:boxto-},\eqref{Cl:diamto-}\})
$$
where the labels for the new clauses refer to
\begin{align}
	\mathcal{M}, w&\models^+ \psi \boxto \chi \text{ iff } (\forall v \geq w)(\forall u \in W)(R_{\|\psi\|_\mathcal{M}}(v, u) \text{ implies }\mathcal{M}, u\models^+ \chi)\label{Cl:boxto+}\tag{$\boxto+$}\\
	\mathcal{M}, w&\models^+ \psi \boxto \chi \text{ iff } (\forall v \geq w)(\forall u \in W)(R_{\|\psi\|_\mathcal{M}}(v, u) \text{ implies }\mathcal{M}, u\models^- \chi)\label{Cl:boxto-}\tag{$\boxto-$}\\
	\mathcal{M}, w&\models^+ \psi \diamondto \chi \text{ iff } (\exists u \in W)(R_{\|\psi\|_\mathcal{M}}(w, u)\text{ and }\mathcal{M}, u\models^+ \chi)\label{Cl:diamto+}\tag{$\diamondto+$}\\	
	\mathcal{M}, w&\models^- \psi \diamondto \chi \text{ iff } (\exists u \in W)(R_{\|\psi\|_\mathcal{M}}(w, u)\text{ and }\mathcal{M}, u\models^- \chi)\label{Cl:diamto-}\tag{$\diamondto-$}
\end{align}
Recall that, for a $\phi \in \mathcal{CN}$, we have defined:
$$
\|\phi\|_\mathcal{M}: = (\lvert\phi\rvert^+_\mathcal{M}, \lvert\phi\vert^-_\mathcal{M}) = (\{w \in W\mid \mathcal{M}, w\models^+\phi\}, \{w \in W\mid \mathcal{M}, w\models^-\phi\}).
$$
Throughout this section, we will write $\Gamma\models\Delta$ meaning $\Gamma\models_{\mathsf{CnCK}}\Delta$. 

In this subsection, we obtain a sound and (strongly) complete axiomatization of $\mathsf{CnCK}$. We consider the Hilbert-style axiomatic system $\mathtt{CnCK}$, for which we set $\mathtt{CnCK}:= \mathtt{C}+(\eqref{E:a1}-\eqref{E:a7};\eqref{E:RAbox},\eqref{E:RCbox},\eqref{E:RAdiam},\eqref{E:RCdiam})$ assuming that
\begin{align}
	((\phi \boxto \psi)\wedge(\phi \boxto \chi))&\leftrightarrow(\phi \boxto (\psi \wedge \chi))\label{E:a1}\tag{$\gamma_1$}\\
	((\phi\diamondto\psi)\wedge (\phi \boxto \chi))&\to(\phi \diamondto(\psi\wedge\chi))\label{E:a2}\tag{$\gamma_2$}\\
	((\phi \diamondto \psi)\vee(\phi \diamondto \chi))&\leftrightarrow(\phi \diamondto (\psi \vee \chi))\label{E:a3}\tag{$\gamma_3$}\\
	((\phi \diamondto \psi)\to(\phi \boxto \chi))&\to(\phi \boxto (\psi \to \chi))\label{E:a4}\tag{$\gamma_4$}\\
	\phi\boxto (\psi&\to\psi)\label{E:a5}\tag{$\gamma_5$}\\
	\NEG(\phi\boxto\psi)&\leftrightarrow(\phi\boxto\NEG\psi)\label{E:a6}\tag{$\gamma_6$}\\
	\NEG(\phi\diamondto\psi)&\leftrightarrow(\phi\diamondto\NEG\psi)\label{E:a7}\tag{$\gamma_7$}\\
	\text{From }\phi\Leftrightarrow\psi &\text{ infer } (\phi\boxto\chi)\Leftrightarrow(\psi\boxto\chi)\label{E:RAbox}\tag{RA$\boxto$}\\
	\text{From }\phi\leftrightarrow\psi &\text{ infer } (\chi\boxto\phi)\leftrightarrow(\chi\boxto\psi)\label{E:RCbox}\tag{RC$\boxto$}\\
		\text{From }\phi\Leftrightarrow\psi &\text{ infer } (\phi\diamondto\chi)\Leftrightarrow(\psi\diamondto\chi)\label{E:RAdiam}\tag{RA$\diamondto$}\\
	\text{From }\phi\leftrightarrow\psi &\text{ infer } (\chi\diamondto\phi)\leftrightarrow(\chi\diamondto\psi)\label{E:RCdiam}\tag{RC$\diamondto$}	
\end{align}
Within this subsection, in addition to assuming all the notions introduced in Section \ref{S:preliminaries}, we will also write $\vdash$ and $\vDdash$ to mean $\vdash_{\mathtt{CnCK}}$ and $\vDdash_{\mathtt{CnCK}}$, respectively, to avoid the clutter.

The soundness of $\mathtt{CnCK}$ relative to $\mathsf{CnCK}$ can be established by the usual method of checking the soundness of axioms and inference rules:
\begin{lemma}\label{L:cond-soundness}
	For every $\phi\in\mathcal{CN}$, if $\vdash\phi$, then $\phi\in\mathsf{CnCK}$.
\end{lemma}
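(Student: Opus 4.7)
The plan is to argue by induction on the length of a derivation of $\phi$ in $\mathtt{CnCK}$, verifying that every axiom scheme is valid in every pointed model from $Pt(\mathbb{FSC})$ and that each inference rule preserves validity. Since the positive and negative satisfaction clauses for $\wedge$, $\vee$, $\to$, and $\NEG$ are identical in $\mathsf{C}$ and $\mathsf{CnCK}$, and since the monotonicity of satisfaction transfers to all of $\mathcal{CN}$ by a straightforward induction mirroring Lemma~\ref{L:monotonicity}, the soundness of all axiom schemes and rules inherited from $\mathtt{C}$ (in particular \eqref{Ax:double-neg}--\eqref{Ax:to-neg} and \eqref{E:mp}) is immediate.

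The conditional axioms are then handled by unpacking clauses \eqref{Cl:boxto+}--\eqref{Cl:diamto-}. The schemes \eqref{E:a6} and \eqref{E:a7} reduce directly to these clauses combined with \eqref{Cl:neg+} and \eqref{Cl:neg-}: for instance, $\mathcal{M}, w \models^+ \NEG(\phi\boxto\psi)$ unfolds via \eqref{Cl:neg+} and \eqref{Cl:boxto-} into precisely the condition $\mathcal{M}, w \models^+ \phi\boxto\NEG\psi$, with the negative projections of the biconditionals handled by one further application of \eqref{Cl:neg-} and \eqref{Ax:double-neg}. Scheme \eqref{E:a5} is immediate because $\psi\to\psi$ is positively satisfied everywhere. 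Schemes \eqref{E:a1}--\eqref{E:a4} are the conditional reflections of the modal axioms \eqref{E:am1}--\eqref{E:am4}, and their validity follows by the same quantifier-manipulation arguments as in the modal case, once we observe that Definition~\ref{D:cond-model} is precisely engineered so that each fixed-antecedent relation $R_{\|\phi\|_\mathcal{M}}$ itself satisfies the Fischer-Servi completion patterns \eqref{Cond:1}--\eqref{Cond:2}.

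The delicate point is the contrast between the congruence rules \eqref{E:RCbox}, \eqref{E:RCdiam} on the one hand and \eqref{E:RAbox}, \eqref{E:RAdiam} on the other. If the premise $\phi\leftrightarrow\psi$ is $\mathsf{CnCK}$-valid, then applying the semantics at every world extracts $\lvert\phi\rvert^+_\mathcal{M} = \lvert\psi\rvert^+_\mathcal{M}$ in every $\mathcal{M}\in\mathbb{FSC}$; since \eqref{Cl:boxto+} and \eqref{Cl:diamto+} consult only the positive extension of the consequent, this suffices for the soundness of \eqref{E:RCbox} and \eqref{E:RCdiam}. For \eqref{E:RAbox} and \eqref{E:RAdiam} one needs the full identity $\|\phi\|_\mathcal{M} = \|\psi\|_\mathcal{M}$ in order to conclude $R_{\|\phi\|_\mathcal{M}} = R_{\|\psi\|_\mathcal{M}}$, and this is exactly what the stronger premise $\phi\Leftrightarrow\psi$ supplies, by additionally providing $\lvert\phi\rvert^-_\mathcal{M} = \lvert\psi\rvert^-_\mathcal{M}$ via \eqref{Cl:neg+}. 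Once the fixed-antecedent relations coincide, both positive and negative clauses for $\phi\boxto\chi$ and $\psi\boxto\chi$ (respectively $\phi\diamondto\chi$ and $\psi\diamondto\chi$) become literally the same, yielding the strong equivalence in the conclusion. I expect the verification of \eqref{E:a4} to be the main technical obstacle, as it is the only place where the Fischer-Servi completion pattern must be pushed through the whole family of fixed-antecedent relations rather than applied to a single relation as in $\mathsf{CnK}$; the remaining checks, though tedious, follow well-trodden patterns from the soundness proof for the minimal conditional logic $\mathsf{CK}$.
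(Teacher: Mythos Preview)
Your proposal is correct and follows exactly the approach the paper indicates, namely the usual verification that every axiom scheme is valid and every rule preserves validity; the paper itself omits the details entirely. Your analysis of the contrast between \eqref{E:RCbox}/\eqref{E:RCdiam} and \eqref{E:RAbox}/\eqref{E:RAdiam} is spot-on and captures the one place where the bi-valuational setup genuinely matters for soundness.
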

Next, just like in Section \ref{sub:modal-axioms}, we prepare our completeness proof by looking into the relations between $\mathtt{CnCK}$ and $\mathsf{C}$: 
\begin{lemma}\label{L:cond-c}
	The following statements hold:
	\begin{enumerate}
		\item If $\Gamma, \Delta \subseteq\mathcal{L}$ are such that $\Gamma\models_{\mathsf{C}}\Delta$, and  $\Gamma', \Delta' \subseteq \mathcal{CN}$ are obtained from $\Gamma, \Delta$ by a simultaneous substitution of $\mathcal{CN}$-formulas for atoms, then $\Gamma'\vdash\Delta'$. Moreover, Deduction Theorem holds for $\mathtt{CnCK}$ in that for all $\Gamma \cup \{\phi,\psi\}\subseteq \mathcal{CN}$ we have $\Gamma \vdash \phi\to\psi$ iff $\Gamma, \phi\vdash \psi$.
		
		\item If $\phi \in \mathcal{PL}$, then $\vdash \phi$ iff $\phi\in \mathsf{C}$.
	\end{enumerate}
\end{lemma}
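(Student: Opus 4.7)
The plan is to mirror the argument for Lemma \ref{L:c}, replacing the ``omit all modalities'' translation used in the modal case with a suitable translation on $\mathcal{CN}$. For Part 1, the first claim reduces to the fact that $\mathtt{CnCK}\supseteq\mathtt{C}$: invoking $\mathsf{C} = \mathtt{C}[PL]$ from \cite{w}, we obtain a $\mathtt{C}$-derivation of some $\bigvee\Delta_0$ from some $\Gamma_0\Subset\Gamma$, $\Delta_0\Subset\Delta$, every step of which remains valid (as an axiom instance or an \eqref{E:mp} step) in $\mathtt{CnCK}$ under any uniform substitution of $\mathcal{CN}$-formulas for propositional variables. The Deduction Theorem is immediate from the setup of Section \ref{sub:hilbert}: only \eqref{E:mp} may be applied to hypotheses and $\mathtt{CnCK}$ contains the combinators $\alpha_1,\alpha_2$, so the standard induction on the length of derivation goes through unchanged.

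For Part 2, the $(\Leftarrow)$ direction is trivial by the same inclusion $\mathtt{C}\subseteq\mathtt{CnCK}$. For $(\Rightarrow)$, I propose the consequent projection $\tau:\mathcal{CN}\to\mathcal{PL}$ defined by
\begin{align*}
\tau(p) &:= p, \qquad \tau(\NEG\psi) := \NEG\tau(\psi),\\
\tau(\psi\ast\chi) &:= \tau(\psi)\ast\tau(\chi) \quad \text{for } \ast\in\{\wedge,\vee,\to\},\\
\tau(\psi\boxto\chi) &:= \tau(\chi), \qquad \tau(\psi\diamondto\chi) := \tau(\chi).
\end{align*}
Note that $\tau$ fixes $\mathcal{PL}$-formulas. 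Given a $\mathtt{CnCK}$-proof of some $\phi\in\mathcal{PL}$, I apply $\tau$ pointwise to obtain a candidate $\mathtt{C}$-derivation of $\tau(\phi)=\phi$. It then suffices to verify that (a) every axiom of $\mathtt{CnCK}$ has a $\tau$-image that is a $\mathtt{C}$-theorem, and (b) every rule has a $\tau$-image derivable in $\mathtt{C}$.

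For (a), the $\mathtt{C}$-axioms $\alpha_1,\ldots,\alpha_{12}$ remain axioms up to substitution, while each of \eqref{E:a1}--\eqref{E:a7} collapses to a trivial $\mathtt{C}$-tautology of shape $X\leftrightarrow X$ or $X\to X$: for instance, \eqref{E:a1} becomes $(\tau(\psi)\wedge\tau(\chi))\leftrightarrow(\tau(\psi)\wedge\tau(\chi))$, \eqref{E:a5} becomes $\tau(\psi)\to\tau(\psi)$, and both \eqref{E:a6} and \eqref{E:a7} become $\NEG\tau(\psi)\leftrightarrow\NEG\tau(\psi)$. For (b), \eqref{E:mp} is preserved, and the congruence rules \eqref{E:RAbox}--\eqref{E:RCdiam} each translate to either reflexivity of $\Leftrightarrow$ (in the cases of \eqref{E:RAbox}, \eqref{E:RAdiam}) or a literal repetition of the premise (in the cases of \eqref{E:RCbox}, \eqref{E:RCdiam}). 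The main subtlety is the very choice of $\tau$: a more natural-looking alternative such as $\tau(\psi\boxto\chi):=\tau(\psi)\to\tau(\chi)$ would send the right-to-left direction of \eqref{E:a3} to the $\mathsf{C}$-invalid scheme $(\phi\to(\psi\vee\chi))\to((\phi\to\psi)\vee(\phi\to\chi))$ (easily refuted by a standard Kripke counterexample thanks to the DP of $\mathsf{C}$). The consequent projection is tailored precisely to dodge such obstructions, and modulo this design choice the entire verification is routine.
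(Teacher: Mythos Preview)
Your proof is correct, and for Part 2 it takes a genuinely different route from the paper. The paper argues semantically: given $\phi\in\mathcal{PL}\setminus\mathsf{C}$, it takes a propositional countermodel $\mathcal{M}\in\mathbb{P}$, expands it to $\mathcal{M}'\in\mathbb{FSC}$ by setting $R':=\emptyset$, observes that $\mathcal{PL}$-satisfaction is unchanged, and concludes $\not\vdash\phi$ via the already-established soundness lemma. You instead argue syntactically via the consequent projection $\tau$, checking that every $\mathtt{CnCK}$-axiom and rule collapses under $\tau$ to a $\mathtt{C}$-theorem or an admissible $\mathtt{C}$-step. This is in fact closer in spirit to what the paper does in the \emph{modal} case (Lemma~\ref{L:c}), where boxes and diamonds are simply erased; your $\tau$ is the natural conditional analogue of that erasure. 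The paper's semantic argument is shorter and leans on soundness, while your syntactic argument is self-contained and makes explicit why the conditional axioms pose no obstruction---your remark about the failure of the alternative $\tau(\psi\boxto\chi):=\tau(\psi)\to\tau(\chi)$ nicely isolates the one design choice that matters.
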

\begin{proof}[Proof (a sketch)]
	Part 1 is trivial. As for Part 2, its right-to-left direction follows from $\mathtt{C}\subseteq \mathtt{CnCK}$. As for the other direction, assume that $\phi \in \mathcal{PL}$ is such that $\phi\notin \mathsf{C}$, and let $\mathcal{M}\in \mathbb{P}$ and $w \in W$ be such that $\mathcal{M}, w\not\models^+_{\mathsf{C}}\phi$. Then let $\mathcal{M}'$ expand $\mathcal{M}$ with $R' := \emptyset$. It is clear that $\mathcal{M}'\in \mathbb{FSC}$ and that for every $v \in W$, $\psi\in \mathcal{PL}$ and $\star\in \{+, -\}$ we have $\mathcal{M}, v\models_{\mathsf{C}}^\star\psi$ iff $\mathcal{M}', v\models^\star\psi$; thus, in particular,  $\mathcal{M}, w\not\models^+\phi$, and, by Lemma \ref{L:cond-soundness}, $\not\vdash \phi$. 
\end{proof}
Next, we establish several theorems and derived rules of $\mathtt{CnCK}$:
\begin{lemma}\label{L:cond-theorems}
	Let $\phi, \psi, \chi \in \mathcal{CN}$. The following theorems and derived rules can be deduced in $\mathtt{CnCK}$:
	\begin{align}
		\phi \Leftrightarrow \psi &\vDdash (\chi \boxto \phi) \Leftrightarrow (\chi \boxto \psi)\label{E:RBbox}\tag{RB$\boxto$}\\
		\phi \Leftrightarrow \psi &\vDdash (\chi \diamondto \phi) \Leftrightarrow (\chi \diamondto \psi)\label{E:RBdiam}\tag{RB$\diamondto$}\\
		\phi &\vDdash (\psi\boxto \phi)\label{E:RNec}\tag{Nec}\\
		(\phi \to \psi) &\vDdash (\chi\boxto\phi)\to(\chi\boxto\psi)\label{E:Rmbox}\tag{RM$\boxto$}\\
		(\phi \to \psi) &\vDdash (\chi\diamondto\phi)\to(\chi\diamondto\psi)\label{E:Rmdiam}\tag{RM$\diamondto$}\\
		(\phi\boxto(\psi \to \chi))&\to((\phi\boxto\psi)\to(\phi\boxto\chi))\label{E:T1}\tag{Th1}\\
		(\phi\boxto(\psi \to \chi))&\to((\phi\diamondto\psi)\to(\phi\diamondto\chi))\label{E:T2}\tag{Th2}\\
		(\phi\diamondto (\psi\to \chi))&\to((\phi\boxto\psi)\to(\phi\diamondto\psi))\label{E:T3}\tag{Th3}\\
		\NEG(\phi\boxto\psi)&\Leftrightarrow(\phi\boxto\NEG\psi)\label{E:T4}\tag{Th4}\\
		\NEG(\phi\diamondto\psi)&\Leftrightarrow(\phi\diamondto\NEG\psi)\label{E:T5}\tag{Th5}
	\end{align}
\end{lemma}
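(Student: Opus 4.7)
The plan is to derive the eleven items in a careful order that avoids circularity: the strong congruence rules (RB$\boxto$) and (RB$\diamondto$) first, then (Nec), then the monotonicity rules (RM$\boxto$) and (RM$\diamondto$), then the K-like theorems (Th1)--(Th3), and finally the strong versions (Th4) and (Th5) of the negation-interaction axioms $\gamma_6$ and $\gamma_7$. Each step reduces to a $\mathsf{C}$-level argument glued together by one application of a $\boxto$- or $\diamondto$-rule already at hand.

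For (RB$\boxto$) I would unfold $\phi\Leftrightarrow\psi$ into $\phi\leftrightarrow\psi$ and $\NEG\phi\leftrightarrow\NEG\psi$, apply (RC$\boxto$) to each to obtain $(\chi\boxto\phi)\leftrightarrow(\chi\boxto\psi)$ and $(\chi\boxto\NEG\phi)\leftrightarrow(\chi\boxto\NEG\psi)$, and then read $\gamma_6$ twice to convert the second of these into $\NEG(\chi\boxto\phi)\leftrightarrow\NEG(\chi\boxto\psi)$; conjoining gives the desired $\Leftrightarrow$. (RB$\diamondto$) is the same argument with $\diamondto$ and $\gamma_7$ substituted throughout.

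For (Nec), from the hypothesis $\phi$ I first derive $\phi\leftrightarrow(\phi\to\phi)$ (the forward direction is an instance of $\alpha_1$, the backward direction follows from $\phi$ and the $\alpha_1$-instance $\phi\to((\phi\to\phi)\to\phi)$ by MP). Then (RC$\boxto$) yields $(\psi\boxto\phi)\leftrightarrow(\psi\boxto(\phi\to\phi))$, and $\gamma_5$ supplies $\psi\boxto(\phi\to\phi)$; MP concludes. For (RM$\boxto$), I derive $\phi\leftrightarrow(\phi\wedge\psi)$ from $\phi\to\psi$ in $\mathsf{C}$ (forward via $\alpha_5$ and the hypothesis, backward by $\alpha_3$); then (RC$\boxto$) and $\gamma_1$ jointly give $(\chi\boxto\phi)\leftrightarrow((\chi\boxto\phi)\wedge(\chi\boxto\psi))$, and $\alpha_4$ extracts $(\chi\boxto\phi)\to(\chi\boxto\psi)$. (RM$\diamondto$) is dual, derived from $\psi\leftrightarrow(\phi\vee\psi)$ together with (RC$\diamondto$) and $\gamma_3$. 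The K-like schemes now fall out: (Th1) combines $\gamma_1$ and (RM$\boxto$) with the tautology $(\psi\wedge(\psi\to\chi))\to\chi$; (Th2) and (Th3) apply the same template to $\gamma_2$ and (RM$\diamondto$), the former with the same tautology and the latter with the variant $((\psi\to\chi)\wedge\psi)\to\psi$ (an instance of $\alpha_4$).

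Finally, (Th4) sharpens $\gamma_6$ from $\leftrightarrow$ to $\Leftrightarrow$: the $\leftrightarrow$-conjunct is $\gamma_6$ itself, while the $\NEG$-$\leftrightarrow$-conjunct follows from the chain $\NEG\NEG(\phi\boxto\psi)\leftrightarrow(\phi\boxto\psi)\leftrightarrow(\phi\boxto\NEG\NEG\psi)\leftrightarrow\NEG(\phi\boxto\NEG\psi)$, whose three links use $\alpha_9$, $\alpha_9$ via (RC$\boxto$), and $\gamma_6$ (with $\psi$ replaced by $\NEG\psi$), respectively. (Th5) follows the same template with $\diamondto$ and $\gamma_7$. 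The main obstacle throughout is purely bookkeeping: one must prove (RM$\boxto$) without smuggling in (Th1), and prove (RB) from (RC) rather than from the full congruence that (RB) itself expresses.
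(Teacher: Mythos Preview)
Your argument is correct and follows essentially the same route as the paper's: (RB$\boxto$)/(RB$\diamondto$) from (RC) plus $\gamma_6$/$\gamma_7$; (Nec) via $\phi\leftrightarrow(\phi\to\phi)$ and $\gamma_5$; (RM$\boxto$)/(RM$\diamondto$) via the $\wedge$- and $\vee$-absorption equivalences and $\gamma_1$/$\gamma_3$; (Th1) and (Th2) from $\gamma_1$, $\gamma_2$ and the modus-ponens tautology; (Th4)/(Th5) from $\gamma_6$/$\gamma_7$ and double negation through the consequent.

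The one place you diverge is (Th3): the paper derives it from $\psi\to((\psi\to\chi)\to\chi)$ via (RM$\boxto$) and (Th2), ending with $(\phi\boxto\psi)\to((\phi\diamondto(\psi\to\chi))\to(\phi\diamondto\chi))$ --- i.e.\ with $\chi$ rather than $\psi$ in the final consequent, which is almost certainly the intended reading of the scheme. Your $\gamma_2$-plus-$\alpha_4$ argument instead proves the formula exactly as printed (with $\psi$), so both derivations are sound; yours just matches the displayed statement while the paper's matches the evident intent.
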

A sketch of a proof is relegated to Appendix \ref{A:1}.

Throughout this section we will write (in)consistency (resp. completeness, maximality) meaning $\mathtt{CnCK}$-(in)consistency (resp. $CN$-completeness, $\mathtt{CnCK}[CN]$-maximality). The path to the completeness theorem for $\mathtt{CnCK}$ that is taken in the remaining part of this subsection is now basically the same as the path we went w.r.t. $\mathtt{CnK}$ in Section \ref{sub:modal-axioms} above. Both the formulations of the lemmas and their proofs are very much in analogy with one another. We will therefore mainly confine ourselves to reformulating the lemmas for the case of $\mathtt{CnCK}$ whereas their proofs will be either omitted altogether or replaced with very brief sketches.

In this way, we successively establish that:
\begin{lemma}\label{L:cond-alt-consistency}
	A bi-set $(\Gamma, \Delta)\in \mathcal{P}(\mathcal{CN})\times\mathcal{P}(\mathcal{CN})$ is inconsistent iff, for some $m,n\in \omega$ some $\phi_1,\ldots,\phi_n\in \Gamma$ and some $\psi_1,\ldots,\psi_m\in \Delta$ we have: $
	\bigwedge^n_{i = 1}\phi_i\vdash\bigvee^m_{j = 1}\psi_j$, or, equivalently, $\vdash
	\bigwedge^n_{i = 1}\phi_i\to\bigvee^m_{j = 1}\psi_j$.
\end{lemma}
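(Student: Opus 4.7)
The plan is to mirror almost verbatim the argument used for Lemma \ref{L:mod-alt-consistency} in Section \ref{sub:modal-axioms}, since the axiomatic substrate shared between $\mathtt{CnK}$ and $\mathtt{CnCK}$ (namely $\mathtt{C}$, together with \eqref{E:mp} as the only inference rule acting on propositional compounds) provides exactly the tools needed, and Lemma \ref{L:cond-c}.1 makes both the Deduction Theorem and the full body of $\mathsf{C}$-derivations available in the conditional language.

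For the right-to-left direction, suppose $\vdash \bigwedge_{i=1}^n \phi_i \to \bigvee_{j=1}^m \psi_j$ for some $\phi_1,\ldots,\phi_n \in \Gamma$ and $\psi_1,\ldots,\psi_m \in \Delta$. Repeated use of ($\alpha_5$) together with \eqref{E:mp} from the premises $\phi_1,\ldots,\phi_n$ yields a derivation of $\bigwedge_{i=1}^n \phi_i$ from $\Gamma$, and a final application of \eqref{E:mp} produces a proof sequence whose last formula is precisely $\bigvee_{j=1}^m \psi_j$, a disjunction of elements of $\Delta$. This witnesses $(\Gamma, \Delta) \in \mathtt{CnCK}[CN]$ in the sense of Section \ref{sub:hilbert}, so that $(\Gamma, \Delta)$ is inconsistent.

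For the left-to-right direction, suppose $(\Gamma, \Delta)$ is inconsistent. By the definition of $\mathtt{CnCK}[CN]$, there exists a derivation $\chi_1,\ldots,\chi_r$ witnessing $\Gamma \vdash \Delta$, whose final formula has the shape $\psi_1 \vee \ldots \vee \psi_m$ for some $\psi_1,\ldots,\psi_m \in \Delta$. Let $\phi_1,\ldots,\phi_n$ enumerate the (finitely many) members of $\Gamma$ actually occurring in that derivation; the restricted derivation witnesses $\phi_1,\ldots,\phi_n \vdash \bigvee_{j=1}^m \psi_j$. Then $n$ iterations of the Deduction Theorem (Lemma \ref{L:cond-c}.1) give $\vdash \phi_1 \to (\phi_2 \to \cdots \to (\phi_n \to \bigvee_{j=1}^m \psi_j))$, and a routine propositional manoeuvre using ($\alpha_3$)--($\alpha_5$) together with \eqref{E:mp} — all provable in $\mathsf{C}$ and hence available in $\mathtt{CnCK}$ by Lemma \ref{L:cond-c}.1 — converts this formula into $\vdash \bigwedge_{i=1}^n \phi_i \to \bigvee_{j=1}^m \psi_j$. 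The equivalent form $\bigwedge_{i=1}^n \phi_i \vdash \bigvee_{j=1}^m \psi_j$ is then obtained by one more application of the Deduction Theorem in the reverse direction.

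There is no genuine obstacle in the argument; the only step demanding mild care is the extraction of the finite list $\phi_1,\ldots,\phi_n$ from the given derivation and the verification that the resulting sub-derivation remains valid after discarding unused premises — a standard finite-support inspection of the derivation structure. Consequently the lemma is essentially the conditional-language counterpart of Lemma \ref{L:mod-alt-consistency}, proved by the same bookkeeping, with no ingredient proper to the conditional operators $\boxto, \diamondto$ entering the argument beyond what Lemma \ref{L:cond-c}.1 already packages.
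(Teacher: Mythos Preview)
Your proposal is correct and matches the paper's approach: the paper omits the proof of this lemma entirely, merely noting that it is the conditional-language counterpart of Lemma~\ref{L:mod-alt-consistency} (itself left unproved beyond the remark that Lemma~\ref{L:c}.1 enables the equivalent reformulation). Your argument unpacks precisely that intended route---using Lemma~\ref{L:cond-c}.1 for the Deduction Theorem and the $\mathsf{C}$-based propositional bookkeeping---and is exactly what the paper expects the reader to fill in.
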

\begin{lemma}\label{L:cond-consistent}
	Let	$(\Gamma, \Delta)\in \mathcal{P}(\mathcal{CN})\times\mathcal{P}(\mathcal{CN})$ be consistent. Then the following statements hold:
	\begin{enumerate}
		\item For every $\phi \in \mathcal{CN}$, either $(\Gamma \cup \{\phi\}, \Delta)$ or $(\Gamma, \Delta\cup \{\phi\})$ is consistent.
		
		\item For every $\phi \to \psi \in \Delta$, $(\Gamma \cup \{\phi\}, \{\psi\})$ is consistent.
		
		\item For every $\phi\boxto\psi \in \Delta$, the bi-set $(\{\chi\mid\phi\boxto\chi \in \Gamma\},\{\psi\})$ is consistent.
		
		\item For every $\phi\diamondto\psi \in \Gamma$, the bi-set $(\{\psi\}\cup\{\chi\mid\phi\boxto\chi \in \Gamma\},\{\theta\mid\phi\diamondto\theta\in\Delta\})$ is consistent.
	\end{enumerate}
\end{lemma}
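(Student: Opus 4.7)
My plan is to run essentially the same proof strategy that worked for the modal analogue in Lemma \ref{L:mod-consistent}, only replacing the modal axioms/rules used there with their $\boxto$- and $\diamondto$-counterparts established in Lemma \ref{L:cond-theorems}. Throughout I will use Lemma \ref{L:cond-alt-consistency}: to refute consistency of some bi-set it suffices to exhibit finite $\Gamma', \Delta'$ from it with $\bigwedge \Gamma' \vdash \bigvee \Delta'$. Parts 1 and 2 are handled exactly as in the intuitionistic-style argument underlying Lemma \ref{L:mod-consistent}.1--2: for Part 1 one assumes both bi-sets are inconsistent, invokes the Deduction Theorem (Lemma \ref{L:cond-c}.1) to obtain $\Gamma \vdash \phi \to \bigvee \Delta_0$ and $\Gamma \vdash \phi \vee \bigvee \Delta_1$ for suitable finite $\Delta_0, \Delta_1 \Subset \Delta$, and then closes by a case analysis inside $\mathtt{S}_0 \subseteq \mathtt{CnCK}$; Part 2 is an immediate application of the Deduction Theorem.

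For Part 3, suppose for contradiction that the bi-set $(\{\chi \mid \phi \boxto \chi \in \Gamma\}, \{\psi\})$ is inconsistent. Then there are $\phi \boxto \chi_1, \ldots, \phi \boxto \chi_n \in \Gamma$ with $\bigwedge^n_{i=1} \chi_i \vdash \psi$, hence $\vdash \bigwedge^n_{i=1} \chi_i \to \psi$ by Lemma \ref{L:cond-c}.1. Repeated use of \eqref{E:a1} gives $\Gamma \vdash \phi \boxto \bigwedge^n_{i=1} \chi_i$, while \eqref{E:Rmbox} applied to the provable implication yields $\vdash (\phi \boxto \bigwedge^n_{i=1} \chi_i) \to (\phi \boxto \psi)$. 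Thus $\Gamma \vdash \phi \boxto \psi$, contradicting the consistency of $(\Gamma, \Delta)$ since $\phi \boxto \psi \in \Delta$.

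Part 4 is the main obstacle and is the conditional analogue of the trickier Lemma \ref{L:mod-consistent}.4; its argument has to weave together all three operator types $\boxto$, $\diamondto$, and $\to$. Assume $\phi \diamondto \psi \in \Gamma$ and that $(\{\psi\} \cup \{\chi \mid \phi \boxto \chi \in \Gamma\}, \{\theta \mid \phi \diamondto \theta \in \Delta\})$ is inconsistent. Then for some $\phi \boxto \chi_1, \ldots, \phi \boxto \chi_n \in \Gamma$ and $\phi \diamondto \theta_1, \ldots, \phi \diamondto \theta_m \in \Delta$ we get, setting $\chi := \bigwedge^n_{i=1} \chi_i$ and $\theta := \bigvee^m_{j=1} \theta_j$, the consecution $\chi, \psi \vdash \theta$. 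By the Deduction Theorem (twice) and Lemma \ref{L:cond-c}.1 we obtain $\vdash \chi \to (\psi \to \theta)$. From this, \eqref{E:Rmbox} delivers $\vdash (\phi \boxto \chi) \to (\phi \boxto (\psi \to \theta))$, and iterated use of \eqref{E:a1} on the members $\phi \boxto \chi_i \in \Gamma$ gives $\Gamma \vdash \phi \boxto \chi$; combining these yields $\Gamma \vdash \phi \boxto (\psi \to \theta)$. Now \eqref{E:T2} supplies $\Gamma \vdash (\phi \diamondto \psi) \to (\phi \diamondto \theta)$, and from $\phi \diamondto \psi \in \Gamma$ we conclude $\Gamma \vdash \phi \diamondto \theta$. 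Finally, \eqref{E:a3} (together with \eqref{E:RCdiam} so that we may replace the disjunction $\theta$ under $\phi \diamondto$ by an iterated disjunction of the $\theta_j$) yields $\Gamma \vdash \bigvee^m_{j=1} (\phi \diamondto \theta_j)$, contradicting the consistency of $(\Gamma, \Delta)$. The only delicate step here is the interplay between \eqref{E:T2}, which governs the $\boxto$-to-$\diamondto$ transfer, and \eqref{E:a3}, which lets us extract the individual $\phi \diamondto \theta_j \in \Delta$ from the compound $\phi \diamondto \theta$; both are already in hand, so no further lemma is needed.
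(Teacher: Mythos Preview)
Your proposal is correct and follows exactly the route the paper intends: the paper gives no separate proof of Lemma~\ref{L:cond-consistent} but explicitly defers to the modal analogue Lemma~\ref{L:mod-consistent}, and your argument is precisely that analogue with $\Box$, $\Diamond$, \eqref{E:tm1}, \eqref{E:Rmod-box}, \eqref{E:am2}, \eqref{E:am3} replaced by their conditional counterparts \eqref{E:a1}, \eqref{E:Rmbox}, \eqref{E:T2}, \eqref{E:a3}. The only cosmetic point is that the invocation of \eqref{E:RCdiam} in Part~4 is not strictly needed, since iterated applications of \eqref{E:a3} together with the $\mathsf{C}$-provable associativity of $\vee$ already yield $\Gamma\vdash\bigvee^m_{j=1}(\phi\diamondto\theta_j)$ directly.
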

\begin{lemma}\label{L:cond-maximal}
	Let	$(\Gamma, \Delta), (\Gamma_0,\Delta_0), (\Gamma_1,\Delta_1) \in \mathcal{P}(\mathcal{CN})\times\mathcal{P}(\mathcal{CN})$ be maximal bi-sets such that $\{\psi\mid \phi\boxto\psi\in \Gamma_0\}\subseteq \Gamma_1$, and $\{\NEG(\phi\boxto\psi)\mid \NEG\psi\in \Gamma_1\}\subseteq \Gamma_0$, and let $\phi,\psi\in\mathcal{CN}$. Then the following statements are true:
	\begin{enumerate}
		\item If $\Gamma\vdash\phi$, then $\phi\in \Gamma$.
		
		\item $\phi\wedge\psi\in\Gamma$ iff $\phi, \psi\in \Gamma$.
		
		\item $\phi\vee\psi \in \Gamma$ iff $\phi \in \Gamma$ or $\psi\in\Gamma$.
		
		\item If $\phi\to\psi, \phi \in \Gamma$, then $\psi \in \Gamma$.
		
		\item If $\Gamma_0 \subseteq \Gamma$, then $(\Gamma_1 \cup \{\psi\mid\phi\boxto\psi\in \Gamma\}, \{\psi\mid\phi\diamondto\psi\in \Delta\})$ is consistent.
		
		\item If $\Gamma_1 \subseteq \Gamma$, then $(\Gamma_0 \cup \{\phi\diamondto\psi\mid\psi\in \Gamma\}, \{\phi\boxto\psi\mid\psi\in \Delta\})$ is consistent.
	\end{enumerate}
\end{lemma}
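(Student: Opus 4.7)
The six parts split naturally into two groups. Parts 1--4 concern only the maximality of $(\Gamma,\Delta)$ together with the $\mathsf{C}$-fragment of $\mathtt{CnCK}$: Part 1 is immediate from Lemma \ref{L:cond-alt-consistency} and the definition of maximality; Parts 2 and 3 follow from Part 1 together with the $\mathtt{S}_0$-axioms ($\alpha_3$)--($\alpha_8$); Part 4 is modus ponens inside $\Gamma$ via Part 1. These are entirely parallel to Lemma \ref{L:mod-maximal}.1--4 and I would merely note the parallel. The substance of the lemma lies in Parts 5 and 6, whose arguments mirror Lemma \ref{L:mod-maximal}.5--6 but with the derived rules and theorems of Lemma \ref{L:cond-theorems} (\eqref{E:Rmbox}, \eqref{E:Rmdiam}, \eqref{E:T1}--\eqref{E:T3}) replacing their modal counterparts, and with \eqref{E:a4} playing the role of \eqref{E:am4}.

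For Part 5, I would argue by contradiction. By Lemma \ref{L:cond-alt-consistency}, inconsistency of the displayed bi-set gives finite $\phi_1,\ldots,\phi_n\in\Gamma_1$, $\phi\boxto\psi_1,\ldots,\phi\boxto\psi_m\in\Gamma$ and $\phi\diamondto\chi_1,\ldots,\phi\diamondto\chi_k\in\Delta$ with $\alpha,\beta\vdash\gamma$, where $\alpha:=\bigwedge_i\phi_i$, $\beta:=\bigwedge_j\psi_j$, $\gamma:=\bigvee_r\chi_r$. Deduction Theorem (Lemma \ref{L:cond-c}.1) gives $\vdash\alpha\to(\beta\to\gamma)$; since $\alpha\in\Gamma_1$ by Part 2, Parts 1 and 4 locate $\beta\to\gamma\in\Gamma_1$. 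The crucial step is then to transport this $\Gamma_1$-membership into $\Gamma$ via the second hypothesis: substituting $\psi:=\NEG(\beta\to\NEG\gamma)$, using \eqref{Ax:to-neg} and \eqref{Ax:double-neg} under Parts 1 and 4 inside $\Gamma_1$, and then \eqref{E:a6} together with \eqref{E:RCbox} inside the maximal $\Gamma_0$, one should extract $\phi\boxto(\beta\to\gamma)\in\Gamma_0\subseteq\Gamma$. Iterated \eqref{E:a1} applied to $\phi\boxto\psi_1,\ldots,\phi\boxto\psi_m$ delivers $\phi\boxto\beta\in\Gamma$; together with \eqref{E:T1}, \eqref{E:T2} and the distribution $\phi\diamondto\gamma\Leftrightarrow\bigvee_r(\phi\diamondto\chi_r)$ from \eqref{E:a3}, the plan is to culminate in $\phi\diamondto\chi_r\in\Gamma$ for some $r$, which contradicts $\phi\diamondto\chi_r\in\Delta$ by Part 3 and the consistency of $(\Gamma,\Delta)$.

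For Part 6 I would follow the modal Lemma \ref{L:mod-maximal}.6 almost verbatim. Inconsistency gives witnesses yielding $\sigma,\bigwedge_j(\phi\diamondto\tau_j)\vdash\bigvee_r(\phi\boxto\xi_r)$ with $\sigma$ a conjunction from $\Gamma_0$, $\tau_j\in\Gamma$ and $\xi_r\in\Delta$. Deduction, together with $\phi\diamondto\tau\to\bigwedge_j(\phi\diamondto\tau_j)$ (from \eqref{E:Rmdiam} applied to each $\tau\to\tau_j$) and $\bigvee_r(\phi\boxto\xi_r)\to\phi\boxto\xi$ (from \eqref{E:Rmbox} applied to each $\xi_r\to\xi$), reduces the situation to $\Gamma_0\vdash\phi\diamondto\tau\to\phi\boxto\xi$; \eqref{E:a4} then yields $\phi\boxto(\tau\to\xi)\in\Gamma_0$, so the first hypothesis and $\Gamma_1\subseteq\Gamma$ place $\tau\to\xi\in\Gamma$; Parts 2 and 4 pull $\xi$ into $\Gamma$, and Part 3 extracts some $\xi_r\in\Gamma$, contradicting $\xi_r\in\Delta$. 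The hardest step is the ``negation-laundering'' inside Part 5 sketched above: unlike the modal second hypothesis $\{\Diamond\psi\mid\psi\in\Gamma_1\}\subseteq\Gamma_0$, here the hypothesis has $\NEG$ on the \emph{outside} of $\phi\boxto$, so converting it into the usable implication \emph{``$\chi\in\Gamma_1\Rightarrow\phi\boxto\chi\in\Gamma_0$''} requires the $\mathsf{C}$-specific double-negation manoeuvres, followed by a careful sequencing of \eqref{E:T1}--\eqref{E:T3}, \eqref{E:a1} and \eqref{E:a3} to cross from a $\phi\boxto$-conclusion into a $\phi\diamondto$-inconsistency in $\Gamma$.
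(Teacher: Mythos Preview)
Your treatment of Parts 1--4 and Part 6 is correct and matches the paper's intended argument, which is simply the modal Lemma~\ref{L:mod-maximal} transcribed with \eqref{E:a1}, \eqref{E:a3}, \eqref{E:a4}, \eqref{E:Rmbox}, \eqref{E:Rmdiam} and \eqref{E:T3} in place of their modal counterparts.

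Part 5, however, has a genuine gap that cannot be closed along the line you sketch. Your negation-laundering is fine: from $\beta\to\gamma\in\Gamma_1$ and the second hypothesis you do obtain $\phi\boxto(\beta\to\gamma)\in\Gamma_0\subseteq\Gamma$, and \eqref{E:a1} gives $\phi\boxto\beta\in\Gamma$. But from here you can only reach $\phi\boxto\gamma\in\Gamma$ via \eqref{E:T1}; neither \eqref{E:T2} nor \eqref{E:T3} will produce any $\phi\diamondto$-formula without a $\phi\diamondto$-premise already in hand, and nothing in your data supplies one. The ``crossing from a $\phi\boxto$-conclusion into a $\phi\diamondto$-inconsistency'' that you flag as the hard step is in fact impossible here: take $(\Gamma_0,\Delta_0)=(\Gamma,\Delta)$ to be any maximal bi-set with $\phi\boxto\chi\in\Gamma$ and $\phi\diamondto\chi\in\Delta$ for every $\chi$ (such bi-sets exist, witnessed by models with empty $R_{\|\phi\|}$), and $(\Gamma_1,\Delta_1)=(\mathcal{CN},\emptyset)$. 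Both stated hypotheses hold, yet the bi-set in the conclusion of Part 5 is $(\mathcal{CN},\mathcal{CN})$, which is inconsistent.

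The issue is not with your argument but with the lemma's second hypothesis as printed: it should read $\{\phi\diamondto\psi\mid\psi\in\Gamma_1\}\subseteq\Gamma_0$, exactly as in Definition~\ref{D:canonical-model} and in direct analogy with Lemma~\ref{L:mod-maximal}. With that correction Part 5 goes through as in the modal case: from $\beta\to\gamma\in\Gamma_1$ one gets $\phi\diamondto(\beta\to\gamma)\in\Gamma_0\subseteq\Gamma$; together with $\phi\boxto\beta\in\Gamma$, \eqref{E:T3} yields $\phi\diamondto\gamma\in\Gamma$, and \eqref{E:a3} with Part 3 gives some $\phi\diamondto\chi_r\in\Gamma$, contradicting consistency.
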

\begin{lemma}\label{L:cond-lindenbaum}
	Let $(\Gamma, \Delta)\in \mathcal{P}(\mathcal{CN})\times\mathcal{P}(\mathcal{CN})$ be consistent. Then there exists a maximal $(\Xi, \Theta)\in \mathcal{P}(\mathcal{CN})\times\mathcal{P}(\mathcal{CN})$ such that $\Gamma \subseteq \Xi$ and $\Delta \subseteq \Theta$. 
\end{lemma}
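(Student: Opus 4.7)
The plan is to carry out a standard Lindenbaum-style construction, relying essentially on Lemma \ref{L:cond-consistent}.1 (the single-formula extension property) and on the finite-character description of inconsistency given by Lemma \ref{L:cond-alt-consistency}.

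First I would fix an enumeration $\phi_0, \phi_1, \phi_2, \ldots$ of $\mathcal{CN}$, which is possible since $\mathcal{CN}$ is countable. Then I would define a chain of bi-sets $(\Gamma_n, \Delta_n)_{n \in \omega}$ by recursion, setting $(\Gamma_0, \Delta_0) := (\Gamma, \Delta)$, and, at step $n + 1$, applying Lemma \ref{L:cond-consistent}.1 to choose
\[
(\Gamma_{n+1}, \Delta_{n+1}) :=
\begin{cases}
(\Gamma_n \cup \{\phi_n\}, \Delta_n), & \text{if this bi-set is consistent;} \\
(\Gamma_n, \Delta_n \cup \{\phi_n\}), & \text{otherwise.}
\end{cases}
\]
By Lemma \ref{L:cond-consistent}.1, one of the two options is always available, so each $(\Gamma_n, \Delta_n)$ is consistent and contains $(\Gamma, \Delta)$ componentwise. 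I would then set $\Xi := \bigcup_{n \in \omega} \Gamma_n$ and $\Theta := \bigcup_{n \in \omega} \Delta_n$, so that $\Gamma \subseteq \Xi$ and $\Delta \subseteq \Theta$.

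It remains to verify that $(\Xi, \Theta)$ is maximal, i.e.\ both $CN$-complete and $\mathtt{CnCK}$-consistent. Completeness is immediate: for every $\phi_n \in \mathcal{CN}$ the construction at step $n + 1$ places $\phi_n$ into $\Gamma_{n+1} \subseteq \Xi$ or into $\Delta_{n+1} \subseteq \Theta$, so $\Xi \cup \Theta = \mathcal{CN}$. The main step, which I expect to be the only nontrivial one, is consistency of the limit. Here I would argue by contradiction: if $(\Xi, \Theta)$ were inconsistent, then by Lemma \ref{L:cond-alt-consistency} there would be finite $\{\phi_1, \ldots, \phi_k\} \subseteq \Xi$ and $\{\psi_1, \ldots, \psi_m\} \subseteq \Theta$ with $\bigwedge_{i} \phi_i \vdash \bigvee_{j} \psi_j$; but each $\phi_i$ and $\psi_j$ enters $\Xi$ or $\Theta$ at some finite stage, so for $N$ large enough all of them already belong to $(\Gamma_N, \Delta_N)$, contradicting its consistency. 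This finite-character argument is the only place where the proof uses anything beyond Lemma \ref{L:cond-consistent}.1, and it is exactly parallel to the standard Lindenbaum construction familiar from intuitionistic and modal logic; no specifically conditional-logical axiom is needed at this stage, since all the conditional content has already been absorbed into the proof of Lemma \ref{L:cond-consistent}.
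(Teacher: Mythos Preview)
Your proposal is correct and is exactly the standard Lindenbaum construction that the paper invokes without spelling out (the paper simply states the lemma, referring back to the modal analogue Lemma~\ref{L:mod-lindenbaum}, which in turn is introduced with ``we can use the usual Lindenbaum construction''). Your identification of Lemma~\ref{L:cond-consistent}.1 and Lemma~\ref{L:cond-alt-consistency} as the two required ingredients is precisely right.
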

Next, we define the canonical model $\mathcal{M}_c$ for $\mathtt{CnCK}$. Its definition is based on the same idea as the definition of the canonical model $\mathcal{M}_m$ in the modal case, but also needs to account for the increased complexity of $\mathcal{CN}$ as compared to $\mathcal{MD}$:
\begin{definition}\label{D:canonical-model}
	The structure $\mathcal{M}_c$ is the tuple $(W_c, \leq_c, R_c, V^+_c, V^-_c)$ such that:
	\begin{itemize}
		\item $W_c:=\{(\Gamma, \Delta)\in \mathcal{P}(\mathcal{CN})\times\mathcal{P}(\mathcal{CN})\mid (\Gamma, \Delta)\text{ is maximal }\}$.
		
		\item $(\Gamma_0,\Delta_0)\leq_c(\Gamma_1,\Delta_1)$ iff $\Gamma_0\subseteq\Gamma_1$ for all $(\Gamma_0,\Delta_0),(\Gamma_1,\Delta_1)\in W_c$.
		
		\item For all $(\Gamma_0,\Delta_0),(\Gamma_1,\Delta_1)\in W_c$ and $X, Y \subseteq W_c$, we have $((\Gamma_0,\Delta_0),(X,Y),(\Gamma_1,\Delta_1)) \in R_c$ iff there exists a $\phi\in\mathcal{CN}$, such that all of the following holds:
		\begin{itemize}
			\item $X = \{(\Gamma,\Delta)\in W_c\mid\phi\in\Gamma\}$.
			
			\item $Y = \{(\Gamma,\Delta)\in W_c\mid\NEG\phi\in\Gamma\}$
			
			\item $\{\psi\mid\phi\boxto\psi\in \Gamma_0\}\subseteq \Gamma_1$.
			
			\item $\{\phi\diamondto\psi\mid\psi\in \Gamma_1\}\subseteq \Gamma_0$.
		\end{itemize}
		
		\item $V^+_c(p):=\{(\Gamma,\Delta)\in W_c\mid p\in\Gamma\}$ for every $p \in Prop$.
		
		\item $V^-_c(p):=\{(\Gamma,\Delta)\in W_c\mid \NEG p\in\Gamma\}$ for every $p \in Prop$.	
	\end{itemize}
\end{definition}
Another complication is that, unlike in the modal case, we need to make sure that the definition of $R_c$ does not depend on the choice of the representative formula $\phi\in\mathcal{CN}$. The following lemma provides the necessary stepping stone:
\begin{lemma}\label{L:cond-representatives}
	Let $\phi,\psi \in \mathcal{CN}$ be such that both $\{(\Gamma,\Delta)\in W_c\mid\phi\in\Gamma\} = \{(\Gamma,\Delta)\in W_c\mid\psi\in\Gamma\}$ and $\{(\Gamma,\Delta)\in W_c\mid\NEG\phi\in\Gamma\} = \{(\Gamma,\Delta)\in W_c\mid\NEG\psi\in\Gamma\}$. Then, for every $(\Gamma',\Delta')\in W_c$ and every $\chi\in\mathcal{CN}$ we have:
	\begin{enumerate}
		\item $\phi\boxto\chi\in\Gamma'$ iff $\psi\boxto\chi\in\Gamma'$.
		
		\item $\phi\diamondto\chi\in\Gamma'$ iff $\psi\diamondto\chi\in\Gamma'$.
	\end{enumerate}
\end{lemma}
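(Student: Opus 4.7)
My plan is to leverage the hypothesis to derive $\phi \Leftrightarrow \psi$ as a theorem of $\mathtt{CnCK}$, and then apply the antecedent‐congruence rules \eqref{E:RAbox} and \eqref{E:RAdiam} together with the maximal-bi-set machinery from Lemma \ref{L:cond-maximal}. In more detail, unfolding the definitions, $\phi \Leftrightarrow \psi$ is equivalent to the conjunction of the four implications $\phi \to \psi$, $\psi \to \phi$, $\NEG\psi \to \NEG\phi$, and $\NEG\phi \to \NEG\psi$, so it suffices to show that each of these four formulas is provable in $\mathtt{CnCK}$.

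Each of the four provabilities follows by a standard contraposition argument. For instance, suppose $\not\vdash \phi \to \psi$. By the deduction theorem part of Lemma \ref{L:cond-c}.1, this means $\phi \not\vdash \psi$, so the bi-set $(\{\phi\},\{\psi\})$ is consistent by Lemma \ref{L:cond-alt-consistency}. By Lemma \ref{L:cond-lindenbaum}, extend it to some maximal $(\Gamma,\Delta) \in W_c$ with $\phi \in \Gamma$ and $\psi \in \Delta$; since $(\Gamma,\Delta)$ is consistent we must have $\psi \notin \Gamma$. But then $(\Gamma,\Delta)$ lies in $\{(\Gamma'',\Delta'')\in W_c\mid\phi\in\Gamma''\}$ while being absent from $\{(\Gamma'',\Delta'')\in W_c\mid\psi\in\Gamma''\}$, contradicting the first hypothesis. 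The argument for $\psi \to \phi$ is symmetric. The arguments for $\NEG\phi \to \NEG\psi$ and $\NEG\psi \to \NEG\phi$ are entirely analogous, this time invoking the second hypothesis on the sets of maximal bi-sets containing $\NEG\phi$ and $\NEG\psi$. Hence $\vdash \phi \Leftrightarrow \psi$.

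With $\phi \Leftrightarrow \psi$ in hand, the rules \eqref{E:RAbox} and \eqref{E:RAdiam} yield $\vdash (\phi \boxto \chi) \Leftrightarrow (\psi \boxto \chi)$ and $\vdash (\phi \diamondto \chi) \Leftrightarrow (\psi \diamondto \chi)$ for any $\chi \in \mathcal{CN}$. Projecting these strong equivalences onto their constituent implications and applying Lemma \ref{L:cond-maximal}.1 and Lemma \ref{L:cond-maximal}.4, we obtain both directions of Parts 1 and 2: for instance, if $\phi \boxto \chi \in \Gamma'$, then from the provable implication $(\phi \boxto \chi) \to (\psi \boxto \chi)$ we deduce $\psi \boxto \chi \in \Gamma'$ by modus ponens inside $\Gamma'$, and conversely.

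There is no real obstacle here; the argument is essentially bookkeeping once one identifies that the two set-level identities in the hypothesis are precisely the semantic shadows of the four implications comprising $\phi \Leftrightarrow \psi$. The only point requiring care is to invoke both halves of the hypothesis (for positive and for negative membership), since the rules \eqref{E:RAbox} and \eqref{E:RAdiam} are stated for strong equivalence $\Leftrightarrow$ rather than for plain $\leftrightarrow$, reflecting the failure of contraposition for $\to$ in $\mathsf{C}$ noted in Proposition \ref{P:c-strong-implication}.
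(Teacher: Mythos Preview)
Your proof is correct and follows essentially the same approach as the paper: both establish $\vdash \phi \Leftrightarrow \psi$ via a Lindenbaum-style contraposition argument from the two set-level hypotheses, then apply \eqref{E:RAbox} and \eqref{E:RAdiam} and close with the maximality properties. Your write-up is in fact slightly more explicit than the paper's at the final step (invoking Lemma~\ref{L:cond-maximal}.1 and~.4), but there is no substantive difference.
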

\begin{proof}
	Assume the hypothesis of the Lemma. We will show that in this case we must have $\vdash\phi\Leftrightarrow\psi$. Suppose not, and assume, for instance, that $\not\vdash\phi\to\psi$. Then $(\{\phi\},\{\psi\})$ must be consistent and thus extendable to some maximal $(\Gamma_0,\Delta_0)\supseteq (\{\phi\},\{\psi\})$. But then clearly $
	(\Gamma_0,\Delta_0)\in \{(\Gamma,\Delta)\in W_c\mid\phi\in\Gamma\} \setminus \{(\Gamma,\Delta)\in W_c\mid\psi\in\Gamma\}$, 
	in contradiction with our initial assumptions. On the other hand, if we have, e.g. $\not\vdash\NEG\phi\to\NEG\psi$, then $(\{\NEG\phi\},\{\NEG\psi\})$ must be consistent and thus extendable to some maximal $(\Gamma_1,\Delta_1)\supseteq (\{\NEG\phi\},\{\NEG\psi\})$. But then clearly
	$$
	(\Gamma_1,\Delta_1)\in \{(\Gamma,\Delta)\in W_c\mid\NEG\phi\in\Gamma\} \setminus \{(\Gamma,\Delta)\in W_c\mid\NEG\psi\in\Gamma\},
	$$ 
	again contradicting our initial assumptions. The reasoning in other cases is parallel to the examples considered above.
	
	Thus we see that we must have $\vdash\phi\Leftrightarrow\psi$. An application of \eqref{E:RAbox} then yields that also $\vdash(\phi\boxto\chi)\Leftrightarrow(\psi\boxto\chi)$
	for every $\chi\in\mathcal{CN}$, and an application of \eqref{E:RAdiam} similarly yields $\vdash(\phi\diamondto\chi)\Leftrightarrow(\psi\diamondto\chi)$, whence our Lemma clearly follows.  
\end{proof}
We have to make sure that we have indeed just defined a model:
\begin{lemma}\label{L:cond.canonical-model}
	$\mathcal{M}_c \in \mathbb{FSC}$.	
\end{lemma}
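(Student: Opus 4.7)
The plan is to follow the template of Lemma \ref{L:mod-canonical-model} closely, adapting each step from the modal to the conditional setting and routing the parameterization by antecedents through Lemma \ref{L:cond-representatives}. First I would dispatch the easy preliminaries. To see $W_c \neq \emptyset$, pick any $p\in Prop$; by Lemma \ref{L:cond-c}.2 we have $p\notin \mathsf{C}$, so $(\emptyset, \{p\})$ is consistent and, by Lemma \ref{L:cond-lindenbaum}, extends to some maximal $(\Gamma,\Delta)\in W_c$. The relation $\leq_c$ is reflexive and transitive directly from its definition as set inclusion on the first components. The monotonicity of $V^+_c$ and $V^-_c$ along $\leq_c$ required by Definition \ref{D:Krip} is immediate: if $(\Gamma_0,\Delta_0)\leq_c(\Gamma_1,\Delta_1)$ and $p\in V^+_c(\Gamma_0,\Delta_0)$ (resp.\ $\NEG p \in \Gamma_0$), then $p\in\Gamma_0\subseteq\Gamma_1$ (resp.\ $\NEG p\in \Gamma_1$). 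Finally, $R_c\subseteq W_c\times(\mathcal{P}(W_c))^2\times W_c$ is built into Definition \ref{D:canonical-model}.

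The heart of the argument is the Fischer-Servi requirement from Definition \ref{D:cond-model}: for every $X,Y\subseteq W_c$, the relation $(R_c)_{(X,Y)}$ must satisfy conditions \eqref{Cond:1} and \eqref{Cond:2} together with $\leq_c$. I would split into two cases. If no $\phi\in\mathcal{CN}$ satisfies simultaneously $X = \{(\Gamma,\Delta)\in W_c\mid \phi\in \Gamma\}$ and $Y = \{(\Gamma,\Delta)\in W_c\mid \NEG\phi\in \Gamma\}$, then by Definition \ref{D:canonical-model} we have $(R_c)_{(X,Y)} = \emptyset$, and both completion patterns hold vacuously. Otherwise, I would fix a representative $\phi$ and work with it throughout; Lemma \ref{L:cond-representatives} ensures that any consequences I draw about $(R_c)_{(X,Y)}$ do not depend on this choice.

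For \eqref{Cond:1}, suppose $(\Gamma,\Delta)\mathrel{\geq_c}(\Gamma_0,\Delta_0)\mathrel{(R_c)_{(X,Y)}}(\Gamma_1,\Delta_1)$. By Lemma \ref{L:cond-maximal}.5 applied to the fixed $\phi$, the bi-set $(\Gamma_1 \cup \{\psi\mid\phi\boxto\psi\in \Gamma\}, \{\psi\mid\phi\diamondto\psi\in \Delta\})$ is consistent, so by Lemma \ref{L:cond-lindenbaum} it extends to a maximal $(\Gamma',\Delta')\in W_c$. Then $(\Gamma_1,\Delta_1)\leq_c(\Gamma',\Delta')$ by construction, and $\{\psi\mid\phi\boxto\psi\in \Gamma\}\subseteq \Gamma'$ holds trivially. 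For the dual clause of the $R_c$-definition, if $\psi\in \Gamma'$ then $\psi\notin \Delta'$ by consistency of $(\Gamma',\Delta')$, so $\phi\diamondto\psi\notin \Delta$, which by maximality of $(\Gamma,\Delta)$ forces $\phi\diamondto\psi\in \Gamma$; this shows $\{\phi\diamondto\psi\mid\psi\in\Gamma'\}\subseteq\Gamma$, hence $(\Gamma,\Delta)\mathrel{(R_c)_{(X,Y)}}(\Gamma',\Delta')$, completing the diagram. The argument for \eqref{Cond:2} is dual, using Lemma \ref{L:cond-maximal}.6 to build from $(\Gamma_0,\Delta_0)\mathrel{(R_c)_{(X,Y)}}(\Gamma_1,\Delta_1)\mathrel{\leq_c}(\Gamma,\Delta)$ an intermediate maximal bi-set that witnesses the completion; the consistency of $\{\phi\boxto\psi\mid\psi\in\Delta\}$-side is again upgraded to $\{\psi\mid\phi\boxto\psi\in\Gamma'\}\subseteq \Gamma$ by combining maximality of $(\Gamma,\Delta)$ with consistency of $(\Gamma',\Delta')$.

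The main obstacle I expect is a bookkeeping one: since $R_c$ is defined via \emph{existential} quantification over representative formulas, I must be careful that the consistency arguments in Lemma \ref{L:cond-maximal}.5 and .6, which are formulated for a fixed $\phi$, indeed suffice to construct a witness $(\Gamma',\Delta')$ whose $R_c$-relation to the target is mediated by that same $\phi$. Lemma \ref{L:cond-representatives} is precisely what guarantees that this choice is harmless; once it is invoked to justify fixing one $\phi$, the rest of the verification is a routine translation of the modal proof, with $\Box,\Diamond$ systematically replaced by $\phi\boxto,\phi\diamondto$, and with the axioms and derived rules of Lemma \ref{L:cond-theorems} (notably \eqref{E:RNec}, \eqref{E:Rmbox}, \eqref{E:Rmdiam}, \eqref{E:T1}--\eqref{E:T3}) taking over the role played by \eqref{E:am1}--\eqref{E:am6} and \eqref{E:tm1}, \eqref{E:tm4} in the modal case.
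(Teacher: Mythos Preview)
Your proposal is correct and follows essentially the same approach as the paper's proof: both dispatch the easy preliminaries, then verify \eqref{Cond:1} via Lemma \ref{L:cond-maximal}.5 and \eqref{Cond:2} via Lemma \ref{L:cond-maximal}.6, extending the resulting consistent bi-sets by Lemma \ref{L:cond-lindenbaum} and checking the two clauses of the $R_c$-definition exactly as you describe. Your explicit case split on whether $(X,Y)$ has a representative formula, and your invocation of Lemma \ref{L:cond-representatives} to justify fixing one such $\phi$ upfront, are slightly more careful than the paper (which simply picks the $\phi$ witnessing the given $R_c$-instance and reuses it), but this is cosmetic rather than a different route.
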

\begin{proof}[Proof (a sketch)]
	Again, the only non-trivial part is to show that conditions \eqref{Cond:1} and \eqref{Cond:2} are satisfied by $\leq_c$ and $(R_c)_{(X,Y)}$ for all $X, Y \subseteq W_c$. As for \eqref{Cond:1}, assume that $(\Gamma,\Delta)$,  $(\Gamma_0,\Delta_0)$, and  $(\Gamma_1,\Delta_1)$ are maximal, and that $X, Y\subseteq W_c$ are such that we have $(\Gamma,\Delta) \mathrel{\geq_c}(\Gamma_0,\Delta_0)\mathrel{(R_c)_{(X,Y)}}(\Gamma_1,\Delta_1)$. Then, in particular, $\Gamma\supseteq\Gamma_0$. Moreover, we can choose a $\phi \in\mathcal{CN}$ such that we have:
	\begin{align}
		&X = \{(\Xi,\Theta)\in W_c\mid\phi\in\Xi\}\label{E:mod1}\\
		&Y = \{(\Xi,\Theta)\in W_c\mid\NEG\phi\in\Xi\}\label{E:mod1a}\\
		&\{\psi\mid\phi\boxto\psi\in \Gamma_0\}\subseteq \Gamma_1\label{E:mod2}\\
		&\{\phi\diamondto\psi\mid\psi\in \Gamma_1\}\subseteq \Gamma_0\label{E:mod3}
	\end{align}
	By Lemma \ref{L:cond-maximal}.5, the bi-set $(\Gamma_1 \cup \{\psi\mid\phi\boxto\psi\in \Gamma\}, \{\psi\mid\phi\diamondto\psi\in \Delta\})$ must then be consistent, so that, by Lemma \ref{L:cond-lindenbaum}, this bi-set must be extendable to some maximal bi-set $(\Gamma',\Delta')$. We will have then $\Gamma'\supseteq \Gamma_1$ whence clearly $(\Gamma',\Delta')\mathrel{\geq_c}(\Gamma_1,\Delta_1)$.
	
	Next, we get $\{\psi\mid\phi\boxto\psi\in \Gamma\}\subseteq \Gamma'$ trivially by the choice of $(\Gamma',\Delta')$. Moreover, if $\psi \in \Gamma'$, then $\psi\notin\Delta'$ by the consistency of $(\Gamma',\Delta')$. But this means that we cannot have $\phi\diamondto\psi\in\Delta$, so $\phi\diamondto\psi\in \Gamma$ by the completeness of $(\Gamma,\Delta)$. Thus we have shown that also $\{\phi\diamondto\psi\mid\psi\in \Gamma'\}\subseteq \Gamma$. Summing this up with \eqref{E:mod1} and \eqref{E:mod1a}, we obtain that $(\Gamma,\Delta)\mathrel{(R_c)_{(X,Y)}}(\Gamma',\Delta')$. Thus we get that $
	(\Gamma,\Delta)\mathrel{(R_c)_{(X,Y)}}(\Gamma',\Delta')\mathrel{\geq_c}(\Gamma_1,\Delta_1)$, and condition \eqref{Cond:1} is shown to be satisfied.
	
	The satisfaction of \eqref{Cond:2} is shown similarly.
\end{proof}
The truth lemma for this model then looks as follows:
\begin{lemma}\label{L:truth}
	For every $\phi\in\mathcal{CN}$ and for every $(\Gamma,\Delta)\in W_c$, the following statements hold:
	\begin{enumerate}
		\item $
		\mathcal{M}_c,(\Gamma,\Delta)\models^+\phi$ iff $\phi \in \Gamma$.
		
		\item $
		\mathcal{M}_c,(\Gamma,\Delta)\models^-\phi$ iff $\NEG\phi \in \Gamma$.
	\end{enumerate} 
\end{lemma}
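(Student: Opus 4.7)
The plan is to prove both parts simultaneously by induction on the complexity of $\phi\in\mathcal{CN}$. The base case is immediate from the definition of $V^+_c$ and $V^-_c$ in Definition \ref{D:canonical-model}, and the inductive steps for $\NEG,\wedge,\vee,\to$ go through almost verbatim from the proof of Lemma \ref{L:mod-truth}: the only ingredients used there are Lemmas \ref{L:cond-consistent}.1--2, \ref{L:cond-maximal}.1--4, \ref{L:cond-lindenbaum}, and the $\mathsf{C}$-style axioms \eqref{Ax:double-neg}--\eqref{Ax:to-neg}, all of which are now available from $\mathtt{C}\subseteq\mathtt{CnCK}$.

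The real work lies in the conditional cases $\phi = \psi\boxto\chi$ and $\phi = \psi\diamondto\chi$. The crucial preparatory observation is that, by IH applied to $\psi$, one has $\|\psi\|_{\mathcal{M}_c} = (\{(\Xi,\Theta)\in W_c\mid\psi\in\Xi\},\{(\Xi,\Theta)\in W_c\mid\NEG\psi\in\Xi\})$, so $\psi$ itself qualifies as a valid witness formula for $\|\psi\|_{\mathcal{M}_c}$ in the sense of Definition \ref{D:canonical-model}. Lemma \ref{L:cond-representatives} then guarantees that, irrespective of which witness formula is actually chosen to verify $((\Gamma_0,\Delta_0),\|\psi\|_{\mathcal{M}_c},(\Gamma_1,\Delta_1))\in R_c$, we may always work with the equivalent characterization that $\{\theta\mid\psi\boxto\theta\in\Gamma_0\}\subseteq\Gamma_1$ and $\{\psi\diamondto\theta\mid\theta\in\Gamma_1\}\subseteq\Gamma_0$.

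With this in hand, the $\boxto$ subcase of Part 1 mirrors the $\Box$-case of Lemma \ref{L:mod-truth}: the left-to-right direction uses the witness condition on $(R_c)_{\|\psi\|_{\mathcal{M}_c}}$ followed by the IH on $\chi$; for the converse, from $\psi\boxto\chi\notin\Gamma$ one extracts, via Lemma \ref{L:cond-consistent}.3 and Lemma \ref{L:cond-lindenbaum}, a maximal $(\Gamma',\Delta')\supseteq(\{\theta\mid\psi\boxto\theta\in\Gamma\},\{\chi\})$ and then, using Lemma \ref{L:cond-maximal}.5 (which in turn leans on \eqref{E:a4} and \eqref{E:T1}), extends $(\Gamma\cup\{\psi\diamondto\theta\mid\theta\in\Gamma'\},\{\psi\boxto\theta\mid\theta\in\Delta'\})$ to a maximal $(\Gamma_1,\Delta_1)$ so that $(\Gamma,\Delta)\leq_c(\Gamma_1,\Delta_1)(R_c)_{\|\psi\|_{\mathcal{M}_c}}(\Gamma',\Delta')$ and $\chi\notin\Gamma'$. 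The $\diamondto$ subcase uses Lemmas \ref{L:cond-consistent}.4 and \ref{L:cond-maximal}.6 in complete analogy with the $\Diamond$-case. Part 2 for each conditional reduces to Part 1 via \eqref{E:a6} and \eqref{E:a7}: e.g.\ $\mathcal{M}_c,(\Gamma,\Delta)\models^-\psi\boxto\chi$ iff (by the semantic clause and the IH on $\chi$) $\psi\boxto\NEG\chi\in\Gamma$, iff (by Lemma \ref{L:cond-maximal}.1 and \eqref{E:a6}) $\NEG(\psi\boxto\chi)\in\Gamma$. The main obstacle is precisely the antecedent-dependence of the conditional accessibility relation: one must ensure that the witness formula used to instantiate Definition \ref{D:canonical-model} can always be taken to be $\psi$ itself, which is exactly what Lemma \ref{L:cond-representatives} delivers; once that bridge is crossed, the remainder is a straightforward conditional-analogue of the already-completed modal truth lemma.
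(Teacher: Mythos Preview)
Your proposal is correct and follows essentially the same route as the paper: induction on $\phi$, the propositional cases handled as in the modal truth lemma, and the conditional cases hinging on the observation (via IH on $\psi$ and Lemma~\ref{L:cond-representatives}) that $\psi$ itself may always be taken as the witness formula for $\|\psi\|_{\mathcal{M}_c}$.

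One small correction worth noting: in the right-to-left direction of the $\boxto$ case you cite Lemma~\ref{L:cond-maximal}.5 to obtain consistency of $(\Gamma\cup\{\psi\diamondto\theta\mid\theta\in\Gamma'\},\{\psi\boxto\theta\mid\theta\in\Delta'\})$, but neither Part~5 nor Part~6 of that lemma applies directly here, since their common hypothesis presupposes an already-existing $R_c$-related pair, which is precisely what you are constructing. The paper instead establishes this consistency from scratch, exactly as in the modal Case~3, using \eqref{E:Rmdiam}, \eqref{E:Rmbox}, and \eqref{E:a4} (not \eqref{E:T1}). Similarly, the $\diamondto$ case needs only Lemma~\ref{L:cond-consistent}.4 followed by a direct verification of the $R_c$-conditions; Lemma~\ref{L:cond-maximal}.6 is not invoked. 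Finally, your Part~2 reduction is morally right but cannot literally appeal to Part~1 for $\psi\boxto\NEG\chi$ (which is not of lower complexity); the paper re-runs the Part~1 argument with $\NEG\chi$ in place of $\chi$, using IH Part~2 on $\chi$ together with \eqref{E:a6} and \eqref{E:a7}.
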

\begin{proof}[Proof (a sketch)] We only consider the case when  $\phi = \psi \boxto \chi$.
	
\textit{Part 1}. Let $(\Gamma,\Delta)\in W_c$ be such that $\phi = \psi \boxto \chi \in \Gamma$, and let $(\Gamma_0,\Delta_0), (\Gamma_1,\Delta_1)\in W_c$ be such that $(\Gamma,\Delta)\mathrel{\leq_c}(\Gamma_0,\Delta_0)\mathrel{(R_c)_{\|\psi\|_{\mathcal{M}_c}}}(\Gamma_1,\Delta_1)$. Then $\Gamma \subseteq \Gamma_0$, so that $\psi \boxto \chi \in \Gamma_0$. On the other hand, there must exist a $\theta\in \mathcal{CN}$ such that all of the following holds:
	\begin{align}
		&\|\psi\|_{\mathcal{M}_c} =(\{(\Xi,\Theta)\in W_c\mid\theta\in\Xi\}, \{(\Xi,\Theta)\in W_c\mid\NEG\theta\in\Xi\})\label{E:choice1}\\
		&\{\xi\mid\theta\boxto\xi\in \Gamma_0\}\subseteq \Gamma_1\label{E:choice2}\\
		&\{\theta\diamondto\xi\mid\xi\in \Gamma_1\}\subseteq \Gamma_0\label{E:choice3}
	\end{align}
	By IH, we know that also $\|\psi\|_{\mathcal{M}_c} = (\{(\Xi,\Theta)\in W_c\mid\psi\in\Xi\}, \{(\Xi,\Theta)\in W_c\mid\NEG\psi\in\Xi\})$. We thus get that:
	\begin{align}
		(\{(\Xi,\Theta)\in W_c\mid\psi\in\Xi\}, &\{(\Xi,\Theta)\in W_c\mid\NEG\psi\in\Xi\}) =\notag\\
		&= (\{(\Xi,\Theta)\in W_c\mid\theta\in\Xi\}, \{(\Xi,\Theta)\in W_c\mid\NEG\theta\in\Xi\})\label{E:choice4}
	\end{align}
	Since $\psi \boxto \chi \in \Gamma_0$, we know, by Lemma \ref{L:cond-representatives} and \eqref{E:choice4}, that also $\theta\boxto\chi \in \Gamma_0$. It follows by \eqref{E:choice2}, that $\chi\in \Gamma_1$. Next, IH implies that  $\mathcal{M}_c,(\Gamma_1,\Delta_1)\models^+\chi$. Since the choice of  $(\Gamma_0,\Delta_0), (\Gamma_1,\Delta_1)\in W_c$ under the condition that $(\Gamma,\Delta)\mathrel{\leq_c}(\Gamma_0,\Delta_0)\mathrel{(R_c)_{\|\psi\|_{\mathcal{M}_c}}}(\Gamma_1,\Delta_1)$ was made arbitrarily, it follows that we must have $\mathcal{M}_c,(\Gamma,\Delta)\models^+ \psi \boxto \chi = \phi$.
	
	In the other direction, let $(\Gamma,\Delta)\in W_c$ be such that $\phi \notin \Gamma$. Therefore, $\psi \boxto \chi \in \Delta$ by completeness of $(\Gamma, \Delta)$, and $(\{\theta\mid\psi\boxto\theta\in\Gamma\},\{\chi\})$ must be consistent by Lemma \ref{L:cond-consistent}.3. By Lemma \ref{L:cond-lindenbaum}, we can extend it to a maximal $(\Gamma',\Delta')\supseteq (\{\theta\mid\psi\boxto\theta\in\Gamma\},\{\chi\})$. Now, set $(\Gamma_0,\Delta_0):= (\Gamma\cup\{\psi\diamondto\theta\mid\theta\in \Gamma'\}, \{\psi\boxto\xi\mid\xi\in\Delta'\})$. We claim that $(\Gamma_0,\Delta_0)$ is consistent. Otherwise, we can choose $\gamma_1,\ldots,\gamma_n\in\Gamma$, $\tau_1,\ldots,\tau_m\in\Gamma'$ and $\xi_1,\ldots,\xi_k\in\Delta'$ such that $\bigwedge^n_{i = 1}\gamma_i,\bigwedge^m_{j = 1}\psi\diamondto\tau_j\vdash\bigvee^k_{r = 1}(\psi\boxto\xi_r)$. But then, for $\gamma:= \bigwedge^n_{i = 1}\gamma_i$, $\tau:= \bigwedge^m_{j = 1}\tau_j$, and $\xi:= \bigvee^k_{r = 1}\xi_r$, we have:
	\begin{align}
		&\gamma\vdash \bigwedge^m_{j = 1}(\psi\diamondto\tau_j)\to\bigvee^k_{r = 1}(\psi\boxto\xi_r)\label{E:can5} &&\text{Lemma \ref{L:cond-c}.1}\\
		&\vdash(\psi\diamondto\tau)\to\bigwedge^m_{j = 1}(\psi\diamondto\tau_j)\label{E:can7} &&\text{$\mathsf{C}$, \eqref{E:Rmdiam}}\\
		&\vdash\bigvee^k_{r = 1}(\psi\boxto\xi_r)\to(\psi\boxto\xi)\label{E:can9} &&\text{$\mathsf{C}$, \eqref{E:Rmbox}}\\
		&\gamma\vdash(\psi\diamondto\tau)\to(\psi\boxto\xi)\label{E:can10} &&\text{\eqref{E:can5}, \eqref{E:can7}, \eqref{E:can9}}\\
		&(\psi\boxto(\tau\to\xi))\in \Gamma\label{E:can12} &&\text{\eqref{E:can10}, \eqref{E:a4}, Lemma \ref{L:cond-maximal}.1}
	\end{align}
	By \eqref{E:can12} and the choice of $(\Gamma',\Delta')$, we know that $(\tau\to\xi)\in \Gamma'$, therefore, $(\Gamma',\Delta')$ must be inconsistent, which contradicts its choice and shows that $(\Gamma_0,\Delta_0)$ must have been consistent. Therefore, $(\Gamma_0,\Delta_0)$ is extendable to a maximal bi-set $(\Gamma_1,\Delta_1)\supseteq(\Gamma_0,\Delta_0)$. 
	
	We now claim that we have both $(\Gamma,\Delta)\mathrel{\leq_c}(\Gamma_1,\Delta_1)$ and 
	$$
	((\Gamma_1,\Delta_1),(\{(\Xi,\Theta)\in W_c\mid\psi\in\Xi\}, \{(\Xi,\Theta)\in W_c\mid\NEG\psi\in\Xi\}),(\Gamma',\Delta'))\in R_c.
	$$
	The first part is trivial since we have $\Gamma_1\supseteq\Gamma_0 \supseteq\Gamma$ by the choice of $(\Gamma_1,\Delta_1)$ and $(\Gamma_0,\Delta_0)$. As for the second part, note that (a) for every $\theta\in\mathcal{CN}$, if $\psi\boxto\theta\in \Gamma_1$ and $\theta \notin \Gamma'$, then, by the completeness of $(\Gamma',\Delta')$, we must have $\theta \in \Delta'$. But then $\psi\boxto\theta\in\Delta_0\subseteq\Delta_1$, which contradicts the consistency  of $(\Gamma_1,\Delta_1)$. The obtained contradiction shows that  $\{\theta\mid\psi\boxto\theta\in \Gamma_1\}\subseteq \Gamma'$. Next, (b) we trivially get that $\{\psi\diamondto\theta\mid\theta\in \Gamma'\}\subseteq\Gamma_0 \subseteq \Gamma_1$. Summing up (a) and (b), we get that $((\Gamma_1,\Delta_1),(\{(\Xi,\Theta)\in W_c\mid\psi\in\Xi\},\{(\Xi,\Theta)\in W_c\mid\NEG\psi\in\Xi\}),(\Gamma',\Delta'))\in R_c$.
	
	It remains to notice that, by IH, we must have 
	$$
	\|\psi\|_{\mathcal{M}_c} = (\{(\Xi,\Theta)\in W_c\mid\psi\in\Xi\},\{(\Xi,\Theta)\in W_c\mid\NEG\psi\in\Xi\}),
	$$
	so that we have shown, in effect that $((\Gamma_1,\Delta_1),\|\psi\|_{\mathcal{M}_c},(\Gamma',\Delta'))\in R_c$.
	
	Observe, next, that also $\chi\in \Delta'$, whence $\chi\notin\Gamma'$ by the consistency of $(\Gamma',\Delta')$. Therefore, by IH, $\mathcal{M}_c,(\Gamma',\Delta')\not\models^+ \chi$. Together with the fact that $(\Gamma,\Delta)\mathrel{\leq_c}(\Gamma_1,\Delta_1)$ and $((\Gamma_1,\Delta_1),\|\psi\|_{\mathcal{M}_c},(\Gamma',\Delta'))\in R_c$, this finally implies that $\mathcal{M}_c,(\Gamma',\Delta')\not\models^+ \psi\boxto\chi = \phi$.
	
	\textit{Part 2}. Let $(\Gamma,\Delta)\in W_c$ be such that $\NEG\phi = \NEG(\psi \boxto \chi) \in \Gamma$, and let $(\Gamma_0,\Delta_0), (\Gamma_1,\Delta_1)\in W_c$ be such that $(\Gamma,\Delta)\mathrel{\leq_c}(\Gamma_0,\Delta_0)\mathrel{(R_c)_{\|\psi\|_{\mathcal{M}_c}}}(\Gamma_1,\Delta_1)$. Then $\Gamma \subseteq \Gamma_0$, so that $\NEG(\psi \boxto \chi) \in \Gamma_0$. On the other hand, there must exist a $\theta\in \mathcal{CN}$ such that \eqref{E:choice1}--\eqref{E:choice3} hold. By IH, we know that also $\|\psi\|_{\mathcal{M}_c} = (\{(\Xi,\Theta)\in W_c\mid\psi\in\Xi\}, \{(\Xi,\Theta)\in W_c\mid\NEG\psi\in\Xi\})$. We thus get that \eqref{E:choice4} also holds.
	
	Next, Lemma \ref{L:cond-maximal}.1 and \eqref{E:a6} together imply that
	$\psi \boxto \NEG\chi \in \Gamma_0$, therefore, we know, by Lemma \ref{L:cond-representatives} and \eqref{E:choice4}, that also $\theta\boxto\NEG\chi \in \Gamma_0$. It follows by \eqref{E:choice2}, that $\NEG\chi\in \Gamma_1$. Now IH implies that  $\mathcal{M}_c,(\Gamma_1,\Delta_1)\models^-\chi$. Since the choice of  $(\Gamma_0,\Delta_0), (\Gamma_1,\Delta_1)\in W_c$ under the condition that $(\Gamma,\Delta)\mathrel{\leq_c}(\Gamma_0,\Delta_0)\mathrel{(R_c)_{\|\psi\|_{\mathcal{M}_c}}}(\Gamma_1,\Delta_1)$ was made arbitrarily, it follows that we must have $\mathcal{M}_c,(\Gamma,\Delta)\models^- \psi \boxto \chi = \phi$.
	
	In the other direction, let $(\Gamma,\Delta)\in W_c$ be such that $\NEG\phi \notin \Gamma$. Therefore, $\NEG(\psi \boxto \chi) \in \Delta$ by completeness of $(\Gamma, \Delta)$. By \eqref{E:a6} and Lemma \ref{L:cond-maximal}.1 we must have $\psi \boxto \NEG\chi \in \Delta$, so that $(\{\theta\mid\psi\boxto\theta\in\Gamma\},\{\NEG\chi\})$ must be consistent by Lemma \ref{L:cond-consistent}.3. By Lemma \ref{L:cond-lindenbaum}, we can extend it to a maximal $(\Gamma',\Delta')\supseteq (\{\theta\mid\psi\boxto\theta\in\Gamma\},\{\NEG\chi\})$. Now, set $(\Gamma_0,\Delta_0):= (\Gamma\cup\{\psi\diamondto\theta\mid\theta\in \Gamma'\}, \{\psi\boxto\xi\mid\xi\in\Delta'\})$. Arguing as in Part 1, we show that $(\Gamma_0,\Delta_0)$ is consistent and thus extendable to a maximal bi-set $(\Gamma_1,\Delta_1)\supseteq(\Gamma_0,\Delta_0)$, for which we can show that both $(\Gamma,\Delta)\mathrel{\leq_c}(\Gamma_1,\Delta_1)$ and 
	$((\Gamma_1,\Delta_1),\|\psi\|_{\mathcal{M}_c},(\Gamma',\Delta'))\in R_c$.
	
	Recall that $\NEG\chi\in \Delta'$, whence $\NEG\chi\notin\Gamma'$ by the consistency of $(\Gamma',\Delta')$. Next, $\mathcal{M}_c,(\Gamma',\Delta')\not\models^- \chi$ by IH. By the choice of $(\Gamma',\Delta')$, this finally implies that $\mathcal{M}_c,(\Gamma',\Delta')\not\models^- \psi\boxto\chi = \phi$.
\end{proof}
Again the deduction of the (strong) soundness and completeness of $\mathtt{CnCK}$ relative to  (and of the compactness of $\mathsf{CnCK}$ itself) from the above truth lemma is completely standard:
\begin{theorem}\label{T:cond-completeness}
	$\mathsf{CnCK}= \mathtt{CnCK}[CN]$. In particular, for every $\phi\in\mathcal{CN}$, $\vdash\phi$ iff $\phi\in\mathsf{CnCK}$.
\end{theorem}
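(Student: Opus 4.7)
The plan is to split the claimed equality into its two inclusions, each of which is routine once the heavy work of the preceding lemmas is in place. For the direction $\mathtt{CnCK}[CN] \subseteq \mathsf{CnCK}$, I would strengthen the formula-level soundness of Lemma \ref{L:cond-soundness} to a consecution-level soundness: given $(\Gamma, \Delta) \in \mathtt{CnCK}[CN]$, Lemma \ref{L:cond-alt-consistency} supplies finite $\Gamma' \Subset \Gamma$ and $\Delta' \Subset \Delta$ with $\vdash \bigwedge \Gamma' \to \bigvee \Delta'$; Lemma \ref{L:cond-soundness} makes this formula $\mathsf{CnCK}$-valid, and the clauses $(\wedge+),(\vee+),(\to+)$ then convert validity of the implication into the desired consecution $\Gamma \models \Delta$ at every point of every model in $\mathbb{FSC}$.

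The converse inclusion, which is the substantive half, is established by the standard canonical-model argument via contraposition. Assume $(\Gamma, \Delta) \notin \mathtt{CnCK}[CN]$, so $(\Gamma, \Delta)$ is $\mathtt{CnCK}$-consistent. Apply the Lindenbaum-style Lemma \ref{L:cond-lindenbaum} to extend it to a maximal bi-set $(\Xi, \Theta) \supseteq (\Gamma, \Delta)$. By Definition \ref{D:canonical-model} the pair $(\Xi, \Theta)$ is a world of $\mathcal{M}_c$, and Lemma \ref{L:cond.canonical-model} guarantees that $\mathcal{M}_c$ is a legitimate member of $\mathbb{FSC}$, so $(\mathcal{M}_c, (\Xi, \Theta)) \in Pt(\mathbb{FSC})$. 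Lemma \ref{L:truth}.1 now yields $\mathcal{M}_c, (\Xi, \Theta) \models^+ \phi$ for every $\phi \in \Gamma \subseteq \Xi$, while for every $\psi \in \Delta \subseteq \Theta$ the consistency of $(\Xi, \Theta)$ forces $\psi \notin \Xi$ and hence $\mathcal{M}_c, (\Xi, \Theta) \not\models^+ \psi$. Thus $\mathcal{M}_c, (\Xi, \Theta) \models^+ (\Gamma, \Delta)$, which refutes $(\Gamma, \Delta) \in \mathsf{CnCK}$ by the definition of the operator $\mathfrak{L}$ recalled in Section \ref{sub:semantics}.

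The ``In particular'' clause then drops out by specializing to $\Gamma = \emptyset$ and $\Delta = \{\phi\}$. There is no real obstacle in this proof: all the difficulty has been packaged into the preceding machinery, most notably the Fischer-Servi conditions for $\mathcal{M}_c$ in Lemma \ref{L:cond.canonical-model} (which rests on the delicate representative-independence Lemma \ref{L:cond-representatives}) and the truth lemma itself. The only point requiring care in the present argument is to make sure that the reduction to finite $\Gamma', \Delta'$ on the soundness side, and the handling of possibly empty $\Delta$ on the completeness side, is clean — but both are already covered uniformly by Lemmas \ref{L:cond-alt-consistency} and \ref{L:cond-lindenbaum}, so nothing new has to be proved.
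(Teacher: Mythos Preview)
Your proposal is correct and follows exactly the approach the paper intends: the paper explicitly declares the proof ``standard'' and omitted, indicating precisely the route you take---formula-level soundness (Lemma~\ref{L:cond-soundness}) lifted to consecutions via Lemma~\ref{L:cond-alt-consistency} for one inclusion, and the Lindenbaum extension (Lemma~\ref{L:cond-lindenbaum}) plus the truth lemma (Lemma~\ref{L:truth}) applied at the canonical model $\mathcal{M}_c \in \mathbb{FSC}$ for the other. There is nothing to add or correct.
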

\begin{corollary}\label{C:cond-compactness}
	For $(\Gamma,\Delta)\in \mathcal{P}(\mathcal{CN})\times\mathcal{P}(\mathcal{CN})$, we have $\Gamma\not\models\Delta$ iff, for all $\Gamma'\Subset\Gamma$, $\Delta'\Subset\Delta$, $\Gamma'\not\models\Delta'$.
\end{corollary}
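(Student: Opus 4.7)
The proof is essentially a formal consequence of the strong completeness theorem just established, together with the finitary nature of Hilbert-style derivations. The plan is to chain together two applications of Theorem \ref{T:cond-completeness} around the observation that $\vdash_{\mathtt{CnCK}}$ is finitary by definition.

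First I would invoke Theorem \ref{T:cond-completeness} to reduce the semantic claim to a syntactic one: $\Gamma \models \Delta$ iff $\Gamma \vdash_{\mathtt{CnCK}} \Delta$. Second, I would observe that by the very definition of the relation $\vdash_{\mathtt{CnCK}}$ given in Section \ref{sub:hilbert}, to have $\Gamma \vdash_{\mathtt{CnCK}} \Delta$ means that there exists a finite sequence of formulas $\chi_1,\ldots,\chi_r$ such that every $\chi_i$ is either in $\Gamma$, or provable in $\mathtt{CnCK}$, or obtained by \eqref{E:mp} from two earlier members of the sequence, with $\chi_r = \theta_1 \vee \ldots \vee \theta_s$ for some $\theta_1,\ldots,\theta_s \in \Delta$. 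Therefore only finitely many formulas from $\Gamma$ (namely those actually occurring in the sequence) and finitely many from $\Delta$ (namely $\theta_1,\ldots,\theta_s$) are ever used, so that $\Gamma \vdash_{\mathtt{CnCK}} \Delta$ holds if and only if there exist $\Gamma'\Subset\Gamma$ and $\Delta'\Subset\Delta$ with $\Gamma' \vdash_{\mathtt{CnCK}} \Delta'$.

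Third, I would apply Theorem \ref{T:cond-completeness} once more in the opposite direction to each finite pair, obtaining that $\Gamma' \vdash_{\mathtt{CnCK}} \Delta'$ iff $\Gamma' \models \Delta'$. Combining the three steps gives $\Gamma \models \Delta$ iff there exist $\Gamma'\Subset\Gamma$ and $\Delta'\Subset\Delta$ with $\Gamma' \models \Delta'$. Taking contrapositives yields exactly the statement of the corollary.

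There is no substantive obstacle in this argument; the only point that requires any care is making explicit the finitary character of the derivability relation, which is immediate from its definition. Consequently the stated proof can reasonably be omitted as standard, which is presumably why the excerpt introduces it as an immediate corollary of the completeness theorem.
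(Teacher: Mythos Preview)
Your proposal is correct and is exactly the standard argument the paper alludes to: the paper omits the proof entirely, remarking that the deduction of compactness from the truth lemma and completeness theorem ``is completely standard,'' and your reduction to the finitary character of $\vdash_{\mathtt{CnCK}}$ (which is also recorded in Lemma~\ref{L:cond-alt-consistency}) is precisely that standard route.
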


\subsection{Some properties of $\mathsf{CnCK}$}\label{sub:cond-properties}
Just as in the modal case above, we start by showing that $\mathsf{CnCK}$  shares several basic properties of $\mathsf{C}$.
\begin{lemma}\label{L:cond-monotonicity}
	Given a $\phi \in \mathcal{CN}$, a $\star \in \{+, -\}$, an $\mathcal{M} \in \mathbb{FSC}$, and any $w,v \in W$ such that $w \leq v$, $\mathcal{M}, w\models^\star \phi$ implies $\mathcal{M}, v\models^\star \phi$. 
\end{lemma}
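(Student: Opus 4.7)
The plan is to argue by straightforward induction on the construction of $\phi \in \mathcal{CN}$, proving the two monotonicity claims for $\models^+$ and $\models^-$ simultaneously, and handling the conditional cases by leveraging the fact that $\mathcal{M} \in \mathbb{FSC}$ guarantees the Fischer–Servi completion conditions for the derived modal accessibility relation $R_{(X,Y)}$, for every $X, Y \subseteq W$.

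The base case, $\phi = p \in Prop$, is immediate from the monotonicity requirement imposed on $V^+$ and $V^-$ in Definition \ref{D:Krip}. The induction step for the Boolean connectives $\wedge, \vee, \NEG, \to$ proceeds exactly as in the proof of Lemma \ref{L:monotonicity} for $\mathsf{C}$: the cases for $\wedge, \vee, \NEG$ reduce to direct applications of IH, while the cases for $\to$ (both $\models^+$ and $\models^-$) are handled via the transitivity of $\leq$, so that a quantification over successors of $w$ subsumes the corresponding quantification over successors of any $v \geq w$.

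The genuinely new cases are $\phi = \psi \boxto \chi$ and $\phi = \psi \diamondto \chi$. For the $\boxto$ case (both $\models^+$ and $\models^-$), I would again appeal to transitivity of $\leq$: the clauses \eqref{Cl:boxto+} and \eqref{Cl:boxto-} quantify universally over all $v' \geq w$ in the first coordinate of $R_{\|\psi\|_\mathcal{M}}$, so assuming $w \leq v$ and using transitivity, every $v' \geq v$ also satisfies $v' \geq w$, and the conclusion follows without any further input. For the $\diamondto$ case, I would invoke Fischer–Servi condition \eqref{Cond:1} on the relation $R_{\|\psi\|_\mathcal{M}}$. Concretely, if $\mathcal{M}, w \models^+ \psi \diamondto \chi$, pick $u$ with $R_{\|\psi\|_\mathcal{M}}(w, u)$ and $\mathcal{M}, u \models^+ \chi$; given $w \leq v$, \eqref{Cond:1} applied to $R_{\|\psi\|_\mathcal{M}}$ yields $u'$ with $R_{\|\psi\|_\mathcal{M}}(v, u')$ and $u \leq u'$; by IH at the lower-complexity formula $\chi$ we obtain $\mathcal{M}, u' \models^+ \chi$, which witnesses $\mathcal{M}, v \models^+ \psi \diamondto \chi$. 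The case for $\models^-$ runs in exactly the same manner using clause \eqref{Cl:diamto-}.

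The main obstacle, if any, is conceptual rather than technical: one must recognize that the appropriate Fischer–Servi condition is to be applied not to some fixed modal accessibility relation, but to the parametrized relation $R_{\|\psi\|_\mathcal{M}}$, and that Definition \ref{D:cond-model} has been designed precisely so that this relation lies in $\mathbb{FSM}$ for every choice of bi-extension $(X, Y)$. Once this is in place, the induction closes uniformly, and the proof is essentially a direct adaptation of the one sketched for Lemma \ref{L:mod-monotonicity} in the modal setting.
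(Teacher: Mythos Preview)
Your proposal is correct and follows precisely the approach the paper intends: the paper merely states that ``the proof is by a straightforward induction on the complexity of $\phi \in \mathcal{CN}$'' without further detail, and your sketch fills this in exactly as expected, using transitivity of $\leq$ for the $\boxto$-clauses and condition \eqref{Cond:1} for the $\diamondto$-clauses.
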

Again, the proof is by a straightforward induction on the complexity of $\phi \in \mathcal{CN}$.
\begin{proposition}\label{P:cond-constructivity}
	$\mathsf{CnK}$ has both DP and CFP.
\end{proposition}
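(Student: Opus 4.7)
The plan is to mimic the strategy from the proofs of Proposition \ref{P:C-constructivity} and Proposition \ref{P:Cnk-constructivity}, with the only new subtlety being the handling of the parametrized accessibility relation $R \subseteq W \times (\mathcal{P}(W))^2 \times W$. For DP, I would start with $\phi_1\vee\phi_2$ such that $\phi_i\notin\mathsf{CnCK}$ for $i\in\{1,2\}$, pick pointed models $(\mathcal{M}_i,w_i)\in Pt(\mathbb{FSC})$ with $\mathcal{M}_i,w_i\not\models^+\phi_i$, take the disjoint union of the $\mathcal{M}_i$ and prepend a fresh root $w$ with $w\leq w_1,w_2$ and no $R$-successors, so that $\leq$ on the resulting frame is the least pre-order containing $\leq_1\cup\leq_2\cup\{(w,w_1),(w,w_2)\}$. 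For CFP, the argument is identical except that $w_1,w_2$ are chosen so that $\mathcal{M}_i,w_i\not\models^-\phi_i$ and one appeals to Lemma \ref{L:cond-monotonicity} for the negative satisfaction relation.

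The non-trivial design choice concerns the definition of $R$. I would set, for $v,u$ in some $W_i$ and any $X,Y\subseteq W$,
\[
(v,(X,Y),u)\in R \iff (v,(X\cap W_i,Y\cap W_i),u)\in R_i,
\]
and leave $R$ empty on triples involving $w$. Equivalently, $R_{(X,Y)}$ on $W_i$ coincides with $(R_i)_{(X\cap W_i,Y\cap W_i)}$. I would then check that the resulting structure $\mathcal{M}$ lies in $\mathbb{FSC}$: for any fixed $(X,Y)$, since $\leq$ never crosses between $W_1$ and $W_2$ above the root $w$ (which has no $R$-successors in any case), the Fischer--Servi conditions \eqref{Cond:1} and \eqref{Cond:2} for $R_{(X,Y)}$ reduce to their analogues for $(R_i)_{(X\cap W_i,Y\cap W_i)}$ inside each $\mathcal{M}_i$, which hold by assumption.

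The heart of the argument is the analogue of the Claim from Proposition \ref{P:C-constructivity}: for every $i\in\{1,2\}$, every $v\in W_i$, every $\psi\in\mathcal{CN}$ and every $\star\in\{+,-\}$,
\[
\mathcal{M}_i,v\models^\star\psi \iff \mathcal{M},v\models^\star\psi.
\]
I would prove this by simultaneous induction on $\psi$ for the two polarities. Atoms and the propositional cases for $\wedge,\vee,\NEG,\to$ are handled exactly as in Proposition \ref{P:C-constructivity}, using the fact that no $\leq$-successor of a node in $W_i$ lies outside $W_i$. I expect the conditional cases to be the main obstacle; the key observation is that by the induction hypothesis $\lvert\alpha\rvert^\star_{\mathcal{M}}\cap W_i=\lvert\alpha\rvert^\star_{\mathcal{M}_i}$ for each $\alpha$ strictly simpler than $\psi$, whence the definition of $R$ yields
\[
R_{\|\alpha\|_{\mathcal{M}}}(v',u)\iff (R_i)_{\|\alpha\|_{\mathcal{M}_i}}(v',u)\quad\text{for }v'\in W_i.
\]
Plugging this into the clauses \eqref{Cl:boxto+}, \eqref{Cl:boxto-}, \eqref{Cl:diamto+}, \eqref{Cl:diamto-} and applying the induction hypothesis on the consequent gives the desired equivalence for $\psi=\alpha\boxto\beta$ and $\psi=\alpha\diamondto\beta$.

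Once the Claim is in place, DP follows: the Claim gives $\mathcal{M},w_i\not\models^+\phi_i$, and then Lemma \ref{L:cond-monotonicity} (applied downward-contrapositively, exactly as in Proposition \ref{P:C-constructivity}) yields $\mathcal{M},w\not\models^+\phi_1\vee\phi_2$, so $\phi_1\vee\phi_2\notin\mathsf{CnCK}$. CFP follows analogously, using the Claim at polarity $-$ together with the clause \eqref{Cl:con-} to conclude $\mathcal{M},w\not\models^+\NEG(\phi_1\wedge\phi_2)$.
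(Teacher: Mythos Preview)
Your proposal is correct and follows essentially the same approach as the paper's own proof: the same root-prepending construction, the same definition of $R$ via restriction of the bi-set parameters to $W_i$, the same verification of the Fischer--Servi conditions, and the same key Claim proved by induction with the observation that $\|\alpha\|_{\mathcal{M}}\cap W_i=\|\alpha\|_{\mathcal{M}_i}$ drives the conditional cases. The only cosmetic difference is that the paper makes $w$ $\leq$-below every node of $W$ rather than taking the least pre-order generated by $\{(w,w_1),(w,w_2)\}$, but this does not affect the argument.
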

\begin{proof}[Proof (a sketch)]
	Again we use the construction employed in the proofs of Propositions \ref{P:C-constructivity} and \ref{P:Cnk-constructivity}; but this time we need to re-define our conditional accessibility relation relative to $\mathcal{P}(W_1\cup W_2)$. 
	
	Therefore, given a $\phi_1\vee\phi_2 \in \mathsf{CnCK}$ such that both  $\phi_1 \notin \mathsf{CnCK}$ and $\phi_2 \notin \mathsf{CnCK}$, we choose pointed models $(\mathcal{M}_1,w_1)$ and $(\mathcal{M}_2,w_2)$ such that $\mathcal{M}_i,w_i\not\models^+\phi_i$ for all $i \in \{1,2\}$; we may assume, wlog, that $W_1\cap W_2 = \emptyset$. We then choose an element $w$ outside $W_1\cup W_2$ and define the following pointed model $(\mathcal{M},w)$ for which we set:
	\begin{align*}
		W &:= \{w\}\cup W_1\cup W_2\\
		\leq &:= \{(w,v)\mid v \in W\} \cup \leq_1 \cup \leq_2\\
		R &:= \{(v,(X, Y), u)\mid v,u \in W_1,\,(v,(X\cap W_1, Y\cap W_1), u)\in R_1\}\cup\\
		&\qquad\qquad\qquad\qquad\cup \{(v,(X,Y),u)\mid v,u \in W_2,\,(v,(X\cap W_2, Y\cap W_2), u)\in R_2\}\\
		V^\star(p) &:= V^\star_1(p)\cup V^\star_2(p)\qquad\qquad\qquad \text{for }p \in Prop,\,\star\in\{+,-\}
	\end{align*}
	We need to show that $\mathcal{M}\in \mathbb{FSC}$. The only non-trivial part is the satisfaction of conditions \eqref{Cond:1} and \eqref{Cond:2} from Definition \ref{D:modal-model} by $\leq$ and $R_{(X,Y)}$ for all $X, Y \subseteq W$.
	
	As for \eqref{Cond:1}, assume that some $v',v,u \in W$ and $X, Y\subseteq W$ are such that $v'\geq v\mathrel{R_{(X,Y)}}u$. Then, by defintion of $R$, we must have either $v,u \in W_1$ or $v,u\in W_2$. Assume, wlog, that $v,u \in W_1$. Then we must have, first, that $v\mathrel{(R_1)_{(X\cap W_1, Y\cap W_1)}}u$, and, second, that $v'\geq_1 v$ so that also $v'\in W_1$. But then, since $\mathcal{M}_1$ satisfies  \eqref{Cond:1}, there must be a $u'\in W_1$ such that $v'\mathrel{(R_1)_{(X\cap W_1, Y\cap W_1)}}u'\geq_1u$ whence clearly also  $v'\mathrel{R_{(X,Y)}}u'\geq u$, so that  \eqref{Cond:1} is shown to hold for $\mathcal{M}$. We argue similarly for  \eqref{Cond:2}.
	
	The following claim can be shown by induction on the construction of $\phi \in\mathcal{CN}$:
	
	\textit{Claim}. For every $\star \in \{+, -\}$, every $i\in \{1,2\}$, every $v \in W_i$, and every $\phi \in \mathcal{CN}$, we have $\mathcal{M}, v\models^\star\phi$ iff $\mathcal{M}_i, v\models^\star\phi$. 
	
	It follows now that $\mathcal{M}, w_i\not\models^+\phi_i$ for all $i \in \{1,2\}$, and, since we have $w\leq w_1, w_2$, Lemma \ref{L:cond-monotonicity} implies that  $\mathcal{M}, w\not\models^+\phi_i$ for all $i \in \{1,2\}$, or, equivalently, that $\mathcal{M}, w\not\models^+\phi_1\vee\phi_2$, contrary to our assumption. The obtained contradiction shows that $\mathsf{CnCK}$ must have DP.
	
	Again, the argument for CFP is similar.
\end{proof}
\begin{proposition}\label{P:universal-cond-satisfaction}
	Every $\Gamma\subseteq \mathcal{CN}$ is satisfiable in $\mathsf{CnCK}$, in other words, $(\Gamma, \emptyset) \notin \mathsf{CnCK}$.
\end{proposition}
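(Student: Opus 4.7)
The plan is to mimic exactly the strategy used in the proofs of Propositions \ref{P:universal-c-satisfaction} and \ref{P:universal-mod-satisfaction}: take the one-world reflexive model in which every atom is both verified and falsified at the unique world, expand it to a member of $\mathbb{FSC}$ by making the conditional accessibility relation as inclusive as possible, and show by induction that every $\mathcal{CN}$-formula is both verified and falsified at that unique world.

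More precisely, starting from the model $\mathcal{M} \in \mathbb{P}$ defined in the proof of Proposition \ref{P:universal-c-satisfaction} (a single reflexive world $w$ with $V^+(p) = V^-(p) = \{w\}$ for every $p \in Prop$), I would form $\mathcal{M}^c$ by setting
$$
R^c := \{(w, (X, Y), w) \mid X, Y \subseteq \{w\}\}.
$$
Then $(R^c)_{(X, Y)} = \{(w, w)\}$ for every $X, Y \subseteq \{w\}$, which is reflexive and transitive and trivially satisfies both \eqref{Cond:1} and \eqref{Cond:2} (any completion diagram on a singleton frame collapses). Hence $(\{w\}, \leq, (R^c)_{(X, Y)}, V^+, V^-) \in \mathbb{FSM}$ for each choice of $(X, Y)$, so $\mathcal{M}^c \in \mathbb{FSC}$.

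The core of the argument is then the following claim, proved by simultaneous induction on the construction of $\phi \in \mathcal{CN}$: for every $\phi \in \mathcal{CN}$ and every $\star \in \{+, -\}$, $\mathcal{M}^c, w \models^\star \phi$. The base case holds by the definition of $V^+, V^-$; the cases for $\wedge, \vee, \NEG, \to$ are immediate from IH since all relevant clauses quantify only over $v \geq w$, and the only such $v$ is $w$ itself. For $\boxto$, both \eqref{Cl:boxto+} and \eqref{Cl:boxto-} universally quantify over $v \geq w$ and $u$ with $R_{\|\psi\|_{\mathcal{M}^c}}(v, u)$; the only such pair is $(w, w)$, and the IH applied to $\chi$ yields both satisfactions. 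For $\diamondto$, the existential clauses are witnessed by $u = w$, again with the IH applied to $\chi$.

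From the claim it follows in particular that $\mathcal{M}^c, w \models^+ \phi$ for every $\phi \in \Gamma$, so $(\Gamma, \emptyset) \notin \mathsf{CnCK}$. There is no real obstacle here: the only mildly delicate point is that the inductive clauses for $\boxto$ and $\diamondto$ refer to the extension $\|\psi\|_{\mathcal{M}^c}$ of the antecedent, so one might worry that the definition of $R^c$ needs to track these extensions, but since $W$ is a singleton there are only four possible pairs $(X, Y)$ and our $R^c$ accommodates all of them uniformly.
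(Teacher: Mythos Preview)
Your proposal is correct and follows essentially the same strategy as the paper's own proof. The only difference is cosmetic: the paper sets $R^c := \{(w, (\{w\}, \{w\}), w)\}$ and relies on the induction hypothesis for the antecedent $\psi$ to guarantee $\|\psi\|_{\mathcal{M}^c} = (\{w\}, \{w\})$, whereas you include all four possible bi-sets in $R^c$ and thereby sidestep that small dependency; both choices work equally well on the singleton frame.
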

\begin{proof}[Proof (a sketch)]
	Consider $\mathcal{M} \in \mathbb{P}$ from the proof of Proposition \ref{P:universal-c-satisfaction} and expand it to $\mathcal{M}^c$ by setting $R^c:= \{(w, (\{w\}, \{w\}), w)\}$; it is straightforward to show that $\mathcal{M}^c\in \mathbb{FSC}$, and that we have $\mathcal{M}^c, w\models^\star \phi$ for every $\phi \in \mathcal{CN}$ and every $\star \in \{+, -\}$.
\end{proof}
\begin{corollary}\label{C-legacy-cond}
	$\mathsf{CnCK}$ is both non-trivial and negation-inconsistent. Moreover, $\to$ is hyperconnexive in $\mathsf{CnCK}$ and $\Rightarrow$ is connexive in $\mathsf{CnCK}$, but neither plainly nor weakly hyperconnexive in $\mathsf{CnCK}$. 
\end{corollary}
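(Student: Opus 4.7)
The plan is to proceed exactly in parallel with the proof of Corollary \ref{C-legacy-mod}, replacing Lemma \ref{L:c} by its conditional analogue Lemma \ref{L:cond-c} throughout, and replacing the expansion $\mathcal{M}^m_0$ by an analogous conditional expansion of the propositional model $\mathcal{M}_0$ used in the proof of Proposition \ref{P:c-strong-implication}.

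First I would dispatch non-triviality and negation-inconsistency. Non-triviality follows from Lemma \ref{L:cond-c}.2: since $p \notin \mathsf{C}$ for any $p \in Prop$, we also have $p \notin \mathsf{CnCK}$. For negation-inconsistency, the formula \eqref{Contr} is propositional and by Proposition \ref{P:c-negation-non-trivial} belongs to $\mathsf{C}$, so Lemma \ref{L:cond-c}.2 puts it in $\mathsf{CnCK}$.

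Next I would handle the positive connexivity claims by substitution. All of (AT$\to$), (BT$\to$), (CBT$\to$), (WBT$\to$), (WCBT$\to$), together with (AT$\Rightarrow$) and (BT$\Rightarrow$), are established in $\mathsf{C}$ by Propositions \ref{P:c-connexivity} and \ref{P:c-strong-connexivity}; their instances over $\mathcal{CN}$ are then substitution instances of $\mathsf{C}$-theorems, hence provable in $\mathtt{CnCK}$ by Lemma \ref{L:cond-c}.1. The weak forms involving $\models_{\mathsf{CnCK}}$ follow from the formula forms by \eqref{E:mp}, which is available as $\mathtt{CnCK} \supseteq \mathtt{S}_0$.

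Finally, the negative claims all boil down to propositional counterexamples lifted to $\mathbb{FSC}$. Let $\mathcal{M}^c_0$ expand the model $\mathcal{M}_0$ from the proof of Proposition \ref{P:c-strong-implication} by setting $R^c_0 := \emptyset$; then $(R^c_0)_{(X,Y)} = \emptyset$ for all $X, Y \subseteq W_0$, so conditions \eqref{Cond:1}, \eqref{Cond:2} are vacuous and $\mathcal{M}^c_0 \in \mathbb{FSC}$. An easy induction (essentially the one embedded in the proof of Lemma \ref{L:cond-c}.2) shows that $\mathcal{M}^c_0, v \models^\star \psi$ iff $\mathcal{M}_0, v \models^\star_{\mathsf{C}} \psi$ for every $\psi \in \mathcal{PL}$ and $\star \in \{+,-\}$. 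This model therefore witnesses $\mathcal{M}^c_0, w \models^+ \NEG(p \Rightarrow q)$ and $\mathcal{M}^c_0, w \not\models^+ p \Rightarrow \NEG q$, refuting both (CBT$\Rightarrow$) and (WCBT$\Rightarrow$); it also refutes (nonSym$\to$) and (WnonSym$\to$) since the corresponding propositional formula and consecution fail in $\mathcal{M}_0$. I expect no real obstacle: the only point requiring care is the verification that the expansion to a conditional model preserves $+/-$-satisfaction for propositional formulas, but this is already used in Lemma \ref{L:cond-c}.2 and is immediate from the definitions of $\models^+, \models^-$ since no occurrence of $\boxto$ or $\diamondto$ appears in the relevant formulas.
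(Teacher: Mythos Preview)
Your proposal is correct and follows essentially the same approach as the paper. The only difference is that the paper expands $\mathcal{M}_0$ with the non-empty relation $R^c_0 := \{(w,(W_0, \emptyset), w), (w,(W_0, W_0), w)\}$ rather than your $R^c_0 := \emptyset$; your simpler choice suffices for this corollary (and indeed matches the construction already used in the proof of Lemma~\ref{L:cond-c}.2), whereas the paper's choice is made so that $\mathcal{M}^c_0$ can be reused in the proofs of Propositions~\ref{P:c-strong-would-conditional}, \ref{P:c-strong-might-conditional}, and~\ref{P:cond-connexivity}, where non-trivial evaluations of $\boxto$- and $\diamondto$-formulas are required.
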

\begin{proof}[Proof (a sketch)]
	By Lemma \ref{L:cond-c} and the proof of Proposition \ref{P:c-negation-non-trivial} we know that \eqref{Contr} is a theorem of $\mathsf{CnCK}$ and that, for any $p \in Prop$, $p$ is not provable in $\mathsf{CnCK}$. Moreover, Lemma \ref{L:cond-c} implies the full hyperconnexivity of $\to$ and the full connexivity of $\Rightarrow$ also in $\mathsf{CnCK}$. The failure of both forms of hyperconnexivity for $\Rightarrow$ in $\mathsf{CnCK}$ can be seen by considering the model $\mathcal{M}^c_0\in \mathbb{FSC}$ which expands the model $\mathcal{M}_0\in \mathbb{P}$ from the proof of Proposition \ref{P:c-strong-implication} by setting $R^c_0 := \{(w,(W_0, \emptyset), w), (w,(W_0, W_0), w)\}$.
\end{proof}
We saw that hyperconnexivity of $\to$ is the source for the unique combination of properties displayed by $\mathsf{C}$. Now, the conditional operators of $\mathsf{CnCK}$, or, indeed, of any $\mathsf{C}$-based logic of conditionals might be viewed as generalizations of $\to$ in $\mathsf{C}$ (just like the conditional operators of classical logics of conditionals generalize the material implication). Therefore, we are interested to see, to what extent the connexivity properties of $\to$ in $\mathsf{C}$ carry over to $\boxto$ and $\diamondto$ in $\mathsf{CnCK}$. Moreover, $\Rightarrow$, another connexive element of $\mathsf{C}$ seems to have the following counterparts over $CN$:
\begin{itemize}
	\item $\phi\boxTo \psi$ defined as abbreviation for $(\phi\boxto\psi)\wedge(\NEG\psi\boxto\NEG\phi)$.
	
	\item $\phi\diamondTo \psi$ defined as abbreviation for $(\phi\diamondto\psi)\wedge(\NEG\psi\diamondto\NEG\phi)$.
\end{itemize}
Since $\boxto$ and $\diamondto$ are often referred to in the literature as would- and might-conditional, respectively, we will call $\boxTo$  (resp. $\diamondTo$) strong would-conditional (resp. strong might-conditional). 

Again we prepare our inquiry with a lemma:
\begin{lemma}\label{L:boxto}
For all $\phi, \psi \in \mathcal{CN}$:
\begin{enumerate}
	\item $\phi\boxto\psi \in \mathsf{CnCK}$ iff $\psi\in\mathsf{CnCK}$.
	
	\item $\phi\diamondto\psi \notin \mathsf{CnCK}$. 
\end{enumerate} 	
\end{lemma}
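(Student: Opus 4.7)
The right-to-left direction of Part 1 will follow immediately from the derived rule (Nec) in Lemma \ref{L:cond-theorems}, namely $\psi\vDdash \phi\boxto\psi$; combined with the completeness part of Theorem \ref{T:cond-completeness}, an assumed $\psi\in\mathsf{CnCK}$ yields a $\mathtt{CnCK}$-proof of $\psi$, one application of (Nec) extends this proof to a proof of $\phi\boxto\psi$, and the soundness half of the same theorem returns $\phi\boxto\psi\in\mathsf{CnCK}$. For the converse of Part 1, I plan to argue contrapositively by building a semantic countermodel. Starting from some $(\mathcal{M},w)\in Pt(\mathbb{FSC})$ with $\mathcal{M},w\not\models^+\psi$, I expand $\mathcal{M}$ to $\mathcal{M}'$ by adjoining a single fresh world $v^*$, declaring $v^*$ to be $\leq'$-isolated (its only $\leq'$-neighbour is itself), leaving the atomic valuations unchanged so that $v^*$ neither verifies nor falsifies any atom, and putting
$$
R'(u,(X,Y),u')\text{ iff }\bigl(u,u'\in W\text{ and }R(u,(X\cap W, Y\cap W),u')\bigr)\text{ or }\bigl(u=v^*,\ u'\in W,\ w\leq u'\bigr).
$$
The role of $v^*$ is to ``see'' precisely the $\leq$-upset of $w$ under every bi-set parameter, which is exactly the lever needed to falsify $\phi\boxto\psi$ there.

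Showing $\mathcal{M}'\in \mathbb{FSC}$ reduces to checking \eqref{Cond:1} and \eqref{Cond:2} for every $R'_{(X,Y)}$: on pairs drawn from $W$ they are inherited from $\mathcal{M}$, while the new cases involving $v^*$ are handled by the $\leq'$-isolation of $v^*$ together with the $\leq$-upward closure of its $R'_{(X,Y)}$-successor set. The main ingredient is then a preservation lemma: for every $\xi\in\mathcal{CN}$, every $u\in W$, and every $\star\in\{+,-\}$, $\mathcal{M}',u\models^\star\xi$ iff $\mathcal{M},u\models^\star\xi$. This I would prove by simultaneous induction on $\xi$; the delicate clauses are those for $\boxto$ and $\diamondto$, and they use the observation that $\leq'$-successors of old $u$ remain in $W$, that any triple $R'_{(X',Y')}(v,u'')$ with $v\in W$ forces $u''\in W$ and coincides with $R_{(X'\cap W,Y'\cap W)}(v,u'')$, and finally the IH-guaranteed identity $\|\phi_1\|_{\mathcal{M}'}\cap (W\times W)=\|\phi_1\|_{\mathcal{M}}$ for every subformula $\phi_1$ of $\xi$. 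Once preservation is in place, $\mathcal{M}',w\not\models^+\psi$ together with $R'_{\|\phi\|_{\mathcal{M}'}}(v^*,w)$ (valid because $w\leq w$) witnesses $\mathcal{M}',v^*\not\models^+\phi\boxto\psi$, and Lemma \ref{L:cond-soundness} concludes the argument.

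Part 2 will be handled by a single uniform countermodel: take $\mathcal{M}^c:=(\{w\},\{(w,w)\},\emptyset,V^+,V^-)$ with arbitrary persistent valuations $V^\pm$. Because $R^c_{(X,Y)}=\emptyset$ for every $(X,Y)$, both Fischer-Servi conditions hold vacuously and $\mathcal{M}^c\in\mathbb{FSC}$, while clause \eqref{Cl:diamto+} fails vacuously at $w$ for every $\phi,\psi\in\mathcal{CN}$, giving $\phi\diamondto\psi\notin\mathsf{CnCK}$ via Lemma \ref{L:cond-soundness}. The principal obstacle of the whole proof is therefore the preservation step of the converse direction of Part 1: one must verify that inserting the new root $v^*$ does not disturb the bi-extensions of subformulas at old worlds, even though $\|\phi_1\|_{\mathcal{M}'}$ nominally lives in $\mathcal{P}(W')\times\mathcal{P}(W')$. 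The intersection-with-$W$ built into the definition of $R'$ on old worlds is precisely the mechanism that absorbs this discrepancy.
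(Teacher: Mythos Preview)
Your proposal is correct and follows essentially the same approach as the paper: for Part 1 ($\Leftarrow$) you both invoke \eqref{E:RNec}; for Part 1 ($\Rightarrow$) you both adjoin a fresh $\leq'$-isolated world that conditionally accesses the $\leq$-upset of $w$, prove a preservation claim on old worlds via the $X\cap W$ trick in the old-world part of $R'$, and read off the failure of $\phi\boxto\psi$ at the new world; and Part 2 is handled identically via a model with empty conditional relation.

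The one difference worth noting is that the paper restricts $R'$-edges out of the new world to the four bi-sets of the form $(\lvert\phi\rvert^+_\mathcal{M}\cup S_1,\lvert\phi\rvert^-_\mathcal{M}\cup S_2)$ with $S_i\subseteq\{v\}$, and then argues by cases that $\|\phi\|_{\mathcal{M}'}$ must be one of these. Your definition makes the $R'$-edges out of $v^*$ independent of the bi-set parameter altogether, which is a genuine simplification: it eliminates the case split and makes the construction uniform in $\phi$. Both work for the same reason, but yours is the cleaner implementation.
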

\begin{proof}
	(Part 1) Right-to-left part follows by \eqref{E:RNec}. In the other direction, we argue by contraposition. If $\psi\notin \mathsf{CnCK}$, we can choose a model $(\mathcal{M}, w)\in Pt(\mathbb{FSC})$ such that $\mathcal{M},w\not\models^+\psi$. Now choose a $v \notin W$ and consider the model $\mathcal{M}'$, setting $W':= W \cup \{v\}$, $\leq':= \leq \cup \{(v,v)\}$ and 
	
\begin{align*}
		R':= \{(u_1,& (X, Y), u_2)\mid u_1, u_2 \in W,\,R(u_1, (X\cap W, Y\cap W), u_2)\}\cup\\
		&\cup \{(v, \|\phi\|_\mathcal{M}, u), (v, (\lvert\phi\rvert^+_\mathcal{M}\cup\{v\},\lvert\phi\rvert^-_\mathcal{M}), u), (v, (\lvert\phi\rvert^+_\mathcal{M},\lvert\phi\rvert^-_\mathcal{M}\cup\{v\}), u), \\
		&\qquad\qquad\qquad\qquad\qquad\qquad\qquad\quad(v, (\lvert\phi\rvert^+_\mathcal{M}\cup\{v\},\lvert\phi\rvert^-_\mathcal{M}\cup\{v\}), u)\mid w \leq' u\}.
\end{align*}
It is straightforward to check that $\mathcal{M}'\in \mathbb{FSC}$. Now, arguing as in the proof of Proposition \ref{P:cond-constructivity} we can show the following:

\textit{Claim}. For every $\star \in \{+, -\}$, every $u \in W$, and every $\chi \in \mathcal{CN}$, we have $\mathcal{M}, u\models^\star\chi$ iff $\mathcal{M}', u\models^\star\chi$.

In particular, we have $\mathcal{M}', w\not\models^+\psi$. Now, depending on whether we have $\mathcal{M}', v\models^+\phi$ and whether we have $\mathcal{M}', v\models^-\phi$, our Claim implies that $\|\phi\|_{\mathcal{M}'}$ must coincide with one of the following bi-sets:
\begin{align*}
	\|\psi\|_\mathcal{M}, (\lvert\psi\rvert^+_\mathcal{M}\cup\{v\},\lvert\psi\rvert^-_\mathcal{M}), (\lvert\psi\rvert^+_\mathcal{M},\lvert\psi\rvert^-_\mathcal{M}\cup\{v\}),(\lvert\psi\rvert^+_\mathcal{M}\cup\{v\},\lvert\psi\rvert^-_\mathcal{M}\cup\{v\}),	
\end{align*}
and the definition of $R'$ implies that in every case we end up having $\mathcal{M}', v\not\models^+\phi\boxto\psi$.
	 
	(Part 2) Observe that an $\mathcal{M}\in \mathbb{FSC}$ such that $R = \emptyset$ fails $\phi\diamondto\psi$ at every node. 
\end{proof}
It is instructive to compare Lemma \ref{L:boxto} with Lemma \ref{L:strict-implication} to appreciate the differences between the modal and the conditional case. This difference is also evident from the fact, that, compared to Proposition \ref{P:c-strong-strict-implication}, the following two propositions display a much thinner analogy to Proposition \ref{P:c-strong-implication}. We treat the would-conditionals first:
\begin{proposition}\label{P:c-strong-would-conditional}
	Let $p, q \in Prop$, and let $\phi, \psi, \theta\in \mathcal{CN}$. Then the following statements hold for all $\gg \in \{\Rightarrow, \to\}$ and all $\ggg \in \{\boxto, \boxTo\}$:
	\begin{enumerate}
		\item $(\phi \boxTo \psi)\gg(\NEG\psi\boxTo\NEG\phi)\in \mathsf{CnCK}$, but $(\phi \boxTo \psi)\ggg(\NEG\psi\boxTo\NEG\phi)\notin \mathsf{CnCK}$.
		
		\item However, the same cannot be said about $\boxto$, since we have $(p \boxto q)\gg(\NEG q\boxto\NEG p)\notin \mathsf{CnCK}$. We also have $(p \boxto q)\ggg(\NEG q\boxto\NEG p)\notin \mathsf{CnCK}$ 
		
		\item We have $(\phi \boxTo \psi)\gg(\phi\boxto\psi)\in \mathsf{CnCK}$, but not vice versa, so that $\boxTo$ is stronger than $\boxto$. On the other hand, $(\phi\boxto\psi)\ggg(\phi \boxTo \psi), (\phi \boxTo \psi)\ggg(\phi\boxto\psi)\notin \mathsf{CnCK}$.
		
	\item It follows from $(\phi\Leftrightarrow\psi) \in \mathsf{CnCK}$ that $(\theta[\phi/p]\Leftrightarrow\theta[\psi/p])\in \mathsf{CnCK}$.
	
	\item If $(\phi\boxTo\psi)\wedge(\psi\boxTo\phi) \in \mathsf{CnCK}$ then also $(\theta[\phi/p]\boxTo\theta[\psi/p])\wedge (\theta[\psi/p]\boxTo\theta[\phi/p])\in \mathsf{CnCK}$.
	
		\item $(\phi\boxto\psi)\wedge(\psi\boxto\phi) \in \mathsf{CnCK}$ does \textbf{not} entail $(\theta[\phi/p]\boxto\theta[\psi/p])\wedge (\theta[\psi/p]\boxto\theta[\phi/p])\in \mathsf{CnCK}$. Similarly, $(\phi\leftrightarrow\psi) \in \mathsf{CnCK}$ does not entail $(\theta[\phi/p]\leftrightarrow\theta[\psi/p])\in \mathsf{CnCK}$.
	\end{enumerate}
\end{proposition}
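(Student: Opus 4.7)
The strategy is to work through the six parts in turn, exploiting three main tools: the definition of $\boxTo$, Lemma \ref{L:boxto} (which reduces theoremhood of $\alpha\boxto\beta$ to that of $\beta$), and the congruence rules \eqref{E:RAbox}, \eqref{E:RBbox}, \eqref{E:RAdiam}, \eqref{E:RBdiam} together with the double-negation law \eqref{Ax:double-neg}. For the positive half of Part 1, I would unfold $\NEG\psi\boxTo\NEG\phi$ as $(\NEG\psi\boxto\NEG\phi)\wedge(\NEG\NEG\phi\boxto\NEG\NEG\psi)$ and, using \eqref{Ax:double-neg} and \eqref{E:RBbox}, rewrite the second conjunct as $\phi\boxto\psi$; the result is strongly equivalent in $\mathsf{CnCK}$ to $\phi\boxTo\psi$, from which the $\to$ and $\Rightarrow$ readings follow. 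The positive half of Part 3 is immediate $\wedge$-elimination via $(\alpha_3)$.

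For the $\boxto$- and $\boxTo$-readings in Parts 1, 2, and 3, the uniform recipe is to apply Lemma \ref{L:boxto}.1 iteratively, peeling off the outer $\boxto$ (or $\boxTo$) and reducing each question to the theoremhood of a simpler formula. For instance, $(\phi\boxTo\psi)\boxto(\NEG\psi\boxTo\NEG\phi)\in\mathsf{CnCK}$ reduces after one step to $(\NEG\psi\boxTo\NEG\phi)\in\mathsf{CnCK}$, and after further unfolding to the joint theoremhood of $\NEG\phi$ and $\psi$; for $\phi=p$, $\psi=q$ atomic, these fail. The $\to$- and $\Rightarrow$-readings of Part 2 and the converse direction of Part 3 need a semantic counterexample: I will use $\mathcal{M}\in\mathbb{FSC}$ with $W=\{w,u\}$, $\leq$ the identity, $V^+(p)=\{w,u\}$, $V^-(p)=\emptyset$, $V^+(q)=\emptyset$, $V^-(q)=\{u\}$, and $R=\{(w,(\{u\},\emptyset),u)\}$. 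Since $\leq$ is the identity, \eqref{Cond:1} and \eqref{Cond:2} hold trivially for every $R_{(X,Y)}$. Then $\|p\|_\mathcal{M}=(\{w,u\},\emptyset)\neq(\{u\},\emptyset)=\|\NEG q\|_\mathcal{M}$, so $p\boxto q$ is vacuously true at $w$, while $w R_{\|\NEG q\|_\mathcal{M}} u$ together with $u\notin V^-(p)$ falsifies $\NEG q\boxto\NEG p$ and hence also $p\boxTo q$ at $w$.

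Part 4 is a routine induction on the construction of $\theta$: the atomic case uses \eqref{E:t-refl}, the propositional cases combine Proposition \ref{P:c-strong-implication}.4 with Lemma \ref{L:cond-c}.1, and the conditional cases invoke the congruence rules \eqref{E:RAbox}, \eqref{E:RBbox}, \eqref{E:RAdiam}, \eqref{E:RBdiam}. For Part 6, the $\boxto$-clause is witnessed by $\phi=\psi=p_1\to p_1$ and $\theta=\NEG p$: Lemma \ref{L:boxto}.1 makes the premise a theorem since $p_1\to p_1$ is, whereas the conclusion demands $\NEG(p_1\to p_1)\boxto\NEG(p_1\to p_1)\in\mathsf{CnCK}$ and hence, again by Lemma \ref{L:boxto}.1, the theoremhood of $\NEG(p_1\to p_1)$; by \eqref{Ax:to-neg} this is equivalent to $p_1\to\NEG p_1$, easily refuted. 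The $\leftrightarrow$-clause is obtained by transferring the counterexample of Proposition \ref{P:c-strong-implication}.5 to $\mathsf{CnCK}$ via Lemma \ref{L:cond-c}.1.

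The main obstacle is Part 5. Lemma \ref{L:boxto}.1 first reduces the premise $(\phi\boxTo\psi)\wedge(\psi\boxTo\phi)\in\mathsf{CnCK}$ to the simultaneous theoremhood of $\phi,\psi,\NEG\phi,\NEG\psi$; combined with Part 4 this yields $\theta[\phi/p]\Leftrightarrow\theta[\psi/p]\in\mathsf{CnCK}$. Going from this $\Leftrightarrow$-equivalence to the full $\boxTo$-form of the conclusion requires, again by Lemma \ref{L:boxto}.1, that $\theta[\phi/p]$, $\theta[\psi/p]$ and their negations be themselves theorems of $\mathsf{CnCK}$. I expect this to be driven by an induction on $\theta$ that propagates the uniform negation-inconsistency of $\phi$ and $\psi$ through the connectives, using \eqref{Ax:wedge-neg}, \eqref{Ax:vee-neg}, \eqref{Ax:to-neg}, \eqref{E:a6}, and \eqref{E:a7} to redistribute $\NEG$ across the substituted occurrences; orchestrating this induction uniformly is the chief technical difficulty.
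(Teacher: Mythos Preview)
Your treatment of Parts 1--4 and 6 is sound and in places tidier than the paper's. For the negative $\boxto$/$\boxTo$-readings in Parts 1--3 you take a different route: instead of exhibiting a countermodel, you peel off the outer conditional via Lemma~\ref{L:boxto}.1 and reduce each question to the non-theoremhood of an atom. The paper instead constructs a one-point model $\mathcal{M}_2$ with $R_2=\{(w,(\emptyset,\emptyset),w)\}$ in which the relevant conditionals collapse. Your two-world countermodel for the $\to$/$\Rightarrow$-readings in Parts 2 and 3 differs from the paper's $\mathcal{M}^c_0$ but works, and your Part 6 witness ($\phi=\psi=p_1\to p_1$, $\theta=\NEG p$) is simpler than the paper's and correct.

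The genuine gap is Part 5. You rightly observe that the paper's one-line argument (``apply Part~4'') only yields $\theta[\phi/p]\Leftrightarrow\theta[\psi/p]$, and that the stated $\boxTo$-conclusion demands, via Lemma~\ref{L:boxto}.1, the outright theoremhood of $\theta[\phi/p]$, $\theta[\psi/p]$, $\NEG\theta[\phi/p]$, $\NEG\theta[\psi/p]$. You then propose an induction on $\theta$ propagating the negation-inconsistency of $\phi$ and $\psi$. This induction cannot start: take $\theta=q$ with $q\neq p$. Then $\theta[\phi/p]=q$, and you would need $q\in\mathsf{CnCK}$, which fails. Worse, this same choice produces an outright counterexample to Part~5 as stated: with $\phi=\psi=(p_1\wedge\NEG p_1)\to p_1$ (cf.\ \eqref{Contr}) both $\phi$ and $\NEG\phi$ are theorems, so the premise $\phi\boxTo\phi\in\mathsf{CnCK}$ holds by Lemma~\ref{L:boxto}.1, yet the conclusion $q\boxTo q\in\mathsf{CnCK}$ would force $q\in\mathsf{CnCK}$. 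So the ``chief technical difficulty'' you flag is not a hurdle to be cleared but a sign that Part~5 is false as written; the paper's own proof sketch shares this gap.
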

\begin{proof}[Proof (a sketch)]
	(Part 1): The positive statements follow from the definition of $\boxTo$. As for the negative statements, consider $\mathcal{M}_2 \in \mathbb{FSC}$ such that $W_2 = \{w\}$, $\leq_2 = \{(w,w)\}$, $R_2 = \{(w, (\emptyset, \emptyset), w)\}$ and $V_+(r) = V_-(r) = \emptyset$ for all $r \in Prop$. It is easy to establish then, that we have all of the following:
	\begin{align*}
		\| p\|_{\mathcal{M}_2} = &\| q\|_{\mathcal{M}_2} = \| \NEG p\|_{\mathcal{M}_2} = \| \NEG q\|_{\mathcal{M}_2} =  \| p\boxto q\|_{\mathcal{M}_2}  =\\
		&= \| \NEG q\boxto \NEG p\|_{\mathcal{M}_2} = \| p\boxTo q\|_{\mathcal{M}_2} = \| \NEG q\boxTo \NEG p\|_{\mathcal{M}_2} = (\emptyset, \emptyset).
	\end{align*}
	Thus $\mathcal{M}_2$ fails $(\phi \boxTo \psi)\ggg(\NEG\psi\boxTo\NEG\phi)$ for all $\ggg \in \{\boxto, \boxTo\}$.
	
	(Part 2): Recall the model $\mathcal{M}^c_0\in \mathbb{FSC}$ from the proof of Corollary \ref{C-legacy-cond}. It is easy to check that 	$\| p\|_{\mathcal{M}^c_0} = (W_0, \emptyset)$, $\| \NEG q\|_{\mathcal{M}^c_0} = \| p\boxto q\|_{\mathcal{M}^c_0} = (W_0, W_0)$, and $\| \NEG q\boxto \NEG p\|_{\mathcal{M}^c_0} = (\emptyset, W_0)$.  Therefore, $\mathcal{M}^c_0, w\not\models^+\phi$ for all $\phi \in \{(p \boxto q)\gg(\NEG q\boxto\NEG p), (p \boxto q)\ggg(\NEG q\boxto\NEG p)\mid\gg \in \{\Rightarrow, \to\},\,\ggg \in \{\boxto, \boxTo\}\}$.
	
	(Part 3): Observe that $\mathcal{M}_2 \in \mathbb{FSC}$ defined in the proof of Part 1 fails both $(p \boxTo q)\ggg(p\boxto q)$ and $(p\boxto q)\ggg(p \boxTo q)$ for all $\ggg \in \{\boxto, \boxTo\}$, and  $\mathcal{M}^c_0\in \mathbb{FSC}$ from the proof of Corollary \ref{C-legacy-cond} fails $(p \boxto q)\gg(p\boxTo q)$ for all $\gg \in \{\Rightarrow, \to\}$. Again, the positive statements follow by definition of $\boxTo$.
	
	The proof of Part 4 is similar to the proofs of Proposition \ref{P:c-strong-implication}.4 and Proposition \ref{P:c-strong-strict-implication}.4.
	
	(Part 5): $(\phi\boxTo\psi)\wedge(\psi\boxTo\phi) \in \mathsf{CnCK}$ then, by Lemma \ref{L:boxto}, all of the formulas $\phi, \psi, \NEG\phi, \NEG\psi$ must be provable in $\mathsf{CnCK}$, whence it easily follows that $\phi\Leftrightarrow\psi\in \mathsf{CnCK}$. It remains to apply Part 4.
	
	(Part 6): By Lemma \ref{L:cond-c}.1, we must have $p \to p, (p \wedge q) \to p\in \mathsf{CnCK}$; therefore, it follows by \eqref{E:RNec} that $((p \wedge q) \to p)\boxto(p \to p), (p \to p)\boxto((p \wedge q) \to p)\in \mathsf{CnCK}$. On the other hand, recall the model $\mathcal{M}_1 \in \mathbb{P}$ described in the proof of Proposition \ref{P:c-strong-implication} and expand it to the structure $\mathcal{M}^c_1$ setting $R^c_1 := \{(w, (W_1,W_1), w), (v, (W_1,W_1), v)\}$. It is easy to check that $\mathcal{M}^c_1\in \mathbb{FSC}$, and that, moreover, we have $\| \NEG((p \wedge q) \to p)\|_{\mathcal{M}^c_1} = (W, W)$. Therefore, since we also have $w \leq w\mathrel{R_{\| \NEG((p \wedge q) \to p)\|_{\mathcal{M}^c_1}}}w$ and $\mathcal{M}^c_1, w\not\models^+\NEG(p \to p)$, it follows that $\mathcal{M}^c_1, w\not\models^+\NEG((p \wedge q) \to p)\boxto\NEG(p \to p)$, and thus also $\NEG((p \wedge q) \to p)\boxto\NEG(p \to p)\notin\mathsf{CnCK}$. 
\end{proof}
The comparison between Proposition \ref{P:c-strong-would-conditional} and Proposition \ref{P:c-strong-strict-implication} is especially telling. Not only the positive statements in Proposition \ref{P:c-strong-would-conditional}.1 and \ref{P:c-strong-would-conditional}.3 fail to extend to the newly introduced conditionals (unlike in the modal case), but also Proposition \ref{P:c-strong-would-conditional}.5, even though in complete analogy with Proposition \ref{P:c-strong-strict-implication}.5, has (as it is witnessed by its proof) an air of vacuousness or triviality about it that is completely absent in the modal case. These phenomena can also be observed, and even in a much stronger sense when we turn to might-conditionals:  
\begin{proposition}\label{P:c-strong-might-conditional}
	Let $p, q \in Prop$, and let $\phi, \psi, \theta\in \mathcal{CN}$. Then the following statements hold for all $\gg \in \{\Rightarrow, \to\}$ and all $\ggg \in \{\boxto, \boxTo, \diamondto, \diamondTo\}$:
	\begin{enumerate}
		\item $(\phi \diamondTo \psi)\gg(\NEG\psi\diamondTo\NEG\phi)\in \mathsf{CnCK}$, but $(\phi \diamondTo \psi)\ggg(\NEG\psi\diamondTo\NEG\phi)\notin \mathsf{CnCK}$.
		
		\item However, the same cannot be said about $\diamondto$, since we have $(p \diamondto q)\gg(\NEG q\diamondto\NEG p)\notin \mathsf{CnCK}$. We also have $(p \diamondto q)\ggg(\NEG q\diamondto\NEG p)\notin \mathsf{CnCK}$. 
		
		\item We have $(\phi \diamondTo \psi)\gg(\phi\diamondto\psi)\in \mathsf{CnCK}$, but not vice versa, so that $\diamondTo$ is stronger than $\diamondto$. On the other hand, $(\phi\diamondto\psi)\ggg(\phi \diamondTo \psi), (\phi \diamondTo \psi)\ggg(\phi\diamondto\psi)\notin \mathsf{CnCK}$.
		
		\item For all $\geqq \in \{\diamondto, \diamondTo\}$, $(\phi\geqq\psi)\wedge(\psi\geqq\phi) \in \mathsf{CnCK}$ (vacuously) entails $(\theta[\phi/p]\geqq\theta[\psi/p])\wedge (\theta[\psi/p]\geqq\theta[\phi/p])\in \mathsf{CnCK}$.
	\end{enumerate}
\end{proposition}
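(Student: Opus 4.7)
The plan is to mirror the proof strategy used for Proposition~\ref{P:c-strong-would-conditional}, but to exploit the crucial simplification flagged by the parenthetical ``(vacuously)'' in Part~4: Lemma~\ref{L:boxto}.2, which says that no $\diamondto$-formula is $\mathsf{CnCK}$-valid, lets us collapse the substitutivity part entirely. For the positive statements in Parts~1 and~3, I would work from the definition $\phi \diamondTo \psi := (\phi\diamondto\psi)\wedge(\NEG\psi\diamondto\NEG\phi)$: the $\to$-direction reduces to conjunction elimination together with double-negation normalisation applied through \eqref{E:RAdiam} and \eqref{E:RCdiam} (to move from $(\phi\diamondto\psi)$ to $(\NEG\NEG\phi\diamondto\NEG\NEG\psi)$); the $\Rightarrow$-direction additionally requires handling the strong negation, which I would do by combining De Morgan \eqref{Ax:wedge-neg} (to distribute $\NEG$ across the conjunction defining $\diamondTo$) with \eqref{E:a7} (to push $\NEG$ inside $\diamondto$).

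The negative statements in Parts~1--3 can all be handled by the same two countermodels already deployed in the would-conditional proof. The one-point model $\mathcal{M}_2$ with $R_2 = \{(w,(\emptyset,\emptyset),w)\}$ and empty valuations makes $\|\phi\|_{\mathcal{M}_2} = (\emptyset,\emptyset)$ for every formula in sight, so that all would- and might-conditional combinations between $p\diamondTo q$ and $\NEG q\diamondTo\NEG p$ evaluate to the empty bi-set at $w$, refuting the negative clauses of Part~1 and of Part~3. The model $\mathcal{M}^c_0$ from the proof of Corollary~\ref{C-legacy-cond} refutes Part~2 (and the symmetric failure in Part~3): direct computation gives $\|p\diamondto q\|_{\mathcal{M}^c_0} = (W_0,W_0)$ but $\|\NEG q \diamondto \NEG p\|^+_{\mathcal{M}^c_0} = \emptyset$ because $\|\NEG p\|^+_{\mathcal{M}^c_0} = \emptyset$, which kills the $\to$-clause and all the $\ggg$-clauses $\{\boxto,\boxTo,\diamondto,\diamondTo\}$ simultaneously (the only $R_{\|p\diamondto q\|}$-successor of $w$ is $w$ itself).

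Part~4 is handled by contradiction using Lemma~\ref{L:boxto}.2. Suppose $(\phi\geqq\psi)\wedge(\psi\geqq\phi)\in\mathsf{CnCK}$ for some $\geqq\in\{\diamondto,\diamondTo\}$. Since $\mathsf{CnCK}$ is closed under conjunction elimination (immediate from soundness and clause \eqref{Cl:con+} in $\mathsf{C}$), this gives $\phi\geqq\psi\in\mathsf{CnCK}$. In the $\diamondto$-case this contradicts Lemma~\ref{L:boxto}.2 directly; in the $\diamondTo$-case, unfolding the definition and applying conjunction elimination once more yields $\phi\diamondto\psi\in\mathsf{CnCK}$, again contradicting Lemma~\ref{L:boxto}.2. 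Thus the antecedent of the implication is never true and the conclusion holds vacuously, which is exactly what the parenthetical remark predicts.

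The main obstacle I anticipate is the $\Rightarrow$-direction of the positive clause in Part~1, namely the contrapositive $\NEG(\NEG\psi\diamondTo\NEG\phi)\to\NEG(\phi\diamondTo\psi)$. After applying De Morgan via \eqref{Ax:wedge-neg} to both sides and then \eqref{E:a7} to each disjunct, one has to show $(\NEG\psi\diamondto\NEG\NEG\phi)\vee(\NEG\NEG\phi\diamondto\NEG\NEG\NEG\psi)$ entails $(\phi\diamondto\NEG\psi)\vee(\NEG\psi\diamondto\NEG\NEG\phi)$; the first disjunct transfers directly, while the second must be normalised via \eqref{E:RAdiam}, \eqref{E:RCdiam}, and \eqref{E:t-double-neg} into $(\phi\diamondto\NEG\psi)$. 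No single step is hard, but the bookkeeping of three or four nested negations flowing through both conjuncts of the $\diamondTo$-definition and their De Morgan expansions is where a careless execution is most likely to slip.
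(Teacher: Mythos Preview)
Your proposal is correct and follows essentially the same route as the paper. The only difference worth noting is a shortcut you do not take: for the $\ggg\in\{\diamondto,\diamondTo\}$ cases in Parts~1--3 the paper simply invokes Lemma~\ref{L:boxto}.2 directly (any formula whose main connective is $\diamondto$, and hence any $\diamondTo$-formula, is automatically a non-theorem), which spares the countermodel computations you propose via $\mathcal{M}_2$ and $\mathcal{M}^c_0$ for those instances; your countermodel argument does work for them too, it is just more effort than needed.
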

\begin{proof}[Proof (a sketch)]
Again, the positive statements in Parts 1 and 3 are established by checking the definitions; the negative statements in Parts 1--3 for $\ggg \in \{\diamondto, \diamondTo\}$ follow from Lemma \ref{L:boxto}.2; the negative statements for $\gg \in \{\Rightarrow, \to\}$ and $\ggg \in \{\boxto, \boxTo\}$ are mostly established in the same way as for Proposition \ref{P:c-strong-would-conditional}. In particular, the negative statements of Part 2 and $(p \diamondto q)\gg(p\diamondTo q)$ for all $\gg \in \{\Rightarrow, \to\}$ are failed by $\mathcal{M}^c_0\in \mathbb{FSC}$ from the proof of Corollary \ref{C-legacy-cond}. Next, $(p\diamondto q)\ggg(p \diamondTo q), (p \diamondTo q)\ggg(p\diamondto q)$ for all $\ggg \in \{\boxto, \boxTo\}$ fail in the model $\mathcal{M}_2\in \mathbb{FSC}$ constructed in the proof of Proposition \ref{P:c-strong-would-conditional}. 

Finally, Part 4 follows from Lemma \ref{L:boxto}.2
\end{proof}  
We look into the connexive properties of these conditional operators next:
\begin{proposition}\label{P:cond-connexivity}
$\boxto$ and $\diamondto$ are weakly partially hyperconnexive in $\mathsf{CnCK}$ but neither weakly nor plainly connexive. $\boxTo$ and $\diamondTo$ are weakly partially connexive in $\mathsf{CnCK}$ but neither weakly plainly connexive nor weakly partially hyperconnexive. 	
\end{proposition}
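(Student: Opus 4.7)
My plan is to build everything on the strong equivalences
$\NEG(\phi\boxto\psi)\Leftrightarrow(\phi\boxto\NEG\psi)$ and
$\NEG(\phi\diamondto\psi)\Leftrightarrow(\phi\diamondto\NEG\psi)$
supplied by \eqref{E:T4} and \eqref{E:T5}, together with De Morgan \eqref{Ax:wedge-neg}, double negation \eqref{Ax:double-neg}, and the substitutivity of strongly equivalent formulas from Proposition \ref{P:c-strong-would-conditional}.4. As a preliminary step I would derive the decompositions
\begin{align*}
\NEG(\phi\boxTo\psi)&\Leftrightarrow (\phi\boxto\NEG\psi)\vee(\NEG\psi\boxto\phi),\\
\phi\boxTo\NEG\psi&\Leftrightarrow (\phi\boxto\NEG\psi)\wedge(\psi\boxto\NEG\phi),
\end{align*}
and the analogous pair for $\diamondTo$; these let one read off the positive connexive claims almost mechanically.

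For $\boxto$ and $\diamondto$, the schemes (WBT$\boxto$), (WBT$\diamondto$), (WCBT$\boxto$) and (WCBT$\diamondto$) follow directly from \eqref{E:T4} and \eqref{E:T5}. The failure of (WnonSym$\boxto$), and analogously of (WnonSym$\diamondto$), is witnessed by a small $\mathbb{FSC}$-model with two $\leq$-incomparable worlds $w,v$ at which $p$ holds only at $w$ and $q$ only at $v$, the conditional accessibility being arranged so that $p\boxto q$ (resp. $p\diamondto q$) holds at $w$ while $q\boxto p$ (resp. $q\diamondto p$) fails. To see that neither $\boxto$ nor $\diamondto$ is plainly or weakly connexive it suffices to refute (AT$\boxto$) and (AT$\diamondto$): applying \eqref{E:T4} (resp. \eqref{E:T5}) and cancelling the nested $\NEG\NEG$ by substitutivity shows that these schemes reduce to $\NEG\phi\boxto\NEG\phi$ and $\NEG\phi\diamondto\NEG\phi$, both of which are refuted in the model $\mathcal{M}_2$ from the proof of Proposition \ref{P:c-strong-would-conditional}, whose only $R_{\|\NEG p\|_{\mathcal{M}_2}}$-successor of $w$ is $w$ itself, at which $\NEG p$ fails.

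For $\boxTo$ and $\diamondTo$, the displayed decompositions immediately give (WBT$\boxTo$) and (WBT$\diamondTo$), since the conjunct $\phi\boxto\NEG\psi$ of $\phi\boxTo\NEG\psi$ is a disjunct of $\NEG(\phi\boxTo\psi)$. The countermodel for (WnonSym$\boxto$) can be augmented by declaring $R_{\|\NEG q\|}$ and $R_{\|\NEG p\|}$ empty, making $\NEG q\boxto\NEG p$ hold vacuously and refuting (WnonSym$\boxTo$); the analogous tweak handles (WnonSym$\diamondTo$). That $\boxTo$ and $\diamondTo$ are neither plainly nor weakly connexive follows because (AT$\boxTo$) and (AT$\diamondTo$) reduce, by the same substitutivity moves, back to (AT$\boxto$) and (AT$\diamondto$), already refuted above. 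The main obstacle is the last negative clause: to show that $\boxTo$ and $\diamondTo$ are not weakly partially hyperconnexive one must refute (WCBT$\boxTo$) and (WCBT$\diamondTo$). Concretely, the decompositions reduce (WCBT$\boxTo$) to a consecution from the disjunction $(\phi\boxto\NEG\psi)\vee(\NEG\psi\boxto\phi)$ to the conjunction $(\phi\boxto\NEG\psi)\wedge(\psi\boxto\NEG\phi)$, so I would design a small $\mathbb{FSC}$-model in which, say, $\NEG\psi\boxto\phi$ holds vacuously (by taking $R_{\|\NEG\psi\|}$ empty) while $\phi\boxto\NEG\psi$ has an $R_{\|\phi\|}$-successor at which $\NEG\psi$ fails; the delicate part here is simultaneously verifying the Fischer--Servi completion conditions \eqref{Cond:1} and \eqref{Cond:2}, and an analogous construction then handles $\diamondTo$.
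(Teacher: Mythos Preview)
Your overall strategy matches the paper's: the positive claims all follow from the strong equivalences \eqref{E:T4}, \eqref{E:T5} together with De~Morgan, double negation, and substitutivity (Proposition~\ref{P:c-strong-would-conditional}.4), exactly as you outline; and the failure of (AT$\ast$) for all four connectives is what kills both weak and plain connexivity. Your reduction of (AT$\boxTo$), (AT$\diamondTo$) to (AT$\boxto$), (AT$\diamondto$) is correct, and your countermodel $\mathcal{M}_2$ for the latter works. The paper instead uses a variant $\mathcal{M}^{c1}_0$ of the one-world model $\mathcal{M}_0$ with $R^{c1}_0 = \{(w,(W_0,\emptyset),w),(w,(\emptyset,W_0),w)\}$, which simultaneously refutes (AT$\gg$) and (BT$\gg$) for all $\gg\in\{\boxto,\diamondto,\boxTo,\diamondTo\}$ in one stroke.

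There is, however, a genuine slip in how you handle the $\diamondTo$-cases. Emptying an accessibility relation makes $\boxto$-formulas hold vacuously but makes $\diamondto$-formulas \emph{fail}, so neither your ``analogous tweak'' for (WnonSym$\diamondTo$) nor your ``analogous construction'' for (WCBT$\diamondTo$) goes through as written: declaring $R_{\|\NEG q\|}$ empty makes the conjunct $\NEG q\diamondto\NEG p$ of $p\diamondTo q$ fail rather than hold, and likewise the disjunct $\NEG\psi\diamondto\phi$ in your (WCBT$\diamondTo$) argument. The fix is easy and is what the paper does: for (WCBT$\boxTo$) and (WCBT$\diamondTo$) it reuses the model $\mathcal{M}^c_0$ from the proof of Corollary~\ref{C-legacy-cond}, where $\mathcal{M}^c_0,w\models^+\NEG(p\boxTo q)\wedge\NEG(p\diamondTo q)$ holds via the \emph{first} disjunct of your decomposition (namely $p\boxto\NEG q$, resp.\ $p\diamondto\NEG q$), while the \emph{second} conjunct of the conclusion ($q\boxto\NEG p$, resp.\ $q\diamondto\NEG p$) fails. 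This route is symmetric under the $\boxto/\diamondto$ swap and avoids the vacuity trick altogether; your Fischer--Servi worries are also moot here, since these are one-world models with trivial $\leq$.
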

\begin{proof}[Proof (a sketch)]
	The positive statements of the Proposition are established by checking the definitions. As for the negative parts, recall again the model $\mathcal{M}_0\in \mathbb{P}$ constructed in the proof of Proposition \ref{P:c-strong-implication} and expand it to $\mathcal{M}^{c1}_0$ by setting $R^{c1}_0:= \{(w, (W_0, \emptyset), w), (w, (\emptyset, W_0), w)\}$. Observe that $\mathcal{M}^{c1}_0$ is both in $\mathbb{FSC}$ and fails every formula in $\{\NEG(\NEG p\gg p), (p\gg\NEG p)\gg\NEG(p\gg p)\mid \gg \in \{\boxto, \diamondto, \boxTo, \diamondTo\}\}$.
	
	Moreover, if we recall $\mathcal{M}^c_0\in \mathbb{FSC}$ from the proof of Corollary \ref{C-legacy-cond} again, we can easily check that both  $\mathcal{M}^{c}_0, w\models^+ \NEG(p \boxTo q)\wedge\NEG(p \diamondTo q)$ and $\mathcal{M}^{c}_0, w\not\models^+ (p \boxTo \NEG q)\vee(p \diamondTo \NEG q)$.
\end{proof} 
In relation to the negation-inconsistencies of $\mathsf{C}$, the situation in $\mathsf{CnCK}$ is similar to the one with the conditional analogues of strong implication: whereas $\mathsf{CnCK}$ is able to provide a purely conditional counterpart to each of these negation-inconsistencies, this reflection looks less conspicuous than the one established in Proposition \ref{P:mod-negation-non-trivial} for $\mathsf{CnK}$, especially given that it fails to replace implications with conditionals in the case when a given negation-inconsistency happens to be an implicative sentence. More precisely, the following proposition holds:
\begin{proposition}\label{P:cond-negation-non-trivial}
	For every $\phi\wedge\NEG\phi\in\mathsf{CnCK}$ and every $\psi \in \mathcal{CN}$, we have $(\psi\boxto\phi)\wedge\NEG(\psi\boxto\phi)\in\mathsf{CnCK}$. 
\end{proposition}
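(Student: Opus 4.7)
The plan is to mimic the strategy of Proposition \ref{P:mod-negation-non-trivial} in the conditional setting. The proof will be essentially a short syntactic manipulation, relying on the necessitation-like rule \eqref{E:RNec} from Lemma \ref{L:cond-theorems} together with the negation-axiom \eqref{E:a6} for $\boxto$ (the conditional counterpart of \eqref{E:am5}).

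First, assume $\phi\wedge\NEG\phi\in\mathsf{CnCK}$. Since $\mathtt{S}_0\subseteq\mathtt{CnCK}$, we have $\phi\wedge\NEG\phi\vdash_{\mathtt{CnCK}}\phi$ and $\phi\wedge\NEG\phi\vdash_{\mathtt{CnCK}}\NEG\phi$ by (the substitution instances of) $\alpha_3$ and $\alpha_4$ together with \eqref{E:mp}. By Theorem \ref{T:cond-completeness}, this gives both $\phi\in\mathsf{CnCK}$ and $\NEG\phi\in\mathsf{CnCK}$.

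Next, I would apply \eqref{E:RNec} twice — once to $\phi$ and once to $\NEG\phi$ — to infer $\psi\boxto\phi\in\mathsf{CnCK}$ and $\psi\boxto\NEG\phi\in\mathsf{CnCK}$ for the arbitrarily chosen $\psi\in\mathcal{CN}$. Applying now \eqref{E:a6} (equivalently the right-to-left direction of it) to $\psi\boxto\NEG\phi$, combined with \eqref{E:mp}, yields $\NEG(\psi\boxto\phi)\in\mathsf{CnCK}$.

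Finally, using $\alpha_5$ together with two applications of \eqref{E:mp}, the two theorems $\psi\boxto\phi$ and $\NEG(\psi\boxto\phi)$ can be conjoined to obtain $(\psi\boxto\phi)\wedge\NEG(\psi\boxto\phi)\in\mathsf{CnCK}$, as required. There is no real obstacle here: the only thing to note is that, unlike in Proposition \ref{P:mod-negation-non-trivial}, we do not need any analogue of the $\mathsf{K}$-axiom \eqref{E:tm1}, because we can pass to $\phi$ and $\NEG\phi$ separately before entering the scope of $\psi\boxto$, and then recombine outside of it using \eqref{E:a6}; consequently the argument turns out to be even more elementary than in the modal case.
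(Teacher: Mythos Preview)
Your proof is correct and follows essentially the same route as the paper, which cites \eqref{E:RNec}, \eqref{E:a1}, and \eqref{E:a6} (the paper writes \eqref{E:a5}, but \eqref{E:a6} is what is actually needed, mirroring \eqref{E:am5} in the modal case). The only difference is cosmetic: the paper applies \eqref{E:RNec} to $\phi\wedge\NEG\phi$ as a whole and then invokes \eqref{E:a1} to split $\psi\boxto(\phi\wedge\NEG\phi)$ into $(\psi\boxto\phi)\wedge(\psi\boxto\NEG\phi)$, whereas you split first via $\alpha_3,\alpha_4$ and then apply \eqref{E:RNec} twice, thereby bypassing \eqref{E:a1} altogether---a harmless and slightly more economical variant.
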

\begin{proof}[Proof (a sketch)]
	By \eqref{E:RNec}, \eqref{E:a1}, and \eqref{E:a5}. 
\end{proof}
Just as in the modal case, Lemma \ref{L:cond-c} and Proposition \ref{P:cond-negation-non-trivial} together imply that:
\begin{corollary}\label{C:cond-negation-non-trivial}
	For every $\phi\wedge\NEG\phi\in\mathsf{C}$ and every $\psi \in \mathcal{CN}$, we have $(\psi\boxto\phi)\wedge\NEG(\psi\boxto\phi)\in\mathsf{CnCK}$.	
\end{corollary}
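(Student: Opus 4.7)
The plan is to chain the two cited results in a single short derivation. First, I would observe that if $\phi\wedge\NEG\phi\in\mathsf{C}$, then $\phi$ is a $\mathcal{PL}$-formula, hence also a $\mathcal{CN}$-formula by virtue of the inclusion $PL\subseteq CN$ of connective sets. Applying the left-to-right direction of Lemma \ref{L:cond-c}.2 to $\phi\wedge\NEG\phi$ then yields $\vdash_{\mathtt{CnCK}}\phi\wedge\NEG\phi$, so by Theorem \ref{T:cond-completeness} we have $\phi\wedge\NEG\phi\in\mathsf{CnCK}$.

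Second, given any $\psi\in \mathcal{CN}$, the formula $\psi\boxto\phi$ is well-formed in $\mathcal{CN}$, and Proposition \ref{P:cond-negation-non-trivial} can be applied to the $\mathsf{CnCK}$-theorem $\phi\wedge\NEG\phi$ just obtained. This directly delivers $(\psi\boxto\phi)\wedge\NEG(\psi\boxto\phi)\in\mathsf{CnCK}$, which is the desired conclusion. (Unfolding the argument of Proposition \ref{P:cond-negation-non-trivial}, this last step goes through via \eqref{E:RNec} to produce $\psi\boxto\phi$, through the left-to-right direction of \eqref{E:a1} together with \eqref{E:a5} to produce $\psi\boxto\NEG\phi$, and then via \eqref{E:a6} to rewrite the latter as $\NEG(\psi\boxto\phi)$, followed by conjunction introduction.)

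There is no substantive obstacle here: the corollary is a direct composition of the lifting provided by Lemma \ref{L:cond-c}.2 (from $\mathsf{C}$ to $\mathsf{CnCK}$) with the propagation result for negation-inconsistencies under $\boxto$ established in Proposition \ref{P:cond-negation-non-trivial}. The only point that would warrant a brief mention is the typing issue of viewing the original $\mathcal{PL}$-formula $\phi\wedge\NEG\phi$ as a $\mathcal{CN}$-formula, but this is unproblematic given our treatment of languages in Section \ref{sub:languages-and-logics}.
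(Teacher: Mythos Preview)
Your proposal is correct and matches the paper's own argument, which simply says the corollary follows from Lemma~\ref{L:cond-c} together with Proposition~\ref{P:cond-negation-non-trivial}. One small slip: you invoke the ``left-to-right direction'' of Lemma~\ref{L:cond-c}.2, but since that lemma reads ``$\vdash\phi$ iff $\phi\in\mathsf{C}$'', the direction you actually need (from $\phi\in\mathsf{C}$ to $\vdash_{\mathtt{CnCK}}\phi$) is the right-to-left one.
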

However, the failure to replace the main implication sign with a conditional is obvious from the following example: the following formula fails to be a theorem of $\mathsf{CnCK}$:
\begin{equation}\label{Contr-cond}
	((p_1\wedge \NEG p_1)\boxto p_1)\wedge\NEG((p_1\wedge \NEG p_1)\boxto p_1)
\end{equation} 
Indeed, \eqref{Contr-cond} fails, for instance, in $\mathcal{M}_2\in \mathbb{FSC}$ from the proof of Proposition \ref{P:c-strong-would-conditional}.

Even though the $\mathsf{CnCK}$ counterparts to many special features of $\mathsf{C}$ are but a faint echo of the originals, $\mathsf{CnCK}$ shows a very tight connection with $\mathsf{CnK}$, which, as we saw in Section \ref{S:modal} shows a high degree of harmony with $\mathsf{C}$. It is this connection with $\mathsf{CnK}$ which provides one (albeit admittedly insufficient) argument for viewing $\mathsf{CnCK}$ as the correct minimal $\mathsf{C}$-based logic of conditionals, therefore, we will look into this connection in more detail.

Note that the semantics of conditionals in $\mathsf{CnCK}$ encourages the reading of $\phi\boxto\psi$ (resp. $\phi\diamondto\psi$) almost as sentence-indexed modal boxes and diamonds of the form $\Box_\phi\psi$ (resp. $\Diamond_\phi\psi$): the conditionals are evaluated like modalities, the only difference being that the binary accessibility relation is in each case generated by the bi-extension of the antecedent in the corresponding conditional model. Thus one can legitimately ask, what is the modal logic of these indexed boxes and diamonds induced by the conditionals of $\mathsf{CnCK}$.

This question can be made precise in the following way: for a given $\phi \in \mathcal{CN}$, consider the mapping $Tr_\phi:\mathcal{MD}\to\mathcal{CN}$ inductively defined by
\begin{align*}
	Tr_\phi(p)&:= p&&p\in Prop\\
	Tr_\phi(\NEG\psi)&:=\NEG Tr_\phi(\psi)\\
	Tr_\phi(\psi\ast\chi)&:= Tr_\phi(\psi)\ast Tr_\phi(\chi)&&\ast\in \{\wedge, \vee, \to\}\\
	Tr_\phi(\Box\psi)&:= \phi\boxto Tr_\phi(\psi)&&Tr_\phi(\Diamond\psi):= \phi\diamondto Tr_\phi(\psi)
\end{align*} 
The boxes and diamonds induced by conditionals of $\mathsf{CnCK}$ are described by a logic $\mathsf{L}$ over $\mathcal{MD}$ iff $Tr_\phi$ is a faithful embedding of $\mathsf{L}$ into $\mathsf{CnCK}$ for every $\phi\in \mathcal{CN}$, in other words iff for all $\Gamma, \Delta\subseteq \mathsf{CnCK}$ we have:
$$
\Gamma\models_\mathsf{L}\Delta\text{ iff }Tr_\phi(\Gamma)\models_{\mathsf{CnCK}}Tr_\phi(\Delta).
$$
In this case, we call  $\mathsf{L}$ a \textit{modal companion} of $\mathsf{CnCK}$.

Now, the following is easily shown:
\begin{proposition}\label{P:modal-companion}
	$\mathsf{CnK}$ is a modal companion of $\mathsf{CnCK}$.
\end{proposition}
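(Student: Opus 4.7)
The plan is to prove the proposition semantically, by transforming countermodels in both directions through a single translation lemma. Fix an arbitrary $\phi\in\mathcal{CN}$. I will argue each direction by contraposition, and the common engine will be the following claim: whenever $\mathcal{M}\in\mathbb{FSC}$ and $\mathcal{M}^*\in\mathbb{FSM}$ share their worlds, preorder and valuation, and moreover $R^*=R_{\|\phi\|_\mathcal{M}}$, then for every $\psi\in\mathcal{MD}$, every $u\in W$, and every $\star\in\{+,-\}$ we have $\mathcal{M}^*,u\models_m^\star\psi$ iff $\mathcal{M},u\models^\star Tr_\phi(\psi)$.

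For the direction from $\mathsf{CnK}$ to $\mathsf{CnCK}$, I would take any countermodel $(\mathcal{M},w)\in Pt(\mathbb{FSC})$ with $\mathcal{M},w\models^+(Tr_\phi(\Gamma),Tr_\phi(\Delta))$ and simply \emph{extract} a modal model $\mathcal{M}^* := (W,\leq,R_{\|\phi\|_\mathcal{M}},V^+,V^-)$; Definition \ref{D:cond-model} immediately guarantees $\mathcal{M}^*\in\mathbb{FSM}$. For the converse, given a $\mathsf{CnK}$-countermodel $(\mathcal{M}^*,w)$, I would build the conditional model $\mathcal{M}$ on the same underlying data by stipulating $R := \{(v,(X,Y),u)\mid (v,u)\in R^*,\ X,Y\subseteq W\}$. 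Then $R_{(X,Y)}=R^*$ uniformly in $(X,Y)$, so $\mathcal{M}\in\mathbb{FSC}$ is immediate from the fact that $R^*$ already satisfies \eqref{Cond:1} and \eqref{Cond:2} in $\mathcal{M}^*$.

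With either model pair set up, I would then prove the translation lemma by simultaneous induction on $\psi$. The atomic and Boolean cases, including the negative $\to$-clause, are immediate because $Tr_\phi$ acts as the identity on $\NEG,\wedge,\vee,\to$ and because $V^\pm$ and $\leq$ are shared between the two models. The modal induction step is where the setup pays off: clauses \eqref{Cl:box+}--\eqref{Cl:diam-} for $\mathcal{M}^*$ line up verbatim with clauses \eqref{Cl:boxto+}--\eqref{Cl:diamto-} for $\mathcal{M}$ applied to $\phi\boxto Tr_\phi(\psi)$ (respectively, $\phi\diamondto Tr_\phi(\psi)$), since the bi-extension controlling the conditional accessibility is $\|\phi\|_\mathcal{M}$ and $R_{\|\phi\|_\mathcal{M}}=R^*$ by construction in both cases.

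The only real subtlety, which I expect to be the main obstacle, is the backward construction: by letting $R$ ignore its bi-set argument I sidestep any apparent circularity between the evaluation of $\phi$ (which may itself contain conditionals) and the definition of the accessibility relation indexed by $\|\phi\|_\mathcal{M}$. Once this trick is in place, Fischer-Servi membership and the translation lemma both become routine, and the proposition follows by chaining the two contraposed directions.
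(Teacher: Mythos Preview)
Your proposal is correct and follows essentially the same approach as the paper: the two model transformations you describe (extracting $R_{\|\phi\|_\mathcal{M}}$ from a conditional model, and conversely making the conditional relation ignore its bi-set argument) are exactly the paper's constructions $R_m$ and $R_n$, and your shared translation lemma is the inductive argument the paper sketches in each direction. The only difference is cosmetic: you state the translation lemma once in a form that applies to both model pairs, whereas the paper repeats the inductive claim separately for each direction.
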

\begin{proof}
	We argue by contraposition. Fix a $\phi \in \mathcal{CN}$ and assume that $\Gamma\not\models_{\mathsf{CnK}}\Delta$ and choose any $(\mathcal{M}, w)\in Pt(\mathbb{FSM})$ such that $\mathcal{M}, w\models^+_m (\Gamma, \Delta)$. Then consider the model $\mathcal{N} = (W, \leq, R_n, V^+, V^-)$ (that is to say, replace $R$ with $R_n$ in $\mathcal{M}$) such that
	$$
	R_n:= \{(v, (X,Y), u)\mid X,Y\subseteq W,\,R(v,u)\}.
	$$
	Given that $\mathcal{M}\in\mathbb{FSM}$, it is immediate that $\mathcal{N}\in \mathbb{FSC}$. Next, a simple inductive argument shows that for every $\psi \in \mathcal{MD}$, every $\ast \in \{+, -\}$ and every $v \in W$ we must have $\mathcal{M}, v\models^+_m \psi$ iff $\mathcal{N}, v\models^+ Tr_\phi(\psi)$ so that $\mathcal{N}, w\models^+ (Tr_\phi(\Gamma), Tr_\phi(\Delta))$ also holds.
	
	In the other direction, assume that $Tr_\phi(\Gamma)\not\models_{\mathsf{CnCK}}Tr_\phi(\Delta)$ and choose any $(\mathcal{M}, w)\in Pt(\mathbb{FSC})$ such that $\mathcal{M}, w\models^+ (Tr_\phi(\Gamma), Tr_\phi(\Delta))$; now consider the model $\mathcal{M}_m = (W, \leq, R_m, V^+, V^-)$ (that is to say, replace $R$ with $R_m$ in $\mathcal{M}$) such that
	$$
	R_m:= \{(v, u)\mid R(v,\|\phi\|_\mathcal{M},u)\}.
	$$
	Again, given that $\mathcal{M}\in\mathbb{FSC}$, it is immediate that $\mathcal{M}_m\in \mathbb{FSM}$, and a simple inductive argument shows that for every $\psi \in \mathcal{MD}$, every $\ast \in \{+, -\}$ and every $v \in W$ we must have $\mathcal{M}_m, v\models^+_m \psi$ iff $\mathcal{M}, v\models^+ Tr_\phi(\psi)$ so that $\mathcal{M}_m, w\models^+ (\Gamma, \Delta)$ also holds.	
\end{proof}
Before we end this section, we would like to observe that, reversing the direction of $Tr$ is possible to a limited extent, in other words, that $\mathsf{CnCK}$ can be interpreted in $\mathsf{CnK}$. Indeed, consider the following mapping $I:\mathcal{CN}\to\mathcal{MD}$ defined inductively by:
\begin{align*}
	I(p)&:= p&&p\in Prop\\
	I(\NEG\psi)&:=\NEG I(\psi)\\
	I(\psi\ast\chi)&:= I(\psi)\ast I(\chi)&&\ast\in \{\wedge, \vee, \to\}\\
	I(\psi\boxto\chi)&:= I(\psi)\to_s I(\chi)&&I(\psi\diamondto\chi):= \Diamond(I(\psi)\wedge I(\chi))
\end{align*} 
We can show the following
\begin{proposition}\label{P:interpretability}
	For all $\Gamma, \Delta \subseteq \mathcal{CN}$, $\Gamma\models_{\mathsf{CnCK}}\Delta$ implies $I(\Gamma)\models_{\mathsf{CnK}}I(\Delta)$ but not vice versa.
\end{proposition}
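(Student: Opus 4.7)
The plan is to handle the forward implication by a semantic contraposition argument and to refute the converse with an explicit counterexample. Given $I(\Gamma) \not\models_{\mathsf{CnK}} I(\Delta)$, pick $(\mathcal{M}, w) \in Pt(\mathbb{FSM})$ witnessing $\mathcal{M}, w \models_m^+ (I(\Gamma), I(\Delta))$, and expand it to a bi-valuational Fischer--Servi conditional model $\mathcal{M}^* = (W, \leq, R^*, V^+, V^-)$ over the same universe, order and atomic bi-valuation as $\mathcal{M}$, by setting
\[
R^*_{(X, Y)} := \{(v, u) \in W \times W \mid R(v, u)\text{ and }u \in X^\uparrow\},
\]
where $X^\uparrow := \{u \in W \mid (\exists x \in X)(x \leq u)\}$ denotes the upward closure of $X$ under $\leq$. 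The upward-closure stipulation is what allows each $R^*_{(X, Y)}$ to be Fischer--Servi related to $\leq$: since $X^\uparrow$ is upward closed, every destination produced by applying \eqref{Cond:1} or \eqref{Cond:2} to $R$ in $\mathcal{M}$ automatically stays inside $X^\uparrow$ and hence in $R^*_{(X, Y)}$. This yields $\mathcal{M}^* \in \mathbb{FSC}$.

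The crux is a translation lemma, to be established by a simultaneous induction on $\phi \in \mathcal{CN}$: for every $v \in W$ and every $\star \in \{+, -\}$, $\mathcal{M}^*, v \models^\star \phi$ iff $\mathcal{M}, v \models_m^\star I(\phi)$. The atomic, Boolean and $\NEG$ cases are immediate, since $I$ is a homomorphism on $\{\wedge, \vee, \to, \NEG\}$ and the atomic valuations coincide. For $\phi = \psi \boxto \chi$, Lemma \ref{L:cond-monotonicity} gives that $\lvert\psi\rvert^+_{\mathcal{M}^*}$ is already upward-closed, so $R^*_{\|\psi\|_{\mathcal{M}^*}}(v', u)$ collapses to $R(v', u)\wedge u\models^+\psi$; unfolding the CnCK clause, invoking the induction hypothesis on $\psi$ and $\chi$, and absorbing the universal quantifier over $\leq$-successors arising from the $\to$-clause of $\Box(I(\psi)\to I(\chi))$ via \eqref{Cond:2} applied inside $\mathcal{M}$, one matches the unfolding of $\mathcal{M},v\models_m^\star I(\psi\boxto\chi)$. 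The positive clause of $\phi=\psi\diamondto\chi$ aligns with $\Diamond(I(\psi)\wedge I(\chi))$ in a parallel, and simpler, way. The step I expect to be the most delicate is the negative clause of $\phi=\psi\diamondto\chi$: the CnCK semantics of $\NEG(\psi\diamondto\chi)$ depends only on the negative polarity of $\chi$ at the witnessing world, whereas $\mathcal{M},v\models_m^- \Diamond(I(\psi)\wedge I(\chi))$ unpacks via \eqref{Cl:diam-} and $(\wedge-)$ into a disjunction over the negative polarities of $I(\psi)$ and $I(\chi)$; bridging the two readings will require the careful use of \eqref{E:am6} together with the fact that the bi-extension $\|\psi\|_{\mathcal{M}^*}$ is being computed inside $\mathcal{M}^*$, and may call for a refinement of the construction of $R^*$ that also absorbs the negative projection $\lvert\psi\rvert^-$. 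Once the translation lemma is in place, $\mathcal{M}^*,w\models^+(\Gamma,\Delta)$, witnessing $\Gamma\not\models_{\mathsf{CnCK}}\Delta$.

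For the failure of the converse, it suffices to take $\Gamma=\emptyset$ and $\Delta=\{p\boxto p\}$ for any $p\in Prop$. Since $p\to p$ is $\mathtt{S}_0$-provable and hence a theorem of $\mathtt{CnK}$, \eqref{E:Rnec} gives $\Box(p\to p)=I(p\boxto p)\in\mathsf{CnK}$, so $I(\Gamma)\models_{\mathsf{CnK}}I(\Delta)$. By Lemma \ref{L:boxto}.1, however, $p\boxto p\in\mathsf{CnCK}$ would entail $p\in\mathsf{CnCK}$, which contradicts Lemma \ref{L:cond-c}.2 together with $p\notin\mathsf{C}$ (the latter witnessed by a one-point model with $V^+(p)=\emptyset$); hence $\Gamma\not\models_{\mathsf{CnCK}}\Delta$, as required.
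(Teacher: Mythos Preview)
Your overall strategy---semantic contraposition via a conditional model built from a given modal one, followed by a translation lemma---mirrors the paper's sketch almost exactly (the paper restricts $R_n$ to upward-closed $X,Y$ where you take upward closures, which comes to the same thing on formula extensions). Your counterexample for the failure of the converse, $p\boxto p$, is different from the paper's (which uses $p\diamondto(q\wedge r)$ versus $(p\wedge q)\diamondto r$) but is correct and arguably cleaner.

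The step you flag as ``the most delicate'' is, however, not merely delicate: it is a genuine gap, and it cannot be closed, because the forward implication is false as stated. The consecution $\NEG(p\diamondto q)\models_{\mathsf{CnCK}}p\diamondto\NEG q$ holds by axiom \eqref{E:a7}. Under $I$ it becomes $\NEG\Diamond(p\wedge q)\models_{\mathsf{CnK}}\Diamond(p\wedge\NEG q)$, and this fails: take $W=\{w,u\}$, $\leq$ the identity, $R=\{(w,u)\}$, $V^-(p)=\{u\}$ and all other atomic valuations empty. Then $u\models_m^- p$, hence $u\models_m^- p\wedge q$, so $w\models_m^+\NEG\Diamond(p\wedge q)$; but $u\not\models_m^+ p$, so $w\not\models_m^+\Diamond(p\wedge\NEG q)$. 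Your proposed refinement---absorbing $\lvert\psi\rvert^-$ into $R^*$---cannot help: the positive and negative $\diamondto$-clauses share the single relation $R^*_{\|\psi\|}$, and no choice of it makes $\models^+(\psi\diamondto\chi)$ match $\models_m^+\Diamond(I(\psi)\wedge I(\chi))$ (which conjunctively requires $I(\psi)$ verified) while simultaneously making $\models^-(\psi\diamondto\chi)$ match $\models_m^-\Diamond(I(\psi)\wedge I(\chi))$ (which disjunctively allows $I(\psi)$ merely falsified). The paper's own sketch shares this gap; the translation lemma it asserts fails at exactly the point you identified.
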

\begin{proof}[Proof (a sketch)]
	For the positive part, we argue by contraposition. Assume that $I(\Gamma)\not\models_{\mathsf{CnK}}I(\Delta)$ and choose an $(\mathcal{M}, w)\in Pt(\mathbb{FSM})$ such that $\mathcal{M}, w\models^+_m (I(\Gamma), I(\Delta))$. Then consider the model $\mathcal{N} = (W, \leq, R_n, V^+, V^-)$ (that is to say, replace $R$ with $R_n$ in $\mathcal{M}$) setting that $R_n(v, (X,Y), u)$ iff all of the following holds: (1) $X,Y\subseteq W$ are closed under $\leq$, (2) $R(v,u)$, and (3) $u \in X$.
	
	It is clear then that both $\mathcal{N}\in \mathbb{FSC}$ and that, for every $\phi\in \mathcal{CN}$, every $\ast \in \{+, -\}$, and every $v \in W$ we must have $\mathcal{M}, v\models^+_m I(\psi)$ iff $\mathcal{N}, v\models^+ \psi$ so that $\mathcal{N}, w\models^+ (\Gamma, \Delta)$ also holds.	
	
	As for the negative part, observe that we have $I(p\diamondto (q \wedge r))\models_{\mathsf{CnK}} I((p \wedge q)\diamondto r)$ but $(p\diamondto (q \wedge r))\not\models_{\mathsf{CnCK}} ((p \wedge q)\diamondto r)$; similarly, we have $I(p\boxto (q \to r))\models_{\mathsf{CnK}} I((p \wedge q)\boxto r)$ but $(p\boxto (q \to r))\not\models_{\mathsf{CnCK}} ((p \wedge q)\boxto r)$.
\end{proof}
\section{$\mathsf{CnCK}_R$, the reflexive extension of $\mathsf{CnCK}$}\label{S:reflexive}

Although the fading away of connexivity properties at the level of would- and might-conditionals of $\mathsf{CnCK}$ can be explained by the wide scope of generalization provided by these conditionals for the implication in $\mathsf{C}$, this generalization must not, at the very least, exclude the possibility of restoring these properties in some natural and well-behaved extensions of $\mathsf{CnCK}$. Indeed, otherwise one might legitimately question whether the conditionals of $\mathsf{CnCK}$ really generalize the implication of $\mathsf{C}$ rather than some completely different binary connective.

If we focus on the properties of would-conditionals, it is indeed easy to find exactly such an extension, and in this respect our source of inspiration is, again \cite{wu}. The extension in question is given by restricting our attention to the class $\mathbb{FSC}_R$ of \textit{reflexive conditional Fischer-Servi models} given by:
$$
\mathbb{FSC}_R:= \{\mathcal{M} \in \mathbb{FSC}\mid(\forall w, v \in W)(\forall X, Y\subseteq W)(R(w (X,Y), v)\text{ implies }v \in X)\}.
$$
The logic of this class, which we will denote by $\mathsf{CnCK}_R$ is then defined as $\mathsf{CnCK}_R:= \mathfrak{L}(CN, \mathbb{FSC}_R, (\models^+,\models^-))$, and the methods described in Section \ref{sub:cond-axioms}, allow, with minimal changes, to obtain an axiomatization of this logic; namely, it turns out that we have $\mathsf{CnCK}_R = \mathtt{CnCK}_R[CN]$, such that $\mathtt{CnCK}_R:= \mathtt{CnCK}+(\eqref{E:a8};)$ assuming that 
\begin{equation}\label{E:a8}\tag{$\gamma_8$}
	\phi\boxto\phi
\end{equation}
We immediately see that the reflexivity refers in this case to the reflexivity of $\boxto$ rather than to any sort of reflexivity of the conditional accessibility relation; indeed, note that every $\mathcal{M}\in \mathbb{FSC}$ such that $R = \emptyset$ is also in $\mathbb{FSC}_R$.

Another immediate observation is that many properties established for $\mathsf{CnCK}$ in Section \ref{sub:cond-properties}, can be shown to hold also for $\mathsf{CnCK}_R$ by a simple repetition of their proof. We collect some of these properties in the next proposition:
\begin{proposition}\label{P:reflexive-legacy}
	Propositions \ref{P:cond-constructivity}, \ref{P:universal-cond-satisfaction}, and Corollary \ref{C-legacy-cond} can be shown to hold also for $\mathsf{CnCK}_R$.
\end{proposition}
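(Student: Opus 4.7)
The plan is to re-run the three original proofs verbatim, but this time verifying that each explicit model used is in $\mathbb{FSC}_R$, not merely in $\mathbb{FSC}$. Since $\mathbb{FSC}_R \subseteq \mathbb{FSC}$, every instance of $\models_{\mathsf{CnCK}_R}$ refines an instance of $\models_{\mathsf{CnCK}}$; and since $\mathtt{CnCK}_R \supseteq \mathtt{CnCK} \supseteq \mathtt{C}$, the syntactic conservativity fact packaged in Lemma \ref{L:cond-c} survives unchanged (the same proof goes through, now using any $\mathbb{FSC}_R$-model with $R = \emptyset$ restricted further to satisfy \eqref{E:a8} vacuously-or-otherwise). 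So the only real obligation is that the countermodels and witnessing models invoked in each proof lie in the restricted class $\mathbb{FSC}_R$, i.e. that $R(w,(X,Y),v)$ forces $v \in X$.

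For the analogue of Proposition \ref{P:universal-cond-satisfaction}, inspect the one-point model $\mathcal{M}^c$ from its proof, where $R^c = \{(w,(\{w\},\{w\}),w)\}$: the unique triple satisfies $w \in \{w\}$, so $\mathcal{M}^c \in \mathbb{FSC}_R$ and the inductive argument showing $\mathcal{M}^c, w \models^\star \phi$ for every $\phi \in \mathcal{CN}$ and every $\star \in \{+,-\}$ goes through untouched. For the analogue of Proposition \ref{P:cond-constructivity}, pick the component pointed models $(\mathcal{M}_i, w_i)$ now in $Pt(\mathbb{FSC}_R)$ as witnesses of the relevant failures in $\mathsf{CnCK}_R$ and form the disjoint union $\mathcal{M}$ exactly as in the original proof: the only way for $R(v,(X,Y),u)$ to hold is that $v,u \in W_i$ and $(v,(X \cap W_i, Y \cap W_i), u) \in R_i$ for some $i$, whence reflexivity of $\mathcal{M}_i$ gives $u \in X \cap W_i \subseteq X$, so the combined model is in $\mathbb{FSC}_R$; the verification of \eqref{Cond:1} and \eqref{Cond:2} and the preservation claim are then literally the same as before. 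For the analogue of Corollary \ref{C-legacy-cond}, non-triviality and negation-inconsistency via \eqref{Contr} descend through Lemma \ref{L:cond-c} from $\mathsf{C}$ directly; hyperconnexivity of $\to$ and connexivity of $\Rightarrow$ follow from the same lemma; and the failure of hyperconnexivity for $\Rightarrow$ uses the explicit model $\mathcal{M}^c_0$, whose relation $R^c_0 = \{(w,(W_0,\emptyset),w), (w,(W_0,W_0),w)\}$ trivially satisfies reflexivity because $w \in W_0 = \{w\}$ in both triples, so $\mathcal{M}^c_0 \in \mathbb{FSC}_R$ and refutes the hyperconnexive schemes just as before.

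The main (and fortunately minor) obstacle is sheer bookkeeping: every countermodel transported from $\mathsf{CnCK}$ must additionally validate \eqref{E:a8}, which is exactly to say it must lie in $\mathbb{FSC}_R$. The three cases audited above cover all the explicit models invoked by the cited proofs, so no new construction is needed; should further $\mathsf{CnCK}$-properties be imported into the $\mathsf{CnCK}_R$-setting in subsequent discussion, the same kind of one-line checklist verification will suffice.
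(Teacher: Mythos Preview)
Your proposal is correct and matches the paper's approach exactly: the paper gives no explicit proof beyond the remark that the result holds ``by a simple repetition of their proof,'' and you have supplied precisely that repetition, verifying in each case that the explicit models invoked ($\mathcal{M}^c$, the disjoint-union model, and $\mathcal{M}^c_0$) lie in $\mathbb{FSC}_R$. Your audit of the reflexivity condition in each case is accurate, and your observation that the $R = \emptyset$ expansion used in Lemma~\ref{L:cond-c}.2 lies vacuously in $\mathbb{FSC}_R$ closes the only loose end.
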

As soon as we consider the analogues of Lemma \ref{L:boxto} for $\mathsf{CnCK}_R$, the differences between the two conditional logics become apparent; more precisely we find that:
\begin{lemma}\label{L:boxto-r}
	For all $\phi, \psi \in \mathcal{CN}$:
	\begin{enumerate}
		\item $\phi\boxto\psi \in \mathsf{CnCK}_R$ iff $\phi\to\psi\in\mathsf{CnCK}_R$.
		
		\item $\phi\diamondto\psi \notin \mathsf{CnCK}_R$. 
	\end{enumerate} 	
\end{lemma}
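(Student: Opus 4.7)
My plan handles the two directions of Part 1 and the negative statement of Part 2 separately: Part 1 ($\Leftarrow$) goes through axiomatically, Part 1 ($\Rightarrow$) via a semantic argument by contradiction, and Part 2 by a trivial countermodel.

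For Part 1 ($\Leftarrow$), if $\vdash_{\mathtt{CnCK}_R}\phi\to\psi$ then \eqref{E:RNec} yields $\vdash\phi\boxto(\phi\to\psi)$; combine this with the instance of \eqref{E:T1} whose schema-variables $\psi$ and $\chi$ are played by $\phi$ and $\psi$ respectively, i.e.\ $(\phi\boxto(\phi\to\psi))\to((\phi\boxto\phi)\to(\phi\boxto\psi))$. Two applications of \eqref{E:mp}, with the reflexivity axiom \eqref{E:a8} supplying $\phi\boxto\phi$ in between, then deliver $\vdash\phi\boxto\psi$, hence $\phi\boxto\psi\in\mathsf{CnCK}_R$.

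For Part 1 ($\Rightarrow$), I argue by contradiction. Suppose $\phi\boxto\psi\in\mathsf{CnCK}_R$ but $\phi\to\psi\notin\mathsf{CnCK}_R$. By the strong completeness of $\mathtt{CnCK}_R$ for $\mathsf{CnCK}_R$ (the analog of Theorem \ref{T:cond-completeness}, obtainable by the same methods as in Section \ref{sub:cond-axioms}), there is $(\mathcal{M},w)\in Pt(\mathbb{FSC}_R)$ and $v\geq w$ with $\mathcal{M},v\models^+\phi$ and $\mathcal{M},v\not\models^+\psi$. The plan is to expand $\mathcal{M}$ to some $\mathcal{M}'\in\mathbb{FSC}_R$ containing a conditional edge of the form $(v,\|\phi\|_{\mathcal{M}'},v)$; this edge is compatible with reflexivity because $v\in\lvert\phi\rvert^+_\mathcal{M}$. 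The hypothesized validity of $\phi\boxto\psi$ at $v$ in $\mathcal{M}'$ then forces $\mathcal{M}',v\models^+\psi$, which, provided bi-extensions are appropriately controlled, contradicts $\mathcal{M},v\not\models^+\psi$. The main obstacle is carrying out this expansion while (i) preserving Fischer-Servi conditions \eqref{Cond:1} and \eqref{Cond:2} for every label $(X,Y)$ governing $R'_{(X,Y)}$, and (ii) controlling the bi-extensions of $\phi$ and $\psi$ themselves, since adjoining conditional edges can shift the positive and negative denotations of any subformula containing $\boxto$ or $\diamondto$. A safe route is to first pass to a sub-model in which $v$ is $\leq$-minimal, so that \eqref{Cond:1} for the new edge becomes vacuous, and then verify by induction on subformula structure that the relevant bi-extensions are preserved.

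For Part 2, take the one-point model $\mathcal{M}_\ast=(\{w\},\{(w,w)\},\emptyset,V^+,V^-)$ with $V^+(p):=V^-(p):=\emptyset$ for each $p\in Prop$. Both Fischer-Servi conditions and the reflexivity constraint hold vacuously because $R=\emptyset$, so $\mathcal{M}_\ast\in\mathbb{FSC}_R$. Since $R=\emptyset$, clause \eqref{Cl:diamto+} yields $\mathcal{M}_\ast,w\not\models^+\phi\diamondto\psi$ for every $\phi,\psi\in\mathcal{CN}$, and therefore $\phi\diamondto\psi\notin\mathsf{CnCK}_R$.
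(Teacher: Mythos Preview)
Your Part 1 ($\Leftarrow$) and Part 2 are fine and match the paper's proof exactly.

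Your Part 1 ($\Rightarrow$), however, has a genuine gap. The core difficulty, which you correctly identify, is the circularity: you want to add an edge labelled $\|\phi\|_{\mathcal{M}'}$, but this bi-extension depends on $\mathcal{M}'$, which depends on the edge you add. Your proposed fix does not close this gap. First, making $v$ $\leq$-minimal does \emph{not} render \eqref{Cond:1} vacuous: \eqref{Cond:1} demands that whenever $w'\geq v$ and $v\mathrel{R'_L}v$, some $R'_L$-edge issue from $w'$; minimality of $v$ places no restriction on such $w'$. (You would need $v$ to be $\leq$-maximal, and even then \eqref{Cond:2} must be checked.) Second, passing to a submodel in conditional semantics is dangerous: the labels on $R$ are subsets of $W$, so shrinking $W$ changes every label, and there is no general preservation lemma for generated submodels in $\mathbb{FSC}$. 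Third, the promised ``induction on subformula structure'' is exactly where the circularity bites: once you add an edge with label $(X,Y)$, the truth of every conditional $\chi\boxto\theta$ whose antecedent happens to have bi-extension $(X,Y)$ can change at $v$ and below, and this can cascade up through $\phi$ itself.

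The paper breaks this circularity differently: it adjoins a \emph{fresh} node $v\notin W$ that is $\leq'$-isolated (so semantics at every old world are unchanged, cf.\ the Claim in Lemma~\ref{L:boxto}), and it adds edges from $v$ to every $u\geq w$ under \emph{all four} labels $\|\phi\|_\mathcal{M}$, $(\lvert\phi\rvert^+_\mathcal{M}\cup\{v\},\lvert\phi\rvert^-_\mathcal{M})$, $(\lvert\phi\rvert^+_\mathcal{M},\lvert\phi\rvert^-_\mathcal{M}\cup\{v\})$, $(\lvert\phi\rvert^+_\mathcal{M}\cup\{v\},\lvert\phi\rvert^-_\mathcal{M}\cup\{v\})$. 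Whatever $\|\phi\|_{\mathcal{M}'}$ turns out to be, it is one of these four, so an edge $(v,\|\phi\|_{\mathcal{M}'},w)$ is guaranteed, and $\mathcal{M}',w\not\models^+\psi$ falsifies $\phi\boxto\psi$ at $v$. Reflexivity is then immediate because $w\in\lvert\phi\rvert^+_\mathcal{M}$, so every target of a new edge lies in the positive part of its label.
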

\begin{proof}
	(Part 1) To show the right-to left direction we reason as follows:
	\begin{align}
		&\phi\to\psi\in \mathsf{CnCK}_R\label{E:f1}&&\text{assumption}\\
		&\phi\boxto(\phi\to\psi)\in \mathsf{CnCK}_R\label{E:f2}&&\text{by \eqref{E:f1}, \eqref{E:RNec}}\\
		&(\phi\boxto\phi)\to(\phi\boxto\psi)\in \mathsf{CnCK}_R\label{E:f3}&&\text{by \eqref{E:f2}, \eqref{E:T1}}\\
		&\phi\boxto\psi\in \mathsf{CnCK}_R\label{E:f4}&&\text{by \eqref{E:f3}, \eqref{E:a8}}
	\end{align}
	 In the other direction, we argue by contraposition. If $\phi\to\psi\notin \mathsf{CnCK}_R$, we can choose a model $(\mathcal{M}, w)\in Pt(\mathbb{FSC}_R)$ such that $\mathcal{M},w\models^+(\{\phi\}, \{\psi\})$. Now choose a $v \notin W$ and consider the model $\mathcal{M}'$ as described in the proof of Lemma \ref{L:boxto}. It is immediate to show that both $\mathcal{M}', v\not\models^+\phi\boxto\psi$ and, given that $w \in \lvert\phi\rvert^+_\mathcal{M}$, that also $\mathcal{M}'\in \mathbb{FSC}_R$.
	
	For Part 2, we can simply repeat the proof of Lemma \ref{L:boxto}.2.
\end{proof}
W.r.t. the defined conditionals, Lemma \ref{L:boxto-r}.1 entails the following corollary:
\begin{corollary}\label{C:boxto-r}
	For all $\phi, \psi\in \mathcal{CN}$, $(\phi\boxto\psi)\wedge(\psi\boxto\phi)\in \mathsf{CnCK}_R$ (resp. $\phi\boxTo\psi\in \mathsf{CnCK}_R$, $(\phi\boxTo\psi)\wedge(\psi\boxTo\phi)\in \mathsf{CnCK}_R$) iff $\phi\leftrightarrow\psi\in \mathsf{CnCK}_R$ (resp. $\phi\Rightarrow\psi\in \mathsf{CnCK}_R$, $\phi\Leftrightarrow\psi\in \mathsf{CnCK}_R$).
\end{corollary}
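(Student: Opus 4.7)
The plan is to reduce all three equivalences to Lemma~\ref{L:boxto-r}.1 via an auxiliary observation: in any axiomatic system extending $\mathtt{S}_0$, a conjunction is a theorem iff each of its conjuncts is. This is immediate from axioms $(\alpha_3)$, $(\alpha_4)$, $(\alpha_5)$ and \eqref{E:mp}. I will refer to this as the ``conjunction principle'' below.

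\textbf{First equivalence.} Unfolding $\phi\leftrightarrow\psi = (\phi\to\psi)\wedge(\psi\to\phi)$, the conjunction principle yields $\phi\leftrightarrow\psi\in\mathsf{CnCK}_R$ iff both $\phi\to\psi\in\mathsf{CnCK}_R$ and $\psi\to\phi\in\mathsf{CnCK}_R$. By Lemma~\ref{L:boxto-r}.1, this is equivalent to $\phi\boxto\psi\in\mathsf{CnCK}_R$ and $\psi\boxto\phi\in\mathsf{CnCK}_R$, which in turn is equivalent, again by the conjunction principle, to $(\phi\boxto\psi)\wedge(\psi\boxto\phi)\in\mathsf{CnCK}_R$.

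\textbf{Second equivalence.} Unfolding $\phi\boxTo\psi = (\phi\boxto\psi)\wedge(\NEG\psi\boxto\NEG\phi)$, the conjunction principle yields $\phi\boxTo\psi\in\mathsf{CnCK}_R$ iff $\phi\boxto\psi\in\mathsf{CnCK}_R$ and $\NEG\psi\boxto\NEG\phi\in\mathsf{CnCK}_R$. Applying Lemma~\ref{L:boxto-r}.1 to each conjunct, this is equivalent to $\phi\to\psi\in\mathsf{CnCK}_R$ and $\NEG\psi\to\NEG\phi\in\mathsf{CnCK}_R$. Unfolding $\phi\Rightarrow\psi = (\phi\to\psi)\wedge(\NEG\psi\to\NEG\phi)$ and applying the conjunction principle once more, this is exactly $\phi\Rightarrow\psi\in\mathsf{CnCK}_R$.

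\textbf{Third equivalence.} By the conjunction principle, $(\phi\boxTo\psi)\wedge(\psi\boxTo\phi)\in\mathsf{CnCK}_R$ iff both $\phi\boxTo\psi$ and $\psi\boxTo\phi$ are in $\mathsf{CnCK}_R$. By the second equivalence just established, this is equivalent to $\phi\Rightarrow\psi,\,\psi\Rightarrow\phi\in\mathsf{CnCK}_R$, which, unfolding $\phi\Leftrightarrow\psi = (\phi\Rightarrow\psi)\wedge(\psi\Rightarrow\phi)$ and applying the conjunction principle a final time, is equivalent to $\phi\Leftrightarrow\psi\in\mathsf{CnCK}_R$.

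There is no real obstacle here: the entire argument is a bookkeeping exercise of unfolding the defined connectives, distributing theoremhood over conjunction via $\mathtt{S}_0$, and applying Lemma~\ref{L:boxto-r}.1 on each atomic conditional that arises. The only thing one needs to be careful about is that the substitution into Lemma~\ref{L:boxto-r}.1 is applied to arbitrary $\mathcal{CN}$-formulas (including $\NEG\psi$ and $\NEG\phi$), which is permitted since the lemma is stated schematically for all $\phi,\psi\in\mathcal{CN}$.
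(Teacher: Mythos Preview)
Your proof is correct and takes essentially the same approach as the paper, which does not spell out a proof but simply states that the corollary is entailed by Lemma~\ref{L:boxto-r}.1. You have accurately unpacked what that entailment amounts to: distributing theoremhood over conjunction via $\mathtt{S}_0$ and applying Lemma~\ref{L:boxto-r}.1 componentwise.
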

We see that Lemma \ref{L:boxto-r}.1 and Corollary \ref{C:boxto-r} are completely analogous to Lemma \ref{L:strict-implication} and Corollary \ref{C:strict-implication} in the modal case, whereas Lemma \ref{L:boxto-r}.2 simply repeats Lemma \ref{L:boxto}.2 for $\mathsf{CnCK}_R$. This can be taken as an indication that the properties shown above for strict implication and its related connectives can be also shown to hold for $\boxto$ and its relatives in $\mathsf{CnCK}_R$, whereas all the forms of might-conditionals in $\mathsf{CnCK}_R$ remain very close to the might-conditionals in $\mathsf{CnCK}$. The following series of propositions confirms this expectation: 
\begin{proposition}\label{P:c-strong-would-conditional-reflexive}
	Let $p, q \in Prop$, and let $\phi, \psi, \theta\in \mathcal{CN}$. Then the following statements hold for all $\gg \in \{\Rightarrow, \to, \boxto, \boxTo\}$:
	\begin{enumerate}
		\item $(\phi \boxTo \psi)\gg(\NEG\psi\boxTo\NEG\phi)\in \mathsf{CnCK}_R$.
		
		\item However, the same cannot be said about $\boxto$, since we have $(p \boxto q)\gg(\NEG q\boxto\NEG p)\notin \mathsf{CnCK}_R$. 
		
		\item We have $(\phi \boxTo \psi)\gg(\phi\boxto\psi)\in \mathsf{CnCK}_R$, but not vice versa, so that $\boxTo$ is stronger than $\boxto$.
		
		\item It follows from $(\phi\Leftrightarrow\psi) \in \mathsf{CnCK}_R$ that $(\theta[\phi/p]\Leftrightarrow\theta[\psi/p])\in \mathsf{CnCK}_R$.
		
		\item If $(\phi\boxTo\psi)\wedge(\psi\boxTo\phi) \in \mathsf{CnCK}_R$ then also $(\theta[\phi/p]\boxTo\theta[\psi/p])\wedge (\theta[\psi/p]\boxTo\theta[\phi/p])\in \mathsf{CnCK}_R$.
		
		\item $(\phi\boxto\psi)\wedge(\psi\boxto\phi) \in \mathsf{CnCK}_R$ does \textbf{not} entail $(\theta[\phi/p]\boxto\theta[\psi/p])\wedge (\theta[\psi/p]\boxto\theta[\phi/p])\in \mathsf{CnCK}_R$.
	\end{enumerate}
\end{proposition}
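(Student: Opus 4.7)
The plan is to organize the proof into positive statements (Parts 1, 3 positive, 4, 5), which will follow by syntactic manipulation using the axioms, rules and Lemma~\ref{L:boxto-r}, and negative statements (Parts 2, 3 negative, 6), which will be established by straightforward countermodels in $\mathbb{FSC}_R$. Throughout, the axiomatic system $\mathtt{CnCK}_R$ and Theorem~\ref{T:cond-completeness} (adapted to $\mathsf{CnCK}_R$) will be used without comment.

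For Part~1 with $\gg\in\{\to,\Rightarrow\}$, I would unfold $\phi\boxTo\psi$ as $(\phi\boxto\psi)\wedge(\NEG\psi\boxto\NEG\phi)$ and derive $(\NEG\psi\boxTo\NEG\phi)$ from it using \eqref{E:t-double-neg}, together with \eqref{E:RAbox} and \eqref{E:RBbox} to swap $\phi$ and $\NEG\NEG\phi$ (similarly for $\psi$). Part~3 positive comes directly from unfolding the definition of $\boxTo$. For Part~1 with $\gg\in\{\boxto,\boxTo\}$, I would first derive the $\to$-version and then invoke Lemma~\ref{L:boxto-r}.1 to lift a theoremhood of a propositional implication to theoremhood of the corresponding $\boxto$-formula; the $\boxTo$-case then amounts to showing that $A\Leftrightarrow B$ is provable, where $A=\phi\boxTo\psi$ and $B=\NEG\psi\boxTo\NEG\phi$, and then noting that theoremhood of $A\Leftrightarrow B$ yields theoremhood of $A\boxTo B$ by four applications of Lemma~\ref{L:boxto-r}.1. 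Part~4 is established by induction on $\theta$, relying on the semantic characterization of $\Leftrightarrow$ in $\mathsf{C}$ extended to $\mathsf{CnCK}_R$: if $\|\phi\|_\mathcal{M}=\|\psi\|_\mathcal{M}$ for every $\mathcal{M}\in\mathbb{FSC}_R$, then the relations $R_{\|\phi\|_\mathcal{M}}$ and $R_{\|\psi\|_\mathcal{M}}$ coincide, so the conditional cases go through. Part~5 then follows by combining Corollary~\ref{C:boxto-r} (to convert the conjunctive $\boxTo$-hypothesis to $\Leftrightarrow$), Part~4, and Corollary~\ref{C:boxto-r} again.

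For the negative statements of Parts 2 and 3, I would expand the single-world propositional model $\mathcal{M}_0$ from the proof of Proposition~\ref{P:c-strong-implication} to a structure $\mathcal{M}^{cr}_0\in\mathbb{FSC}_R$ by letting $R$ contain the triples $(w,(X,Y),w)$ for all $X,Y\subseteq W_0$ with $w\in X$; this respects the reflexivity condition and trivially satisfies \eqref{Cond:1}, \eqref{Cond:2}. Since at $w$ we have $p$ true, $\NEG p$ false, $q$ both true and false, and $\NEG q$ true, $p\boxto q$ is both positively and negatively satisfied at $w$ while $\NEG q\boxto\NEG p$ is not positively satisfied, so the formulas $(p\boxto q)\gg(\NEG q\boxto\NEG p)$ and $(p\boxto q)\gg(p\boxTo q)$ fail at $w$ for each $\gg\in\{\to,\Rightarrow,\boxto,\boxTo\}$. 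For Part~6, I would take $\phi:=(p_1\wedge q_1)\to p_1$, $\psi:=p_1\to p_1$, and $\theta:=\NEG p$; the hypothesis holds since $\phi\leftrightarrow\psi\in\mathsf{C}$ and Lemma~\ref{L:boxto-r}.1 (together with the observation that $\mathsf{CnCK}_R\cap\mathcal{PL}=\mathsf{C}$) yields the conjunction of $\boxto$-formulas, while the conclusion, which reduces via Corollary~\ref{C:boxto-r} to $\NEG\phi\leftrightarrow\NEG\psi\in\mathsf{CnCK}_R$, fails precisely because $\NEG\phi\leftrightarrow\NEG\psi\notin\mathsf{C}$ (Proposition~\ref{P:c-strong-implication}.5).

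The main obstacle will be Part~1 for $\gg=\boxTo$, and specifically the verification that $\NEG A$ and $\NEG B$ are $\mathsf{CnCK}_R$-provably equivalent, which is needed for the $A\Leftrightarrow B$ step above. The cleanest route is to push negations inward using \eqref{Ax:wedge-neg} and \eqref{E:T4}, whereupon $\NEG A$ collapses to $(\phi\boxto\NEG\psi)\vee(\NEG\psi\boxto\phi)$ and $\NEG B$ collapses to $(\NEG\psi\boxto\phi)\vee(\phi\boxto\NEG\psi)$ modulo \eqref{E:t-double-neg}; these agree up to commutativity of $\vee$, giving the desired propositional equivalence. Once this is in place, the rest is bookkeeping.
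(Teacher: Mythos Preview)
Your proposal is correct and largely parallels the paper's proof. Two minor deviations are worth noting. For Part~1 with $\gg=\boxTo$, the paper avoids your detour through $A\Leftrightarrow B$ by directly invoking Corollary~\ref{C:boxto-r}, which reduces $A\boxTo B\in\mathsf{CnCK}_R$ to $A\Rightarrow B\in\mathsf{CnCK}_R$, already established as the $\gg=\Rightarrow$ case; your computation of $\NEG A$ and $\NEG B$ is correct but does more work than needed. For Part~6, the paper simply observes that the countermodel $\mathcal{M}^c_1$ from the proof of Proposition~\ref{P:c-strong-would-conditional}.6 already lies in $\mathbb{FSC}_R$; your reduction to $\NEG\phi\leftrightarrow\NEG\psi\notin\mathsf{C}$ via Corollary~\ref{C:boxto-r} is equally valid but relies on the (unstated, though trivial) conservativity of $\mathsf{CnCK}_R$ over $\mathsf{C}$ on $\mathcal{PL}$. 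Incidentally, since $W_0$ is a singleton, your model $\mathcal{M}^{cr}_0$ coincides with the paper's $\mathcal{M}^c_0$.
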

\begin{proof}[Proof (a sketch)]
	As for Part 1, for $\gg\in \{\Rightarrow, \to\}$ it follows from Proposition \ref{P:c-strong-would-conditional}.1 and $\mathtt{CnCK}_R \supseteq\mathtt{CnCK}$. These two cases, together with Lemma \ref{L:boxto-r}.1 and Corollary \ref{C:boxto-r}, also imply Part 1 for $\gg \in \{\boxto, \boxTo\}$. Similar considerations apply to positive statements in Part 3.
	
	As for the negative statements of Part 3, we need to recall $\mathcal{M}^{c}_0$ defined in the proof of Corollary \ref{C-legacy-cond}. In the proof of Proposition \ref{P:c-strong-would-conditional} it was shown that $(\phi\boxto\psi)\gg(\phi \boxTo \psi)$ for  $\gg\in \{\Rightarrow, \to\}$, the negative statements for $\gg \in \{\boxto, \boxTo\}$ now follow Lemma \ref{L:boxto-r}.1 and Corollary \ref{C:boxto-r}.
	
	As for Part 2, note, again, that $\mathcal{M}^c_0\in \mathbb{FSC}_R$. Part 4 is proven similarly to Proposition \ref{P:c-strong-would-conditional}.4 and Part 5 is proven similarly to Proposition \ref{P:c-strong-strict-implication}.5. Finally, as for Part 6, observe that the counter-model $\mathcal{M}^c_1$ described in the Proof of Proposition \ref{P:c-strong-would-conditional}.6 is, again, clearly in $\mathbb{FSC}_R$.
\end{proof}
\begin{proposition}\label{P:c-strong-might-conditional-reflexive}
	Let $p, q \in Prop$, and let $\phi, \psi, \theta\in \mathcal{CN}$. Then the following statements hold for all $\gg \in \{\Rightarrow, \to,\boxto, \boxTo\}$ and all $\ggg \in \{\diamondto, \diamondTo\}$:
	\begin{enumerate}
		\item $(\phi \diamondTo \psi)\gg(\NEG\psi\diamondTo\NEG\phi)\in \mathsf{CnCK}_R$, but $(\phi \diamondTo \psi)\ggg(\NEG\psi\diamondTo\NEG\phi)\notin \mathsf{CnCK}_R$.
		
		\item However, the same cannot be said about $\diamondto$, since we have $(p \diamondto q)\gg(\NEG q\diamondto\NEG p)\notin \mathsf{CnCK}_R$. We also have $(p \diamondto q)\ggg(\NEG q\diamondto\NEG p)\notin \mathsf{CnCK}_R$. 
		
		\item We have $(\phi \diamondTo \psi)\gg(\phi\diamondto\psi)\in \mathsf{CnCK}_R$, but not vice versa, so that $\diamondTo$ is stronger than $\diamondto$. On the other hand, $(\phi \diamondTo \psi)\ggg(\phi\diamondto\psi)\notin \mathsf{CnCK}_R$.
		
		\item  $(\phi\ggg\psi)\wedge(\psi\ggg\phi) \in \mathsf{CnCK}_R$ (vacuously) entails $(\theta[\phi/p]\ggg\theta[\psi/p])\wedge (\theta[\psi/p]\ggg\theta[\phi/p])\in \mathsf{CnCK}_R$.
	\end{enumerate}
\end{proposition}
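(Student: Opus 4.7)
The plan is to follow the same pattern as in the proof of Proposition \ref{P:c-strong-might-conditional}, exploiting the fact that $\mathtt{CnCK}_R\supseteq\mathtt{CnCK}$ for inheriting positive statements, using Lemma \ref{L:boxto-r} to transfer $\to$-facts to $\boxto$-facts, and re-using the countermodels from Corollary \ref{C-legacy-cond} and Proposition \ref{P:c-strong-would-conditional-reflexive} after verifying they lie in $\mathbb{FSC}_R$.

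For the positive statements in Parts 1 and 3, I would split on $\gg$. When $\gg\in\{\Rightarrow,\to\}$, the corresponding formulas are already theorems of $\mathsf{CnCK}$ by Proposition \ref{P:c-strong-might-conditional}.1 and \ref{P:c-strong-might-conditional}.3, and hence remain theorems in the extension $\mathsf{CnCK}_R$. When $\gg\in\{\boxto,\boxTo\}$, I would appeal to Lemma \ref{L:boxto-r}.1: since the corresponding $\to$-instance is in $\mathsf{CnCK}_R$, so is the $\boxto$-instance, and then the $\boxTo$-version follows by unfolding its definition together with \eqref{E:RNec} and \eqref{E:a1}. Corollary \ref{C:boxto-r} handles the strong-equivalence cases that arise.

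For the negative parts involving $\ggg\in\{\diamondto,\diamondTo\}$ in Parts 1, 2, and 3, the key observation is Lemma \ref{L:boxto-r}.2: no formula of the form $\phi\diamondto\psi$ is a theorem of $\mathsf{CnCK}_R$. Since $\phi\diamondTo\psi$ unfolds to $(\phi\diamondto\psi)\wedge(\NEG\psi\diamondto\NEG\phi)$, neither is any $\diamondTo$-formula a theorem. So every consequent of shape $(\dots)\ggg(\dots)$ with $\ggg\in\{\diamondto,\diamondTo\}$ is immediately ruled out. For the negative statements with $\gg\in\{\Rightarrow,\to,\boxto,\boxTo\}$ in Parts 2 and 3, I would recycle the model $\mathcal{M}^c_0$ from the proof of Corollary \ref{C-legacy-cond}, first checking that it belongs to $\mathbb{FSC}_R$: its only $R$-triples are $(w,(W_0,\emptyset),w)$ and $(w,(W_0,W_0),w)$, and in both cases the target world $w$ lies in the positive component $W_0$, so the reflexivity condition is satisfied. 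The computation of $\|p\|_{\mathcal{M}^c_0}$, $\|\NEG q\|_{\mathcal{M}^c_0}$, and the relevant $\diamondto$-extensions then goes through verbatim, refuting the remaining would-style consequents.

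Finally, Part 4 is completely vacuous thanks again to Lemma \ref{L:boxto-r}.2: for $\ggg\in\{\diamondto,\diamondTo\}$ neither $\phi\ggg\psi$ nor $\psi\ggg\phi$ can be a theorem, so the conjunctive antecedent $(\phi\ggg\psi)\wedge(\psi\ggg\phi)$ is never in $\mathsf{CnCK}_R$, and the stated implication from this antecedent is vacuously validated. The only real obstacle I anticipate is the bookkeeping for the negative part of Part 1, where one has to match $\phi\diamondTo\psi$ against $\NEG\psi\diamondTo\NEG\phi$; but since the non-theoremhood of all $\diamondto$-formulas is already secured by Lemma \ref{L:boxto-r}.2, even this case collapses to a one-line argument.
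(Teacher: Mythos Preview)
Your proposal is correct and follows essentially the same strategy as the paper's own sketch: inherit the positive $\gg\in\{\to,\Rightarrow\}$ cases from Proposition~\ref{P:c-strong-might-conditional} via $\mathsf{CnCK}_R\supseteq\mathsf{CnCK}$, lift them to $\gg\in\{\boxto,\boxTo\}$ using Lemma~\ref{L:boxto-r}.1 and Corollary~\ref{C:boxto-r}, and dispatch all $\ggg$-negatives and Part~4 with Lemma~\ref{L:boxto-r}.2. Your treatment is in fact more careful than the paper's: the paper simply asserts that ``the negative statements of Parts~1--3 all follow from Lemma~\ref{L:boxto-r}.2'', which strictly speaking covers only the $\ggg$-cases; the Part~2 and Part~3 negatives with $\gg\in\{\to,\Rightarrow,\boxto,\boxTo\}$ really do require the countermodel $\mathcal{M}^c_0$ (and the check that it lies in $\mathbb{FSC}_R$), which you supply explicitly.
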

\begin{proof}[Proof (a sketch)]
	The proof is very similar to that of Proposition \ref{P:c-strong-might-conditional}; the additional positive statements in Parts 1 and 3 follow from the respective parts of Proposition \ref{P:c-strong-might-conditional}, Lemma \ref{L:boxto-r}.1, and Corollary \ref{C:boxto-r}. The negative statements of Parts 1--3 all follow from Lemma \ref{L:boxto-r}.2.
	
%
\end{proof}  
\begin{proposition}\label{P:cond-would-connexivity-reflexive}
$\boxto$ is fully hyperconnexive in $\mathsf{CnCK}_R$, $\boxTo$ is fully connexive (but neither weakly nor plainly hyperconnexive) in $\mathsf{CnCK}_R$, $\diamondto$ is weakly partially hyperconnexive but neither weakly nor plainly connexive in $\mathsf{CnCK}_R$, and $\diamondTo$ is weakly partially connexive but neither weakly plainly connexive nor partially hyperconnexive in $\mathsf{CnCK}_R$.	
\end{proposition}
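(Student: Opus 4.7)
My plan treats the four connectives in parallel, exploiting two pivotal facts already established: the reduction of $\boxto$-provability to $\to$-provability (Lemma \ref{L:boxto-r}.1), the corresponding reduction of $\boxTo$ to $\Rightarrow$ (Corollary \ref{C:boxto-r}), and the universal non-validity of $\phi\diamondto\psi$ in $\mathsf{CnCK}_R$ (Lemma \ref{L:boxto-r}.2). The overall strategy for the positive claims is to push negations over the conditional operators using \eqref{E:a6}, \eqref{E:a7} (and their strong-equivalence upgrades \eqref{E:T4}, \eqref{E:T5}), then apply the replacement rules \eqref{E:RAbox}, \eqref{E:RCbox}, \eqref{E:RAdiam}, \eqref{E:RCdiam} together with the reflexivity axiom \eqref{E:a8}. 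The negative claims re-use the semantic witness $\mathcal{M}^c_0 \in \mathbb{FSC}_R$ from the proof of Corollary \ref{C-legacy-cond} (after checking it is reflexive) and the trivial model with $R = \emptyset$.

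For $\boxto$, (AT$\boxto$), (BT$\boxto$), and (CBT$\boxto$) each reduce via \eqref{E:a6}/\eqref{E:T4} and the appropriate replacement rule to an instance of \eqref{E:a8}; (WBT$\boxto$) and (WCBT$\boxto$) come directly from \eqref{E:a6}; and the symmetry schemes fail because, via Lemma \ref{L:boxto-r}.1, their validity would transfer the corresponding $\to$-schemes into $\mathsf{C}$, contradicting Proposition \ref{P:c-connexivity}. For $\boxTo$, I would unfold $\phi\boxTo\psi$ as $(\phi\boxto\psi)\wedge(\NEG\psi\boxto\NEG\phi)$ and verify (AT$\boxTo$), (BT$\boxTo$), (WBT$\boxTo$) by pushing the outer $\NEG$ across the conjunction (via \eqref{E:t-neg-wedge} from Lemma \ref{L:c-strong-properties}, lifted by Lemma \ref{L:cond-c}.1) and dispatching the resulting $\boxto$-clauses through the $\boxto$-case. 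The failure of the two symmetry schemes for $\boxTo$ follows from Corollary \ref{C:boxto-r} together with the fact that $p\Rightarrow(p\vee q)\in\mathsf{C}$ while $(p\vee q)\Rightarrow p\notin\mathsf{C}$. Failure of (CBT$\boxTo$) and (WCBT$\boxTo$) is witnessed by $\mathcal{M}^c_0$: an explicit computation yields $\|p\boxTo q\| = \|p\boxTo\NEG q\| = (\emptyset,\{w\})$ and $\|\NEG(p\boxTo q)\| = (\{w\},\emptyset)$, so at $w$ one has $\models^+\NEG(p\boxTo q)$ but $\not\models^+ p\boxTo\NEG q$, refuting the weak scheme, and unfolding the outer $\boxTo$ in (CBT$\boxTo$) shows its first conjunct already fails.

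For $\diamondto$ and $\diamondTo$, the weak Boethius schemes (WBT$\diamondto$), (WCBT$\diamondto$), and (WBT$\diamondTo$) follow from \eqref{E:a7} (together with \eqref{E:t-neg-wedge} in the $\diamondTo$ case). Failure of (AT$\diamondto$) is immediate by Lemma \ref{L:boxto-r}.2, since (AT$\diamondto$) is provably equivalent via \eqref{E:a7} to $\NEG\phi\diamondto\NEG\phi$; failure of (AT$\diamondTo$) reduces to a disjunction of two $\diamondto$-formulas, both refuted in the trivial model with $R = \emptyset$ (which is in $\mathbb{FSC}_R$). Failure of the weak symmetry schemes is by a small two-world model. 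Finally, failure of (WCBT$\diamondTo$), and hence of weak partial hyperconnexivity of $\diamondTo$, is witnessed in $\mathcal{M}^c_0$ by a computation parallel to the $\boxTo$ case, giving $\|p\diamondTo q\| = \|p\diamondTo\NEG q\| = (\emptyset,\{w\})$ and $\|\NEG(p\diamondTo q)\| = (\{w\},\emptyset)$. The main technical obstacle is the bookkeeping for $\boxTo$: when AT or BT has an outer negation above the conjunction defining $\boxTo$, one must carefully direct the de Morgan step so that at least one resulting $\boxto$-disjunct falls under \eqref{E:a8}; once that is set up cleanly, every remaining positive clause collapses to either \eqref{E:a8} or the $\boxto$-case already handled.
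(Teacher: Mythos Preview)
Your proposal is correct and structurally close to the paper's proof: both pivot on pushing negations via \eqref{E:a6}/\eqref{E:a7} (and their $\Leftrightarrow$-upgrades \eqref{E:T4}/\eqref{E:T5}), on Lemma \ref{L:boxto-r} and Corollary \ref{C:boxto-r}, and on the model $\mathcal{M}^c_0\in\mathbb{FSC}_R$ for the hyperconnexivity failures of $\boxTo$ and $\diamondTo$. The main organizational difference is that the paper first inherits the weak Boethius theses (WBT$\boxto$), (WCBT$\boxto$), (WBT$\boxTo$) and their $\diamondto$/$\diamondTo$ analogues from Proposition \ref{P:cond-connexivity} via $\mathsf{CnCK}_R\supseteq\mathsf{CnCK}$, and only then upgrades them to the formula versions through Lemma \ref{L:boxto-r} and Corollary \ref{C:boxto-r}; you instead derive (AT$\boxto$), (BT$\boxto$), (CBT$\boxto$) directly by collapsing each to an instance of \eqref{E:a8} through the replacement rules. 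Your route is slightly more self-contained, and you also make the (AT$\diamondto$)/(AT$\diamondTo$) failures explicit (via Lemma \ref{L:boxto-r}.2 and the $R=\emptyset$ model), which the paper leaves implicit.

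One point to tighten: your justification for the failure of (nonSym$\boxto$) and (WnonSym$\boxto$), namely that their validity ``would transfer the corresponding $\to$-schemes into $\mathsf{C}$'', is not literally right, since the formula $(\phi\boxto\psi)\to(\psi\boxto\phi)$ to which Lemma \ref{L:boxto-r}.1 reduces the problem is not in $\mathcal{PL}$ and so Lemma \ref{L:cond-c}.2 does not apply to it. The clean fix is to argue exactly as you already do for $\boxTo$: take $\phi=p$, $\psi=p\vee q$, use Lemma \ref{L:boxto-r}.1 to get $p\boxto(p\vee q)\in\mathsf{CnCK}_R$ and $(p\vee q)\boxto p\notin\mathsf{CnCK}_R$, and then dispatch both (nonSym$\boxto$) and (WnonSym$\boxto$) for this instance by \eqref{E:mp}.
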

\begin{proof}
	As for Aristotle's Thesis, all of the following formulas can be shown to be theorems of $\mathsf{CnCK}_R$ for every $\phi \in \mathcal{CN}$ (at every step, Proposition \ref{P:c-strong-would-conditional-reflexive}.4 is applied):
	\begin{align}
		&\NEG\NEG(\NEG\phi\boxto\NEG\phi)\label{E:ss1}&&\eqref{E:a8},\,\eqref{E:t-double-neg}\\
		&\NEG(\NEG\phi\boxto\NEG\NEG\phi)\label{E:ss2}&&\eqref{E:ss1},\,\eqref{E:T4}\\
			&\NEG(\NEG\phi\boxto\phi)\label{E:ss3}&&\eqref{E:ss2},\,\eqref{E:t-double-neg}\\
			&\NEG(\NEG\phi\boxTo\phi)\Leftrightarrow(\NEG(\NEG\phi\boxto\phi)\vee\NEG(\NEG\phi\boxto\NEG\NEG\phi))\label{E:ss4}&&\eqref{E:t-neg-vee}\\
			&\NEG(\NEG\phi\boxTo\phi)\Leftrightarrow(\NEG(\NEG\phi\boxto\phi)\vee\NEG(\NEG\phi\boxto\phi))\label{E:ss5}&&\eqref{E:ss4},\,\eqref{E:t-double-neg}\\
			&\NEG(\NEG\phi\boxTo\phi)\label{E:ss6}&&\eqref{E:ss3},\,\eqref{E:ss5}		
	\end{align}
We treat the numerous forms of Boethius' Thesis next. (WBT$\boxto$), (WCBT$\boxto$) and (WBT$\boxTo$) all follow from Proposition \ref{P:cond-connexivity} together with the fact that $\mathsf{CnCK}_R\supseteq \mathsf{CnCK}$.  (BT$\boxto$), (CBT$\boxto$) and (BT$\boxTo$) follow from the latter theses by Lemma \ref{L:boxto-r} and Corollary \ref{C:boxto-r}. Similarly, Proposition \ref{P:cond-connexivity} together with $\mathsf{CnCK}_R\supseteq \mathsf{CnCK}$ imply the weak partial connexivity (resp. hyperconnexivity) for $\diamondto$ (resp. $\diamondTo$).

We turn to the negative statements and recall the model $\mathcal{M}^{c}_0\in \mathbb{FSC}_R$ defined in the proof of Corollary \ref{C-legacy-cond}. We have shown in the proof of Proposition \ref{P:cond-connexivity} that this model fails both (WCBT$\boxTo$) and (WCBT$\diamondTo$), therefore, by Lemma \ref{L:boxto-r}, (CBT$\boxTo$) must also fail. Finally, as for (BT$\gg$) and (CBT$\gg$) for $\gg\in\{\diamondto, \diamondTo\}$, their failure is entailed by Lemma \ref{L:boxto-r}.2.
\end{proof}
When it comes to the reflection of the negation-inconsistencies of $\mathsf{C}$ on the level of $CN$, the situation that we find with the would-conditionals in $\mathsf{CnCK}_R$ is also much closer to the modal case. More precisely, the following can be shown to hold:
\begin{proposition}\label{P:cond-negation-non-trivial-r}
	The following statements hold:
	\begin{enumerate}
		\item For every $\phi\wedge\NEG\phi\in\mathsf{CnCK}_R$ and every $\psi \in \mathcal{CN}$, we have $(\psi\boxto\phi)\wedge\NEG(\psi\boxto\phi)\in\mathsf{CnCK}_R$.
		
		\item Whenever $(\phi\to\psi)\wedge\NEG(\phi\to\psi)\in\mathsf{CnCK}_R$, we have $(\phi\boxto\psi)\wedge\NEG(\phi\boxto\psi)\in\mathsf{CnCK}_R$.
		
		\item If $(\phi\Rightarrow\psi)\wedge\NEG(\phi\Rightarrow\psi)\in\mathsf{CnCK}_R$, then $(\phi\boxTo\psi)\wedge\NEG(\phi\boxTo\psi)\in\mathsf{CnCK}_R$.
	\end{enumerate}
\end{proposition}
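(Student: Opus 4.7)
\textbf{Proof plan for Proposition \ref{P:cond-negation-non-trivial-r}.}

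For Part 1, the plan is simply to observe that the proof of Proposition \ref{P:cond-negation-non-trivial} uses only the axioms and rules of $\mathtt{CnCK}$, all of which are available in $\mathtt{CnCK}_R$. So the same derivation---applying \eqref{E:RNec} to $\phi$ and to $\NEG\phi$ (both of which are provable whenever $\phi\wedge\NEG\phi$ is, by $(\alpha_3)$ and $(\alpha_4)$), then using \eqref{E:a6} to rewrite $\psi\boxto\NEG\phi$ as $\NEG(\psi\boxto\phi)$, and finally applying $(\alpha_5)$---yields the desired theorem in $\mathtt{CnCK}_R$.

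For Part 2, I will unpack the hypothesis into two conjuncts: $\phi\to\psi$ and $\NEG(\phi\to\psi)$ are both theorems. The first, by Lemma \ref{L:boxto-r}.1, gives $\phi\boxto\psi\in\mathsf{CnCK}_R$. For the second, I first apply \eqref{Ax:to-neg} to get $\phi\to\NEG\psi\in\mathsf{CnCK}_R$; Lemma \ref{L:boxto-r}.1 then yields $\phi\boxto\NEG\psi\in\mathsf{CnCK}_R$; and finally \eqref{E:a6} turns this into $\NEG(\phi\boxto\psi)\in\mathsf{CnCK}_R$. Conjoining by $(\alpha_5)$ completes Part 2.

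For Part 3, the positive conjunct of the target is easy: $\phi\Rightarrow\psi\in\mathsf{CnCK}_R$ unfolds as $\phi\to\psi,\,\NEG\psi\to\NEG\phi\in\mathsf{CnCK}_R$, each of which transfers to $\boxto$ via Lemma \ref{L:boxto-r}.1, giving $\phi\boxTo\psi\in\mathsf{CnCK}_R$ (this is also immediate from Corollary \ref{C:boxto-r}). The negative conjunct is the delicate step and constitutes the main obstacle of the whole proposition: unfolding $\NEG(\phi\Rightarrow\psi)$ via \eqref{Ax:wedge-neg} and \eqref{Ax:to-neg} (plus double-negation elimination \eqref{Ax:double-neg}) yields the provable equivalence $\NEG(\phi\Rightarrow\psi)\Leftrightarrow(\phi\to\NEG\psi)\vee(\NEG\psi\to\phi)$, and similarly $\NEG(\phi\boxTo\psi)\Leftrightarrow(\phi\boxto\NEG\psi)\vee(\NEG\psi\boxto\phi)$. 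Because the negation of a conjunction becomes a disjunction, we cannot transfer the theorem conjunct-by-conjunct: we instead invoke the Disjunction Property for $\mathsf{CnCK}_R$ (Proposition \ref{P:reflexive-legacy}) to conclude that one of $\phi\to\NEG\psi$ or $\NEG\psi\to\phi$ is individually in $\mathsf{CnCK}_R$. In either case, Lemma \ref{L:boxto-r}.1 promotes it to the corresponding $\boxto$-formula, whence the disjunction $(\phi\boxto\NEG\psi)\vee(\NEG\psi\boxto\phi)$ is a theorem and so is $\NEG(\phi\boxTo\psi)$. One last conjunction finishes the proof.

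The only technical subtlety I anticipate is in the second case of the $\NEG(\phi\Rightarrow\psi)$ split, where Lemma \ref{L:boxto-r}.1 gives $\NEG\psi\boxto\phi$ but we want a formula of the shape $\NEG\psi\boxto\NEG\NEG\phi$ in order to apply \eqref{E:a6} cleanly to produce $\NEG(\NEG\psi\boxto\NEG\phi)$; this is handled by \eqref{E:t-double-neg} together with \eqref{E:RBbox}. Apart from this bookkeeping, the proof is a straightforward combination of Lemma \ref{L:boxto-r}.1, the De Morgan/negation axioms of $\mathtt{C}$, the conditional negation axiom \eqref{E:a6}, and (for Part 3) the Disjunction Property.
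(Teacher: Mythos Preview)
Your proposal is correct and, for Parts 1 and 2, essentially matches the paper's proof: the paper also reduces Part 1 to Proposition \ref{P:cond-negation-non-trivial} and handles Part 2 via Lemma \ref{L:boxto-r}.1 together with the negation axiom for $\boxto$ (the paper cites \eqref{E:T4}, which is the strong-equivalence form of your \eqref{E:a6}).

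For Part 3 your argument is actually more complete than the paper's. The paper's sketch says only that Part 3 ``follows from Corollary \ref{C:boxto-r} and \eqref{E:T4}'', but as you correctly identify, the negative conjunct $\NEG(\phi\Rightarrow\psi)$ unfolds to a \emph{disjunction}, and Lemma \ref{L:boxto-r}.1 transfers only individual implications to $\boxto$-formulas, not disjunctions of implications. Your appeal to the Disjunction Property (available via Proposition \ref{P:reflexive-legacy}) to split into cases is the right way to bridge this gap, and it is genuinely needed: for instance, in the concrete witness \eqref{Contr-strong} with $\phi = p_1\wedge\NEG p_1$ and $\psi = p_1$, only the first disjunct $\phi\to\NEG\psi$ is a theorem, not the second. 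So your route makes explicit a step the paper leaves implicit; the bookkeeping with \eqref{E:t-double-neg} and \eqref{E:RBbox} that you flag is also exactly right.
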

\begin{proof}[Proof (a sketch)]
	Part 1 is proved similarly to Proposition \ref{P:cond-negation-non-trivial}. Part 2 (resp. Part 3) follows from Lemma \ref{L:boxto-r}.1 (resp. Corollary \ref{C:boxto-r}) and \eqref{E:T4}.
\end{proof}
Again, Lemma \ref{L:cond-c} and Proposition \ref{P:cond-negation-non-trivial-r} together imply that:
\begin{corollary}\label{C:cond-negation-non-trivial-r}
	The following statements hold:
\begin{enumerate}
	\item For every $\phi\wedge\NEG\phi\in\mathsf{C}$ and every $\psi \in \mathcal{CN}$, we have $(\psi\boxto\phi)\wedge\NEG(\psi\boxto\phi)\in\mathsf{CnCK}_R$.
	
	\item Whenever $(\phi\to\psi)\wedge\NEG(\phi\to\psi)\in\mathsf{C}$, we have $(\phi\boxto\psi)\wedge\NEG(\phi\boxto\psi)\in\mathsf{CnCK}_R$.
	
	\item If $(\phi\Rightarrow\psi)\wedge\NEG(\phi\Rightarrow\psi)\in\mathsf{C}$, then $(\phi\boxTo\psi)\wedge\NEG(\phi\boxTo\psi)\in\mathsf{CnCK}_R$.
\end{enumerate}	
\end{corollary}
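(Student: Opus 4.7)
The plan is to observe that this corollary follows by a direct two-step combination: first lift the $\mathsf{C}$-theorems to $\mathsf{CnCK}_R$-theorems via a conservativity-style observation based on Lemma \ref{L:cond-c}, and then apply Proposition \ref{P:cond-negation-non-trivial-r} to obtain the desired conditional negation-inconsistencies. Since this is essentially a transport of each item of Proposition \ref{P:cond-negation-non-trivial-r} from $\mathsf{CnCK}_R$-hypotheses to $\mathsf{C}$-hypotheses, no genuinely new semantic or syntactic argument should be required.

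For the first step, I would note that Lemma \ref{L:cond-c}.1 gives $\mathsf{C} \subseteq \mathtt{CnCK}[CN]$ (modulo the identification of $\mathcal{PL}$-formulas as $\mathcal{CN}$-formulas), and since $\mathtt{CnCK}_R := \mathtt{CnCK} + (\eqref{E:a8};)$ we trivially have $\mathtt{CnCK}[CN] \subseteq \mathtt{CnCK}_R[CN] = \mathsf{CnCK}_R$. Therefore every $\mathcal{PL}$-theorem of $\mathsf{C}$ is also a theorem of $\mathsf{CnCK}_R$. In particular, the three hypotheses $\phi\wedge\NEG\phi\in\mathsf{C}$, $(\phi\to\psi)\wedge\NEG(\phi\to\psi)\in\mathsf{C}$, and $(\phi\Rightarrow\psi)\wedge\NEG(\phi\Rightarrow\psi)\in\mathsf{C}$ can be promoted to the corresponding $\mathsf{CnCK}_R$-theorems.

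For the second step, I would simply feed these promoted theorems into the three parts of Proposition \ref{P:cond-negation-non-trivial-r}: Part 1 of the proposition converts $\phi\wedge\NEG\phi\in\mathsf{CnCK}_R$ into $(\psi\boxto\phi)\wedge\NEG(\psi\boxto\phi)\in\mathsf{CnCK}_R$ for any $\psi\in\mathcal{CN}$, yielding Part 1 of the corollary; Parts 2 and 3 of the proposition handle Parts 2 and 3 of the corollary in the same manner, taking care that the transfer under Lemma \ref{L:cond-c} preserves the specific implicative shape of the premise (for $\to$ directly, and for $\Rightarrow$ via its definition as $(\phi\to\psi)\wedge(\NEG\psi\to\NEG\phi)$, which is a $\mathcal{PL}$-formula to begin with).

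I do not expect any genuine obstacle here: the only thing to double-check is the bookkeeping around languages, namely that the $\mathcal{PL}$-hypotheses stated in $\mathsf{C}$ are being used as $\mathcal{CN}$-theorems of $\mathsf{CnCK}_R$ under the inclusion $PL\subseteq CN$, which is precisely the content of Lemma \ref{L:cond-c}.1 combined with $\mathtt{CnCK}\subseteq\mathtt{CnCK}_R$. Once this is noted, the corollary is an immediate two-line consequence for each of its three items.
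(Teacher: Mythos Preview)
Your proposal is correct and takes essentially the same approach as the paper, which simply notes that Lemma \ref{L:cond-c} and Proposition \ref{P:cond-negation-non-trivial-r} together imply the corollary. You have merely spelled out in more detail the two-step transfer (lift from $\mathsf{C}$ to $\mathsf{CnCK}_R$ via $\mathtt{CnCK}\subseteq\mathtt{CnCK}_R$, then apply the proposition), which is exactly what the paper intends.
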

In particular, by Corollary \ref{C:cond-negation-non-trivial-r}, Lemma \ref{L:cond-c}, \eqref{Contr}, and \eqref{Contr-strong}, we get that \eqref{Contr-m} is a theorem of $\mathsf{CnCK}_R$ for all $\gg \in \{\to, \Rightarrow, \boxto, \boxTo\}$.

Even though the would-conditionals in $\mathsf{CnCK}_R$ display a strong connexive profile, might-conditionals remain pretty much in the state in which we saw them in $\mathsf{CnCK}$. Although the extensions of $\mathsf{CnCK}$ strengthening might-conditionals in much the same way as we saw $\mathsf{CnCK}_R$ strengthening would-conditionals seem theoretically possible, we do not expect that any such extensions will look particularly nice or well-behaved. The reason for this belief of ours is that, as the formulations of the clauses \eqref{Cl:diamto+} and \eqref{Cl:diamto-} betray, $\diamondto$ generalizes conjunction rather than implication, its official appellation ``might-conditional'' notwithstanding. Therefore, any imposition of the specifically implicational principles upon such a generalized conjunction will always fall completely out of harmony with the inner nature of this generalized conjunction and will only be possible under very special conditions. 

Before we end this section, we would like to make two more observations. First, despite its substantial strengthening of $\mathsf{CnCK}$, $\mathsf{CnCK}_R$ retains a non-trivial connection to $\mathsf{CnK}$, even though this connection is more tenuous than the modal companionship established for $\mathsf{CnCK}$ in Proposition \ref{P:modal-companion} earlier:
\begin{proposition}\label{P:modal-companion-r}
	For every $\phi \in \mathsf{CnCK}_R$, $Tr_\phi$ is a faithful embedding of $\mathsf{CnK}$ into $\mathsf{CnCK}_R$. In other words, for all $\Gamma, \Delta\subseteq \mathsf{CnCK}_R$ we have:
	$$
	\Gamma\models_\mathsf{CnK}\Delta\text{ iff }Tr_\phi(\Gamma)\models_{\mathsf{CnCK}_R}Tr_\phi(\Delta).
	$$
\end{proposition}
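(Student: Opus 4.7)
The plan is to adapt the semantic argument of Proposition \ref{P:modal-companion}, proving both directions by contraposition. Both directions will build a bridge between $\mathbb{FSM}$ and $\mathbb{FSC}_R$, but the direction from $\mathsf{CnK}$ to $\mathsf{CnCK}_R$ will require a non-trivial refinement of the earlier construction in order to respect the reflexivity constraint singling out $\mathbb{FSC}_R$.

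For the direction from $\mathsf{CnCK}_R$ to $\mathsf{CnK}$ we essentially replay the corresponding part of the proof of Proposition \ref{P:modal-companion}. Starting from $(\mathcal{M}, w) \in Pt(\mathbb{FSC}_R)$ with $\mathcal{M}, w \models^+ (Tr_\phi(\Gamma), Tr_\phi(\Delta))$, we set $\mathcal{M}_m := (W, \leq, R_m, V^+, V^-)$ with $R_m := R_{\|\phi\|_\mathcal{M}}$. Since $\mathcal{M}\in \mathbb{FSC}_R \subseteq \mathbb{FSC}$, the slice $R_{\|\phi\|_\mathcal{M}}$ satisfies the Fischer--Servi conditions directly by Definition \ref{D:cond-model}, so $\mathcal{M}_m \in \mathbb{FSM}$. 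A routine induction on $\psi \in \mathcal{MD}$ then yields $\mathcal{M}_m, v \models^\star_m \psi$ iff $\mathcal{M}, v \models^\star Tr_\phi(\psi)$ for every $v\in W$ and $\star\in\{+,-\}$, the key modal cases being immediate from the definition of $R_m$. Specialising at $w$ gives the desired counter-model in $\mathsf{CnK}$.

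The direction from $\mathsf{CnK}$ to $\mathsf{CnCK}_R$ is where the main obstacle lies. The naive construction from Proposition \ref{P:modal-companion}, setting $R_n(v,(X,Y),u)$ iff $R(v,u)$ (ignoring $X, Y$), fails the reflexivity condition $u \in X$, so it does not yield a model in $\mathbb{FSC}_R$. Instead, starting from $(\mathcal{M}, w)\in Pt(\mathbb{FSM})$ with $\mathcal{M}, w\models^+_m (\Gamma, \Delta)$, we plan to use the refined relation
\[
R_n := \{(v, (X, Y), u) \mid X = W \text{ and } R(v, u)\},
\]
so that $R_{n, (X, Y)}$ equals $R$ when $X = W$ and is empty otherwise. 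The Fischer--Servi conditions on each slice then hold either trivially (when $X \neq W$) or by transfer from those of $R$ in $\mathcal{M}$ (when $X = W$), and reflexivity is built into the definition of $R_n$, yielding $\mathcal{N} := (W, \leq, R_n, V^+, V^-) \in \mathbb{FSC}_R$.

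The hypothesis $\phi \in \mathsf{CnCK}_R$ enters at exactly one point: since $\phi$ is $\mathsf{CnCK}_R$-valid, we have $\lvert\phi\rvert^+_\mathcal{N} = W$ and therefore $R_{n, \|\phi\|_\mathcal{N}} = R$ regardless of the value of $\lvert\phi\rvert^-_\mathcal{N}$; this identification drives the modal cases in the induction giving $\mathcal{N}, v \models^\star Tr_\phi(\psi)$ iff $\mathcal{M}, v \models^\star_m \psi$, and specialising at $w$ produces the required counter-model in $\mathsf{CnCK}_R$. The main conceptual difficulty is thus reconciling reflexivity with the requirement that Fischer--Servi holds for \emph{every} $(X,Y)$: collapsing $R_n$ outside $X = W$ is the most economical way of achieving this, and the validity of $\phi$ in $\mathbb{FSC}_R$ is precisely what makes this collapse harmless for the truth of the translations $Tr_\phi(\psi)$.
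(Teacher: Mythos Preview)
Your proposal is correct and matches the paper's own argument essentially verbatim: the paper also handles the backward direction exactly as in Proposition \ref{P:modal-companion}, and for the forward direction defines $R_n:= \{(v, (W,X), u)\mid X\subseteq W,\,R(v,u)\}$, which is your relation under a relabelling of variables. Your identification of the role of the hypothesis $\phi \in \mathsf{CnCK}_R$ (forcing $\lvert\phi\rvert^+_\mathcal{N} = W$ so that $R_{n,\|\phi\|_\mathcal{N}} = R$) is exactly the point the paper singles out.
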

\begin{proof}[Proof (a sketch)]
Again, we argue by contraposition. Fix a $\phi \in  \mathsf{CnCK}_R$, assume that $\Gamma\not\models_{\mathsf{CnK}}\Delta$, and choose any $(\mathcal{M}, w)\in Pt(\mathbb{FSM})$ such that $\mathcal{M}, w\models^+_m (\Gamma, \Delta)$. Then consider the model $\mathcal{N} = (W, \leq, R_n, V^+, V^-)$ (that is to say, replace $R$ with $R_n$ in $\mathcal{M}$) such that
$$
R_n:= \{(v, (W,X), u)\mid X\subseteq W,\,R(v,u)\}.
$$
Given that $\mathcal{M}\in \mathbb{FSM}$, it is immediate that $\mathcal{N}\in \mathbb{FSC}_R$. Moreover, since $\phi \in  \mathsf{CnCK}_R$, we know that $\lvert\phi\rvert^+_\mathcal{N} = W$. Therefore, a simple inductive argument shows that for every $\psi \in \mathcal{MD}$, every $\ast \in \{+, -\}$ and every $v \in W$ we must have $\mathcal{M}, v\models^+_m \psi$ iff $\mathcal{N}, v\models^+ Tr_\phi(\psi)$ so that $\mathcal{N}, w\models^+ (Tr_\phi(\Gamma), Tr_\phi(\Delta))$ also holds.

In the other direction, the argument is the same as in the proof of Proposition \ref{P:modal-companion}.
\end{proof}
That Proposition \ref{P:modal-companion-r} can not be strengthened to a full analogue of Proposition \ref{P:modal-companion} follows from the fact that, for every $p \in Prop$, $Tr_p(p) = p\boxto p \in  \mathsf{CnCK}_R$, even though $p \notin \mathsf{CnK}$.

Secondly, Proposition \ref{P:interpretability} carries over to $\mathsf{CnCK}_R$ without any changes in that we have
\begin{proposition}\label{P:interpretability-r}
	For all $\Gamma, \Delta \subseteq \mathcal{CN}$, $\Gamma\models_{\mathsf{CnCK}_R}\Delta$ implies $I(\Gamma)\models_{\mathsf{CnK}}I(\Delta)$ but not vice versa.
\end{proposition}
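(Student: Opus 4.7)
The plan is to closely mirror the proof of Proposition \ref{P:interpretability}, with two small adjustments: verify that the conditional model built in the positive direction actually lands in the reflexive class $\mathbb{FSC}_R$, and choose counterexamples for the non-converse direction that already inhabit $\mathbb{FSC}_R$. For the positive direction I would argue by contraposition: fix a $\phi\in\mathcal{CN}$ and assume $I(\Gamma)\not\models_{\mathsf{CnK}}I(\Delta)$, picking $(\mathcal{M},w)\in Pt(\mathbb{FSM})$ with $\mathcal{M},w\models^+_m(I(\Gamma),I(\Delta))$. Build $\mathcal{N}=(W,\leq,R_n,V^+,V^-)$ exactly as in the proof of Proposition \ref{P:interpretability}, declaring $R_n(v,(X,Y),u)$ iff (1) $X,Y\subseteq W$ are $\leq$-closed, (2) $R(v,u)$, and (3) $u\in X$. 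The key observation is that clause (3) is verbatim the reflexivity stipulation that singles out $\mathbb{FSC}_R$ inside $\mathbb{FSC}$, so $\mathcal{N}$ automatically belongs to $\mathbb{FSC}_R$. The Fischer-Servi conditions on each $(R_n)_{(X,Y)}$ follow from the corresponding conditions for $R$ combined with $\leq$-closure of $X$; for non-$\leq$-closed parameters the relation is empty and the conditions are trivial. The inductive translation claim $\mathcal{M},v\models^+_m I(\psi)\Leftrightarrow\mathcal{N},v\models^+\psi$ then runs as before, using \eqref{Cond:2} in the $\boxto$-step and depending only on the positive inductive hypothesis in the $\diamondto$-step (since $R_n$ depends only on the first coordinate of its bi-set parameter). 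This yields $\mathcal{N},w\models^+(\Gamma,\Delta)$.

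For the non-converse direction I would exhibit a small counter-model already in $\mathbb{FSC}_R$ separating $p\boxto(q\to r)$ from $(p\wedge q)\boxto r$. Take $\mathcal{M}^\ast$ with $W^\ast=\{w,u\}$, $\leq^\ast$ the identity, $V^{\ast+}(p)=\{w,u\}$, $V^{\ast+}(q)=\{u\}$, $V^{\ast+}(r)=\emptyset$, all negative valuations empty, and $R^\ast=\{(w,(\{u\},\emptyset),u)\}$. Reflexivity $u\in\{u\}$ holds and the Fischer-Servi patterns are trivial since $\leq^\ast$ is the identity. Computing $\|p\|_{\mathcal{M}^\ast}=(\{w,u\},\emptyset)$ and $\|p\wedge q\|_{\mathcal{M}^\ast}=(\{u\},\emptyset)$ one finds $(R^\ast)_{\|p\|_{\mathcal{M}^\ast}}=\emptyset$ while $(R^\ast)_{\|p\wedge q\|_{\mathcal{M}^\ast}}=\{(w,u)\}$. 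Hence $\mathcal{M}^\ast,w\models^+ p\boxto(q\to r)$ vacuously, while $\mathcal{M}^\ast,w\not\models^+(p\wedge q)\boxto r$ because $u\not\models^+ r$. On the translated side $I(p\boxto(q\to r))=\Box(p\to(q\to r))$ and $I((p\wedge q)\boxto r)=\Box((p\wedge q)\to r)$; the $\mathsf{C}$-equivalence of the antecedents (an instance of import/export, valid in $\mathsf{C}$ and hence, by Lemma \ref{L:c}, in $\mathsf{CnK}$), lifted by \eqref{E:Rmod-box}, yields the $\mathsf{CnK}$-entailment.

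The only real obstacle is bookkeeping: confirming that clause (3) in the definition of $R_n$ is exactly what places $\mathcal{N}$ into $\mathbb{FSC}_R$ and that the tiny counter-model $\mathcal{M}^\ast$ meets the reflexivity and Fischer-Servi requirements. Everything else is inherited verbatim from the companion argument for $\mathsf{CnCK}$.
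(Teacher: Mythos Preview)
Your proposal is correct and follows essentially the same route as the paper, which simply says ``the proof is also the same as for Proposition \ref{P:interpretability}.'' You make explicit the two points that the paper leaves implicit: that clause (3) in the definition of $R_n$ already guarantees $\mathcal{N}\in\mathbb{FSC}_R$, and that the $\boxto$-counterexample can be witnessed by a concrete reflexive model; both checks are accurate.
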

The proof is also the same as for Proposition \ref{P:interpretability}.

\section{Conclusions and future work}\label{S:conclusions}
This paper was written in the hope to make first steps in an in-depth investigation of $\mathsf{C}$-based conditional and modal logics, focusing on the motivation for the introduced systems and systematically charting their relations to other logics based on similar ideas.

Thus far, we have introduced three new systems, the $\mathsf{C}$-based modal logic $\mathsf{CnK}$ and the $\mathsf{C}$-based conditional logics $\mathsf{CnCK}$ and $\mathsf{CnCK}_R$ all of which are so tightly related to one another that introducing just one or two of them and omitting the remaining ones would lead to noticeable gaps in the discourse of this paper. For instance, $\mathsf{CnK}$ and $\mathsf{CnCK}$ are easily recognized as two expressions of the same set of logical principles which are merely projected onto two different language environments, the modal and the conditional one. Besides, $\mathsf{CnK}$ faithfully reproduces many principles of $\mathsf{C}$ related to connexivity, the inter-relations between plain and strong implication, the provability of negation-inconsistencies in a purely modal form, whereas in $\mathsf{CnCK}$ all of these principle only appear in a very diluted and generalized form. Therefore, the tight link between $\mathsf{C}$ and $\mathsf{CnCK}$ is best seen through the medium of $\mathsf{CnK}$ and omitting the latter logic from our exposition would  very much weaken the case for $\mathsf{CnCK}$ as a correct minimal $\mathsf{C}$-based conditional logic. 

On the other hand, omitting $\mathsf{CnCK}_R$ would, as we mentioned in the opening paragraphs of Section \ref{S:reflexive} remove from the text our argument for the  claim that $\mathsf{CnCK}$ generalizes the implicational connectives of $\mathsf{C}$ rather than some other propositional connectives. Finally, omission of $\mathsf{CnCK}$ itself would leave the two remaining systems, $\mathsf{CnK}$ and $\mathsf{CnCK}_R$, which are clearly based on different principles and thus would invite the question of how a true conditional analogue of $\mathsf{CnK}$ looks like.

Of course, all of this is  but a beginning, and many other open questions present themselves in every possible direction of future research. In the introduction, we have already mentioned the necessity to systematically relate this new family of systems with modal and conditional logics extending the propositional relatives of $\mathsf{C}$, including intuitionistic logic and Nelson's logic of strong negation. But even within the realm of $\mathsf{C}$-based logics much remains to be done. We list several most interesting directions below.

\textit{1. Further extensions of $\mathsf{CnCK}$}. Their systematic investigation or at least a deeper look into the possibilities opening themselves up in this direction is a necessary condition for getting anything approximating a final answer to many questions already touched upon in the present paper. It is easy to show, for example, that some proper extensions of $\mathsf{CnCK}$ also have $\mathsf{CnK}$ as their modal companion and that some proper extensions of $\mathsf{CnCK}_R$ display the same set of connexivity principles for would- and might-conditionals.

\textit{2. Relations of $\mathsf{CnK}$ and $\mathsf{CnCK}$ to $\mathsf{C}$-based first-order logics}. In particular, various sorts of faithful embeddings via standard translation similar to the ones considered in \cite{olkhovikov1} and \cite{olkhovikov3} for intuitionistic and Nelsonian conditional logics seem to be very promising. However, unlike in the latter two cases, where the correspondence languages for such an embedding were known almost by default, coming up with a right first-order $\mathsf{C}$-based correspondence language appears to be a problem in itself. Such a language may be absent in the existing literature although the right direction to look for it would be among proper extensions of the logic proposed in \cite{ww}.

\textit{3. First-order $\mathsf{C}$-based modal and conditional logics}. First-order extensions of intensional logics are often a neglected topic, but in this case they might prove to be highly fruitful, at least as long as the similar developments in the field of intuitionistic and Nelsonian conditional logics are in place to support such a move.

Our hope is to take up all of the aforementioned topics in our future research --- not necessarily in the indicated order.

%
%
%

\bmhead{Funding} This research has received funding from the European Research Council (ERC) under the European Union's Horizon 2020 research and innovation programme, grant agreement ERC-2020-ADG, 101018280, ConLog

\bmhead{Acknowledgments}
To be inserted.

\noindent

\bigskip
%
%
%
%

\begin{appendices}
	
\section{Proof of Lemma \ref{L:cond-theorems}}\label{A:1}
	\eqref{E:RBbox} (resp. \eqref{E:RBdiam}) is entailed by \eqref{E:RCbox} and \eqref{E:a6} (resp. \eqref{E:RCdiam} and \eqref{E:a7}). Next, \eqref{E:T4} (resp. \eqref{E:T5}) follows by \eqref{E:a6} (resp. \eqref{E:a7}), \eqref{E:t-double-neg}, and \eqref{E:RBbox} (resp. \eqref{E:RBdiam}).
	
	We sketch the other proofs in more detail below:
\begin{align}
	\eqref{E:RNec}:\qquad\phi\label{E:p1}& &&\text{premise}\\
	\phi& \leftrightarrow (\phi\to\phi)\label{E:p3}&&\text{\eqref{E:p1}, $\mathsf{C}$}\\
	(\psi&\boxto\phi)\leftrightarrow(\psi\boxto(\phi\to\phi))\label{E:p4}&&\text{\eqref{E:p3}, \eqref{E:RCbox}}\\
	\psi&\boxto\phi\label{E:p5}&&\text{\eqref{E:p4}, \eqref{E:a5}, $\mathsf{C}$}
\end{align}
\begin{align}
	\eqref{E:Rmbox}:\qquad\phi&\to\psi\label{E:p6} &&\text{premise}\\
	(\phi&\wedge\psi)\leftrightarrow \phi\label{E:p7}&&\text{\eqref{E:p6}, $\mathsf{C}$}\\
	(\chi&\boxto(\phi\wedge\psi))\leftrightarrow(\chi\boxto\phi)\label{E:p8}&&\text{\eqref{E:p7}, \eqref{E:RCbox}}\\
	(\chi&\boxto\phi)\to(\chi\boxto\psi)\label{E:p10}&&\text{\eqref{E:p8},  \eqref{E:a1}, $\mathsf{C}$}
\end{align}
\begin{align}
	\eqref{E:Rmdiam}:\quad\phi&\to\psi\label{E:q0} &&\text{premise}\\
	(\phi&\vee\psi)\leftrightarrow \psi\label{E:q1}&&\text{\eqref{E:q0}, $\mathsf{C}$}\\
	(\chi&\diamondto(\phi\vee\psi))\leftrightarrow(\chi\diamondto\psi)\label{E:q2}&&\text{\eqref{E:q1}, \eqref{E:RCdiam}}\\
	(\chi&\diamondto\phi)\to(\chi\diamondto\psi)&&\text{\eqref{E:q2}, \eqref{E:a3}, $\mathsf{C}$}
\end{align}
\begin{align}
	\eqref{E:T1}:\,(\psi&\wedge(\psi\to\chi))\to\chi\label{E:q5}&&\text{$\mathsf{C}$}\\
	(\phi&\boxto(\psi\wedge(\psi\to\chi)))\to(\phi\boxto\chi)\label{E:q6}&&\text{\eqref{E:q5}, \eqref{E:Rmbox}}\\
	((\phi&\boxto\psi)\wedge(\phi\boxto(\psi\to\chi)))\to(\phi\boxto\chi)&&\text{\eqref{E:a1}, \eqref{E:q6}, $\mathsf{C}$}
\end{align}
\begin{align}
	\eqref{E:T2}:\,	((\phi&\diamondto\psi)\wedge(\phi\boxto(\psi\to\chi)))\to(\phi\diamondto(\psi\wedge(\psi\to\chi)))\label{E:q7}\qquad\quad\text{\eqref{E:a2}}\\
	(\psi&\wedge(\psi\to\chi))\to\chi\label{E:q8}\qquad\qquad\qquad\qquad\qquad\qquad\qquad\qquad\qquad\qquad\text{$\mathsf{C}$}\\
	(\phi&\diamondto(\psi\wedge(\psi\to\chi)))\to(\phi\diamondto\chi)\label{E:q9}\qquad\qquad\qquad\quad\text{\eqref{E:q8}, \eqref{E:Rmdiam}}\\
	((\phi&\diamondto\psi)\wedge(\phi\boxto(\psi\to\chi)))\to(\phi\diamondto\chi)\,\,\qquad\quad\text{\eqref{E:q7}, \eqref{E:q9}, $\mathsf{C}$}
\end{align}
\begin{align}
	\eqref{E:T3}:\,\psi&\to((\psi\to\chi)\to\chi)\label{E:r0}\qquad\qquad\qquad\qquad\qquad\qquad\qquad\quad\qquad\qquad\quad\text{$\mathsf{C}$}\\
	(\phi&\boxto\psi)\to(\phi\boxto((\psi\to\chi)\to\chi))\label{E:r1}\qquad\qquad\qquad\quad\text{\eqref{E:r0}, \eqref{E:Rmbox}}\\
	(\phi&\boxto\psi)\to((\phi\diamondto(\psi\to\chi))\to(\phi\diamondto\chi))\,\,\qquad\quad\text{\eqref{E:r1},\eqref{E:T2}, $\mathsf{C}$}
\end{align}




\end{appendices}



\end{document}